\newcommand*\linenomathpatchAMS[1]{%
  \expandafter\pretocmd\csname #1\endcsname {\linenomathAMS}{}{}%
  \expandafter\pretocmd\csname #1*\endcsname{\linenomathAMS}{}{}%
  \expandafter\apptocmd\csname end#1\endcsname {\endlinenomath}{}{}%
  \expandafter\apptocmd\csname end#1*\endcsname{\endlinenomath}{}{}%
}
  \let\linenomathAMS\linenomathWithnumbers
  \patchcmd\linenomathAMS{\advance\postdisplaypenalty\linenopenalty}{}{}{}
  \let\linenomathAMS\linenomathNonumbers
\title{
The multistochastic Monge--Kantorovich problem\thanks{The article was prepared within the framework of the HSE University Basic Research Program. The second named author was supported by RFBR project 20-01-00432.}}
\date{}
\author{Nikita~A. Gladkov\thanks{UCLA Department of Mathematics, Los Angeles} \and Alexander~V. Kolesnikov \thanks{Faculty of Mathematics, HSE University, Russian Federation} \and Alexander~P. Zimin\thanks{Faculty of Mathematics, HSE University, Russian Federation \& Center for Advanced Studies, Skoltech, Moscow, Russian Federation} }
\begin{document}

\maketitle

\begin{abstract}
The multistsochastic Monge--Kantorovich problem on the product   $X = \prod_{i=1}^n X_i$ of $n$ spaces is a generalization 
of the multimarginal Monge--Kantorovich problem. For a given integer number $1 \le k<n$ we consider the minimization problem
$\int c d \pi \to \inf$ of the space of measures with fixed projections onto every  $X_{i_1} \times \dots \times X_{i_k}$
for arbitrary set of $k$ indices $\{i_1, \dots, i_k\} \subset \{1, \dots, n\}$. 
In this paper we study  basic properties of the multistochastic problem, including well-posedness, existence of a dual solution, boundedness
and continuity of a dual solution.
\end{abstract}
\tableofcontents
\section{Introduction}

This paper is a continuation of our previous work \cite{GlKoZi}, where we studied a natural generalization of the transportation or
Monge--Kantorovich problem.

Let $\mu$ and $\nu$  be probability measures on measurable spaces $X$ and $Y$, and let $c: X \times Y \to \mathbb{R}$ be a measurable function.
The classical Kantorovich  problem is the minimization problem  
\[
\int_{X \times Y}c(x, y)~d\pi \to \inf
\]
on the space  $\Pi(\mu, \nu)$  of probability measures on $X \times Y$ with fixed marginals  $\mu$ and $\nu$.

It is well-known that this problem is closely related to another linear programming problem, which is called 
``dual transportation problem'' 
\[
\int f~d\mu + \int g~d\nu \to \sup.
\]
The dual transportation problem is considered on the couples of integrable functions $(f, g)$, satisfying $f(x) + g(y) \le c(x, y)$ for all $x \in X$, $y \in Y$.

Nowadays, the Monge--Kantorovich theory attracts growing attention. The reader can find  huge amount of information in the following books and surveys papers: \cite{AGS}, \cite{BoKo}, \cite{PeyreCuturi},\cite{Galichon},   \cite{McCannGuill}, \cite{HL}, \cite{RR}, \cite{Santambrogio}, \cite{Villani}, \cite{Villani2}.

A particular case of the multistochastic problem is the multimarginal transportation problem. In the multimarginal problem one considers the product of
$n > 2$ spaces and $n$ independent marginals $\mu_1, \dots, \mu_n$. Some classical results on the multimarginal problem is contained in book
\cite{RR}, in particular, functional-analytical duality theorems, applications to probability etc. Nevertheless, till recent, only the case of two marginals
was in focus of research. A revival of interest in the case of many marginals is partially motivated   by applications in economics and
 quantum physics \cite{CoDPMa}, \cite{CFK},  \cite{MGN}, \cite{Pass}.
 Our motivation to study  the cost function $xyz$ in $\mathbb{R}^3$ is partially related to the multimarginal problem  considered 
in  \cite{GlKoZi2}.

In \cite{GlKoZi} we introduce a more general problem, which we call ``multistochastic problem''. Compare to the classical (multimarginal) case
this new problem is genuinely more difficult. Even its well-posedness  depends on the structure of the marginals in a complicated way.
The aim of this work is to fill many gaps related to basic properties of the problem.

The paper is organized as follows: the reader can consider Section 2 as an extended introduction, where we present the results of the paper, 
our previous results, open questions, examples, and discuss relations to other problems. In Section 3 we study sufficient conditions  for existence
of a feasible measure for the multistochastic   problem. In Section 4 we give a proof of a duality theorem  which is based
on the duality theory  for linearly constrained transportation problem. In Section 5 we study sufficient conditions for existence of a dual solution and construct an
example of non-existence. In Section 6 we give explicit uniform bounds for the dual solution under assumption that the cost function is bounded. Then we prove uniqueness of the primal and dual solutions in our main example studied in \cite{GlKoZi}.
Finally, we give an example showing that a dual solution can be discontinuous even for a nice cost function $c$.  

\section{The multistochastic Monge--Kantorovich problem. Preliminaries, examples, and open questions.}

We start with the formulation of the multistochastic problem in the most general setting. 
Let $X_1$, $X_2$, \dots, $X_n$ be measurable spaces equipped with $\sigma$-algebras 
$\mathcal{B}_1, \dots, \mathcal{B}_n$.
It will be assumed throughout that $X_i$ are Polish spaces and $\mathcal{B}_i$ are Borel sigma algebras.

\begin{definition}
Let $p$, $q$ be nonnegative integers, $q \le p$. Let us denote by $\mathcal{I}_{pq}$ the family of subsets
 $\{1, 2, \dots, p\}$ of cardinality $q$. In addition, the family of all subsets of $\{1, 2, \dots, p\}$ will be denoted by
  $\mathcal{I}_p = \cup_{q = 0}^{p}\mathcal{I}_{pq}$.
\end{definition}

\begin{definition}
For all $\alpha \in \mathcal{I}_n$ let us set  $X_\alpha = \prod_{i \in \alpha}X_i$. The product of all spaces  $X = \prod_{i = 1}^n X_i$ will be denoted by $X$. For a fixed $\alpha \in \mathcal{I}_n$ the projection of $X$ onto $X_\alpha$ will be denoted by  $\prj_\alpha$. In addition, for arbitrary   $x \in X$  the image of $x$ under projection $\prj_\alpha$ will be denoted by $x_{\alpha}$:
$x_{\alpha} = \prj_\alpha(x)$.
\end{definition}

For arbitrary space  $X$ let us denote by $\mathcal{P}(X)$ the space of all probability measures on $X$.

\begin{problem}[Primal $(n, k)$-Monge--Kantorovich problem]
Given Polish spaces  $X_1, \dots, X_n$, fixed family of measures $\mu_\alpha \in \mathcal{P}(X_\alpha)$, $\alpha \in \mathcal{I}_{nk}$, and a measurable cost function $c$. Assume in addition that there exist integrable functions $c_\alpha \in L_1(X_\alpha, \mu_\alpha)$, $\alpha \in \mathcal{I}_{nk}$, such that $|c(x)| \le \sum_{\alpha \in \mathcal{I}_{nk}}c_\alpha(x_\alpha)$. Then we are looking for  
\[
\inf_{\pi \in \Pi(\mu_\alpha)}\int_X c~d\pi,
\]
where infimum is taken among  the all uniting measures $\pi$.
\end{problem}
Note that under that assumptions the cost function $c$ is integrable with respect to every uniting measure $\mu \in \Pi(\mu_\alpha)$. Indeed, one has  $\int_X |c|\,d\mu \le \sum_{\alpha \in \mathcal{I}_{nk}}\int_X c_\alpha(x_\alpha)~d\mu_\alpha$, if $|c(x)| \le \sum_{\alpha \in \mathcal{I}_{nk}} c_\alpha(x_\alpha)$.

In what follows, we will additionally assume that $c$ is continuous, and we will work with the following functional spaces
\begin{align*}
    &C_L(X_\alpha, \mu_\alpha) = C(X_\alpha) \cap L^1(\mu_\alpha),\\
    &C_L(X, \mu_\alpha) = \left\{c \in C(X): |c(x)| \le \sum_{\alpha \in \mathcal{I}_{nk}} c_\alpha(x_\alpha) \text{ for some } \ c_\alpha \in C_L(X_\alpha, \mu_\alpha)\right\}.
\end{align*}
In addition, $C_b(X)$ is the space of all continuous bounded functions on  $X$, $C_b(X) \subset C_L(X)$.

\begin{definition}
Assume that for every  $\alpha \in \mathcal{I}_{nk}$ we are given a probability measure $\mu_\alpha$ on $X_\alpha$. We say that a measure  $\mu \in \mathcal{P}(X)$ is {\bf uniting} if $\prj_\alpha(\mu) = \mu_\alpha$ 
for all $\alpha \in \mathcal{I}_{nk}$. The set of all uniting measures will be denoted by $\Pi(\mu_\alpha)$.
\end{definition}

\begin{example}{\bf ($(3, 2)$-problem)}
Consider a product of three spaces $X=X_1 \times X_2 \times X_3$, probability measures 
$\mu_{12}$, $\mu_{23}$, $\mu_{13}$ on $X_1 \times X_2$, $X_2 \times X_3$, $X_1 \times X_3$ respectively. Then 
$\mu \in \Pi(\mu_\alpha)$ if and only if $\mu$ is a measure on $X$ such that
\[
\prj_{12}(\mu) =\mu_{12}, \prj_{13}(\mu) =\mu_{13}, \prj_{23}(\mu) =\mu_{23}.
\]
\end{example}

In this introductory section we briefly describe several aspects of this problem. In particular, we discuss previously known results, examples,  open problems, and relation to other 
research. 

\subsection{Feasibility of the problem,  Latin squares and descriptive geometry.} The multistochastic problem is overdetermined and a uniting measure does not always exist.
 It is clear that a necessary condition for existence of a uniting measure is the following consistency condition:
\[
\prj_{\alpha \cap \beta}(\mu_\alpha) = \prj_{\alpha \cap \beta}(\mu_\beta) = \prj_{\alpha \cap \beta}(\mu).
\]
 This condition is not sufficient (see \cite{GlKoZi} and other examples below), but we show that this condition is sufficient for
 existence of a {\bf signed} uniting measure (see \cref{explicit_signed_construction}).
 
 Nevertheless, in certain situations the set of feasible measures is very rich. This happens, for instance, if $X_i$  are finite sets of the same cardinality
 and all the measures $\mu_{\alpha}$ are uniform. The natural continuous generalization is: $X_i = [0, 1]$ and $\mu_{\alpha}$ are the Lebesgue measures
 on  $[0, 1]^k$ of the corresponding dimension $k$.  A natural related discrete combinatorial object is a {\bf Latin square}.
 To see the relation let us consider an $n \times n$ Latin square $S$ containing first $n$ integers.
 Then the discrete measure
 \[
\frac{1}{n^2} \sum_{i, j} \delta_{i, j, S(i, j)}
 \]
 on $[1, \dots, n]^3$
 has uniform projections to discrete $xy$, $xz$, $yz$  planes.
 
 More generally, the $(n, k)$-multistochastic problem is always feasible for the system of measures
 \[
 \mu_{\alpha} = \prod_{i \in \alpha} \mu_i, \ \alpha \in {\mathcal{I}}_{nk},
 \]
 where $\mu_1, \dots, \mu_n$ are fixed measures on $X_1, \dots, X_n$.
 
 We believe that this example provides a natural source of applications, this is why a big part of our results 
 is related to this particular case. 
 
 Other source of applications might arise from the engineering, in particular, the descriptive geometry.  
 One of the founding fathers of descriptive geometry, Gaspard Monge, developed a method of reconstruction of a three-dimensional
 body using its two-dimensional orthogonal projections. This procedure is known as {\bf ``projection de Monge''},
 in our language it gives a recipe of finding a uniting measure in $(3, 2)$-problem.

 A necessary and sufficient condition for existence of a measure with a given system  of marginal distributions in the spirit of linear programming duality
 was established by H.~Kellerer \cite{Kellerer64}.
Assume we are  given  a system of marginal distributions 
 $\mu_{\alpha}$, where $\alpha$ belongs to some system $A$ of subsets of $\{1, \dots, n\}$.
 This system admits a uniting measure if and only if
 \[
\sum_{\alpha \in A} \int f_{\alpha}(x_{\alpha})\,d \mu_{\alpha} \ge 0
 \]
 for all bounded continuous system of functions $f_{\alpha}(x_{\alpha})$
 satisfying $\sum_{\alpha \in A}  f_{\alpha}(x_{\alpha})\ge 0$.
 We give an independent proof of this fact  for $A= {\mathcal{I}}_{nk}$ in Section 3.
 Note, however, that this criterion does not seem to be very practical. We establish some
 easy-to-check sufficient conditions for existence of uniting measure in terms of uniform bounds for densities.
 In particular, we prove the following (see  \cref{thm:density_condition}):

\begin{theorem}  
For given natural numbers $1 \le k <n$
there exists a constant $\lambda_{nk} > 1$
which admits the following property.

Assume we are given a consistent family of probability measures $\mu_\alpha \in \mathcal{P}(X_\alpha)$,  $\alpha \in \mathcal{I}_{nk}$, and another family of probability measures  $\nu_i \in \mathcal{P}(X_i)$, $1 \le i \le n$.  
Assume that  every measure  $\mu_\alpha$,  $\alpha \in \mathcal{I}_{nk}$, is absolutely continuous with respect to  $\nu_\alpha =  \prod_{i \in \alpha}\nu_i$:
\[
\mu_\alpha = \rho_{\alpha} \cdot \nu_{\alpha}.
\]
Finally, assume that  there exist constants $0 < m \le M$ such that every density $\rho_{\alpha}$ satisfies $m \le \rho_\alpha \le M$ $\nu_\alpha$-almost everywhere for all $\alpha \in \mathcal{I}_{nk}$.

Then  $\Pi(\mu_\alpha)$ is not empty provided  $\frac{M}{m} \le \lambda_{nk}$.
\end{theorem}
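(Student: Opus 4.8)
The plan is to construct an explicit uniting measure of the form $\pi = \phi\cdot\nu$, where $\nu = \prod_{i=1}^n\nu_i$ and $\phi$ is a bounded function assembled from the densities $\rho_\alpha$. Since $\nu$ is a product measure, the projection of $\phi\cdot\nu$ onto $X_\alpha$ has density $\mathbb{E}_\nu[\phi\mid x_\alpha]$ with respect to $\nu_\alpha$, so the requirement $\pi\in\Pi(\mu_\alpha)$ is equivalent to solving the linear system $\mathbb{E}_\nu[\phi\mid x_\alpha] = \rho_\alpha$ for all $\alpha\in\mathcal{I}_{nk}$, supplemented by the single inequality $\phi\ge 0$.

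I would work with the Hoeffding (ANOVA) decomposition attached to $\nu$: every $f\in L^2(\nu)$ is uniquely $f = \sum_{S\subseteq\{1,\dots,n\}}H_Sf$, where $H_Sf$ depends only on $x_S$ and satisfies $\mathbb{E}_\nu[H_Sf\mid x_T] = 0$ for every $T\subsetneq S$; one has $H_Sf = \sum_{T\subseteq S}(-1)^{|S|-|T|}\mathbb{E}_\nu[f\mid x_T]$, hence $\|H_Sf\|_\infty\le 2^{|S|}\|f\|_\infty$, and $\mathbb{E}_\nu[f\mid x_\alpha] = \sum_{S\subseteq\alpha}H_Sf$. Using the consistency hypothesis $\prj_{\alpha\cap\beta}(\mu_\alpha) = \prj_{\alpha\cap\beta}(\mu_\beta)$ — which, projected further onto $X_S$, gives $\mathbb{E}_\nu[\rho_\alpha\mid x_S] = \mathbb{E}_\nu[\rho_\beta\mid x_S]$ for every $S\subseteq\alpha\cap\beta$ — an induction on $|S|$ (equivalently, M\"obius inversion over the subset lattice) shows that for each $S$ with $|S|\le k$ the component $H_S\rho_\alpha$ coincides for every $\alpha\in\mathcal{I}_{nk}$ containing $S$; denote it $h_S$, noting $h_\emptyset = \int\rho_\alpha\,d\nu_\alpha = 1$. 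Now set
\[
\phi := \sum_{S:\ |S|\le k} h_S .
\]
Then $\int\phi\,d\nu = h_\emptyset = 1$, and for each $\alpha\in\mathcal{I}_{nk}$ we get $\mathbb{E}_\nu[\phi\mid x_\alpha] = \sum_{S\subseteq\alpha}h_S = \sum_{S\subseteq\alpha}H_S\rho_\alpha = \rho_\alpha$, because every $S\subseteq\alpha$ has $|S|\le k$ (so it is included in the sum defining $\phi$) while no summand with $S\not\subseteq\alpha$ survives the conditional expectation. Thus $\pi = \phi\cdot\nu$ automatically has all the prescribed projections, and it only remains to guarantee $\phi\ge 0$.

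This last point is exactly where the hypothesis enters. From $m\le\rho_\alpha\le M$ and $\int\rho_\alpha\,d\nu_\alpha = 1$ one gets $m\le 1\le M$, hence $\|\rho_\alpha - 1\|_\infty\le M-m = m(M/m-1)\le M/m-1 =: \varepsilon$. Since $H_S1 = 0$ for $S\ne\emptyset$, $\|h_S\|_\infty = \|H_S(\rho_\alpha-1)\|_\infty\le 2^{|S|}\varepsilon$, so
\[
\phi \ \ge\ h_\emptyset - \!\!\sum_{S:\ 1\le|S|\le k}\!\!\|h_S\|_\infty \ \ge\ 1 - \varepsilon\sum_{j=1}^{k}2^{j}\binom{n}{j}.
\]
Hence $\phi\ge 0$, and the theorem holds with $\lambda_{nk} := 1 + \bigl(\sum_{j=1}^{k}2^{j}\binom{n}{j}\bigr)^{-1} > 1$. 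The only genuinely non-routine step is the consistency/M\"obius-inversion argument that the low-order Hoeffding components glue into well-defined functions $h_S$; granting this, the projection identity and the uniform estimate are immediate. (One could instead deduce feasibility from the Kellerer-type duality criterion proved in Section 3, but the explicit formula for $\phi$ has the advantage of producing a concrete admissible value of $\lambda_{nk}$.)
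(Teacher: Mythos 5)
Your argument is correct, and it is really a repackaging, through the Hoeffding (ANOVA) decomposition, of the same signed uniting measure that the paper builds in Theorem~\ref{explicit_signed_construction}. Both constructions produce a density on $X$ that is a linear combination, indexed by subsets $T$ of size $\le k$, of the consistent marginal densities $\rho_T = \mathbb{E}_\nu[\rho_\alpha\mid x_T]$; the paper's coefficients $\lambda_t$ are extracted as the unique solution of a triangular linear system, whereas you obtain them at once by M\"obius inversion and group them as the zero-mean Hoeffding components $h_S$. The two proofs then diverge in how they verify nonnegativity: the paper first splits $\mu_\alpha = m\nu_\alpha + (1-m)\mu'_\alpha$, applies the signed construction to $\mu'_\alpha$, and uses a crude bound $|\lambda_t|\binom{n}{t}$ on each term, yielding $\lambda_{nk} = 1 + C^{-1}$ with $C = \sum_t|\lambda_t|\binom{n}{t}$. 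You instead exploit the cancellation built into the Hoeffding decomposition: the constant component is exactly $1$, each higher component has $\|h_S\|_\infty \le 2^{|S|}\|\rho_\alpha - 1\|_\infty \le 2^{|S|}(M/m-1)$, and summing gives the explicit admissible value $\lambda_{nk} = 1 + \bigl(\sum_{j=1}^k 2^j\binom{n}{j}\bigr)^{-1}$. This is more self-contained, avoids the detour through $\mu'_\alpha$, and makes the constant fully explicit (though for $(3,2)$ it gives $1 + 1/18$, which is weaker than the paper's $8/7$ from the generic bound and much weaker than the sharpened $3/2$ or $2$ of Propositions~\ref{thm:density_condition1} and~\ref{prop:density_condition2}). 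All the steps you leave brief do go through: consistency propagates $\mathbb{E}_\nu[\rho_\alpha\mid x_T] = \mathbb{E}_\nu[\rho_\beta\mid x_T]$ for $T\subseteq\alpha\cap\beta$ because $T\subseteq S\subseteq\alpha\cap\beta$ whenever $\alpha,\beta\in\mathcal{I}_{nk}$ both contain $S$ (so no chain argument is needed), the orthogonality $\mathbb{E}_\nu[h_S\mid x_\alpha]=0$ for $S\not\subseteq\alpha$ follows from $\int h_S\,d\nu_i = 0$ for $i\in S$, and the $L^\infty$ bounds make sense $\nu$-a.e.\ because the $\rho_\alpha$ are essentially bounded.
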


We will give precise bounds for the constant $\lambda_{32} $.

\begin{remark} {\bf Solvability of the primal problem.} As soon as the  set of uniting measures is not empty, the proof of  existence of a solution to the primal problem
 for a lower semicontinuous cost is a standard exercise.
 \end{remark}
 
\begin{theorem}  [\cite{GlKoZi}] \label{thm:primal_solution_exists}
Assume that the cost function
$c\ge 0$ is lower semicontinuous. If  $\Pi(\mu_\alpha)$ is not empty, then there exists a solution to the multistochastic problem.
\end{theorem}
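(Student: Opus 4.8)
The plan is to use the direct method of the calculus of variations: show that the admissible set $\Pi(\mu_\alpha)$ is compact in a suitable topology and that the functional $\pi \mapsto \int_X c\,d\pi$ is lower semicontinuous with respect to that topology, so that a minimizing sequence has a convergent subsequence whose limit attains the infimum. The natural topology here is the weak topology on measures induced by duality with $C_b(X)$ (narrow convergence). First I would observe that $\Pi(\mu_\alpha)$ is nonempty by hypothesis, so the infimum $I = \inf_{\pi \in \Pi(\mu_\alpha)} \int_X c\,d\pi$ is a well-defined element of $[0, +\infty]$; since $c \ge 0$, if $I = +\infty$ there is nothing to prove, so assume $I < +\infty$ and pick a minimizing sequence $\pi_m \in \Pi(\mu_\alpha)$ with $\int_X c\,d\pi_m \to I$.

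The key compactness step is to check that $\Pi(\mu_\alpha)$ is tight. Each $\pi_m$ has all $k$-dimensional projections fixed equal to the $\mu_\alpha$; in particular, for each $i$ the one-dimensional projection $\prj_i(\pi_m)$ equals a fixed measure $\mu_i$ on the Polish space $X_i$ (obtained as a marginal of any $\mu_\alpha$ with $i \in \alpha$, consistency being automatic since all the $\pi_m$ realize the same system). A single Borel probability measure on a Polish space is tight, so given $\varepsilon > 0$ choose compact $K_i \subset X_i$ with $\mu_i(X_i \setminus K_i) < \varepsilon/n$; then $K = \prod_i K_i$ is compact in $X$ and $\pi_m(X \setminus K) \le \sum_i \mu_i(X_i \setminus K_i) < \varepsilon$, uniformly in $m$. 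By Prokhorov's theorem the sequence $(\pi_m)$ is relatively compact in the narrow topology, so after passing to a subsequence $\pi_m \to \pi$ narrowly for some $\pi \in \mathcal{P}(X)$. Next I would verify $\pi \in \Pi(\mu_\alpha)$: for each $\alpha$ and each $\varphi \in C_b(X_\alpha)$, the map $x \mapsto \varphi(x_\alpha)$ lies in $C_b(X)$, so $\int_{X_\alpha} \varphi\,d(\prj_\alpha \pi) = \int_X \varphi(x_\alpha)\,d\pi = \lim_m \int_X \varphi(x_\alpha)\,d\pi_m = \int_{X_\alpha} \varphi\,d\mu_\alpha$, whence $\prj_\alpha(\pi) = \mu_\alpha$.

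Finally, lower semicontinuity of $\pi \mapsto \int_X c\,d\pi$ along narrowly convergent sequences follows from the standard fact that a nonnegative lower semicontinuous $c$ can be written as the pointwise supremum of an increasing sequence of nonnegative bounded continuous functions $c_j$ (or equivalently, $\int c\,d\pi \le \liminf_m \int c\,d\pi_m$ by the portmanteau theorem for l.s.c.\ functions bounded below): indeed $\int_X c_j\,d\pi = \lim_m \int_X c_j\,d\pi_m \le \liminf_m \int_X c\,d\pi_m = I$ for every $j$, and letting $j \to \infty$ with monotone convergence gives $\int_X c\,d\pi \le I$. Since $\pi \in \Pi(\mu_\alpha)$ also forces $\int_X c\,d\pi \ge I$, we conclude $\int_X c\,d\pi = I$, so $\pi$ is a minimizer. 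The only mild subtlety — and the step most worth stating carefully — is the tightness argument, i.e.\ deducing tightness of the full admissible set from the fixed one-dimensional marginals; everything else is routine once narrow compactness is in hand.
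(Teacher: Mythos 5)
Your proof is correct and is precisely the ``standard exercise'' that the paper alludes to in the remark preceding \cref{thm:primal_solution_exists} (the result is imported from \cite{GlKoZi} and the present paper gives no proof). The tightness step via the fixed one-dimensional marginals, Prokhorov compactness, stability of the marginal constraints under narrow convergence, and lower semicontinuity of $\pi \mapsto \int c\,d\pi$ via monotone approximation of the lsc nonnegative cost by bounded continuous functions is exactly the direct-method argument one expects here, and you have identified the only genuinely non-trivial step (tightness of $\Pi(\mu_\alpha)$) correctly.
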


 \subsection{Examples. Fractal structure versus smooth structure.}
  
 The main example of an explicit solution to a multistochastic problem was found in \cite{GlKoZi}. 
 The unexpected beauty of this example  was the main motivation for us for subsequent study of the multistochastic problem.

 In the following example we consider a $(3, 2)$-problem. Denote by 
  $\Pi(\mu_{xy}, \mu_{yz}, \mu_{xz})$ the set of measures with projections
 ${\rm Pr}_{xy}\pi  = \mu_{xy}, {\rm Pr}_{xz}\pi  = \mu_{xz}, {\rm Pr}_{yz}\pi  = \mu_{yz}$.
 
\begin{theorem}[\cite{GlKoZi}]
\label{mainprimal}

Let $\mu_{xy}= \lambda_{xy}, \mu_{xz}= \lambda_{xz}, \mu_{yz}=\lambda_{xz}$ be the two--dimensional Lebesgue measures  $\left[0, 1\right]^2$ and let $c=xyz$.
Then there exists a unique solution to the  corresponding
$(3, 2)$-problem
\[
\int xyz \, d \pi \to \min, \ \pi \in \Pi(\mu_{xy}, \mu_{yz}, \mu_{xz}).
\]
 It is concentrated on the set 
\[
S = \{ (x, y, z) \colon x\oplus y \oplus z =0\}, 
\]
where  $\oplus$ is the bitwise addition. See 
\cref{fig:tetrahedron}.
\end{theorem}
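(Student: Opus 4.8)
The plan is to establish the theorem in two independent pieces: first that the measure $\pi_0$ supported on $S = \{(x,y,z) : x\oplus y\oplus z = 0\}$ is a feasible (uniting) measure whose cost we can compute, and second that it is in fact the unique minimizer. For the first piece, I would use the self-similar/fractal structure of $S$: writing $x,y,z$ in binary, the condition $x\oplus y\oplus z=0$ decomposes $[0,1]^3$ dyadically, and on each of the four ``admissible'' sub-cubes of side $1/2$ the set $S$ is a rescaled copy of itself. This gives a natural recursive construction of $\pi_0$ as the weak limit of measures uniformly distributed on the level-$N$ admissible sub-cubes; the key check is that each two-dimensional projection is Lebesgue measure on $[0,1]^2$, which follows because at every dyadic level exactly one of the four admissible cells sits above each of the four sub-squares in any coordinate plane. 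The cost $\int xyz\,d\pi_0$ can then be evaluated by the self-similarity: if $I$ denotes this integral, the recursion relating the cube to its four admissible children yields a linear equation for $I$ (each child contributes a rescaled-and-translated copy, and the cross terms involve the already-known marginals), which one solves explicitly.

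For the uniqueness/optimality piece, the natural tool is Monge--Kantorovich duality, available here via the duality theorem promised in Section 4. I would look for a dual solution, i.e.\ functions $f(x,y)$, $g(y,z)$, $h(x,z)$ with
\[
f(x,y) + g(y,z) + h(x,z) \le xyz \quad \text{on } [0,1]^3,
\]
and with equality precisely on $S$. The candidate dual potentials should themselves have a self-similar (fractal) form matching the structure of $S$; I would construct them recursively on dyadic cubes, mirroring the construction of $\pi_0$, so that the constraint is saturated exactly on the admissible cells at every level and hence on $S = \bigcap_N (\text{level-}N\text{ admissible set})$. Once such a dual triple is exhibited, complementary slackness forces every optimal $\pi$ to be concentrated on $S$; and since (as one checks) $S$ is a graph over any coordinate plane in the appropriate sense — more precisely, the projection $\prj_{xy}$ restricted to $S$ is essentially injective modulo null sets, because $z$ is determined by $x\oplus y$ — any measure on $S$ with the prescribed $xy$-marginal is uniquely determined. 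This yields both existence and uniqueness of the primal solution, and simultaneously the existence of the dual solution.

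The main obstacle I anticipate is the explicit construction of the dual potentials $f,g,h$ and the verification of the inequality $f+g+h\le xyz$ with equality on $S$. Unlike the primal measure, whose feasibility is a clean combinatorial statement about dyadic projections, the dual functions must interact correctly with the smooth cost $xyz$; the recursion for the potentials will mix the self-similar scaling with polynomial (linear and bilinear) correction terms coming from expanding $xyz$ around the center of each dyadic sub-cube, and one must check that the accumulated corrections still leave the inequality valid globally (not just on $S$). I would handle this by an inductive estimate on the ``defect'' $xyz - f - g - h$ over level-$N$ cubes, showing it is nonnegative on the non-admissible cells and tends to zero on the admissible tower; controlling the error terms uniformly in $N$ is the delicate point. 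A secondary technical issue is justifying the weak convergence and the passage to the limit in both the marginal conditions and the cost integral, but given $c=xyz$ is bounded and continuous this is routine.
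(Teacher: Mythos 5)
Your existence construction of $\pi_0$ --- as the weak limit of uniform measures on the level-$N$ admissible dyadic cells, with each two-dimensional projection Lebesgue because exactly one admissible cell sits over each sub-square of a coordinate plane --- is correct and is essentially equivalent to the paper's construction of $\pi_0$ as the pushforward of Lebesgue measure under $(x,y) \mapsto (x,y,x\oplus y)$ (\cref{thm:primal_solution_for_xyz}). Likewise, your final ``graph modulo null sets'' step agrees with \cref{lem:optimal_pi_at_most_one}: the bad set $S\setminus\Gamma$ projects to a Lebesgue-null subset of the $xy$-plane, so a uniting measure on $S$ is forced onto the graph $z=x\oplus y$ and hence is unique.

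The two proofs diverge on how you get the inclusion $\supp(\pi)\subseteq S$ for \emph{every} optimal $\pi$, and there your plan has a genuine gap. You want to deduce it from complementary slackness with a dual triple $(f,g,h)$ satisfying $f+g+h\le xyz$, ``with equality precisely on $S$.'' But complementary slackness only places an optimal $\pi$ on the equality set $E=\{f+g+h=xyz\}$; to then invoke the graph argument you need $E=S$, i.e.\ \emph{strict} inequality $f+g+h<xyz$ off $S$. Your own description of the inductive estimate --- ``showing it is nonnegative on the non-admissible cells and tends to zero on the admissible tower'' --- only yields $E\supseteq S$ and $E$ closed; it does not rule out $E\supsetneq S$. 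The explicit dual triple is in fact already known (\cref{maindual}), so constructing it is not the real obstacle; the missing, non-trivial step is proving that its equality set is exactly $S$ (or, at least, that $E$ itself has the graph property over the $xy$-plane). You currently verify neither.

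The paper (following \cite{GlKoZi}) avoids this issue entirely. \cref{thm:optimal_xyz_on_Jn} is a direct variational improvement argument: if $\supp(\pi)\nsubseteq J_n$ for some $n$, one exhibits a feasible competitor with strictly smaller cost. Hence any minimizer has $\supp(\pi)\subseteq\bigcap_n J_n=S$, and \cref{lem:optimal_pi_at_most_one} finishes. This route never needs to analyze where the dual constraint is tight, which is precisely the point where your argument would have to work harder. If you want to keep the duality route, you must supply the strict inequality off $S$ for your candidate potentials (or a graph property for $E$ directly); otherwise uniqueness is not established.
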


\begin{figure}
    \centering
    \includegraphics[scale=0.4]{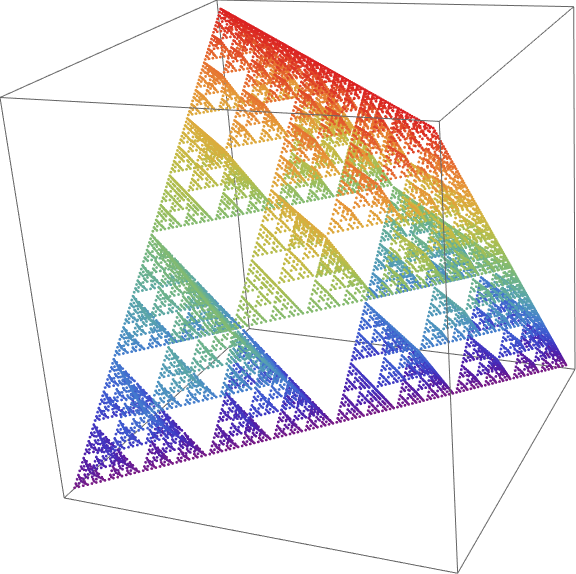}
    \caption{The solution is supported on Sierpi{\'n}sky tetrahedron.}
    \label{fig:tetrahedron}
\end{figure}


The set $S$ is called \textit{Sierpi{\'n}sky tetrahedron}.

We stress that some fractal solutions to a multimarginal transportation problem were known before our work. See, for instance,
\cite{MGN}, where multimarginal problem with the cost function of the type $h (\sum_{i=1}^n x_i)$ and the Lebesgue measure projections
was considered. Though we don't see any direct relation between these examples, they have something in common: in both cases the set of feasible measures contains more than one element and  the entire construction relies  on the dyadic decomposition.

\begin{remark}
\label{fghzero}
The $(3, 2)$-problem can admit not only fractal but also smooth solutions.
For instance, consider measurable functions $f(x)$, $g(y)$ and $h(z)$ on $\left[0, 1\right]$. Assume that  $h$ is injective, 
the set  $\Gamma = \{f(x) + g(y) + h(z)=0\}$ is not empty,  and $\mu$ is a probability measure concentrated on $\Gamma$: $\mu(\Gamma)=1$. 
Set  $\mu_{xy}=\prj_{xy} \mu$, $\mu_{xz}=\prj_{xz} \mu$, $\mu_{yz}=\prj_{yz} \mu$. Then  $\mu$ is the unique element of $\Pi(\mu_{xy}, \mu_{yz}, \mu_{xz})$. 
Indeed, let  $\nu \in \Pi(\mu_{xy}, \mu_{yz}, \mu_{xz})$. Clearly, 
$\int \bigl( f(x)+ g(y)+ h(z)\bigr)^2 d \nu$ depends solely on the integrals of pairwise products of functions $f, g, h$ with respect to  measures  $\mu_{xy}, \mu_{yz}, \mu_{xz}$.
Hence
\[
\int \bigl( f(x)+ g(y)+ h(z)\bigr)^2 d \nu = \int \bigl( f(x)+ g(y)+ h(z)\bigr)^2 d \mu =0, 
\]
this implies that  $\nu$ is concentrated on  $\Gamma$. Since  $h$ is injective,  $\Gamma$ is the graph of the mapping   $(x, y) \to  h^{-1}(-f(x) - g(y))$, 
hence  $\nu$ is uniquely determined by its projection 
$\mu_{xy}$, thus coincides with  $\mu$.
\end{remark}

In particular, this observation can be applied to construct an example of a solution concentrated on a smooth set.
\begin{example}
\label{plane}
The Lebesgue measure on $\left[0, 1\right]^3 \cap \{x_1+x_2+x_3=1\}$ is a solution to the $(3, 2)$-problem, where marginals are the two-dimensional Lebesgue measures
concentrated on the set  $\{x_i + x_j \le 1\} \subset \left[0, 1\right]^2$ and arbitrary cost function.
\end{example}
 
 It is clear, that the smoothness of the solution in this example is just a matter of fact that $ \Pi(\mu_{xy}, \mu_{yz}, \mu_{xz})$ contains a unique (smooth) element.
 However, it is natural to expect that the solution may have a fractal/non-regular structure provided uniting measures constitute a sufficiently large set.
 
 The following problem, yet vaguely formulated, seems to be crucial for understanding of the structure of solutions to $(n, k)$-problem.
 
 {\bf Open problem 1.}
  Is it true that solutions to $(n, k)$-problem have ``fractal structure'' provided $\Pi(\mu_\alpha)$ contains sufficiently ``rich'' set of measures?

 \subsection{Duality and the Kantorovich problem with linear constraints}  
 
 As in the classical case the multistochastic problem admits  the corresponding  dual problem:
 
\begin{problem}[Dual $(n, k)$-Monge--Kantorovich problem]
Assume we are given Polish spaces  $X_1, \dots, X_n$, a fixed family of measures $\mu_\alpha \in \mathcal{P}(X_\alpha)$ and a cost function  $c \in C_L(X, \mu_\alpha)$. 
Find 
\[
\sup_{f \le c}\sum_{\alpha \in \mathcal{I}_{nk}}\int_{X_\alpha}f_\alpha~d\mu_\alpha,
\]
where the supremum is taken among the functions $f$ having the form  $f(x) = \sum_{\alpha \in \mathcal{I}_{nk}}f_\alpha(x_\alpha)$, where  $f_\alpha \in L^1(X_\alpha, \mu_\alpha)$.
\end{problem}

\begin{definition}
We say that there is no duality gap for the $(n, k)$-problem
if 
\[
\min_{\pi \in \Pi(\mu_\alpha)}\int c~d\pi = \sup_{f \le c}\sum_{\alpha \in \mathcal{I}_{nk}}\int_{X_\alpha}f_\alpha~d\mu_\alpha,
\]
where 
$f_\alpha \in L^1(X_\alpha, \mu_\alpha)$, $f(x) = \sum_{\alpha \in \mathcal{I}_{nk}}f_\alpha(x_\alpha)$.
\end{definition}

The absence of duality gap was shown in \cite{GlKoZi}
under assumption of compactness of the spaces $X_i$.
In this work we prove the following result:
\begin{theorem}
There is no duality gap for $(n, k)$-problem provided $X_i$ are Polish spaces and
  $c \in C_L(X, \mu_\alpha)$.
\end{theorem}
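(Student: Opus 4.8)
The strategy is to reduce the no-duality-gap statement for the $(n,k)$-problem on Polish spaces to the already-established compact case, using an exhaustion of each $X_i$ by compact sets together with the growth control built into the definition of $C_L(X,\mu_\alpha)$. First I would fix $c \in C_L(X,\mu_\alpha)$ together with a dominating family $c_\alpha \in C_L(X_\alpha,\mu_\alpha)$ with $|c(x)| \le \sum_\alpha c_\alpha(x_\alpha)$. Since each $\mu_i := \prj_{\{i\}}(\mu_\alpha)$ (for any $\alpha \ni i$; these agree by the consistency condition) is a Borel probability measure on a Polish space, it is tight, so there is an increasing sequence of compact sets $K_i^m \subset X_i$ with $\mu_i(X_i \setminus K_i^m) \to 0$ as $m \to \infty$. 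Setting $K^m = \prod_i K_i^m$, the restrictions of the $\mu_\alpha$ to $\prj_\alpha(K^m)$, suitably renormalized, form a consistent family of measures on compact spaces, and the compact duality theorem of \cite{GlKoZi} applies on each $K^m$.

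The heart of the argument is then a limiting procedure. On one side, weak-$*$ compactness of $\Pi(\mu_\alpha)$ (which follows from tightness of the marginals, exactly as in \cref{thm:primal_solution_exists}) plus the uniform integrability of $c$ guaranteed by the domination $|c| \le \sum_\alpha c_\alpha$ with $c_\alpha \in L^1(\mu_\alpha)$ shows that the primal values on $K^m$ converge to the primal value on $X$; here one must be careful that restricting a uniting measure to $K^m$ and renormalizing produces an admissible competitor for the compact problem with nearly the right marginals, and conversely that optimal compact measures, viewed as measures on $X$ with a small defect in their marginals, can be corrected to genuine uniting measures at vanishing cost — this correction step is where most of the technical bookkeeping lives. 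On the dual side, one takes near-optimal dual potentials $f_\alpha^m$ for the compact problem on $K^m$ and must produce from them admissible potentials for the full problem; the natural move is to extend $f_\alpha^m$ from $\prj_\alpha(K^m)$ to $X_\alpha$ by an inf-convolution / $c$-transform type formula so that the constraint $\sum_\alpha f_\alpha \le c$ is preserved globally while $\sum_\alpha \int f_\alpha^m\, d\mu_\alpha$ changes only by $O(\mu_i(X_i \setminus K_i^m))$ terms controlled by the $c_\alpha$.

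I expect the main obstacle to be precisely this extension of the dual potentials off the compact sets while simultaneously (i) maintaining the pointwise inequality $\sum_\alpha f_\alpha \le c$ on all of $X$, (ii) keeping each $f_\alpha$ in $L^1(\mu_\alpha)$, and (iii) losing only a controlled amount in the objective. The clean way to handle (i)–(iii) together is to avoid extending potentials directly and instead work entirely at the level of values: show $\inf_{\text{primal}} \le \lim_m (\text{compact primal value on } K^m)$ and $\lim_m (\text{compact dual value on } K^m) \le \sup_{\text{dual}}$, combine with the compact equality $(\text{compact primal}) = (\text{compact dual})$ on each $K^m$, and use the trivial weak-duality inequality $\sup_{\text{dual}} \le \inf_{\text{primal}}$ for the full problem to squeeze. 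The one place this cannot be purely formal is establishing $\lim_m(\text{compact dual on }K^m) \le \sup_{\text{dual on }X}$: a near-optimal compact potential $(f_\alpha^m)$ on $\prj_\alpha(K^m)$ must be turned into some admissible $(\tilde f_\alpha)$ on $X_\alpha$, and for this I would set $\tilde f_\alpha$ equal to $f_\alpha^m$ on $\prj_\alpha(K^m)$ and extend it downward by $\tilde f_\beta(x_\beta) = \inf\{ c(x) - \sum_{\alpha\neq\beta}\tilde f_\alpha(x_\alpha)\}$ over the remaining coordinates, checking with the bound $|c|\le\sum c_\alpha$ that this stays $\ge -\,(\text{integrable function})$ and that the mass it adds to the integral is negligible; handling the mutual dependence of the $\tilde f_\alpha$ under this simultaneous extension is the genuinely delicate point, and I would organize it by extending the potentials one index-set $\alpha$ at a time in a fixed order, so that at each stage only a single inf-convolution against an already-fixed function is performed.
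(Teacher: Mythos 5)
Your compact-exhaustion strategy is a genuinely different route from the one the paper takes, and it founders on an obstacle specific to the $(n,k)$-problem with $k>1$: feasibility of the truncated problems. For $k=1$, restricting and renormalizing one-dimensional marginals to compacts $K_i^m$ always yields a feasible family (the product measure works), so exhaustion arguments are safe. For $k>1$ this breaks down twice over. First, the normalized restrictions of $\mu_\alpha$ to $\prj_\alpha(K^m)$ are not consistent in general: the projection onto $X_{\alpha\cap\beta}$ of the truncated $\mu_\alpha$ depends on the joint law of $\mu_\alpha$ over the remaining coordinates in $\alpha$, not only on $\mu_{\alpha\cap\beta}$, so two index sets $\alpha,\beta$ containing the same $i$ typically produce different truncated $\mu_i$'s, and the compact $(n,k)$-problem on $K^m$ is not even well-posed. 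Second, even a consistent truncated family need not admit a uniting measure --- this is exactly the delicate existence question occupying Section~3 of the paper --- and when $\Pi$ of the truncated marginals is empty the compact primal and dual on $K^m$ both equal $+\infty$, which makes your inequality $\lim_m(\text{compact dual})\le \sup_{\text{dual on }X}$ false. The ``correction at vanishing cost'' you invoke is not a bookkeeping step; it is a quantitative feasibility theorem which the paper can establish only under extra density hypotheses. Your dual-extension step has a secondary problem of the same flavor: the sequential inf-convolution $\tilde f_\alpha(x_\alpha)=\inf_{x\supset x_\alpha}\bigl(c(x)-\sum_{\beta\ne\alpha}\tilde f_\beta(x_\beta)\bigr)$ needs every other $\tilde f_\beta$ already defined on all of $X_\beta$, and because every pair $\alpha,\beta$ with $\alpha\cap\beta\ne\varnothing$ is coupled, there is no acyclic order of index sets in which to process them, unlike the two-marginal $c$-transform.

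The paper's route avoids all of this and involves no exhaustion. It lifts to $\widetilde{X}=\prod_{\alpha\in\mathcal{I}_{nk}}X_\alpha$, where each $\mu_\alpha$ becomes an honest one-factor marginal; it identifies uniting measures on $X$ with measures on $\widetilde{X}$ concentrated on the diagonal $P=\Delta(X)$; it encodes the support constraint $\mathrm{supp}\subset P$ as the linear constraints $\int\omega^i_{\alpha\beta}\,d\pi=0$ with $\omega^i_{\alpha\beta}=\min(d_i(\widetilde{x}^i_\alpha,\widetilde{x}^i_\beta),1)\in C_b(\widetilde{X})$; and it then applies Zaev's duality theorem for linearly constrained Kantorovich problems, pushing the result back to $X$ through $\Delta$. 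Because on $\widetilde{X}$ the problem is a plain multimarginal one plus a linear constraint, the feasibility and $c$-transform pathologies above never arise. If you want a proof that avoids Zaev's theorem, you would at a minimum need a truncation scheme that preserves both the consistency and the feasibility of the truncated marginal family, and for $k>1$ no such scheme is currently available.
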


Our approach is based on the result of D.~Zaev \cite{Zaev} on duality for the classical Kantorovich problem with 
{\bf linear constraints}.  The transportation problem with linear constraints is the standard Kantorovich problem
with additional constraint of the type
$l(P)=0$, where $l$ is a linear functional on the space of measures. The proof of Zaev is based on the general minimax principle.

\subsection{Structure of dual solutions. Monge problem}

Our main example of a dual solution is given in the following theorem.

\begin{theorem}[\cite{GlKoZi}]
\label{maindual}
Let $\mu_{xy}= \lambda_{xy}, \mu_{xz}= \lambda_{xz}, \mu_{yz}=\lambda_{xz}$ be the two dimensional Lebesgue measures on $\left[0, 1\right]^2$ and $c=xyz$. 
Then the triple of functions $(f(x, y), f(x, z), f(y, z))$, where
\[
f(x, y) = \int_0^x \int_0^y t \oplus s~dtds - \frac{1}{4}  \int_0^x \int_0^x t \oplus s~dtds - \frac{1}{4}  \int_0^y \int_0^y t \oplus s~dtds
\]
solves the corresponding dual multistochastic problem.
\end{theorem}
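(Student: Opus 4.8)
My plan is to verify optimality of $(f,f,f)$ by LP duality (no duality gap) together with complementary slackness against the known primal optimizer. Since the spaces $X_i=[0,1]$ are compact, there is no duality gap (by the duality theorem above, or by \cite{GlKoZi} in the compact case), and by \cref{mainprimal} the primal problem is solved by the unique measure $\pi^\ast$, concentrated on the set $S$. The three functions in the statement coincide; denote the common function by $f(u,v)$. As the integrand $(t,s)\mapsto t\oplus s$ is bounded, $f$ is continuous and bounded on $[0,1]^2$, so $(f,f,f)$ is an admissible triple for the dual problem with $f_\alpha\in L^1$. Hence it suffices to check \emph{(i)} $f(x,y)+f(x,z)+f(y,z)\le xyz$ on $[0,1]^3$, and \emph{(ii)} $f(x,y)+f(x,z)+f(y,z)=xyz$ on $S$. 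Granting these, the dual value produced by $(f,f,f)$ equals $3\int f\,d\lambda_{xy}=\int\bigl(f(x,y)+f(x,z)+f(y,z)\bigr)\,d\pi^\ast=\int xyz\,d\pi^\ast$, which by the no-gap property is the common optimal value; together with feasibility \emph{(i)} this exhibits $(f,f,f)$ as a dual optimizer.

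Write $F(u,v)=\int_0^u\int_0^v (t\oplus s)\,ds\,dt$, so $f(u,v)=F(u,v)-\tfrac14F(u,u)-\tfrac14F(v,v)$, and \emph{(i)}, \emph{(ii)} become $G\ge0$ and $G|_S=0$ for
\[
G(x,y,z)=xyz+\tfrac12\bigl(F(x,x)+F(y,y)+F(z,z)\bigr)-F(x,y)-F(x,z)-F(y,z).
\]
The engine is the dyadic self-similarity of $\oplus$: $\tfrac t2\oplus\tfrac s2=\tfrac12(t\oplus s)$, and on the other three half-scale quadrants $(t,s)\mapsto t\oplus s$ acquires explicit affine corrections. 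This yields
\[
F\!\left(\tfrac u2,\tfrac v2\right)=\tfrac18F(u,v),\qquad
F\!\left(\tfrac{1+u}2,\tfrac v2\right)=\tfrac v{16}+\tfrac{uv}8+\tfrac18F(u,v),\qquad
F\!\left(\tfrac{1+u}2,\tfrac{1+v}2\right)=\tfrac1{16}+\tfrac{3(u+v)}{16}+\tfrac18F(u,v).
\]
Substituting these into $G$ and using that $G$ is symmetric in $x,y,z$, one finds that for the four \emph{even-parity} similarities $w_e(p)=\tfrac12(p+e)$ with $e\in\{(0,0,0),(1,1,0),(1,0,1),(0,1,1)\}$ one has $G\circ w_e=\tfrac18 G$ identically on $[0,1]^3$, while for the four \emph{odd-parity} similarities $G\circ w_e=\tfrac18 G+P_e$ with $P_e$ an explicit quadratic polynomial. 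Property \emph{(ii)} is then immediate: $S$ is the attractor of the four even-parity maps, so every $p\in S$ equals $w_{e^{(1)}}\!\circ\cdots\circ w_{e^{(m)}}$ evaluated at some point of $[0,1]^3$, whence $|G(p)|\le 8^{-m}\|G\|_{\infty}$ for every $m$, forcing $G(p)=0$.

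It remains to prove \emph{(i)}, $G\ge0$ on $[0,1]^3$, which is the heart of the matter. Since $[0,1]^3$ is covered by the eight dyadic subcubes $w_e([0,1]^3)$, $e\in\{0,1\}^3$, and on an even-parity cube $G$ equals $\tfrac18 G$ composed with an affine bijection onto $[0,1]^3$, positivity of $G$ on $[0,1]^3$ is equivalent to positivity of $G$ on the four odd-parity subcubes. Moreover the dyadic subcubes with all-even-parity address shrink to $S$, where $G\equiv0$ by \emph{(ii)}; hence every point outside $S$ eventually lies in a subcube whose address contains an odd digit, at which point $G$ equals a positive power of $\tfrac18$ times the value of $G$ at a point of an odd-parity subcube. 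So it is enough to prove $G\ge0$ on the odd-parity subcubes. On such a subcube $G=\tfrac18 G(\,\cdot\,)+P_e$, and iterating the recursion along the dyadic address writes $G$ as a series of the correction polynomials $P_{e^{(j)}}$ evaluated along the orbit of the coordinatewise dyadic shift, with weights $8^{-(j-1)}$. The $P_e$ are \emph{not} individually non-negative on $[0,1]^3$, and this is where the real work lies: one must show that the (explicitly located) regions where some $P_e$ is negative are exactly the regions whose dyadic addresses force the next correction term to be positive and large enough to compensate, which can be organized as a finite case analysis over the offending corners of the cube. Once this compensation is established, $G\ge0$ and the proof is complete; I expect this last step to be the main obstacle.
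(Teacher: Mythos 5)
Your overall framework is sound: by weak duality it suffices to exhibit one feasible triple whose dual value matches the primal value at a feasible $\pi$, so the theorem reduces to showing that $G(x,y,z) := xyz + \tfrac12\sum F(x_i,x_i) - \sum F(x_i,x_j)$ is nonnegative on $[0,1]^3$ and vanishes on $S$. I checked your self-similarity identities for $F$ and the resulting scaling law $G\circ w_e = \tfrac18 G$ for the four even-parity maps, and they are correct; your derivation of $G|_S = 0$ from the contraction $\lvert G(p)\rvert\le 8^{-m}\lVert G\rVert_\infty$ is a clean argument.

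However, the crucial nonnegativity $G\ge 0$ is not established, and you say so yourself. Two remarks on why what is written does not yet close the gap. First, the claim that ``positivity of $G$ on $[0,1]^3$ is equivalent to positivity on the four odd-parity subcubes'' is circular as phrased: on an odd-parity subcube you still have $G = \tfrac18 G(\cdot)+P_e$ with $G$ evaluated on all of $[0,1]^3$, which is what you want to prove. The non-circular statement is the one you arrive at next by iterating the recursion: since $8^{-m}G(p^{(m)})\to 0$, for every dyadic address $G(p)=\sum_{j\ge1}8^{-(j-1)}P_{e^{(j)}}(p^{(j)})$, and the theorem is equivalent to nonnegativity of all such series. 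Second, this series has genuinely negative terms — for instance $P_{(1,0,0)}(0,0,1)=-\tfrac1{32}$ — and the single next term cannot always compensate, since the scaled successor contribution $8^{-1}P_{e'}$ has magnitude at most $\tfrac18\cdot\tfrac7{32}<\tfrac1{32}$. So the compensation has to come from a more global accounting over the whole address, not a one-step lookahead, and the ``finite case analysis over the offending corners'' you gesture at is exactly the hard combinatorial estimate that constitutes the substance of the theorem. Until that inequality is proved, the argument is a plausible plan rather than a proof.
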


\begin{remark}
The uniqueness result for this problem under assumption of continuity of the dual solution is proved in the present paper in \cref{tetrahedron-dual-unique}
\end{remark}

The solution to the dual problem given in \cref{maindual},
has the following relation to the solution $\pi$ to the primal problem (see \cref{mainprimal}): $\pi$ is concentrated on the graph of the mapping
$(x, y) \mapsto f_{xy}(x, y)$, i.e. 
\begin{equation}
\label{zfxy}
z = f_{xy}(x, y)
\end{equation}
$\pi$-almost everywhere.

Let us note that $f$ admits a non-negative mixed derivative 
$f_{xy}$, but derivatives  $f_{xx}, f_{yy}$ do not exist (at least in the classical sense). 

The relation \cref{zfxy} can be derived from the fact that the support $S$ of the solution  $\pi$ is a fractal set.  Indeed,
function  $f(x, y) +  f(x, z) + f(y, z) - xyz$ is non positive and equals zero  $\pi$-a.e. Thus for   $\pi$-almost all points 
the first order condition 
\begin{equation}
\label{fxyz}
f_x(x, y) + f_{x}(x, z) = yz
\end{equation}
is satisfied.

Next, it is easy to show that for  $\pi$-almost every point  $M = (x_0, y_0, z_0) \in S$ the set  $S$ contains points of the type $M + t_n v$, where $t_n$ is a sequence tending to zero and 
vector  $v$ belongs to a set $V$ containing three independent vectors. One can prove this using the fractal structure of  $S$. 
Consequently, one can differentiate 
\cref{fxyz} along $V$ and deduce \cref{zfxy} from these relations.

Thus in this particular case the solution admits the following properties.

\begin{enumerate}[label=\upshape{(\alph*)}]
\item The solution is concentrated on the graph of a mapping $z=T(x, y)$.

\item This mapping $T$ has the form $T(x, y) = f_{xy}(x, y)$, where $(f, g, h)$ is a solution to the dual problem.
The same holds for $g, h$.

\item Function $f(x, y)$ is a cumulative distribution  function (up to a  term depending on $x$ and a term depending on $y$) of a positive measure on a plane.
Equivalently, $f_{xy}(x, y) \ge 0$ almost everywhere.
\end{enumerate}
 
 \begin{definition} {\bf(Optimal mapping.)}
 Let $T$ satisfy (a). Then we say that $T$ is an optimal mapping. 
 \end{definition}
 
 One can ask whether any solution to $(3, 2)$-problem (under natural assumptions on the marginals) with the cost function $xyz$ does satisfy properties (a), (b), (c).
We show that in  fact no one of these properties are satisfies in general.

%
 
 \begin{example}
{\bf The solutions to $(3, 2)$-problems are not always concentrated on graphs; (a) fails.}
Consider the sphere $S = \{x^2 + y^2 + z^2 = 1\}$, and consider the quarter sphere $S_1 = S \cap \{x \ge 0, y \ge 0\}$, $S_2 = S \cap \{x < 0, y \ge 0\}$, $S_3 = S \cap \{x < 0, y < 0\}$ and $S_4 = S \cap \{x \ge 0, y < 0\}$. Let $\pi$ be the surface measure on the 3/4-part of the sphere $S_1 \sqcup S_2 \sqcup S_4$, and let $\mu_{xy}, \mu_{xz}, \mu_{yz}$ be the corresponding two-dimensional projections. 

Slightly modifying the arguments of \cref{fghzero} we prove that if $\widehat{\pi}$ is a measure with projections $\mu_{xy}$, $\mu_{xz}$ and $\mu_{yz}$, then
$\widehat{\pi}$ is concentrated on the set $S_1 \sqcup S_2 \sqcup S_4$. For each point of $S_2$ there is no other point of $S_1 \sqcup S_2 \sqcup S_4$ with the same projection onto the coordinate plane $Oxz$, and therefore the restriction of the measure $\widehat{\pi}$ to $S_2$ is fully determined by its projection $\mu_{xz}$ and coincides with $\pi|_{S_2}$.

Similarly, the restriction of $\widehat{\pi}$ to $S_4$ is fully determined by its projection $\mu_{yz}$ and coincides with $\pi|_{S_4}$. Hence, $\widehat{\pi}|_{S_1} = \widehat{\pi} - \pi|_{S_2} - \pi|_{S_4}$. Thus, the projections of $\widehat{\pi}|_{S_1}$ and $\pi|_{S_1}$ to the coordinate planes are the same, and then $\widehat{\pi}|_{S_1} = \pi|_{S_1}$. So we conclude that $\pi$ is the only measure with projections $\mu_{xy}, \mu_{xz}, \mu_{yz}$, and there is no optimal mappings $T_{xy}$, $T_{xz}$ and $T_{yz}$.

See also \cref{ex:discrete_nonuniform} for a discrete counterexample. 
 \end{example}

 \begin{example}
 {\bf Example without dual solutions satisfying (\ref{zfxy}); (b) fails. }
 This example is considered in  \cref{dualdiscontinuous}. In this example
 $f_{xy}$ is either zero or not defined.
 \end{example}
 
\begin{example}
\label{polynoms-dual}
{\bf Non-uniqueness for the dual problem; (c) fails.}
In the problem  considered in  \cref{plane}
there exist  many dual solutions.
To see this let us note that the following inequality  holds for all 
$(x, y, z) \in [0, 1]^3$ and a fixed constant $A > 0$, equality holds if and only if $x+y+z=1$:
\[
(x+y+z -1)^2 (x+y+z + A) \ge 0.
\]
Developing the left-hand side 
we see that this inequality is equivalent to
\[
xyz \ge f_A(x, y) + f_A(x, z) + f_A(y, z),
\]
where 
\[
f_A(x, y) = - \frac{1}{12} (x^3 + y^3)
-\frac{1}{2} xy(x+y) -(A-2) \Bigl( \frac{x^2}{12} + \frac{xy}{3} + \frac{y^2}{12}\Bigr)
- \frac{1-2A}{12} (x+y) - \frac{A}{18}. 
\]
Clearly, the triple $(f_A(x, y), f_A(x, z),f_A(y, z))$
solves the dual problem for every $A \ge 0$.
Note that  \cref{zfxy} and (c) fails for all $A \ge 0$.

We believe  that there are no other dual solution, but can not prove this. 
\end{example}

Thus we see that the particular form \cref{zfxy}  of the optimal mapping related to $(3, 2)$-problem with cost function $xyz$
is  related to the fractal structure of the solution. Motivated by these observations we state the following problem.

{\bf Open problem 2.}
Assume that $\pi$ is a solution to a $(3, 2)$-problem
with the cost function $xyz$.
Find general sufficient conditions for presentation of $\pi$ in the form
\[
z = f_{xy}(x, y),
\]
where $f(x, y), g(x, z), h(y, z)$ solve the corresponding dual multistochastic problem.

It seems  quite difficult to describe  the general structure of solutions to $(3,2)$-problem with $c=xyz$, since it is very sensitive to non-local properties of the marginals. Something can be established under very strong "smoothness" assumptions, as presented in the proposition below. But we stress that this situation can not pretend to describe a reasonable model case.

 \begin{proposition}
 Consider a tuple of twice continuously differentiable functions $f(x, y)$, $g(x,z)$, $h(y,z)$ satisfying
 $f(x,y) + g(x,z) + h(y,z) \ge xyz$.
  Assume, in addition, that
\[
\Gamma = \Bigl\{  
f(x,y) + g(x,z) + h(y,z) = xyz\Bigr\}
\] is a two-dimensional smooth surface.

Let $\Gamma_x, \Gamma_y, \Gamma_z$ be sets defined by equations:
\[
\Gamma_x = \{x = h_{yz}\}, \ \Gamma_y = \{y = g_{xz}\}, \ \Gamma_z = \{z = f_{xy}\}.
\]
Then for every point $(x_0, y_0, z_0) \in 
\Gamma$
the following alternative holds:
 \begin{enumerate}[label=\upshape{(\Alph*)}]
     \item
 $(x_0, y_0, z_0) \in \Gamma_x \cap \Gamma_y \cap \Gamma_z$, i.e. at this point
 \[
 x = h_{yz}, \ y = g_{xz}, \ z = f_{xy}.
 \]
 \item
 $(x_0, y_0, z_0) \notin \Gamma_x \cup \Gamma_y \cup \Gamma_z$ and the vector field
 \[
 N = \Bigl( \frac{1}{x-h_{yz}}, \frac{1}{y-g_{xz}}, \frac{1}{z-f_{xy}} \Bigr)
 \]
 is orthogonal to $\Gamma$ at $(x_0, y_0,z_0)$.
 \end{enumerate}
  \end{proposition}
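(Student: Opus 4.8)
The plan is to fold the three functions into one auxiliary function
\[
F(x,y,z) := f(x,y) + g(x,z) + h(y,z) - xyz,
\]
which is $C^2$, satisfies $F \ge 0$ everywhere, and has zero set exactly $\Gamma$. Since $F \ge 0$ and $F$ vanishes identically on $\Gamma$, every point $p = (x_0,y_0,z_0) \in \Gamma$ is a global minimum of $F$, hence $\nabla F(p) = 0$; writing this out gives the first-order relations $f_x + g_x = yz$, $f_y + h_y = xz$, $g_z + h_z = xy$ on $\Gamma$ (the analogue of \cref{fxyz}). The next step is second order. Differentiating the identity $\nabla F \equiv 0$ along any $C^1$ curve lying in $\Gamma$ — equivalently, combining the second-order Taylor expansion of $F$ at $p$ with $F \ge 0$ — shows that $\mathrm{Hess}\,F(p)$ is positive semidefinite and kills the tangent plane $T_p\Gamma$. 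As $\Gamma$ is two-dimensional, $T_p\Gamma$ is a $2$-plane in $\mathbb{R}^3$, so $\mathrm{Hess}\,F(p)$ has rank at most one; a symmetric positive semidefinite matrix of rank $\le 1$ has the form $\mathrm{Hess}\,F(p) = \lambda\,\nu\nu^{\top}$ with $\lambda \ge 0$ and $|\nu| = 1$, and when $\lambda > 0$ its kernel $\nu^{\perp}$ equals $T_p\Gamma$, i.e. $\nu$ is a unit normal to $\Gamma$ at $p$.

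Then I would read off the mixed second partials $F_{xy} = f_{xy} - z$, $F_{xz} = g_{xz} - y$, $F_{yz} = h_{yz} - x$, and compare them with the off-diagonal entries of $\lambda\,\nu\nu^{\top}$ (coordinates labelled so that $1,2,3$ correspond to $x,y,z$):
\[
f_{xy}(x_0,y_0) - z_0 = \lambda\nu_1\nu_2, \qquad g_{xz}(x_0,z_0) - y_0 = \lambda\nu_1\nu_3, \qquad h_{yz}(y_0,z_0) - x_0 = \lambda\nu_2\nu_3.
\]
If all three right-hand sides vanish — in particular if $\lambda = 0$, or $\lambda > 0$ and $\nu$ has at least two zero coordinates — then $z_0 = f_{xy}$, $y_0 = g_{xz}$ and $x_0 = h_{yz}$ at $p$, which is alternative (A). If $\lambda > 0$ and all coordinates of $\nu$ are non-zero, then none of the right-hand sides vanishes, so $p \notin \Gamma_x \cup \Gamma_y \cup \Gamma_z$; moreover $x_0 - h_{yz} = -\lambda\nu_2\nu_3$ and cyclically, so
\[
N = \Bigl(\tfrac{1}{x_0 - h_{yz}},\ \tfrac{1}{y_0 - g_{xz}},\ \tfrac{1}{z_0 - f_{xy}}\Bigr) = -\tfrac{1}{\lambda}\Bigl(\tfrac{1}{\nu_2\nu_3},\ \tfrac{1}{\nu_1\nu_3},\ \tfrac{1}{\nu_1\nu_2}\Bigr) = -\tfrac{1}{\lambda\nu_1\nu_2\nu_3}\,(\nu_1,\nu_2,\nu_3)
\]
is a non-zero multiple of the normal $\nu$, hence $N \perp \Gamma$ at $p$; this is alternative (B). (Consistency check: in the setting of \cref{plane} and \cref{polynoms-dual}, the normal to $\Gamma = \{x+y+z=1\}$ is parallel to $(1,1,1)$ and one computes $x_0 - h_{A,yz} = y_0 - g_{A,xz} = z_0 - f_{A,xy} = (A+1)/3$ on $\Gamma$, so $N \parallel (1,1,1)$, as it should be.)

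The only configuration left is $\lambda > 0$ with exactly one coordinate of $\nu$ equal to zero; this would put $p$ in precisely two of the sets $\Gamma_x,\Gamma_y,\Gamma_z$, so that $N$ is not even defined at $p$, and it must therefore be excluded. I expect this exclusion to be the main obstacle. If, say, $\nu_3 = 0$, then $(0,0,1) \in T_p\Gamma$ and $F_{zz}(p) = \lambda\nu_3^2 = 0$, so the non-negative function $t \mapsto F(x_0,y_0,z_0+t)$ has a zero of order at least three at $t = 0$; I would try to contradict this using the additive structure of $F$ (equivalently $\partial_x\partial_y\partial_z F \equiv -1$) together with the hypothesis that $\Gamma$ is a genuine smooth two-dimensional surface through $p$ whose tangent plane there contains the $z$-direction. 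Once this degenerate case is ruled out the dichotomy is complete; all the rest is routine differentiation and linear algebra, and it is precisely the elimination of this intermediate degeneracy that I expect to demand the most care.
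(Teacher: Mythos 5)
Your approach coincides with the paper's: since $F = f + g + h - xyz$ attains its global minimum value $0$ on $\Gamma$ and $\Gamma$ is a two-dimensional submanifold of $\mathbb{R}^3$, the gradient $\nabla F$ vanishes on $\Gamma$, hence $\operatorname{Hess} F(p)$ annihilates the tangent plane $T_p\Gamma$ and has rank at most one; you then write $\operatorname{Hess} F(p) = \lambda\nu\nu^\top$ and read off the mixed second partials, which is exactly what the paper does through the collinearity of the three row vectors $\nabla(F_x), \nabla(F_y), \nabla(F_z)$.

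You are right to flag the intermediate configuration where $\lambda > 0$ and exactly one coordinate of $\nu$ vanishes: the rank/positive-semidefiniteness argument alone does not exclude it, and if it occurred the point would lie in exactly two of $\Gamma_x, \Gamma_y, \Gamma_z$, which neither alternative (A) nor (B) allows. What you should know is that the paper's own proof does not close this case either: it passes from ``either all these coordinates are zero'' directly to a formula obtained by dividing by $y - g_{xz}$ (and ``similarly for other coordinates''), tacitly assuming all three of $x - h_{yz}$, $y - g_{xz}$, $z - f_{xy}$ are nonzero. So you have put your finger on a real imprecision in the source rather than missing an ingredient. Your idea of exploiting the constant third mixed derivative $F_{xyz} \equiv -1$ to rule out the degenerate case is a plausible direction, but beware that $f, g, h$ are assumed only $C^2$: differentiating the Hessian is generally not available, so the argument would have to proceed via first-order variations of the identities $F_{xz} = 0$, $F_{yz} = 0$, $F_{zz} = 0$ along curves in $\Gamma$ together with the hypothesis that $\Gamma$ is genuinely two-dimensional. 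In the end, everything you completed is correct and matches the paper; the part you left open is the same part the paper leaves open.
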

 \begin{proof}
 Since every $(x, y, z) \in \Gamma$  is a minimum point of $f(x,y) + g(x,z) + h(y,z) - xyz$,
 then  the functions 
 \[
 u =yz - f_x(x,y) - g_x(x,z), \ v =xz - f_y(x,y) - h_y(y,z), \ w=xy - g_z(x,z) - h_z(y,z)
 \]
 vanish on $\Gamma$.
 Hence their gradients 
 \[ \nabla u=
 (-f_{xx} - g_{xx}, z - f_{xy}, y - g_{xz})
 \]
 \[\nabla v=
 (z-f_{xy}, - f_{yy} - h_{yy}, x - h_{yz})
 \]
 \[\nabla w=
 (y-g_{xz}, x - h_{yz}, - g_{zz} - h_{zz})
 \]
 are orthogonal to $\Gamma$.
 Then they are colinear, because $\Gamma$ is two-dimensional.
 Hence either all these coordinates are zero (case (1)) or
 \[
 f_{yy} + h_{yy} = - \frac{(x-h_{yz})(z-f_{xy})}{y-g_{xz}}
 \]
 (similarly for other coordinates). This gives that $N$ is orthogonal to $\Gamma$.
\end{proof}
 
\begin{remark}
Example of  (B) is given in \cref{polynoms-dual}.
We emphasize that in the main example we have  (A), but neither $\Gamma$ is not a smooth surface, nor $f,g,h$ are twice differentiable.
In fact, the fractal structure of $\Gamma$ is exactly the reason why (1) holds (see explanation above).
\end{remark}

 \begin{remark} {\bf (Vector fields orthogonal to smooth solutions)}.
 Assume that $\pi$ is a solution to a $(3, 2)$-problem concentrated on the surface $\Gamma$
 and alternative (B) holds. 
 Let $\pi$ is given by its density with respect to the two-dimensional Hausdorff measure
 \[
 \pi = p(x,y,z) \cdot \mathcal{H}^2|_{\Gamma}.
 \]
 Denote by $\rho_{xy}, \rho_{xz}, \rho_{yz}$ the density of the corresponding projections
 $\mu_{xy}, \mu_{xz}, \mu_{yz}$.
 Then 
 $\rho_{xy}(x,y) |\cos(N, (0, 0, 1))| =  p(x, y, z) 
 $ for every $(x,y,z) \in \Gamma$
 and
 \[
\rho_{xy}(x,y)  =   p(x,y,z) |z- f_{xy}| \sqrt{  \frac{1}{(x-h_{yz})^2} + \frac{1}{(y-g_{xz})^2}+ \frac{1}{(z-f_{xy})^2} }. 
 \]
 Similarly for the other densities.
 This easily leads to the following relations: for every $(x,y,z) \in \Gamma$ the vector field
 \[
\Bigl(  \frac{{\rm sign}(x-h_{yz})}{\rho_{yz}}, \frac{{\rm sign}(y-g_{xz})}{\rho_{xz}}, \frac{{\rm sign}(z-f_{xy})}{\rho_{xy}} \Bigr)
\]
 is orthogonal to $\Gamma$ and
 \[
 \frac{1}{p^2(x,y,z)} = \frac{1}{\rho^2_{xy}(x,y)}  + \frac{1}{\rho^2_{xz}(x,z)} + \frac{1}{\rho^2_{yz}(y,z)}.
 \]
In particular, we obtain that one of the vector fields
\[
\Bigl(  \frac{\pm 1}{\rho_{yz}}, \frac{\pm 1}{\rho_{xz}}, \frac{{\pm 1}}{\rho_{xy}} \Bigr)
\]
is (locally) orthogonal to $\Gamma$.
 \end{remark}

\begin{example}
\label{uniformbound}
{\bf (c) fails; relation to the transportation problem with uniform bound on density.}
Consider the $(3, 2)$-problem with   $c=xyz$ and $\mu_{xy} = \mu_x \otimes \mu_y$, $\mu_{xz} = \mu_x \otimes \mu_z$,  $\mu_{yz} = \mu_y \otimes \mu_z$, 
where $\mu_x =\mu_y$ is the Lebesgue measure on $[0,1]$, and $\mu_z$ is the uniform discrete measure on $\{0, 1, 2\}$.
Then the solution is concentrated on the graph of a function  $z=T(x, y)$, where $T$ takes values in $\{0, 1, 2\}$.

In this example we were able to verify numerically that there exists a dual solution $f(x,y), g(x,z), h(y,z)$ (maybe not unique) which does not satisfy $f_{xy} \ge 0$, equivalently $f(x_1, y_1) + f(x_2, y_2) - f(x_1, y_1) -  f(x_2, y_2) \ge 0$ for all $x_1 < x_2$, $y_1 < y_2$. In particular, relation $z=f_{xy}$ fails again.

Note that some elements of sets  $\{z=0\}$, $\{z=1\}$ (see \cref{fig:uniformbound}) are solutions to an optimal transportation problem with capacity constraints  
\cite{McCannKor}, \cite{McCannKor2}.
\end{example}

\begin{figure}

\begin{minipage}[b]{0.32\textwidth}
    \centering
    \includegraphics[width=\textwidth]{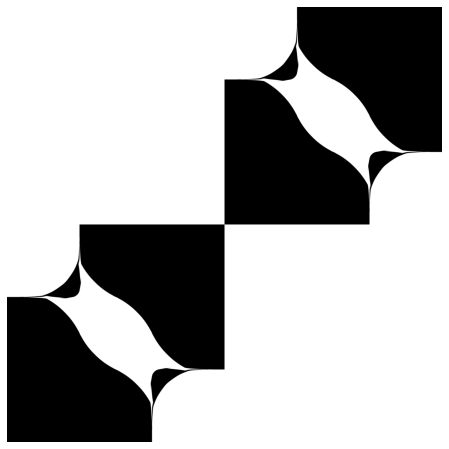}
    The slice $z = 0$.
\end{minipage}
\begin{minipage}[b]{0.32\textwidth}
    \centering
    \includegraphics[width=\textwidth]{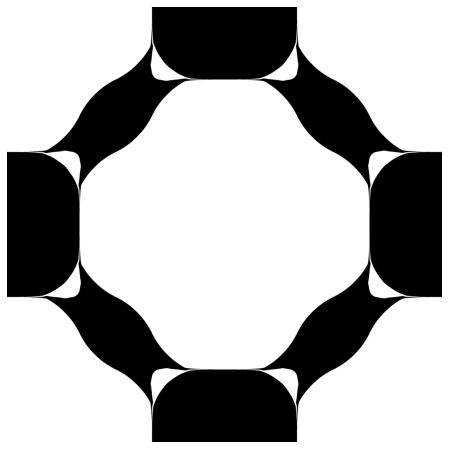}
    The slice $z = 1$.
\end{minipage}
\begin{minipage}[b]{0.32\textwidth}
    \centering
    \includegraphics[width=\textwidth]{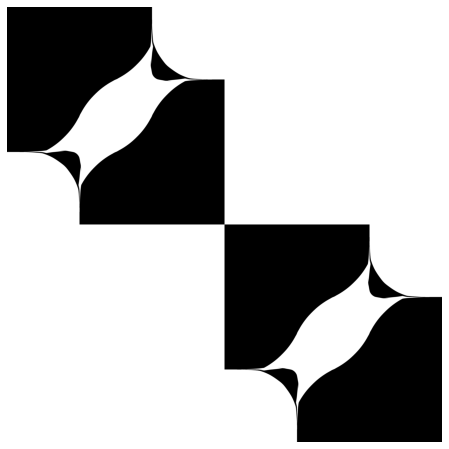}
    The slice $z = 2$.
\end{minipage}

\caption{The visualization of the primal solution to the problem considered in \cref{uniformbound}. Each picture shows the restriction of the primal solution to the set $z = 0, 1, 2$. In the white points the density function is equal to 0, and in the black points it is equal to 3. Almost every horizontal and vertical section of the black body has a length $1/3$. Compare to \cref{fig:caterpillar}.}
\label{fig:uniformbound}
\end{figure}
\begin{remark}
\label{stochdom}
It worth noting that the condition $f(x_1, y_1) + f(x_2, y_2) - f(x_1, y_1) -  f(x_2, y_2) \ge 0$ for all $x_1 < x_2$, $y_1 < y_2$
corresponds to a bit different primal problem, where assumptions on the marginals are replaced by assumptions that the marginals are {\bf stochastically dominated} by given measures. But we don't pursue this viewpoint here.
\end{remark}

\subsection{Solvability of the dual problem}

Section 5 is devoted to existence of a solution to the dual problem. We establish a sufficient existence condition
for the dual problem in the spirit of a classical result of Kellerer \cite{Kell} for the multistochastic
problem, but with a self-contained independent proof.

The main assumption on the cost function for {\bf solvability of the dual problem} is the following bound:
\begin{equation}
\label{Kell-bound}
 |c(x)| \le \sum_{\alpha \in \mathcal{I}_{nk}} C_\alpha(x_\alpha) .
\end{equation}
for some integrable functions, $C_\alpha \colon X_\alpha \to \mathbb{R}\cup\{+\infty\}$
This is a generalization of the Kellerer's assumption.

 However, yet another assumption, which is specific for $(n,k)$-problem, should be done on marginals.
 Namely, we have to assume that the system of measures $\{\mu_{\alpha}\}$ is {\bf reducible}.  
The latter means that there exists a measure $\mu \in \Pi(\mu_{\alpha})$ and the system of probability measures $\nu_i \in \mathbb{P}(X_i)$
such that for some $0 < c < C$
\begin{equation}
\label{reducebound}
c \nu \le \mu \le C \nu,
\end{equation}
where $\nu=\prod_i \nu_i$.
Our main existence/nonexistence result is the following Theorem (see details in \cref{thm:existence_multistochastic_dual_solution,X123}):
\begin{theorem}
If the system $\{ \mu_{\alpha} \}$ is reducible, then under assumption \cref{Kell-bound}
there exists a solution to the dual multistochastic problem.

Without assumption  of reducibility the dual solution may not exist.
More precisely, there exists an example of  a probability measure $\mu$ on the space 
$X = \mathbb{N}^3$  and the cost function   $c \colon X \to \{0,1\}$
such that there is no solution to the dual multistochastic  problem for the system
\[
\mu_{ij} = \prj_{ij}\mu.
\]
\end{theorem}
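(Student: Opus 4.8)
The plan is to split the theorem into its two halves and handle them with quite different tools. For the positive part, the strategy is to reduce the multistochastic problem to an ordinary Kantorovich problem with linear constraints, so that Zaev's duality machinery and a Kellerer-type compactness argument become available. Concretely, fix a reducing measure $\mu \in \Pi(\mu_\alpha)$ and the product measure $\nu = \prod_i \nu_i$ with $c\nu \le \mu \le C\nu$. Reducibility lets us replace the family of marginal constraints $\prj_\alpha(\pi) = \mu_\alpha$ by the single marginal constraint that all one-dimensional projections equal $\nu_i$ (so $\pi \ll \nu$ with controlled density), together with finitely many \emph{linear} constraints of the form $\int \varphi\,d\pi = \int \varphi\,d\mu_\alpha$ for $\varphi$ running over a separating family on each $X_\alpha$; this is exactly the setting of the transportation problem with linear constraints. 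The bound \cref{Kell-bound} guarantees integrability of $c$ against every feasible $\pi$ and, more importantly, provides the domination needed to run the standard dual-existence argument: one takes a maximizing sequence $(f^{(m)}_\alpha)$ for the dual functional, performs a ``$c$-transform''-type normalization to force the potentials into an order-bounded set (each $f^{(m)}_\alpha$ squeezed between $-C_\alpha$ and a translate of $\sum_{\beta \ne \alpha} C_\beta$), and extracts a limit using the Komlós/biting lemma or a weak-$*$ compactness argument in the appropriate $L^1$-type space. The absence of a duality gap (the earlier duality theorem in the excerpt) then certifies that the limiting triple is genuinely optimal.

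The step I expect to be the main obstacle is precisely this compactness/extraction step: in the multistochastic setting the potentials $f_\alpha$ live on the \emph{overlapping} spaces $X_\alpha$, $\alpha \in \mathcal{I}_{nk}$, rather than on disjoint factors, so the usual ``subtract a function of the shared variables'' normalization that kills the non-compact directions in the classical multimarginal case has to be done carefully and iteratively over the poset of index sets, and one must check that after normalization the potentials are not merely bounded in $L^1$ but uniformly integrable with respect to the $\mu_\alpha$. This is where reducibility does the real work: the uniform two-sided bound $c\nu \le \mu \le C\nu$ transfers an $L^\infty(\nu_\alpha)$-type control to an $L^1(\mu_\alpha)$-type control and prevents the maximizing sequence from escaping to infinity along a ``gauge'' direction. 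I would isolate this as a lemma: \emph{under reducibility and \cref{Kell-bound}, any maximizing sequence may be replaced by one whose potentials are uniformly $\mu_\alpha$-integrable.} Once that lemma is in hand, the rest is the routine semicontinuity-of-the-dual-functional argument.

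For the negative part I would build the counterexample by hand on $X = \mathbb{N}^3$. The idea is to choose a probability measure $\mu$ on $\mathbb{N}^3$ supported on a carefully chosen countable set $S$ and a cost $c = \mathbf{1}_{S^c}$ (equivalently $c$ valued in $\{0,1\}$, zero exactly on $\operatorname{supp}\mu$), so that the primal infimum is $0$, attained by $\mu$ itself, while any dual triple $(f_{12}, f_{13}, f_{23})$ with $f_{12}(i,j) + f_{13}(i,k) + f_{23}(j,k) \le c(i,j,k)$ and $\sum \int f_{\alpha}\,d\mu_\alpha = 0$ would have to satisfy $f_{12}(i,j)+f_{13}(i,k)+f_{23}(j,k) = 0$ on $S$ and $\le 1$ off $S$; arranging $S$ so that these equalities force the $f_\alpha$ to be unbounded (each $\mu_\alpha = \prj_\alpha \mu$ having infinite support with slowly decaying mass) while the off-support inequality caps them, producing a contradiction with integrability against $\mu_\alpha$. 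Here the model to imitate is the classical Kellerer-type non-existence example for the two-marginal problem on $\mathbb{N}\times\mathbb{N}$: pick weights along an infinite ``staircase'' in $\mathbb{N}^3$ so that the consistency relations among the pairwise marginals propagate an additive inconsistency; I would take $\mu$ to live on a set like $\{(i, i, i)\} \cup \{(i, i, i{+}1)\} \cup \{(i{+}1, i, i{+}1)\}$-style triangular chains, assign geometrically decaying masses, and check that the only candidate potentials are, up to the additive gauge, partial sums of a divergent series. The main subtlety is ensuring simultaneously that (i) the system $\{\mu_{ij}\}$ is consistent and a uniting measure exists (so the primal problem is well-posed with value $0$), (ii) any admissible dual triple must saturate on $S$, and (iii) saturation plus the cap force non-integrability; this is a finite-verification-per-scale computation that I would organize inductively. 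I expect the bookkeeping — not any single inequality — to be the delicate part here, and I would present it via an explicit table of the masses and the forced values of the potentials.
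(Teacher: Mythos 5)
The positive half of your proposal is aligned with the paper at a high level (bound a maximizing sequence in $L^1$, extract via Koml{\'o}s, pass to the limit by reverse Fatou), but the key mechanism is different. You propose a ``$c$-transform''-type normalization to tame the individual potentials; the paper instead proves and uses a purely algebraic decomposition lemma. \Cref{prop:nk_function_decomposition} gives, for any finite $(n,k)$-function $F$, a universal inclusion--exclusion formula $\widehat f_\alpha(x_\alpha)=\sum_{\beta\subseteq\alpha}\lambda_{|\beta|}F_\beta(x_\beta)$ (sections of $F$ through a fixed base point $y$) that reproduces $F$, and \cref{lem:point_with_good_sections} shows $y$ can be chosen so that every section appearing in it has $L^1$ norm at most $2^{n+1}\|F\|_1$; reducibility then converts this into the needed $L^1(\mu_\alpha)$ bound (\cref{thm:existence_integrable_f_alpha_for_reducible}). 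This sidesteps exactly the difficulty you flag -- that the usual ``subtract the shared part'' normalization fails when the factors $X_\alpha$ overlap -- by never appealing to a transform at all. Two further ingredients you omit but that are load-bearing: the argument runs in a \emph{relaxed} dual problem where $\sum f_\alpha\le c$ is only required off a set of zero $(n,k)$-thickness (which is what keeps the replacement $f_\alpha^{(t)}\mapsto\widehat f_\alpha^{(t)}$ and the a.e.\ Koml{\'o}s limit feasible), and the $\widehat f_\alpha^{(t)}$ are finite-valued by construction, so Ces\`aro averages are well defined.

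The negative half has a fatal flaw. You take $\mu$ supported on a set $S$ and set $c=\mathbf 1_{S^c}$, so $c\equiv 0$ on $\mathrm{supp}(\mu)$ and $c\ge 0$ everywhere. The primal value is then $0$, but the constant tuple $f_{12}=f_{13}=f_{23}\equiv 0$ is dual-feasible ($\sum f_\alpha=0\le c$) and attains $0$, so the dual problem \emph{is} solved in your setup -- there is nothing to contradict, and the intended telescoping never forces any unboundedness because $F\equiv 0$ already saturates on $S$. The paper's construction is structurally different: $\mu=(1-\alpha_0)\mu_p+\alpha_0\mu_\eps$ has \emph{full} support on $\mathbb N^3$ (the geometric-product component $\mu_\eps$ makes every $\mu_{ij}(n_i,n_j)>0$, so potentials cannot dodge constraints by going to $-\infty$), and $c$ is the indicator of $\cup_n A_n$ with $A_n=\{(n{+}1,n,n),(n,n{+}1,n),(n,n,n{+}1)\}$, a set carrying \emph{positive} primal cost. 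The hard step is \cref{lem:pi_x_ge_0_on_An}: a linear-programming lower bound showing that every $\gamma\in\Pi(\mu_{ij})$ puts strictly positive mass at every point of every $A_n$. This forces any dual solution to saturate $F=1$ on all of $A_n$; the algebraic identity $F(n{+}1,n{+}1,n{+}1)-F(n,n,n)=\sum F(\text{complementary off-diagonal points})-\sum_{x\in A_n}F(x)\le 0-3$ then yields $F(n,n,n)\le -3(n{-}1)$, and the $1/n^2$ decay of $\mu_{ij}$ along the diagonal makes $\sum\|f_{ij}\|_{L^1(\mu_{ij})}$ diverge. Your ``geometrically decaying masses'' would in any case be too fast for this last divergence to occur, but the structural problem is the trivial zero solution described above.
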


\subsection{Other properties of dual solutions: boundedness and (dis)continuity}

In Section 6 we study  basic properties of solutions to the dual  $(3, 2)$ problem: boundedness and continuity.
It is known that for the classical (multimarginal) problem the dual solution is bounded provided $|c|$ is bounded.
But this is crucial that in the classical case the dual solution is a sum 
of independent functions. This is the reason why it is hard to extend the arguments to the general $(n,k)$-case.   
We establish the following result on the boundedness of solutions.

\begin{theorem}
Let $X_1$, $X_2$, $X_3$ be Polish spaces,  $\mu_i \in \mathcal{P}(X_i)$ for $1 \le i \le 3$, and let $\mu_{ij} = \mu_i \otimes \mu_j$ for all $\{i, j\} \in \mathcal{I}_{3,2}$. Let $c \colon X \to \mathbb{R}_+$ be a bounded continuous cost function. 
If $\{f_{ij}\}$ is a solution to the related dual problem, then \[
f_{12}(x_1, x_2) + f_{13}(x_1, x_3) + f_{23}(x_2, x_3) \ge -12\norm{c}_{\infty}
\]
for $\mu_1 \otimes \mu_2 \otimes \mu_3$-almost all points $x \in X$.

Moreover,  there exists a solution $\{f_{ij}\}$ to the standard dual problem such that
\[
    -26\frac{2}{3} \norm{c}_{\infty} \le f_{ij}(x_i, x_j) \le 13\frac{1}{3}\norm{c}_{\infty}.
\]
\end{theorem}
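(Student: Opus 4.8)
The plan rests on two soft observations followed by a quantitative ``bookkeeping'' step.

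\textbf{Setup.} Since $\mu_{ij}=\mu_i\otimes\mu_j$, the product measure $\mu_1\otimes\mu_2\otimes\mu_3$ is uniting and $c$ is bounded, so the common value $v$ of the primal and dual problems (they coincide by the no-duality-gap theorem above) obeys $0\le v\le\|c\|_\infty$: the lower bound because $c\ge 0$ makes the zero triple dual-feasible. The dual problem carries a large gauge symmetry: for arbitrary $a\in L^1(\mu_1)$, $b\in L^1(\mu_2)$, $d\in L^1(\mu_3)$, replacing $(f_{12},f_{13},f_{23})$ by $\big(f_{12}+a(x_1)+b(x_2),\ f_{13}-a(x_1)+d(x_3),\ f_{23}-b(x_2)-d(x_3)\big)$ leaves the constraint function $S:=f_{12}+f_{13}+f_{23}$ unchanged and, by the product structure of the marginals, leaves the objective unchanged ($\int a\,d\mu_1+\int b\,d\mu_2-\int a\,d\mu_1+\int d\,d\mu_3-\int b\,d\mu_2-\int d\,d\mu_3=0$). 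Thus the first assertion, being about $S$, is gauge invariant, while for the second assertion we are free to choose a convenient gauge representative. Finally, for any \emph{optimal} triple each $f_{ij}$ is a fixed point of the essential-infimum transform: $f_{12}(x_1,x_2)=\operatorname*{ess\,inf}_{x_3}\big(c(x)-f_{13}(x_1,x_3)-f_{23}(x_2,x_3)\big)$ for $\mu_{12}$-a.e.\ $(x_1,x_2)$, and cyclically. Indeed the right-hand side is $\ge f_{12}$ by feasibility, keeps feasibility, and is squeezed between $f_{12}$ and $\int c\,d\mu_3-\int f_{13}(x_1,\cdot)\,d\mu_3-\int f_{23}(x_2,\cdot)\,d\mu_3\in L^1$, so by optimality it must equal $f_{12}$ a.e.

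\textbf{Lower bound $S\ge-12\|c\|_\infty$.} Integrating each fixed-point identity over the missing variable gives $f_{ij}(x_i,x_j)+\overline{f}_{ik}(x_i)+\overline{f}_{jk}(x_j)\le\|c\|_\infty$, where $\overline f$ denotes the indicated partial integral; and, crucially, the essential infimum being attained in the limit means that whenever $f_{12}(x_1,x_2)\le-T$ there is a positive-$\mu_3$-measure set of $x_3$ on which $f_{13}(x_1,x_3)+f_{23}(x_2,x_3)>T-\varepsilon$, so one of $f_{13},f_{23}$ exceeds $\approx T/2$ there. Feeding this information back into the integrated inequalities (which control $f_{13}$ and $f_{23}$ from above by partial integrals of the other functions) and cycling through the three pairs produces a closed system of estimates for the gauge-invariant quantities $S$, $\operatorname*{ess\,sup}S$, and the partial integrals; the constant $12\|c\|_\infty$ drops out of this. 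I expect this to be the hard part: because the gauge can drive any single $f_{ij}$ to $\pm\infty$, one cannot bound the $f_{ij}$ individually at this stage, so only gauge-invariant combinations may be chained, and the naive integrated inequalities close with the \emph{wrong} sign unless the ess-inf ``hit'' is used at precisely the right step.

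\textbf{The bounded solution.} Existence of a dual solution follows from the existence theorem above, since the product system is reducible ($\mu_1\otimes\mu_2\otimes\mu_3$ lies between $\tfrac12\nu$ and $2\nu$ with $\nu=\mu_1\otimes\mu_2\otimes\mu_3$) and $c$ is bounded, so the Kellerer-type hypothesis holds with $C_\alpha\equiv\tfrac13\|c\|_\infty$. Starting from any solution, I would apply the three essential-infimum transforms in turn (landing on a fixed-point triple) and then fix the gauge $(a,b,d)$ so that the partial integrals $\int f_{13}(x_1,\cdot)\,d\mu_3$, $\int f_{23}(\cdot,x_3)\,d\mu_2$, etc.\ become constants; the integrated fixed-point inequalities then read $f_{ij}\le\|c\|_\infty-(\text{constants})$, and pinning those constants down via $0\le v\le\|c\|_\infty$ gives the one-sided bound $f_{ij}\le\tfrac{40}{3}\|c\|_\infty$. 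Combining this upper bound for two of the functions with the gauge-invariant bound $S\ge-12\|c\|_\infty$ (and one more pass through the integrated inequalities to sharpen the constants) yields $f_{ij}\ge-\tfrac{80}{3}\|c\|_\infty$. The arithmetic is routine once the normalization is chosen; the only real subtlety is selecting the gauge — a symmetric choice such as splitting the $x_3$-part evenly between $f_{13}$ and $f_{23}$ — that makes all three bounds come out at once.
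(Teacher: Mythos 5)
Your two structural observations — the gauge freedom $(f_{12},f_{13},f_{23})\mapsto(f_{12}+a+b,\,f_{13}-a+d,\,f_{23}-b-d)$ and the essential-infimum fixed-point relation for optimal triples — are both correct and do identify the genuine obstacle: only the gauge-invariant quantity $S=f_{12}+f_{13}+f_{23}$ can be bounded pointwise.  But the crucial quantitative step is missing.  Your sketch for $S\ge-12\|c\|_\infty$ says: use the ess-inf hit ``whenever $f_{12}\le -T$, one of $f_{13},f_{23}$ exceeds $\approx T/2$'' and then ``feed this back into the integrated inequalities and cycle.''  This does not close as stated.  The chain you describe keeps producing facts about individual $f_{ij}$'s, which the gauge freedom renders vacuous unless you pass through gauge-invariant combinations; the only natural gauge-invariant scalar the loop returns to is $S$ itself (or the single-variable invariants $h_1=g_{12,1}+g_{13,1}$, $h_2$, $h_3$), so one ends up needing a lower bound on $S$ in order to prove a lower bound on $S$.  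You flag this yourself (``the naive integrated inequalities close with the wrong sign''), but you don't exhibit the input that breaks the circularity, and I don't see one coming from the ess-inf machinery alone.  The constants $12$, $40/3$, $80/3$ appear in your text without derivation; since you could read them off the statement, their matching is not evidence the argument closes.

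What the paper actually does is qualitatively different and is exactly the missing global input.  It introduces a lifting operation $\Up_\alpha(\nu_\alpha,\pi)$ (reweighting an optimal $\pi$ by a density depending only on $x_\alpha$), and with it explicitly builds signed measures $\gamma$ whose two-dimensional projections all vanish, of the form $\gamma=\nu_1\otimes\nu_2\otimes\nu_3-\gamma^{(0)}+\gamma^{(1)}-\gamma^{(2)}-\gamma^{(3)}+2\pi$, where each $\gamma^{(\ell)}$ is a sum of measures that are either $\ll_B$ $\pi$ (so $\int F\,d\gamma^{(\ell)}\ge0$ by $F=c\ge0$ $\pi$-a.e.) or $\ll_B$ a uniting measure (so $\int F\,d\gamma^{(\ell)}\le\|c\|_\infty\cdot\#\text{terms}$).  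Since $\int F\,d\gamma=\sum_{ij}\int f_{ij}\,d(\prj_{ij}\gamma)=0$, one reads off $\int F\,d(\nu_1\otimes\nu_2\otimes\nu_3)\ge-12\|c\|_\infty$ for all $\nu_i\ll_B\mu_i$, and a density/semialgebra argument upgrades this to the a.e.\ pointwise bound.  The derivation of the bounded solution then follows your general intuition but more concretely: pick a point $y$ (via a Markov-inequality argument, the paper's Lemma 5.5) where all slices $F(x_\alpha\,y_{\mathbf{n}\setminus\alpha})$ obey $-12\|c\|_\infty\le F\le\|c\|_\infty$, set $\widehat f_{12}(x_1,x_2)=F(x_1,x_2,y_3)-\tfrac12F(x_1,y_2,y_3)-\tfrac12F(y_1,x_2,y_3)+\tfrac13F(y_1,y_2,y_3)$ and cyclically, which gives $-17\|c\|_\infty\le\widehat f_{ij}\le\tfrac{40}{3}\|c\|_\infty$, and finally set the functions to $-\tfrac{80}{3}\|c\|_\infty$ on the bad null sets so that the pointwise constraint $\sum\widehat f_{ij}\le c$ holds everywhere.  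This is an explicit normalization, not an iterate of ess-inf transforms; your Legendre-iterate route would additionally require proving the iteration stabilizes, which is not obvious in the $(3,2)$ setting.  In short: the structural observations are sound, but the proposal replaces the paper's one genuinely hard lemma with an unresolved placeholder.
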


Another important feature of the classical Monge--Kantorovich  problem: for a cost function $c$ with nice geometric/regularity properties
the corresponding dual solutions are regular. This happens because the dual functions are related by  Legendre  transform, which is highly regularizing.
We can not expect this for the $(n,k)$-problem, the following example demonstrates that a solution can be unique and discontinuous even for very
simple and nice cost: maximum of two linear functions.

\begin{example}
Let $X=Y=Z=[0,1]$. Consider the $(3,2)$-problem with the cost function
\[
c=\max(0,x+y+3z-3),
\]
where $\mu_{zy}, \mu_{xz}, \mu_{yz}$  are the Lebesgue measures restricted to $[0,1]^2$.
Then the dual problem admits a unique discontinuous solution, given by the following formulas:
\begin{align*}
    &f_{12}(x_1, x_2) = 0 \text{ for all points $(x_1, x_2) \in [0, 1]^2$};\\
    &f_{13}(x_1, x_3) = \begin{cases}
    0, &\text{if $x_3 < \frac{2}{3}$},\\
    x_1 + \frac{3}{2}x_3 - \frac{3}{2}, &\text{if $x_3 \ge \frac{2}{3}$};
    \end{cases}\\
    &f_{23}(x_2, x_3) = \begin{cases}
    0, &\text{if $x_3 < \frac{2}{3}$},\\
    x_2 + \frac{3}{2}x_3 - \frac{3}{2}, &\text{if $x_3 \ge \frac{2}{3}$}.
    \end{cases}
\end{align*}
\end{example}

\subsection{Uniqueness result for the main example}

In Section 6 we establish the following results for our main example: $(3,2)$-problem with the
two-dimensional Lebesgue marginals.

\begin{theorem}
\label{tetrahedron-dual-unique}
If a tuple of functions $\{f_{ij}\}$ is a solution to the problem from \cref{maindual} and every $f_{ij}$ is continuous for all $\{i, j\} \in \mathcal{I}_{3,2}$, then there exist continuous functions $f_i \colon [0, 1] \to \mathbb{R}$, $1 \le i \le 3$, such that 
\begin{align*}
&f_{12}(x_1, x_2) = f(x_1, x_2) + f_1(x_1) - f_2(x_2), \\
&f_{23}(x_2, x_3) = f(x_2, x_3) + f_2(x_2) - f_3(x_3),\\ 
\intertext{and}
&f_{13}(x_1, x_3) = f(x_1, x_3) + f_3(x_3) - f_1(x_1),
\end{align*}
where
\[
f(x, y) = \int_0^x\int_0^y s\oplus t\,dsdt - \frac{1}{4}\int_0^x\int_0^x s\oplus t\,dsdt - \frac{1}{4}\int_0^y\int_0^y s\oplus t\,dsdt.
\]
\end{theorem}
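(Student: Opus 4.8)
The plan is to convert the coexistence of two dual optimizers into a single additive functional equation on the Sierpi\'nski tetrahedron via complementary slackness, reduce the theorem to a rigidity statement for that equation, and then prove the rigidity from the self-similar and ``$\oplus$-shift'' structure of the tetrahedron together with continuity.

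\emph{From duality to a functional equation.} By the no--duality--gap theorem proved above and the uniqueness part of \cref{mainprimal}, the primal optimizer $\pi$ is concentrated on $S=\{x_1\oplus x_2\oplus x_3=0\}$; since $\pi$ is the self-similar (``uniform'') measure on the tetrahedron, its support is all of $S$. If $\{f_{ij}\}$ is any dual optimizer, then $\int\bigl(f_{12}+f_{13}+f_{23}\bigr)\,d\pi=\int c\,d\pi$ combined with the feasibility inequality $f_{12}(x_1,x_2)+f_{13}(x_1,x_3)+f_{23}(x_2,x_3)\le x_1x_2x_3$ forces equality $\pi$-a.e., hence, by continuity and full support, $f_{12}(x_1,x_2)+f_{13}(x_1,x_3)+f_{23}(x_2,x_3)=x_1x_2x_3$ for all $(x_1,x_2,x_3)\in S$. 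The explicit triple $(f,f,f)$ of \cref{maindual} is a dual optimizer too, so it satisfies the same identity on $S$; subtracting, the continuous functions $h_{12}=f_{12}-f$, $h_{13}=f_{13}-f$, $h_{23}=f_{23}-f$ on $[0,1]^2$ satisfy
\[
h_{12}(x_1,x_2)+h_{13}(x_1,x_3)+h_{23}(x_2,x_3)=0\qquad\text{for all }(x_1,x_2,x_3)\in S .\qquad(\dagger)
\]
Since coboundaries $h_{12}=f_1-f_2$, $h_{23}=f_2-f_3$, $h_{13}=f_3-f_1$ sum to zero identically, the theorem is equivalent to: every continuous solution of $(\dagger)$ is a coboundary. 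It suffices to show that each $h_{ij}$ is additively separable, $h_{ij}(a,b)=p_{ij}(a)+q_{ij}(b)$; then substituting into $(\dagger)$ and using that on $S$ the third coordinate equals $x_1\oplus x_2$ reduces matters to $\Phi_1(x_1)+\Phi_2(x_2)+\Phi_3(x_1\oplus x_2)=0$ for suitable $\Phi_i$, evaluating at $x_2=0$ and $x_1=0$ shows that $\Phi_1-\Phi_1(0)$ is a homomorphism of $([0,1),\oplus)$ into $\mathbb{R}$, hence identically zero because the group is $2$-torsion, so the $\Phi_i$ are constant and straightforward bookkeeping produces the continuous $f_1,f_2,f_3$ with the stated form.

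\emph{Separability of the $h_{ij}$.} Here the geometry of $S$ enters through two facts: $S$ is invariant under permutations of coordinates, and — the crucial point — for every $s\in[0,1)$ and every $(x,y,z)\in S$ the four points $(x,y,z)$, $(x\oplus s,y,z\oplus s)$, $(x,y\oplus s,z\oplus s)$, $(x\oplus s,y\oplus s,z)$ all lie in $S$. Writing $(\dagger)$ at these four points and taking the alternating sum, the $h_{12}$ terms assemble into the mixed second difference $h_{12}(x,y)-h_{12}(x\oplus s,y)-h_{12}(x,y\oplus s)+h_{12}(x\oplus s,y\oplus s)$, while the $h_{13}$ and $h_{23}$ terms collapse into differences of one-variable restrictions of $h_{13}$ and $h_{23}$ evaluated at $z$ and $z\oplus s$; permuting coordinates gives the analogous identities for $h_{13}$ and $h_{23}$. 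One then feeds these identities into the four-fold self-similarity $S=\bigcup_{e}\bigl(\tfrac12 S+\tfrac12 e\bigr)$ over $e\in\{(0,0,0),(1,1,0),(1,0,1),(0,1,1)\}$ and iterates over the binary digits of the shift: the self-similar scaling contracts the ``error'' one-variable terms at each step while uniform continuity controls the limit, forcing every mixed second difference of each $h_{ij}$ to vanish. If the pure equation $(\dagger)$ turns out not to suffice, the feasibility inequality $h_{12}+h_{13}+h_{23}\le\Phi$, where $\Phi:=c-(f+f+f)\ge0$ vanishes on $S$, supplies the extra one-sided control.

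\emph{Main obstacle.} The genuine difficulty is the separability step. Since $S$ is Lebesgue-null, no measure-theoretic fullness is available and the rigidity must be extracted purely from the combinatorial/topological richness of $S$ — its $\oplus$-shift orbits and its $\tfrac12$-self-similarity — while the argument must cope with the clash between ordinary continuity of the $h_{ij}$ and the everywhere-discontinuous fiber map $(x,y)\mapsto x\oplus y$. A minor but unavoidable technicality is that $S$ is the closure of the graph of $(x,y)\mapsto x\oplus y$ over non-dyadic points, so each identity derived on this dense graph must be promoted to all of $S$ by continuity before being reused.
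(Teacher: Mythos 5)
Your first step — complementary slackness on the Sierpi\'nski tetrahedron $S$, promotion to an identity on all of $S$ by continuity and full support, and subtraction of the explicit optimizer $(f,f,f)$ to obtain the homogeneous equation $(\dagger)$ for $h_{ij}=f_{ij}-f$ — is correct and coincides with the opening moves of the paper's proof. However, the heart of your argument, the ``separability'' step, has a genuine gap. You derive from the $\oplus$-shift invariance of $S$ (which is valid: if $(x,y,z)\in S$ then so are $(x\oplus s,y,z\oplus s)$, etc.) an identity equating the diagonal mixed $\oplus$-second difference of $h_{12}$ to a combination of first $\oplus$-differences of $h_{13}$ and $h_{23}$ taken in $z$. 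This identity does not give vanishing of anything: the right-hand side depends on $z=x\oplus y$ and on $x\oplus s,y\oplus s$, and since $\oplus$ is a discontinuous operation there is no way to pass to a genuine (translation) second-difference estimate without additional quantitative input. Your ``iterate via the four-fold self-similarity while uniform continuity controls the limit'' is a one-sentence sketch of what would have to be the entire proof; you yourself signal the trouble by offering, as a fallback, the one-sided feasibility inequality $h_{12}+h_{13}+h_{23}\le c-3f$. That fallback is in fact where all the work lives — $(\dagger)$ alone is a constraint only on $S$, a Lebesgue-null set, and it is far from clear (and not shown) that every continuous solution of $(\dagger)$ is a coboundary; the full optimality inequality off $S$ is doing essential quantitative work that your reduction discards.

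The paper's proof does not try to extract rigidity from the algebraic structure of $S$ alone. Instead it applies the boundedness theorem (\cref{thm:boundedness_F_in_32}) to a \emph{rescaled} dual solution: for each dyadic cube $J_n^{a_1,a_2,a_3}$ with $a_1\oplus a_2\oplus a_3=0$, the affinely rescaled functions $\widehat f_{ij}$ are shown to be themselves a dual optimizer for the same problem, and the uniform $L^\infty$ bound then yields $|F(x)-x_1x_2x_3|\le 13\cdot 2^{-3n}$ on $J_n^{a_1,a_2,a_3}$. A four-term cancellation inside each cube then controls the ordinary mixed second difference of $f_{ij}$ up to error $O(2^{-3n})$, and telescoping over the $2^n\times 2^n$ grid gives the exact formula $f_{ij}(x_i,x_j)-f_{ij}(x_i,0)-f_{ij}(0,x_j)+f_{ij}(0,0)=\int_0^{x_i}\int_0^{x_j}s\oplus t\,ds\,dt$, after which the boundary relations on the edges of $S$ determine the $f_i$. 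The cubic decay $2^{-3n}$, which survives $4^n$ summands, is the engine of the argument and has no counterpart in your sketch. Your closing algebraic computation (reducing to $\Phi_1(x_1)+\Phi_2(x_2)+\Phi_3(x_1\oplus x_2)=0$ and killing the resulting $\oplus$-homomorphism by $2$-torsion) is fine as far as it goes, but it only takes effect once separability is established, so it does not rescue the missing step.
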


\begin{remark}
We believe that this problem admits no other (discontinuous) solutions, but have no proof of this.
\end{remark}

\subsection{Relation to other problems}

We mentioned already that the multistochastic problem is closely related to the Kantorovich problem
with linear constraints  studied by Zaev in \cite{Zaev}.
More precisely, our problem can be reduced to the Kantorovich problem
with linear constraints, see explanations in Section 4.

Another related problem is, of course, 
problem with uniform constraint on the density, sometimes called "the capacity constrained problem" (see \cite{McCannKor}, \cite{McCannKor2}, \cite{Doledenok}).
The solution to the problem from \cref{uniformbound}
admits the following  structure: there is a partition of the unit square into several parts, each of them 
is either  a homothetic image of the body shown on \cref{fig:caterpillar}
\begin{figure}
    \centering
    \includegraphics[width=0.33\textwidth]{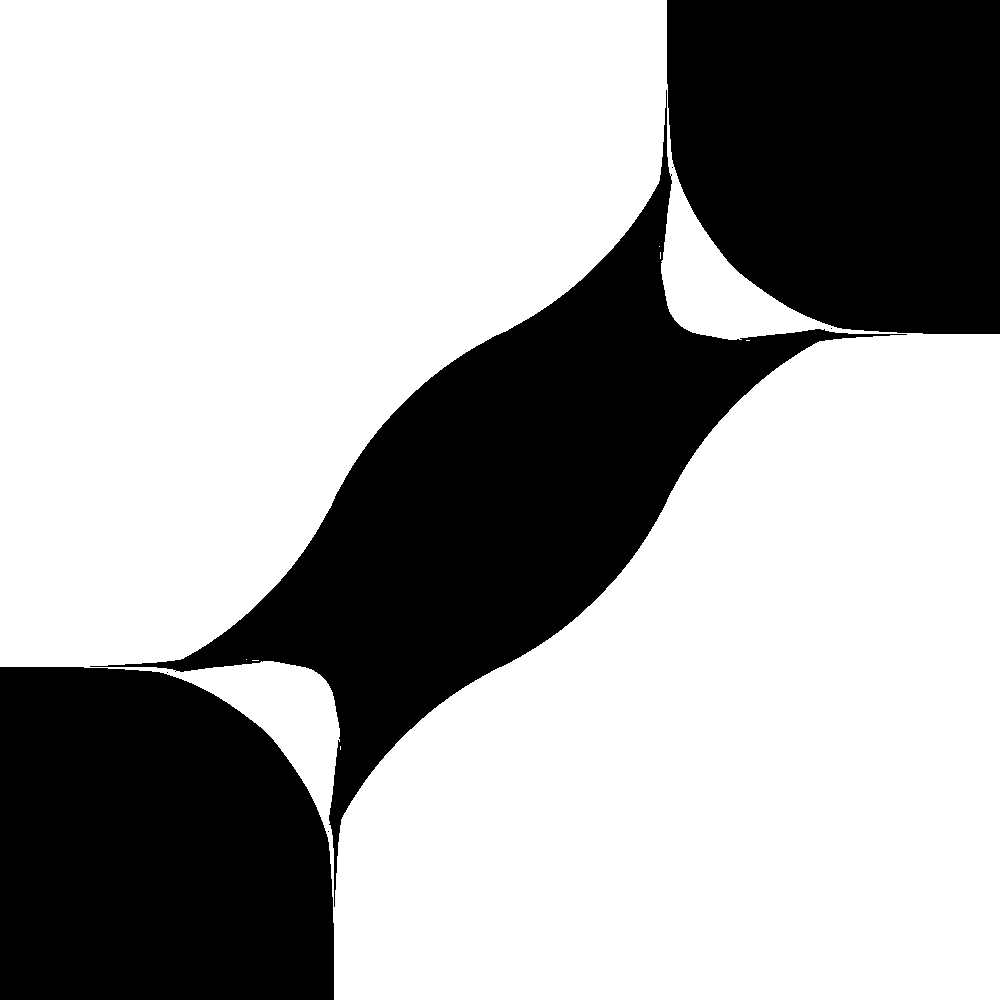}
    \caption{The  support of a solution to a capacity constrained problem (see \cite{McCannKor}, \cite{McCannKor2}). Compare to \cref{fig:uniformbound}.}
    \label{fig:caterpillar}
\end{figure}
or its complement. This set is a solution to a capacity constrained problem and appeared for the first time
in \cite{McCannKor2}: find a  function $0 \le h \le 3$ on $[0,1]^2$ maximizing integral
\[
\int_{A} xy  h(x,y) \ dx dy
\] 
such that $h(x,y) dx dy$ has the Lebesgue projections onto both axes. Then the solution $h$ takes values in $\{0,3\}$
and $\{h=3\}$ is the body on \cref{fig:caterpillar}.
We leave to the reader as an exercise the precise construction relating these two problems.
It seems to be a highly nontrivial task to give the precise description of this figure. This is especially
difficult, because numerical experiments demonstrate that it coincides up to a very small set with a figure, which boundary is piecewise smooth and can be
parametrized by  piecewise elementary functions (polynomials).

Among the other problems which can be ``embedded'' into the linearly constrained transportation problem let us mention
the martingale transportation problem \cite{HL,BeigJuill}, problems with symmetries \cite{GhM,KZ1,KZ2}.

Finally, there is a connection between the multistochastic problem and the  transportation problem with convex constraints, in particular, problems on the space of measures
with given ordering. In particular, in the $(3,2)$-problem with the cost function $xyz$ the natural ordering on the space of measure
is stochastic ordering, i.e. for two measures $\mu, \nu$ on the plane we say that $\mu$ is bigger than $\nu$ if the distribution function $F_{\mu}$
is bigger than $F_{\nu}$ (see \cref{stochdom}). We plan to study the related modified $(3,2)$-problem in the subsequent work. Here we just mention that there are
many recent paper with very interesting results dealing with convex ordering and optimal transportation, see \cite{GRST2,GozlanJuillet}.

\section{Existence of a uniting measure for \texorpdfstring{$(n,k)$}{(n, k)}-problem.}

\subsection{Setting of the problem, basic facts}

Unlike the classical Monge--Kantorovich problem, existence of a uniting measure for a $(n,k)$-problem is a nontrivial task.
In the multimarginal Monge--Kantorovich problem, which is a particular case of $(n,k)$-problem with $k=1$, 
the uniting measure always exists: this is  $\prod_{i = 1}^n\mu_i$. 
In the case of  $(n, k)$-problem one has the following necessary  condition:

\begin{proposition}
Assume that the set  $\Pi(\mu_\alpha)$ is not empty. Let $\mu \in \Pi(\mu_\alpha)$ be arbitrary 
uniting measure. Then for all  $\alpha, \beta \in \mathcal{I}_{nk}$ the following relation holds:
\[
\prj_{\alpha \cap \beta}(\mu_\alpha) = \prj_{\alpha \cap \beta}(\mu_\beta) = \prj_{\alpha \cap \beta}(\mu).
\]
\end{proposition}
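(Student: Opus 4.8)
The plan is to use the functoriality of the image (push-forward) operation under composition of coordinate projections. The key elementary fact is that whenever $\gamma \subseteq \alpha$ the projection $\prj_\gamma \colon X \to X_\gamma$ factors through $X_\alpha$: writing $\prj^\alpha_\gamma \colon X_\alpha \to X_\gamma$ for the coordinate projection, one has $\prj_\gamma = \prj^\alpha_\gamma \circ \prj_\alpha$, and hence for image measures $\prj_\gamma(\mu) = \prj^\alpha_\gamma\bigl(\prj_\alpha(\mu)\bigr)$. Apply this with $\gamma = \alpha \cap \beta$, which is contained in both $\alpha$ and $\beta$. Factoring through $X_\alpha$ and using the defining property $\prj_\alpha(\mu) = \mu_\alpha$ of a uniting measure gives $\prj_{\alpha\cap\beta}(\mu) = \prj^\alpha_{\alpha\cap\beta}(\mu_\alpha)$; factoring through $X_\beta$ and using $\prj_\beta(\mu) = \mu_\beta$ gives $\prj_{\alpha\cap\beta}(\mu) = \prj^\beta_{\alpha\cap\beta}(\mu_\beta)$. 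Reading the statement's $\prj_{\alpha\cap\beta}$ applied to $\mu_\alpha$ (resp.\ $\mu_\beta$) as the coordinate projection $\prj^\alpha_{\alpha\cap\beta}$ (resp.\ $\prj^\beta_{\alpha\cap\beta}$), the two identities combine into the asserted chain of equalities.

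There is essentially no obstacle here: the only thing to be careful about is the bookkeeping of the symbol $\prj_{\alpha\cap\beta}$, which occurs in three compatible meanings — a map out of $X$, out of $X_\alpha$, and out of $X_\beta$ — together with the (immediate) measurability of the coordinate projections between the Polish spaces $X_\gamma$ equipped with their Borel $\sigma$-algebras. The whole statement is thus a direct unwinding of the definition of $\Pi(\mu_\alpha)$ and the fact that projections compose.
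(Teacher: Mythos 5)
Your argument is correct and is exactly the one the paper leaves implicit: the proposition is stated without proof precisely because it is a direct unwinding of the definition of $\Pi(\mu_\alpha)$ together with the functoriality of push-forward under composed coordinate projections. Factoring $\prj_{\alpha\cap\beta}$ through $X_\alpha$ and through $X_\beta$ and substituting $\prj_\alpha(\mu)=\mu_\alpha$, $\prj_\beta(\mu)=\mu_\beta$ is all there is to it, and your note on the mild abuse of the symbol $\prj_{\alpha\cap\beta}$ (as a map out of $X$, out of $X_\alpha$, and out of $X_\beta$) is the right bookkeeping point to flag.
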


\begin{definition}\label{weak_condition}
We say that the set of measures $\mu_\alpha \in \mathcal{P}(X_\alpha)$ is   \textit{consistent}, if it satisfies  
$\prj_{\alpha \cap \beta}(\mu_\alpha) = \prj_{\alpha \cap \beta}(\mu_\beta)$
for all  $\alpha, \beta \in \mathcal{I}_{nk}$.
\end{definition}

The consistency assumption for $n = 3$, $k = 2$ was considered in \cite{GlKoZi}. In what follows, we consider only consistent sets of measures. 
For a consistent set the measures  $\mu_\alpha$ are well-defined for all $\alpha \in \mathcal{I}_{nt}$, where $t \le k$. Indeed, denote $\mu_\alpha = \prj_\alpha(\mu_\beta)$ for arbitrary $\beta \in \mathcal{I}_{nk}$ containing $\alpha$. 
The consistency assumption implies that the result is independent of the choice of  $\beta$.

\begin{proposition}
Unlike the multimarginal problem, the consistency assumption is not sufficient for  $1 < k < n$.
\end{proposition}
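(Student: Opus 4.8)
The statement asks only for an example, so the plan is to exhibit, for every pair $1<k<n$, a consistent family $\{\mu_\alpha\}_{\alpha\in\mathcal{I}_{nk}}$ with $\Pi(\mu_\alpha)=\emptyset$. The cleanest choice generalizes the textbook $(3,2)$ obstruction (three ``$x_i\neq x_j$'' constraints that are pairwise compatible yet jointly impossible on $\{0,1\}^3$) by a parity trick. Take $X_i=\{0,1\}=\mathbb{Z}_2$ for every $i$, and for each $\alpha\in\mathcal{I}_{nk}$ let $\mu_\alpha$ be the uniform probability measure on the affine slice $H_\alpha=\{x\in\{0,1\}^{\alpha}:\sum_{i\in\alpha}x_i\equiv 1\ (\mathrm{mod}\ 2)\}$, a set of $2^{k-1}$ points each carrying mass $2^{-(k-1)}$. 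The whole argument then splits into two verifications: (i) this family is consistent, and (ii) it admits no uniting measure.

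For (i) I would fix $\alpha,\beta\in\mathcal{I}_{nk}$, set $\gamma=\alpha\cap\beta$, and check $\mathrm{pr}_\gamma(\mu_\alpha)=\mathrm{pr}_\gamma(\mu_\beta)$. If $\gamma=\alpha=\beta$ there is nothing to do; otherwise $\gamma\subset\alpha$ with $\gamma\neq\alpha$, and the key (elementary) linear-algebra fact is that the coordinate projection $\mathbb{Z}_2^{\alpha}\to\mathbb{Z}_2^{\gamma}$ restricted to the codimension-one affine subspace $H_\alpha$ is onto with all fibres of equal size $2^{\,k-1-|\gamma|}$: since $\alpha\setminus\gamma\neq\emptyset$, one may prescribe the coordinates in $\gamma$ arbitrarily and still solve $\sum_{i\in\alpha}x_i=1$ by adjusting a single coordinate outside $\gamma$. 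Hence $\mathrm{pr}_\gamma(\mu_\alpha)$ is the uniform measure on $\mathbb{Z}_2^{\gamma}$, and the same computation gives $\mathrm{pr}_\gamma(\mu_\beta)$ equal to it, which is exactly \cref{weak_condition}. (As a by-product this identifies $\mu_\gamma$ for $|\gamma|<k$ as the uniform measure.)

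For (ii), suppose $\mu\in\Pi(\mu_\alpha)$. Each marginal $\mathrm{pr}_\alpha(\mu)=\mu_\alpha$ is concentrated on $H_\alpha$, so $\mu$ is concentrated on $\bigcap_{\alpha\in\mathcal{I}_{nk}}\{x:\sum_{i\in\alpha}x_i\ \text{odd}\}$. Given $i\neq j$, I use $k<n$ to pick a $k$-subset $\alpha$ with $i\in\alpha$, $j\notin\alpha$ (the set $\{1,\dots,n\}\setminus\{j\}$ has $n-1\ge k$ elements and contains $i$), and put $\alpha'=(\alpha\setminus\{i\})\cup\{j\}\in\mathcal{I}_{nk}$; subtracting the two parity conditions gives $x_i\equiv x_j\ (\mathrm{mod}\ 2)$, i.e.\ $x_i=x_j$. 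Thus $\mu$ is concentrated on $\{(0,\dots,0),(1,\dots,1)\}$. If $k$ is even, neither point satisfies $\sum_{i\in\alpha}x_i$ odd, so $\mu$ is the zero measure, a contradiction; if $k$ is odd, only $(1,\dots,1)$ survives, so $\mu=\delta_{(1,\dots,1)}$, whence $\mathrm{pr}_\alpha(\mu)$ is a single atom while $\mu_\alpha$ is spread over $2^{k-1}\ge 2$ points (here $k\ge 2$ is used), again a contradiction. Hence $\Pi(\mu_\alpha)=\emptyset$.

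The routine parts are the $\mathbb{Z}_2$-linear algebra in step (i) and the index bookkeeping; the only point requiring a little care is the case split on the parity of $k$ in step (ii), together with checking that the hypothesis $k<n$ is genuinely what allows one to produce $\alpha$ and $\alpha'$ differing by a single transposition — this is precisely where $1<k<n$ (as opposed to $k=1$ or $k=n$) enters. I would also note that for $k=1$ the same family is realized by $\delta_{(1,\dots,1)}$, consistent with the fact that the multimarginal problem is always feasible, so the failure of consistency-implies-feasibility is a genuinely new feature of the regime $k\ge 2$.
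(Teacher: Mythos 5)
Your construction is correct and follows essentially the same approach as the paper: take $\mu_\alpha$ uniform on the affine slice $\{x:\sum_{i\in\alpha}x_i\equiv 1\}$, check consistency by observing that every proper coordinate projection of such a slice is uniform, and derive a contradiction by using $k<n$ to force all coordinates of any point in the support of a hypothetical uniting measure to coincide. The only difference is the modulus: the paper works in $\mathbb{Z}_k$ rather than $\mathbb{Z}_2$, so that a sum of $k$ equal residues vanishes automatically, sidestepping the parity case split on $k$ that your $\mathbb{Z}_2$ version handles (correctly) in its last step via the atom-versus-uniform comparison.
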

\begin{proof} 
Let $X_i = \{0, 1, \dots, k - 1\}$ for all $1 \le i \le n$. For every $\alpha \in \mathcal{I}_{nk}$ let us construct  the corresponding measure $\mu_\alpha$ on the set $X_\alpha$. If $\alpha = \{i_1, i_2, \dots, i_k\}$, then every point of $X_\alpha$ is given by coordinates $x = (x_{i_1}, x_{i_2}, \dots, x_{i_k})$, where
 $x_{i_t} \in \{0, 1, \dots, k - 1\}$ for all $1 \le t \le k$. Set $\mu_\alpha(x) = k^{1 - k}$, if $\sum_{t = 1}^k x_{i_t} \equiv 1 \Mod k$ and $\mu_\alpha(x) = 0$ in the opposite case.

It is easy to check that the consistency assumption of \cref{weak_condition} holds: the projection of any measure $\mu_\alpha$ onto $X_\beta$ is uniform if $|\beta|<|\alpha|$. 
Assume that a uniting measure $\mu$ exists. Since the projections are non-zero, $\mu$ is not zero itself. Take a point $x = (x_1, x_2, \dots, x_n)$ such that $\mu(x) > 0$. Then for all  $\alpha = \{i_1, \dots, i_k\} \in \mathcal{I}_{nk}$ 
the relation $\sum_{t = 1}^kx_{i_t} \equiv 1 \Mod k$ holds, in the opposite case the $\mu$-mass of the projection  of  $(x_{i_1}, x_{i_2}, \dots, x_{i_k})$ onto  $X_\alpha$ is zero, hence projection of  $\mu$ does not coincide with $\mu_\alpha$.

We extract from condition $\sum_{t = 1}^kx_{i_t} \equiv 1 \Mod k$, which holds for all $\{i_1, \dots, i_k\} \in \mathcal{I}_{nk}$, $k<n$ that  $x_i \equiv x_j \Mod k$ for all $1 \le i, j \le n$. Then $\sum_{t = 1}^kx_{i_t} \equiv 0 \not\equiv 1 \Mod k$.
We obtain a contradiction.
\end{proof}

Another example for  $n = 3$, $k = 2$ the reader can find in \cite{GlKoZi}.

\subsection{Existence of a signed measure}

It follows from the previous proposition that the consistency assumption is not sufficient for existence of a uniting measure.
Nevertheless, it is sufficient for existence of a signed measure.

Let $\nu_i \in \mathcal{P}(X_i)$ be an arbitrary family of probability measures. 

\begin{definition}
For all $\alpha \in \mathcal{I}_{nt}$, $0 \le t \le k$ let us extend  $\mu_\alpha$ to $X$
in the following way: $\widetilde{\mu}_\alpha = \mu_{\alpha} \times \prod_{i \not\in \alpha}\nu_i$. 
In addition, set $\widetilde{\mu}_t = \sum_{\alpha \in \mathcal{I}_{nt}}\widetilde{\mu}_\alpha$, where $0 \le t \le k$.
\end{definition}

The following theorem contains a construction of a uniting signed measure.

\begin{theorem}\label{explicit_signed_construction}
There exists a linear combination $\mu = \sum_{t = 0}^k\lambda_t\widetilde{\mu}_t$ satisfying $\prj_\alpha(\mu) = \mu_\alpha$ for all $\alpha \in \mathcal{I}_{nk}$. 
The coefficients of $\lambda_t$ do not depend on  the choice of  $\nu_i$.
\end{theorem}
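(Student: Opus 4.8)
The plan is to compute $\prj_\alpha(\widetilde\mu_t)$ explicitly for every $\alpha\in\mathcal I_{nk}$, rewrite the desired identity $\prj_\alpha(\mu)=\mu_\alpha$ as a single linear system in the unknowns $\lambda_0,\dots,\lambda_k$ that does not involve $\alpha$ or the $\nu_i$ at all, and then observe that this system is unit upper triangular, hence uniquely solvable. The substantive step is the first one. Fix $t\le k$, $\beta\in\mathcal I_{nt}$ and $\alpha\in\mathcal I_{nk}$. In the product measure $\widetilde\mu_\beta=\mu_\beta\times\prod_{i\notin\beta}\nu_i$ the coordinates outside $\beta$ are independent of the $\beta$-block, so pushing forward under $\prj_\alpha$ (i.e.\ forgetting the coordinates not in $\alpha$) leaves the coordinates in $\alpha\cap\beta$ with joint law $\prj_{\alpha\cap\beta}(\mu_\beta)$, independently of the coordinates in $\alpha\setminus\beta$ which carry $\prod_{i\in\alpha\setminus\beta}\nu_i$. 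Since the family is consistent, $\prj_{\alpha\cap\beta}(\mu_\beta)=\mu_{\alpha\cap\beta}$, where $\mu_{\alpha\cap\beta}$ is the well-defined lower marginal (recall that for a consistent family $\mu_\gamma$ is defined for all $\gamma$ with $|\gamma|\le k$). Thus
\[
\prj_\alpha(\widetilde\mu_\beta)=\mu_{\alpha\cap\beta}\times\prod_{i\in\alpha\setminus\beta}\nu_i .
\]

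Next comes elementary bookkeeping. For $\alpha\in\mathcal I_{nk}$ and $0\le j\le k$ set $S^\alpha_j:=\sum_{\gamma\subseteq\alpha,\,|\gamma|=j}\mu_\gamma\times\prod_{i\in\alpha\setminus\gamma}\nu_i$, a measure on $X_\alpha$ with $S^\alpha_k=\mu_\alpha$. Summing the previous display over $\beta\in\mathcal I_{nt}$ and grouping by $\gamma:=\alpha\cap\beta$: for a fixed $\gamma\subseteq\alpha$ of size $j$, the $\beta\in\mathcal I_{nt}$ with $\alpha\cap\beta=\gamma$ are obtained by adjoining $t-j$ of the $n-k$ indices outside $\alpha$, so there are $\binom{n-k}{t-j}$ of them. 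Hence $\prj_\alpha(\widetilde\mu_t)=\sum_{j=0}^{t}\binom{n-k}{t-j}S^\alpha_j$, and therefore
\[
\prj_\alpha\Bigl(\sum_{t=0}^k\lambda_t\widetilde\mu_t\Bigr)=\sum_{j=0}^{k}\Bigl(\textstyle\sum_{t=j}^{k}\lambda_t\binom{n-k}{t-j}\Bigr)S^\alpha_j .
\]
It therefore suffices to pick $\lambda_0,\dots,\lambda_k$ so that the inner coefficient equals $1$ for $j=k$ and $0$ for $0\le j<k$; then $\prj_\alpha(\sum_t\lambda_t\widetilde\mu_t)=S^\alpha_k=\mu_\alpha$ for every $\alpha\in\mathcal I_{nk}$. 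Read from $j=k$ downwards, these $k+1$ equations form an upper-triangular linear system whose diagonal entries are all $\binom{n-k}{0}=1$; it has a unique solution, and since its coefficients $\binom{n-k}{t-j}$ involve neither $\alpha$ nor the $\nu_i$, neither do the resulting $\lambda_t$. This gives the theorem. (For an explicit formula one may encode the equations as $(1+y)^{n-k}\sum_{t}\lambda_t y^{k-t}=1$, whence $\lambda_t=(-1)^{k-t}\binom{n-t-1}{k-t}$.)

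The only genuinely delicate point is the projection identity of the first paragraph: one must be careful that pushing a product measure forward and then reading off marginals really produces $\mu_{\alpha\cap\beta}\times\prod_{i\in\alpha\setminus\beta}\nu_i$, and that consistency is exactly what lets us replace $\prj_{\alpha\cap\beta}(\mu_\beta)$ by the intrinsic lower marginal $\mu_{\alpha\cap\beta}$, independently of $\beta$. Once that identification is secured, the rest is binomial counting and a triangular back-substitution, with no further obstacle; note in particular that we only need the chosen $\lambda_t$ to be \emph{sufficient}, so no linear-independence statement about the $S^\alpha_j$ is required.
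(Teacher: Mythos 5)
Your proof follows essentially the same route as the paper's: compute $\prj_\alpha(\widetilde\mu_\beta)$ as a product measure, group by $\gamma=\alpha\cap\beta$ to obtain $\prj_\alpha(\widetilde\mu_t)=\sum_{j=0}^{t}\binom{n-k}{t-j}S^\alpha_j$ (your $S^\alpha_j$ is the paper's $\widetilde\mu^\alpha_j$), and solve the resulting unit upper-triangular system. The only additions beyond the paper's argument are the explicit identification $\prj_{\alpha\cap\beta}(\mu_\beta)=\mu_{\alpha\cap\beta}$ via consistency, which the paper leaves as ``easy to realise,'' and the pleasant closed form $\lambda_t=(-1)^{k-t}\binom{n-t-1}{k-t}$ via the generating-function identity $(1+y)^{n-k}\sum_t\lambda_t y^{k-t}\equiv 1\ (\mathrm{mod}\ y^{k+1})$, both of which are correct.
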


\begin{proof}
Fix  $\alpha \in \mathcal{I}_{nk}$. 
Introduce the following notations: 
\begin{align*}
    &\widetilde{\mu}^\alpha_\beta = \mu_{\beta} \times \prod_{\substack{i \not\in \beta \\ i \in \alpha}}\nu_i, \beta \subset \alpha, \\
    &\widetilde{\mu}^\alpha_t = \sum_{\substack{\beta \in \mathcal{I}_{nt} \\ \beta \subset \alpha}}\widetilde{\mu}^\alpha_\beta.
\end{align*}

For arbitrary $\beta \in \mathcal{I}_{nt}$, where $t \le k$, find a projection  $\widetilde{\mu}_\beta$ 
onto  $X_\alpha$. It is easy to realise that one obtains   $\widetilde{\mu}^\alpha_{\beta \cap \alpha}$. Let us project  $\widetilde{\mu}_t$ onto $X_\alpha$.
Applying definition of  $\widetilde{\mu}_t$ one can get
\[
\prj_\alpha(\widetilde{\mu}_t) = \sum_{\beta \in \mathcal{I}_{nt}}\widetilde{\mu}^\alpha_{\beta \cap \alpha} = \sum_{i = 0}^t\sum_{\substack{\gamma \in \mathcal{I}_{ni} \\ \gamma \subset \alpha}}\binom{n - k}{t - i}\widetilde{\mu}^\alpha_{\gamma} = \sum_{i = 0}^t\binom{n - k}{t - i}\widetilde{\mu}_i^\alpha.
\]

Thus we express $\prj_\alpha(\widetilde{\mu}_t)$ through $\widetilde{\mu}_i^\alpha$ 
with fixed coefficients.  
We get the following system of linear equations
\[
\sum_{t = 0}^k\lambda_t\prj_\alpha(\widetilde{\mu}_t) = \widetilde{\mu}^\alpha_k = \mu_{\alpha}.
\]
on $\lambda_t$.
The coefficient of $\widetilde{\mu}_i^\alpha$ equals  $0$ for  $i > t$ and equals $1$ for  $i = t$. Thus, the given system 
corresponds to a triangular matrix with units on the diagonal. This means that there exist a unique set of numbers $\lambda_t$, $0 \le t \le k$, satisfying 

In addition, we observe that these coefficients do not depend on $\alpha$. Thus, the signed measure $\sum_{t = 0}^k\lambda_t\widetilde{\mu}_t$ is uniting.
\end{proof}

\begin{example}
Let us give an example in the  $(3,2)$-case. One has
\begin{align*}
    &\widetilde{\mu}_0 = \nu_1 \times \nu_2 \times \nu_3,\\
    &\widetilde{\mu}_1 = \mu_1 \times \nu_2 \times \nu_3 + \nu_1 \times \mu_2 \times \nu_3 + \nu_1 \times \nu_2 \times \mu_3,\\
    &\widetilde{\mu}_2 = \mu_{12} \times \nu_3 + \mu_{13}\times \nu_2 + \mu_{23} \times \nu_1.
\end{align*}
The projections of these measures onto  $X_1 \times X_2$ are given by
\begin{align*}
    \prj_{12}(\widetilde{\mu}_0) &= \nu_1 \times \nu_2,\\
    \prj_{12}(\widetilde{\mu}_1) &= \prj_{12}(\mu_1 \times \nu_2 \times \nu_3) + \prj_{12}(\nu_1 \times \mu_2 \times \nu_3) + \prj_{12}(\nu_1 \times \nu_2 \times \mu_3)\\
    &=\mu_1 \times \nu_2 + \nu_1 \times \mu_2 + 1 \cdot \nu_1 \times \nu_2,\\
    \prj_{12}(\widetilde{\mu}_2) &= \prj_{12}(\mu_{12} \times \nu_3) + \prj_{12}(\mu_{13}\times \nu_2) + \prj_{12}(\mu_{23} \times \nu_1)\\
    &= \mu_{12} + \mu_1 \times \nu_2 + \nu_1 \times \mu_2.
\end{align*}

Thus for arbitrary coefficients  $\lambda_0$, $\lambda_1$, $\lambda_2$ one can find projection of $\lambda_0\widetilde{\mu}_0 + \lambda_1\widetilde{\mu}_1+\lambda_2\widetilde{\mu}_2$ onto $X_1 \times X_2$:
\[
\prj_{12}(\lambda_0\widetilde{\mu}_0 + \lambda_1\widetilde{\mu}_1+\lambda_2\widetilde{\mu}_2) = (\lambda_0 + \lambda_1)\nu_1 \times \nu_2 + (\lambda_1 + \lambda_2)(\mu_1 \times \nu_2 + \nu_1 \times \mu_2) + \lambda_2\mu_{12}.
\]

In order to have equality $\prj_{12}(\lambda_0\widetilde{\mu}_0 + \lambda_1\widetilde{\mu}_1+\lambda_2\widetilde{\mu}_2) = \mu_{12}$ it is sufficient to require $\lambda_0 + \lambda_1 = 0$, $\lambda_1 + \lambda_2 = 0$, $\lambda_2 = 1$. 
This system has a unique solution  $\lambda_0 = 1$, $\lambda_1 = -1$, $\lambda_2 = 1$. Thus $\prj_{12}(\widetilde{\mu}_0 - \widetilde{\mu}_1 + \widetilde{\mu}_2) = \mu_{12}$. 
By the reason of symmetry   $\prj_{13}(\widetilde{\mu}_0 - \widetilde{\mu}_1 + \widetilde{\mu}_2) = \mu_{13}$ and $\prj_{23}(\widetilde{\mu}_0 - \widetilde{\mu}_1 + \widetilde{\mu}_2) = \mu_{23}$.

\end{example}

\subsection{Dual condition for existence of a uniting measure.}

The following existence criterion for uniting measure is a particular case of a result obtained by Kellerer in \cite{Kellerer64}. We give an independent proof based on the use of  the minimax theorem.

\begin{theorem}
Let $X_1$, $X_2$, \dots, $X_n$ be compact metric spaces and let $\mu_\alpha \in \mathcal{P}(X_\alpha)$, $\alpha \in \mathcal{I}_{nk}$ be a fixed family of measures. 
Then  $\Pi(\mu_\alpha)$  is not empty if and only if for every set of functions   $f_\alpha \in L^1(X_\alpha, \mu_\alpha)$ satisfying assumption $\sum_{\alpha \in I_{nk}}f_\alpha(x_\alpha) \ge 0$ for all  $x \in X$ the following inequality holds: 
\[
\sum_{\alpha \in \mathcal{I}_{nk}}\int_{X_\alpha}f_\alpha~d\mu_\alpha \ge 0.
\]
\end{theorem}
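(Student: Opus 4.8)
The forward direction (necessity) is the easy half: if $\pi \in \Pi(\mu_\alpha)$ and $\sum_\alpha f_\alpha(x_\alpha)\ge 0$ pointwise, then integrating against $\pi$ gives $\sum_\alpha \int_{X_\alpha} f_\alpha \, d\mu_\alpha = \int_X \bigl(\sum_\alpha f_\alpha(x_\alpha)\bigr)\,d\pi \ge 0$, using only that $\prj_\alpha(\pi) = \mu_\alpha$. So the content is in the converse, and the plan is to prove the contrapositive via a minimax/separation argument: assuming $\Pi(\mu_\alpha)$ is empty, produce a system $\{f_\alpha\}$ with $\sum_\alpha f_\alpha \ge 0$ but $\sum_\alpha \int f_\alpha \, d\mu_\alpha < 0$.

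\textbf{Key steps.} First I would set up the right function-analytic framework: since the $X_i$ are compact metric, $X$ is compact metric, $C(X)$ is a Banach space, and $\mathcal{P}(X) \subset C(X)^*$ is weak-$*$ compact and convex. Consider the continuous linear map $\Phi \colon \mathcal{M}(X) \to \prod_{\alpha \in \mathcal{I}_{nk}} \mathcal{M}(X_\alpha)$ sending $\pi \mapsto (\prj_\alpha(\pi))_\alpha$; feasibility of the primal problem is exactly the statement that the target point $(\mu_\alpha)_\alpha$ lies in $\Phi(\mathcal{P}(X))$. The natural route is to introduce the function
\[
F(\pi) \;=\; \max_{\alpha \in \mathcal{I}_{nk}} \; \sup_{\|f_\alpha\|_\infty \le 1} \Bigl( \int_{X_\alpha} f_\alpha \, d\bigl(\prj_\alpha \pi\bigr) - \int_{X_\alpha} f_\alpha \, d\mu_\alpha \Bigr),
\]
or, more cleanly, to study the two-player game with payoff
\[
K(\pi, (f_\alpha)) \;=\; \sum_{\alpha \in \mathcal{I}_{nk}} \int_{X_\alpha} f_\alpha \, d\bigl(\prj_\alpha \pi - \mu_\alpha\bigr) \;=\; \int_X \Bigl( \sum_{\alpha} f_\alpha(x_\alpha) \Bigr) d\pi - \sum_\alpha \int_{X_\alpha} f_\alpha \, d\mu_\alpha,
\]
where $\pi$ ranges over $\mathcal{P}(X)$ and $(f_\alpha)$ ranges over a suitable convex set of tuples of continuous functions, say the unit ball of $\prod_\alpha C(X_\alpha)$ in the sup-norm. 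The set $\mathcal{P}(X)$ is convex and weak-$*$ compact, $K$ is affine (hence concave) and weak-$*$ continuous in $\pi$ for fixed $(f_\alpha)$, and affine (hence convex) in $(f_\alpha)$ for fixed $\pi$; so Sion's minimax theorem applies and gives
\[
\inf_{(f_\alpha)} \sup_{\pi \in \mathcal{P}(X)} K(\pi, (f_\alpha)) \;=\; \sup_{\pi \in \mathcal{P}(X)} \inf_{(f_\alpha)} K(\pi, (f_\alpha)).
\]
The inner infimum on the right is $0$ exactly when $\prj_\alpha \pi = \mu_\alpha$ for all $\alpha$, and is $-\infty$ otherwise (scale $f_\alpha$ to exploit the discrepancy); so the right side equals $0$ if $\Pi(\mu_\alpha) \ne \emptyset$ and equals $-\infty$ if it is empty. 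Therefore, if $\Pi(\mu_\alpha) = \emptyset$, the left side is $-\infty$, which in particular means there is a tuple $(f_\alpha)$ with $\sup_{\pi \in \mathcal{P}(X)} K(\pi, (f_\alpha)) < 0$; taking $\pi = \delta_x$ for each $x \in X$ gives $\sum_\alpha f_\alpha(x_\alpha) < 0$ for all $x$ together with $-\sum_\alpha \int f_\alpha \, d\mu_\alpha < 0$, i.e. a system of \emph{continuous} $f_\alpha$ with $\sum_\alpha f_\alpha \ge 0$ (after sign flip and a harmless constant shift) but strictly negative total integral. This contradicts the hypothesis, proving $\Pi(\mu_\alpha) \ne \emptyset$. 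Finally, a density/approximation remark handles the passage between $C(X_\alpha)$ and $L^1(X_\alpha, \mu_\alpha)$: any $L^1$ system satisfying $\sum_\alpha f_\alpha \ge 0$ can be reduced to (or the conclusion extended from) the continuous case, so it suffices to test against continuous functions.

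\textbf{Main obstacle.} The genuinely delicate point is handling the non-compactness/unboundedness mismatch between the $L^1$ statement in the theorem and the $C(X)$ framework where minimax and weak-$*$ compactness are available — i.e. justifying that testing against bounded continuous $f_\alpha$ is equivalent to testing against all $L^1$ systems with $\sum_\alpha f_\alpha \ge 0$. On compact $X_i$ this is manageable: continuous functions are automatically bounded hence in every $L^1(\mu_\alpha)$, and a truncation-plus-Lusin argument approximates an $L^1$ system from below by continuous ones without destroying the inequality $\sum_\alpha f_\alpha \ge 0$ in an essential way; but getting the inequalities to go in the correct direction under approximation requires care, and this is where I would spend most of the effort. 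A secondary technical point is making sure the inner infimum computation ("$0$ if feasible, $-\infty$ otherwise") is rigorous, which needs that the discrepancy $\prj_\alpha \pi - \mu_\alpha$, if nonzero, is detected by some continuous function — immediate since $C(X_\alpha)$ separates measures on the compact space $X_\alpha$.
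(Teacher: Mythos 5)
Your proposal is correct and establishes the result, but it takes a genuinely cleaner route than the paper's. Both proofs are minimax-type duality arguments exploiting compactness of $\mathcal{P}(X)$, but the paper applies the Fenchel--Rockafellar theorem on $E = C_b(X)$ with a functional $\Psi(u) = \sum_\alpha \int u_\alpha \, d\mu_\alpha$ defined on functions admitting a decomposition $u = \sum_\alpha u_\alpha(x_\alpha)$; since such decompositions are not unique, the paper must prove $\Psi$ is well-defined, and it does this by invoking the existence of a signed uniting measure (the paper's \cref{explicit_signed_construction}). That construction in turn requires the family $\{\mu_\alpha\}$ to be consistent, forcing the paper into a preliminary case split where the inconsistent case is handled by an explicit ad hoc construction. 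Your approach, using Sion's theorem with the bilinear payoff $K(\pi,(f_\alpha)) = \sum_\alpha \int_{X_\alpha} f_\alpha \, d(\prj_\alpha\pi - \mu_\alpha)$ and treating the dual variable as the \emph{tuple} $(f_\alpha)$ rather than the sum function, avoids the well-definedness issue entirely, hence needs neither the signed uniting measure nor the consistency reduction, and handles all cases uniformly. One small point: the ``main obstacle'' you anticipated — passing between $C(X_\alpha)$ and $L^1(X_\alpha,\mu_\alpha)$ — is actually a non-issue here and requires no approximation argument. The forward direction works verbatim for $L^1$ systems (integrate $\sum f_\alpha \ge 0$ against a uniting measure), and the converse direction via minimax produces \emph{continuous} $f_\alpha$, which on compact $X_\alpha$ are automatically in every $L^1(\mu_\alpha)$; producing a continuous counterexample falsifies the $L^1$ hypothesis a fortiori. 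Also tighten the final step: from $\sup_\pi K(\pi,(f_\alpha)) < 0$ you get $\max_x \sum_\alpha f_\alpha(x_\alpha) < \sum_\alpha\int f_\alpha\,d\mu_\alpha$; only after the constant shift do you separate this into $\sum_\alpha f_\alpha(x_\alpha) < 0$ pointwise with positive total integral, and then the sign flip gives the required counterexample — the order ``shift, then split, then flip'' matters, though you flagged all the ingredients.
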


\begin{proof}
The existence of a uniting measure trivially implies the inequality. If  $\mu \in \Pi(\mu_\alpha)$ and the set of functions  $f_\alpha$ satisfies the assumption of the theorem, the function 
 $F(x) = \sum_{\alpha \in \mathcal{I}_{nk}}f_\alpha(x_\alpha)$ is integrable with respect to $\mu$ and the following inequality holds:
\[
\sum_{\alpha \in \mathcal{I}_{nk}}\int_{X_\alpha}f_\alpha~d\mu_\alpha = \int_X F~d\mu \ge \int_X 0~d\mu = 0.
\]

Let us prove the theorem in the other direction.
Assume that the set of measures $\mu_\alpha$ does not satisfy assumptions of \cref{weak_condition}. Then there exists  $\alpha, \beta \in \mathcal{I}_{nk}$, such that the measures  
$\nu_1 = \prj_{\alpha \cap \beta}\mu_\alpha$ and $\nu_2 = \prj_{\alpha \cap \beta}\mu_\beta$ are different. Let $A$ be a subset of $X_{\alpha \cap \beta}$ satisfying $\nu_1(A) < \nu_2(A)$. 
Set: $f_\alpha(x_\alpha) = 1$ if $x_{\alpha \cap \beta} \in A$ and $0$ in the opposite case. In addition, set $f_\beta(x_\beta) = -1$ if $x_{\alpha \cap \beta} \in A$, and $0$ in the opposite case; $f_\gamma(x_\gamma) = 0$, if  $\gamma \not\in \{\alpha, \beta\}$. 
Then  $\sum_{\gamma \in \mathcal{I}_{nk}}f_\gamma(x_\gamma) = 0$ for all  $x \in X$. On the other hand 
\[
\sum_{\gamma \in \mathcal{I}_{nk}}\int_{X_\gamma}f_\gamma~d\mu_\gamma = \int_{X_\alpha}f_\alpha~d\mu_\alpha + \int_{X_\beta}f_\beta~d\mu_\beta = \nu_1(A) - \nu_2(A) < 0.
\]

Thus, one can assume without loss of generality that the set of measures  $\mu_\alpha$  satisfies \cref{weak_condition}. 
We apply the following version of the minimax theorem (see \cite{Brezis}, \cite{Villani}):
\begin{theorem}[Fenchel-Rockafellar Duality]
Let $E$ be a normed vector space and $E^*$ be the corresponding dual space. Consider convex functions  $\Phi$ and $\Psi$ on $E$, taking values in $\mathbb{R} \cup \{+\infty\}$.
Let $\Phi^*$ and $\Psi^*$ be the corresponding Legendre transforms. In addition, assume that there exists  $z \in E$ satisfying $\Phi(z) < +\infty$, $\Psi(z) < +\infty$. Then  
\[
\inf_{E}[\Psi + \Phi] = \max_{z \in E^*}[-\Phi^*(-z) - \Psi^*(z)].
\]
\end{theorem}

Let  $E$ be the space of continuous (bounded) functions on  $X$ equipped with the uniform convergence norm  $||~||_{\infty}$. According to Radon theorem  $E^*$ is the space of finite signed measures on  $X$ equipped with the full variation norm.
Set: 
\begin{align*}
\Phi: u \in C_b(X) \to 
\begin{cases}
0 \text{, if } u \ge 0, \\
+\infty \text{ otherwise}.
\end{cases}
\end{align*}
\begin{align*}
\Psi: u \in C_b(X) \to 
\begin{cases}
\sum_{\alpha \in \mathcal{I}_{nk}}\int_{X_\alpha}u_\alpha~d\mu_\alpha \text{, if } u(x) = \sum_{\alpha \in \mathcal{I}_{nk}}u_\alpha(x_\alpha), \\
+\infty \text{ otherwise}.
\end{cases}
\end{align*}

Function $\Psi$ is well-defined, indeed, if  $\mu$ is a signed measure satisfying $\prj_\alpha\mu = \mu_\alpha$ for all $\alpha \in \mathcal{I}_{nk}$, then $\int_X u~d\mu = \sum_{\alpha \in \mathcal{I}_{nk}}\int_{X_\alpha}u_\alpha~d\mu_\alpha$. 
The signed measure  $\mu$ exists by \cref{explicit_signed_construction}. It is easy to check that functions $\Psi$ and $\Phi$ are convex; in addition, function  $u \equiv 1$ satisfies assumptions of the minimax theorem. Thus, the following equality holds:
\[
\inf_{E}[\Psi + \Phi] = \max_{z \in E^*}[-\Phi^*(-z) - \Psi^*(z)].
\]
It is easy to check that
\[\inf_E[\Phi + \Psi] = \inf_{\sum f_\alpha \ge 0}\sum\int_{X_\alpha}f_\alpha~d\mu_\alpha.\]

Let us find $\Phi^*(-\pi)$. 
\[
\Phi^*(-\pi) = \sup_{u \ge 0}\left[-\int_X u~d\pi\right] = -\inf_{u \ge 0}\int_X u~d\pi
\]
If  $\pi$ is nonnegative, then $\int_X u~d\pi \ge 0$ for all $u \ge 0$. Otherwise $\int_X u~d\pi$ can take arbitrary small values. Hence
\begin{align*}
    \Phi^*(-\pi) = \begin{cases}
    0 \text{, if } \pi \ge 0,\\
    +\infty \text{, otherwise}.
    \end{cases}
\end{align*}
In the same way we check   that 
\begin{align*}
    \Psi^*(\pi) = \begin{cases}
    0 \text{, if } \prj_\alpha\pi = \mu_\alpha,\\
    +\infty \text{, otherwise}.
    \end{cases}
\end{align*}
Thus the maximum $\max_{\pi \in E^*}[-\Phi^*(-\pi) - \Psi^*(\pi)]$ equals $0$, if there exists a nonnegative uniting measure, otherwise it equals $-\infty$. 
In particular, if a uniting measure does not exist, then $\inf_{\sum f_\alpha \ge 0}\sum\int_{X_\alpha}f_\alpha~d\mu_\alpha = -\infty$. Hence there exist  continuous functions $f_\alpha$ satisfying $\sum\int_{X_\alpha}f_\alpha~d\mu_\alpha < 0$.
\end{proof}

\subsection{Sufficient condition for existence of a uniting measure}

Let us mention the following trivial sufficient condition for existence of uniting measure.
\begin{proposition}
Assume that there exists a family of measures $\nu_i \in \mathcal{P}(X_i)$, $1 \le i \le n$, such that  $\mu_\alpha = \prod_{i \in \alpha}\nu_i$  $\alpha \in \mathcal{I}_{nk}$ . Then the set  $\Pi(\mu_\alpha)$ is non-empty and  $\prod_{i = 1}^n \nu_i$ is a uniting measure.
\end{proposition}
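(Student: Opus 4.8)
The plan is to simply exhibit $\prod_{i=1}^n \nu_i$ as an element of $\Pi(\mu_\alpha)$, which settles non-emptiness at once. First I would set $\mu = \prod_{i=1}^n \nu_i$. Since each $\nu_i$ is a probability measure on the Polish space $X_i$, the product $\mu$ is a well-defined Borel probability measure on $X = \prod_{i=1}^n X_i$ (the product of Polish spaces is Polish, and the product $\sigma$-algebra coincides with the Borel $\sigma$-algebra).

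Next I would compute the projections. Fix $\alpha \in \mathcal{I}_{nk}$. The projection $\prj_\alpha \colon X \to X_\alpha$ forgets the coordinates outside $\alpha$. For a product measure the pushforward under such a coordinate projection is again a product, namely the product of the factors indexed by $\alpha$; this is the elementary fact that $\prj_\alpha\bigl(\bigotimes_{i=1}^n \nu_i\bigr) = \bigotimes_{i \in \alpha}\nu_i$, which one checks on measurable rectangles $\prod_{i \in \alpha} B_i$ and extends by a monotone class / $\pi$-$\lambda$ argument. Hence $\prj_\alpha(\mu) = \prod_{i \in \alpha}\nu_i = \mu_\alpha$ by hypothesis.

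Since this holds for every $\alpha \in \mathcal{I}_{nk}$, the measure $\mu$ is uniting in the sense of the definition, so $\mu \in \Pi(\mu_\alpha)$ and in particular $\Pi(\mu_\alpha) \neq \varnothing$.

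There is essentially no obstacle here: the only substantive ingredient is the standard behaviour of product measures under coordinate projections, and the consistency of the family $\{\mu_\alpha\}$ is automatic because the $\mu_\alpha$ are themselves products of the same $\nu_i$. This Proposition is really the special case $k$ arbitrary of the trivial construction already used for $k=1$, and it is the base case motivating the more delicate density bounds of \cref{thm:density_condition}.
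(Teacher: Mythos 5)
Your proof is correct and is exactly the intended argument; the paper states this proposition without proof precisely because the only content is the standard fact that coordinate projections of a product measure are the corresponding sub-products, which you verify carefully.
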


We generalize this sufficient condition using \cref{explicit_signed_construction}.

\begin{theorem}[Density condition] \label{thm:density_condition} For given natural numbers $1 \le k <n$
there exists a constant $\lambda_{nk} > 1$
which admits the following property.

Assume we are given a consistent family of probability measures $\mu_\alpha \in \mathcal{P}(X_\alpha)$,  $\alpha \in \mathcal{I}_{nk}$, and another family of probability measures  $\nu_i \in \mathcal{P}(X_i)$, $1 \le i \le n$.  
Assume that  every measure  $\mu_\alpha$,  $\alpha \in \mathcal{I}_{nk}$, is absolutely continuous with respect to  $\nu_\alpha =  \prod_{i \in \alpha}\nu_i$:
\[
\mu_\alpha = \rho_{\alpha} \cdot \nu_{\alpha}.
\]
Finally, assume that  there exist constants $0 < m \le M$ such that every density $\rho_{\alpha}$ satisfies $m \le \rho_\alpha \le M$ $\nu_\alpha$-almost everywhere for all $\alpha \in \mathcal{I}_{nk}$.

Then  $\Pi(\mu_\alpha)$ is not empty provided  $\frac{M}{m} \le \lambda_{nk}$.
\end{theorem}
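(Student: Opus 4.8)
The plan is to perturb off the signed uniting measure furnished by \cref{explicit_signed_construction}. Apply that theorem with the present family $\nu_i$ to obtain a signed measure $\mu=\sum_{t=0}^{k}\lambda_t\widetilde\mu_t$ with $\prj_\alpha(\mu)=\mu_\alpha$ for all $\alpha\in\mathcal I_{nk}$, the coefficients $\lambda_t$ depending only on $n$ and $k$ (they solve the universal triangular system in the proof of \cref{explicit_signed_construction}). I claim that under the density hypothesis this $\mu$ is automatically nonnegative, hence a genuine uniting probability measure. Indeed, by consistency each measure $\mu_\alpha$ with $\alpha\in\mathcal I_{nt}$, $0\le t\le k$, is absolutely continuous with respect to $\nu_\alpha=\prod_{i\in\alpha}\nu_i$ with a density $\rho_\alpha$ satisfying $m\le\rho_\alpha\le M$ $\nu_\alpha$-a.e.: for $t<k$ write $\mu_\alpha=\prj_\alpha(\mu_\beta)$ with $\beta\in\mathcal I_{nk}$, $\beta\supset\alpha$, so that $\rho_\alpha$ is an average of values of $\rho_\beta$ with respect to $\prod_{i\in\beta\setminus\alpha}\nu_i$ and inherits the two-sided bound. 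Consequently $\widetilde\mu_\alpha=\mu_\alpha\times\prod_{i\notin\alpha}\nu_i=\rho_\alpha(x_\alpha)\cdot\nu$, where $\nu=\prod_{i=1}^{n}\nu_i$, and therefore $\mu=D\cdot\nu$ with $D(x)=\sum_{t=0}^{k}\lambda_t\sum_{\alpha\in\mathcal I_{nt}}\rho_\alpha(x_\alpha)$.

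It remains to show $D\ge0$ $\nu$-a.e. when $M/m$ is close to $1$. For the normalization, apply \cref{explicit_signed_construction} to the degenerate data $\mu_\alpha=\nu_\alpha$: then $\widetilde\mu_\alpha=\nu$ for every $\alpha$, hence $\widetilde\mu_t=\binom{n}{t}\nu$ and the constructed signed measure equals $\bigl(\sum_{t=0}^{k}\lambda_t\binom{n}{t}\bigr)\nu$; comparing projections onto $X_\alpha$ with $\nu_\alpha=\prj_\alpha(\nu)$ gives
\[
\sum_{t=0}^{k}\lambda_t\binom{n}{t}=1.
\]
Next, since $\int\rho_\alpha\,d\nu_\alpha=1$ and $m\le\rho_\alpha\le M$ we have $m\le1\le M$, hence $\rho_\alpha-1\le M-1\le M-m\le M/m-1$ and $1-\rho_\alpha\le1-m\le 1/m-1\le M/m-1$; thus $|\rho_\alpha-1|\le M/m-1=:\delta$ for every $\alpha\in\mathcal I_{nt}$, $t\le k$. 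Writing $\rho_\alpha=1+\varepsilon_\alpha$ and using the normalization identity,
\[
D(x)=1+\sum_{t=0}^{k}\lambda_t\sum_{\alpha\in\mathcal I_{nt}}\varepsilon_\alpha(x_\alpha),\qquad |D(x)-1|\le\delta\sum_{t=0}^{k}|\lambda_t|\binom{n}{t}.
\]
Set $\lambda_{nk}:=1+\bigl(\sum_{t=0}^{k}|\lambda_t|\binom{n}{t}\bigr)^{-1}$; since $\sum_t|\lambda_t|\binom{n}{t}\ge\bigl|\sum_t\lambda_t\binom{n}{t}\bigr|=1$ we have $1<\lambda_{nk}\le2$. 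If $M/m\le\lambda_{nk}$, i.e. $\delta\le\bigl(\sum_t|\lambda_t|\binom{n}{t}\bigr)^{-1}$, then $|D-1|\le1$ forces $D\ge0$ $\nu$-a.e., so $\mu=D\cdot\nu$ is a nonnegative measure; its projections onto the $X_\alpha$ are the probability measures $\mu_\alpha$, hence $\mu(X)=1$ and $\mu\in\Pi(\mu_\alpha)$, which proves $\Pi(\mu_\alpha)\ne\emptyset$.

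There is no deep obstacle here: the only mild points requiring care are verifying that the consistent extensions $\mu_\alpha$ for $t<k$ inherit the density bounds and pinning down the normalization constant $\sum_t\lambda_t\binom{n}{t}=1$; the rest is a one-line perturbation estimate on the density of the signed corrector. The constant $\lambda_{nk}$ produced this way is far from sharp, because the triangle inequality above discards the linear relations among the $\rho_\alpha$ imposed by consistency as well as the freedom in choosing the $\nu_i$; this is why a better explicit value for $\lambda_{32}$ is obtained separately by a more careful, problem-specific analysis.
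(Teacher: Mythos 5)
Your proof is correct and uses the same core strategy as the paper: build the universal signed uniting measure from \cref{explicit_signed_construction} and show nonnegativity of its density with respect to $\nu$ by a perturbation argument, arriving at exactly the same constant $\lambda_{nk}=1+\bigl(\sum_{t=0}^k|\lambda_t|\binom{n}{t}\bigr)^{-1}$. The bookkeeping differs slightly and is arguably cleaner in your version: the paper first centers the data via $\mu'_\alpha=(\mu_\alpha-m\nu_\alpha)/(1-m)$, applies the signed construction to $\mu'_\alpha$, bounds its density from below, and then recombines with $m\nu$; you apply the signed construction directly to $\mu_\alpha$, make explicit the normalization identity $\sum_{t=0}^k\lambda_t\binom{n}{t}=1$ (which the paper uses only implicitly), and expand the density around the constant $1$ using $|\rho_\alpha-1|\le M/m-1$. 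Your step verifying that the induced densities $\rho_\alpha$ for $|\alpha|<k$ inherit the bounds $m\le\rho_\alpha\le M$, and the derivation of the normalization identity by feeding the degenerate data $\mu_\alpha=\nu_\alpha$ into the uniqueness part of \cref{explicit_signed_construction}, are both sound; no gaps.
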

\begin{proof}
The definition of $m$ implies that  $\mu_\alpha - m \cdot \nu_\alpha$  is a nonnegative measure for all  $\alpha \in \mathcal{I}_{nk}$, hence $m \le 1$, because  both $\mu_\alpha$ and $\nu_\alpha$ are probability measures.
In addition, if $m = 1$, the  $\mu_\alpha - \nu_\alpha = 0$  for all $\alpha \in \mathcal{I}_{nk}$. In this case  the measure $\nu = \prod_{i = 1}^n\nu_i$ is uniting.

Consider the case $m < 1$. Note that   $\mu'_\alpha = (\mu_\alpha - m\cdot\nu_\alpha) / (1 - m)$ is a probability measure for all $\alpha \in \mathcal{I}_{nk}$, which is absolutely continuous 
with respect to  $\nu_\alpha$ and its density is bounded from above by  $\frac{m}{1 - m}(\lambda_{nk} - 1) > 0$. In addition, the family of measures   $\mu'_\alpha$ satisfies consistency condition.
\cref{explicit_signed_construction} implies that given measures  $\nu_i$ and $\mu'_\alpha$ one can construct a family of measures $\widetilde{\mu}'_t$ and find numbers $\lambda_t$ such that the signed measure $\sum_{t = 0}^k \lambda_t \widetilde{\mu}'_t$ is uniting.
Note that  $\mu'_\alpha$ is absolutely continuous with respect to $\nu_\alpha$  for all  $\alpha \in \mathcal{I}_{nt}$,  $1 \le t \le k$,
moreover, its density is bounded from above by   $\frac{m}{1 - m}(\lambda_{nk} - 1)$. This means that the same condition holds for  $\widetilde{\mu}'_\alpha$, where we consider the corresponding density with respect to 
 $\nu = \prod_{i = 1}^n\nu_i$. Hence $\widetilde{\mu}'_t$ is absolutely continuous with respect to  $\nu$ and its density is bounded almost everywhere by  $\binom{n}{t}\cdot \frac{m}{1 - m}(\lambda_{nk} - 1)$.

We infer from this  that the density of the signed uniting measure  $\mu' = \sum_{t = 0}^k\lambda_k\widetilde{\mu}'_t$ is bounded from below by  $-\sum_{t = 0}^k|\lambda_t|\binom{n}{t}\frac{m}{1 - m}(\lambda_{nk} - 1) = -C \cdot \frac{m}{1 - m}(\lambda_{nk} - 1)$, where $C$ depends on $(n, k)$ only.

Let us prove that the assertion of the theorem holds for  $\lambda_{nk} = 1 + \frac{1}{C}$.  For the set of measures  $\mu'_\alpha$ we constructed a uniting signed measure  $\mu'$ which density with respect to  $\nu$ is almost everywhere bounded from below by number  $-C \cdot \frac{m}{1 - m}(\lambda_{nk} - 1) = -\frac{m}{1 - m}$. 
Then  $\mu = (1 - m)\mu' + m\nu$ is a uniting measure for the family  $\mu_\alpha$, and its density is nonnegative  $\nu$-almost everywhere, hence $\mu$ is nonnegative.
\end{proof}
Thus we obtained a sufficient condition for existence of uniting measure for a wide class of functions. Moreover, the uniting measure obtained in \cref{thm:density_condition} admits a bounded density. 
However, it is often helpful to require density to be bounded away from zero.

\begin{definition}
We say that measures $\mu$ and $\nu$ on the same measurable space  $(X, \mathcal{F})$ are  \textit{uniformly equivalent}, if  there exists a Radon--Nicodym density  $\rho$ of $\mu$ with respect to  $\nu$,  which is bounded from above and from below by positive constants:
 $0 < m \le \rho(x) \le M$ for all  $x \in X$.
\end{definition}

In particular, uniformly equivalent measures are absolutely continuous with respect to each other. Following the proof of  \cref{thm:density_condition} one can easily check

\begin{theorem}[Uniformly equivalent density condition] \label{thm:density_condition_eqv}
Under assumption of \cref{thm:density_condition} there exists constant  $\widehat{\lambda}_{nk} > 1$ with the following property. If all  $\alpha \in \mathcal{I}_{nk}$ satisfy $m \le \rho_\alpha \le M$ $\nu_\alpha-$almost everywhere and $\frac{M}{m} \le \widehat{\lambda}_{nk}$, 
then the set  $\Pi(\mu_\alpha)$ contains at least one measure which is uniformly equivalent to  $\prod_{i = 1}^n\nu_i$.
\end{theorem}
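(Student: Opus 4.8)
The plan is to retrace the proof of \cref{thm:density_condition} almost verbatim, recording an \emph{upper} bound on the density of the auxiliary signed measure alongside the lower bound already extracted there, and choosing the target ratio $\widehat{\lambda}_{nk}$ slightly smaller than the constant $\lambda_{nk}$ produced in \cref{thm:density_condition}. First I would dispose of the degenerate case $m = 1$: then $\mu_\alpha = \nu_\alpha$ for every $\alpha$, so $\mu = \prod_{i=1}^n \nu_i$ is itself a uniting measure uniformly equivalent to $\prod_{i=1}^n \nu_i$. Henceforth assume $m < 1$, put $\nu = \prod_{i=1}^n \nu_i$ and $\mu'_\alpha = (\mu_\alpha - m \nu_\alpha)/(1-m)$ as before; then each $\mu'_\alpha$ is a probability measure, the family $\{\mu'_\alpha\}$ is consistent, and the density of $\mu'_\alpha$ with respect to $\nu_\alpha$ lies in the interval $[0, \delta]$, where $\delta := \frac{m}{1-m}(\widehat{\lambda}_{nk} - 1)$, the lower bound $0$ coming from $\rho_\alpha \ge m$ and the upper bound $\delta$ from $\rho_\alpha \le M \le m \widehat{\lambda}_{nk}$.

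Next I would invoke \cref{explicit_signed_construction} for the family $\{\mu'_\alpha\}$ and the probabilities $\{\nu_i\}$, obtaining coefficients $\lambda_0, \dots, \lambda_k$ depending only on $(n,k)$ and a signed uniting measure $\mu' = \sum_{t=0}^k \lambda_t \widetilde{\mu}'_t$. The key point — already verified inside the proof of \cref{thm:density_condition} — is that every $\widetilde{\mu}'_t$ is a \emph{nonnegative} measure whose density with respect to $\nu$ takes values in $[0, \binom{n}{t}\delta]$: projecting $\mu'_\alpha$ onto any lower-dimensional factor $X_\gamma$, $\gamma \in \mathcal{I}_{nt}$ with $t \le k$, cannot increase the essential supremum of the density, multiplying by the remaining $\nu_i$ leaves it unchanged, and $\widetilde{\mu}'_t$ is a sum of $\binom{n}{t}$ such terms. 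Hence the density $g$ of $\mu'$ with respect to $\nu$ satisfies $|g| \le C\delta$, where $C = \sum_{t=0}^k |\lambda_t| \binom{n}{t}$ depends only on $(n,k)$; this is the same $C$ as in \cref{thm:density_condition}, only now harnessed for a two-sided bound rather than a one-sided one.

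Finally I would set $\widehat{\lambda}_{nk} = 1 + \frac{1}{2C}$, which is strictly smaller than the value $1 + \frac{1}{C}$ chosen in \cref{thm:density_condition}. Then $C\delta = \frac{m}{2(1-m)}$, so $|g| \le \frac{m}{2(1-m)}$, and I put $\mu = (1-m)\mu' + m\nu$. Exactly as in \cref{thm:density_condition}, $\mu$ is a uniting measure for $\{\mu_\alpha\}$; its density with respect to $\nu$ equals $(1-m)g + m$, which lies in the interval $[\frac{m}{2}, \frac{3m}{2}]$. Since $m > 0$, both endpoints are positive and finite, so (a suitable version of) this density witnesses that $\mu$ is uniformly equivalent to $\prod_{i=1}^n \nu_i$, which is exactly the assertion.

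I do not expect a genuine obstacle: the argument is pure bookkeeping layered on top of \cref{thm:density_condition}. The only point demanding slight care is that the positive lower constant $\frac{m}{2}$ is available only because $\widehat{\lambda}_{nk}$ was taken \emph{strictly} below $1 + \frac{1}{C}$ — the original choice $\lambda_{nk} = 1 + \frac{1}{C}$ yields density $\ge 0$ but not bounded away from $0$ — and that the bound $\delta$ indeed propagates from $\mu'_\alpha$, $\alpha \in \mathcal{I}_{nk}$, to all $\widetilde{\mu}'_\alpha$ with $\alpha \in \mathcal{I}_{nt}$, $t \le k$, which is precisely the step already carried out in the proof of \cref{thm:density_condition}.
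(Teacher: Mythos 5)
Your argument is correct and is essentially the approach the paper has in mind: the paper's own proof is just the remark ``Following the proof of Theorem~\ref{thm:density_condition} one can easily check,'' and your write-up supplies exactly the missing bookkeeping — tracking the two-sided bound $|g|\le C\delta$ on the density of the signed measure $\mu'$, and then choosing $\widehat{\lambda}_{nk}=1+\tfrac{1}{2C}$ strictly below $\lambda_{nk}=1+\tfrac{1}{C}$ so that the resulting density $(1-m)g+m$ stays in $[\tfrac{m}{2},\tfrac{3m}{2}]$, hence bounded away from $0$ and $\infty$.
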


\subsection{Estimates for \texorpdfstring{$(3,2)$}{(3, 2)}-case}
In the $(3,2)$-case one can obtain explicit estimates on the optimal value of  $\lambda_{32}$ from \cref{thm:density_condition}.

\begin{proposition}
For $\lambda_{32} > 2$ the conclusion of \cref{thm:density_condition} does not hold.
\end{proposition}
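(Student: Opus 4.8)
The plan is to exhibit, for every $\lambda > 2$, a consistent family $\mu_{12},\mu_{13},\mu_{23}$ on $\{0,1\}^2$ (so $\nu_1=\nu_2=\nu_3$ the uniform measure on two points) whose densities all lie in a band $[m,M]$ with $M/m$ arbitrarily close to $2$ from above, yet $\Pi(\mu_{12},\mu_{13},\mu_{23})=\varnothing$. Passing to the discrete $(3,2)$-case is natural because the obstruction to feasibility is combinatorial, and the two-point spaces make the bookkeeping minimal: a uniting measure is a nonnegative array $p_{ijk}$, $i,j,k\in\{0,1\}$, with the three prescribed $2\times2$ marginals.

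First I would reduce to a one-parameter family. Put $\nu_i = \tfrac12(\delta_0+\delta_1)$, so $\nu_\alpha$ assigns mass $\tfrac14$ to each of the four points of $\{0,1\}^2$. A density in $[m,M]$ with $M/m = \lambda$ just close to $2$ should be taken of the form $\rho_\alpha \in \{m,M\}$ with $M = 2m - \varepsilon$; since the $\mu_\alpha$ are probability measures, the average of $\rho_\alpha$ over the four points is $1$, which forces exactly two of the four values to be $M$ and two to be $m$ when $M+m=2$, i.e. in the limiting case $\lambda=2$. Concretely, take each $\mu_\alpha$ to be (a small perturbation of) the measure putting mass $\tfrac12$ on the two points of $X_\alpha$ where the two coordinates are \emph{equal} and mass $0$ on the two where they differ — that is $\mu_\alpha$ concentrated on the diagonal of $\{0,1\}^2$. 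This is consistent: every pairwise projection onto a single $X_i$ is the uniform measure $\nu_i$. But a uniting measure $p_{ijk}\ge 0$ with all three projections supported on diagonals must have $p_{ijk}=0$ unless $i=j$, $i=k$ and $j=k$ simultaneously, i.e. unless $i=j=k$; and then the projection onto, say, $X_1\times X_2$ is $\tfrac{p_{000}}{\cdot}\delta_{(0,0)} + p_{111}\delta_{(1,1)}$, which can match $\mu_{12}$ only if $p_{000}=p_{111}=\tfrac12$, but then $p_{000}+p_{111}=1$, forcing $p_{ijk}=0$ off the main diagonal, and the projection to $X_1\times X_3$ is again the diagonal measure — fine — so actually $\mu_\alpha$ \emph{diagonal} IS feasible. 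So the sharp family is not the diagonal one; I would instead use the "odd-sum" family from the proof that consistency is insufficient (the $(n,k)$ counterexample with $X_i=\{0,\dots,k-1\}$, here $k=2$): $\mu_\alpha$ puts mass $\tfrac12$ on the two points of $\{0,1\}^2$ with coordinate sum $\equiv 1\pmod 2$. That family is consistent, has densities $\rho_\alpha\in\{0,2\}$ (ratio "$\infty$", not yet good enough), and is infeasible by that proof.

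The real work, then, is to \emph{soften} this extremal infeasible family to one with density ratio $2+\delta$ for arbitrarily small $\delta>0$ while preserving infeasibility. I would set, for each $\alpha$, $\mu_\alpha = (1-\theta)\,\mu_\alpha^{\mathrm{odd}} + \theta\,\nu_\alpha$ with $\theta$ small; the density of this mixture with respect to $\nu_\alpha$ is $2(1-\theta)+\theta = 2-\theta$ on the "odd" points and $\theta$ on the "even" points, giving ratio $(2-\theta)/\theta$, which is large, not small — wrong direction again. The correct perturbation is the opposite: take $\mu_\alpha$ with density $1+s$ on the two odd points and $1-s$ on the two even points for $s$ close to $1$; ratio $(1+s)/(1-s)$, which \emph{exceeds} $2$ precisely when $s>\tfrac13$, and tends to $2$ as $s\to\tfrac13^+$. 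So I would fix any $s\in(\tfrac13,1)$ with $(1+s)/(1-s)$ as close to $2$ as desired, and show the resulting consistent family admits no nonnegative uniting measure. For that, suppose $p_{ijk}\ge0$ unites them. Summing the marginal constraints with a well-chosen $\pm1$ test function $f_{12}+f_{13}+f_{23}\ge0$ (the Kellerer-type certificate), e.g. $f_\alpha = +1$ on odd points of $X_\alpha$ and appropriately negative on even ones, one derives $\sum_\alpha\int f_\alpha\,d\mu_\alpha<0$ whenever $s>\tfrac13$, contradicting nonnegativity of $p$. Equivalently and more concretely: the constraint on each pairwise marginal fixes $\sum_k p_{ijk}$ for all $i,j$, etc.; adding the three "odd-cell" sums $\sum_{i+j\ \mathrm{odd}}\sum_k p_{ijk} + \sum_{i+k\ \mathrm{odd}}\sum_j p_{ijk} + \sum_{j+k\ \mathrm{odd}}\sum_i p_{ijk}$ equals $3(1+s)\cdot\tfrac12$ on one hand, while on the other hand a parity check on $\{0,1\}^3$ shows each cell $(i,j,k)$ is counted an \emph{even} number of times (0 or 2) among the three "odd-pair" conditions — in fact exactly $2$ times for every cell with $i+j+k$ odd and $0$ times for $i+j+k$ even, or vice versa — so the total is $2\cdot(\text{sum of }p\text{ over one parity class})$, an even multiple forcing a numerical identity that fails once $s>\tfrac13$.

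The step I expect to be the genuine obstacle is pinning down the exact parity-counting identity on $\{0,1\}^3$ so that the bound $\lambda_{32}\le 2$ comes out sharp rather than off by a constant: one must verify that among the three conditions "$i+j$ odd", "$i+k$ odd", "$j+k$ odd", every point of the cube satisfies an \emph{even} number of them (indeed exactly the points with $i+j+k$ even satisfy zero, and those with $i+j+k$ odd satisfy exactly two), and then check that the resulting equation $3(1+s)/2 = 2\sigma$ with $\sigma = \sum_{i+j+k\ \mathrm{odd}} p_{ijk} \le \sum p_{ijk}=1$ (and $\ge 0$) is violated exactly when $(1+s)/2 > 4/3$, i.e. $s>5/3$ — which would give the threshold at ratio $(1+s)/(1-s)$ evaluated at $s=5/3$, nonsense since $s<1$; so the clean inequality $0\le\sigma\le 1$ must be replaced by tighter bounds on $\sigma$ coming from the individual marginal constraints (each $\sum_k p_{ijk}$ is itself pinned to $\tfrac{1+s}{4}$ or $\tfrac{1-s}{4}$), and it is that finer linear-programming infeasibility — not the crude one — that yields the sharp constant. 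I would carry this out by writing the $2\times2\times2$ transportation polytope explicitly (it is low-dimensional: $8$ unknowns, $12$ equality constraints of which $3\cdot 3 - \text{rank corrections}$ are independent, leaving a small affine family) and exhibiting the single coordinate whose forced value turns negative precisely when $M/m>2$.
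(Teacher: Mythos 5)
Your family is exactly the paper's: on $X_i=\{0,1\}$ with uniform $\nu_i$, put mass $M=(1+s)/4$ on the two ``odd'' cells (coordinates differ) and $m=(1-s)/4$ on the two ``even'' cells (coordinates agree), so $M+m=\tfrac12$ and $M/m=(1+s)/(1-s)>2$ iff $s>\tfrac13$. So the construction is right, and the idea of a $\pm1$ parity certificate is also the paper's. But your key counting claim is wrong, and it is precisely this error that sends you off looking for a ``finer LP analysis'' that is not actually needed.

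You assert that among the three conditions ``$i+j$ odd'', ``$i+k$ odd'', ``$j+k$ odd'', a point satisfies zero of them when $i+j+k$ is even and two when $i+j+k$ is odd. That is false: $(1,1,0)$ has $i+j+k$ even yet satisfies two ($i+k$ and $j+k$ are odd), while $(1,1,1)$ has $i+j+k$ odd yet satisfies zero. The correct statement is that the only points satisfying zero of the three conditions are the two main-diagonal points $(0,0,0)$ and $(1,1,1)$, and every one of the remaining six points satisfies exactly two. With this corrected count the crude bound already closes the argument: writing $p_{ijk}$ for a hypothetical uniting measure,
\[
6M \;=\; \sum_{\alpha}\ \sum_{\substack{\text{odd cells}\\\text{of }X_\alpha}} \mu_\alpha
\;=\; 2\sum_{(i,j,k)\notin\{(0,0,0),(1,1,1)\}} p_{ijk}
\;=\; 2\bigl(1-p_{000}-p_{111}\bigr)\;\le\;2,
\]
so $M\le\tfrac13$, hence $m=\tfrac12-M\ge\tfrac16$ and $M/m\le 2$. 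Thus any $s>\tfrac13$ is infeasible, which is the sharp threshold; no inspection of the $2\times2\times2$ transportation polytope is required. This is exactly the paper's inequality $2A\ge B$ (there $2A-B=6\mu(0,0,0)+6\mu(1,1,1)\ge0$), just phrased via the Kellerer certificate instead of directly. Your ``nonsense threshold $s>5/3$'' arose solely from grouping by the parity of $i+j+k$ rather than by membership in the main diagonal; once you fix that, your route and the paper's coincide.
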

\begin{proof}
Let $X_1 = X_2 = X_3 = \{0, 1\}$ and let every $\nu_i$ be the uniform probability measure on $X_i$. Let us construct measures  $\mu_{12}, \mu_{13}, \mu_{23}$ on spaces  $X_1 \times X_2$, $X_1 \times X_3$ and $X_2 \times X_3$ respectively. Set $\mu_{ij}(x_i, x_j) = M$, if $x_i + x_j = 1$;  and $\mu_{ij}(x_i, x_j) = m$ otherwise. Here $m$ and $M$ 
are nonegative constants  such that $\frac{M}{m} = \lambda_{32}$ and $\mu_{ij}(X_i \times X_j) = 1$.

Assume that a  uniting measure $\mu$ exists. Consider the following sums:
\begin{align*}
A = 6m = &~\mu_{12}(0, 0) + \mu_{12}(1, 1) + \mu_{13}(0, 0) + \mu_{13}(1, 1) + \mu_{23}(0, 0) + \mu_{23}(1, 1) \\
= &~3 \mu(0, 0, 0) + \mu(1, 0, 0) + \mu(0, 1, 0) + \mu(0, 0, 1) \\
& + \mu(1, 1, 0) + \mu(1, 0, 1) + \mu(0, 1, 1) + 3 \mu(1, 1, 1),\\
B= 6M =&~\mu_{12}(0, 1) + \mu_{12}(1, 0) + \mu_{13}(0, 1) + \mu_{13}(1, 0) + \mu_{23}(0, 1) + \mu_{23}(1, 0) \\
=&~2\mu(1, 0, 0) + 2\mu(0, 1, 0) + 2\mu(0, 0, 1) + 2\mu(1, 1, 0) + 2\mu(1, 0, 1) + 2\mu(0, 1, 1). 
\end{align*}

On one hand $2A < B$, because $2m < M$. On the other hand, analyzing expressions on the right-hand sides we see that  $2A \ge B$. We get a contradiction.
\end{proof}

\begin{proposition}\label{thm:density_condition1}
The conclusion of \cref{thm:density_condition_eqv} holds for  $\widehat\lambda_{32} = \frac{3}{2}$. In particular, there exists a uniting measure  $\mu$, which is uniformly equivalent to $\nu = \nu_1 \times \nu_2 \times \nu_3$.
\end{proposition}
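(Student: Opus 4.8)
The plan is to write down the explicit signed uniting measure supplied by \cref{explicit_signed_construction} and to show that, as soon as $\frac{M}{m}\le\frac32$, its density with respect to $\nu=\nu_1\times\nu_2\times\nu_3$ lies between two positive constants $\nu$-almost everywhere; that signed measure is then itself a genuine uniting measure uniformly equivalent to $\nu$. In the $(3,2)$ case \cref{explicit_signed_construction} produces $\mu=\widetilde{\mu}_0-\widetilde{\mu}_1+\widetilde{\mu}_2$ (cf.\ the example following it), whose density with respect to $\nu$ equals
\[
h(x_1,x_2,x_3)=1-\rho_1(x_1)-\rho_2(x_2)-\rho_3(x_3)+\rho_{12}(x_1,x_2)+\rho_{13}(x_1,x_3)+\rho_{23}(x_2,x_3),
\]
where $\rho_i$ is the density of $\mu_i=\prj_i\mu_{ij}$ with respect to $\nu_i$. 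Consistency makes $\rho_i$ well defined and yields the identity $\rho_i(x_i)=\int\rho_{ij}(x_i,x_j)\,d\nu_j(x_j)$ for every $j\ne i$, which is the decisive input. As $\mu$ has total mass $1$ and the prescribed two-dimensional projections, everything reduces to the two-sided bound on $h$; the upper bound $h\le 1+3M$ is trivial, and after replacing $[m,M]$ by the essential range of the family $\{\rho_{ij}\}$ (which only decreases $\frac{M}{m}$) we may assume $m\le 1\le M$, as in the proof of \cref{thm:density_condition}.

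For the lower bound I would argue in two regimes. If $M-m<\frac13$, the crude estimate $h\ge 1-3M+3m=1-3(M-m)$ is already a positive constant. If $M-m\ge\frac13$, then combined with $\frac{M}{m}\le\frac32$ and $m\le1\le M$ this confines the parameters to $m\in[\frac23,1]$, $M\in[1,\frac32]$, and the crude bound is useless; here the marginal identity $\rho_i(x_i)=\int\rho_{ij}(x_i,x_j)\,d\nu_j(x_j)$ must be used in an essential way. The mechanism is: were $h$ small on a set of positive $\nu$-measure, all three of $\rho_{12},\rho_{13},\rho_{23}$ would have to be close to $m$ there; but then, averaging against $\nu_j$ each $\rho_{ij}$ (which is bounded above by $M$), the densities $\rho_i$ would be forced toward $1$ rather than toward $M$ on a corresponding set, which pushes $h$ back up to essentially $1-3+3m=3m-2>0$. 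Making this incompatibility quantitative and optimizing the resulting inequality is what one verifies still leaves $\frac32$ admissible.

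I expect this refined lower bound to be the main obstacle. The delicate point is that $h$ need only be controlled $\nu$-almost everywhere: when the $\nu_i$ are non-atomic the genuinely worst pointwise configuration (all $\rho_i(x_i)=M$ with all $\rho_{ij}=m$) is not forbidden on a $\nu$-null set, so the argument cannot be purely pointwise and has to quantify, in measure, the incompatibility between ``$\rho_{ij}$ near $m$'' and ``$\rho_i$ near $M$'', via a Markov-type estimate applied to $\rho_i(x_i)=\int\rho_{ij}(x_i,\cdot)\,d\nu_j$. Atoms of the $\nu_i$ should be handled separately: an atom at $x_j$ of mass $w$ gives the clean improvement $\rho_i(x_i)\le M-w\bigl(M-\rho_{ij}(x_i,x_j)\bigr)$, and the relevant worst case is the two-point instance already used in the proposition ruling out $\lambda_{32}>2$. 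Balancing the atomic and non-atomic contributions across the two regimes above is the step that yields the constant $\frac32$.
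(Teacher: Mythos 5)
Your approach and the paper's diverge at the very first step: you take the canonical signed uniting measure $\widetilde{\mu}_0-\widetilde{\mu}_1+\widetilde{\mu}_2$ from \cref{explicit_signed_construction}, with $\nu$-density $h=1-\rho_1-\rho_2-\rho_3+\rho_{12}+\rho_{13}+\rho_{23}$, whereas the paper replaces it with a different signed uniting measure whose $\nu$-density is the nonlinear expression
\[
4\rho_1\rho_2\rho_3-2(\rho_1\rho_2+\rho_1\rho_3+\rho_2\rho_3)+2(\rho_{12}+\rho_{13}+\rho_{23})-(\rho_1\rho_{23}+\rho_2\rho_{13}+\rho_3\rho_{12}).
\]
That choice is decisive: the latter expression is \emph{pointwise} bounded below by a positive $\eps(m)$ on the box $m\le\rho_i,\rho_{ij}\le\frac32 m$, $\frac23<m<1$, and since it is affine in each variable this is a finite corner check. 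Your $h$ has no such property: at $\rho_i=\frac32 m$, $\rho_{ij}=m$ one gets $h=1-\frac32 m<0$ for every $m>\frac23$, i.e.\ for the \emph{entire} interesting parameter range once $\frac{M}{m}$ is near $\frac32$ --- your ``easy regime'' $M-m<\frac13$ is then empty, since $M-m=\frac12 m\ge\frac13$. So the pointwise reduction is simply unavailable to you, and the marginal identity $\rho_i(x_i)=\int\rho_{ij}(x_i,\cdot)\,d\nu_j$ must rescue the bound in measure. You correctly identify this, but it is the entire content of the proposition and you do not carry it out. It is not at all evident that a Markov-type averaging argument applied to $h$ still leaves $\frac32$ admissible; for comparison, the paper's own measure-theoretic argument of exactly this flavor, in the proof of \cref{prop:density_condition2}, is considerably more involved and only reaches the weaker constant $\lambda_{32}=2$ \emph{without} uniform equivalence.

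There is also a slip in the heuristic driving your hard regime. From $\rho_i(x_i)=\int\rho_{ij}(x_i,\cdot)\,d\nu_j$ and $\rho_{ij}\le M$, the conclusion is that $\rho_i(x_i)$ close to $M$ forces $\rho_{ij}(x_i,\cdot)$ close to $M$ $\nu_j$-a.e., not that $\rho_i$ is ``pushed toward $1$.'' The distinction matters because you need more than the observation that the worst pointwise configuration sits on a $\nu$-null set: you need a quantitative a.e.\ lower bound on $h$, uniform over all consistent data with $\frac{M}{m}\le\frac32$, and the two heuristics lead to quite different estimates. In the paper's proof this whole issue evaporates because the better signed measure makes the density bound purely algebraic; to push the affine density $h$ through you would have to actually execute and optimize the averaging argument, which the proposal leaves as a promise.
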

\begin{proof}
Let  $0 < m \le M$ be constants from  \cref{thm:density_condition}: $m \le \rho_{ij} \le M$ for all $1 \le i < j < M$ $\nu_{ij}$-almost everywhere.
Clearly,  $m \le 1 \le M$. If  $m = 1$ or $M = 1$, then $\mu_{ij} = \nu_{ij}$, this means that  $\nu$ is a uniting measure itself.

For  $m < 1 < M$, the following measure is uniting:
\begin{align*}
\mu =&~4\mu_1 \times \mu_2 \times \mu_3 - 2\left(\nu_1 \times \mu_2 \times \mu_3 + \mu_1 \times \nu_2 \times \mu_3 + \mu_1 \times \mu_2 \times \nu_3\right)\\
&+ 2\left(\mu_{12} \times \nu_3 + \mu_{13} \times \nu_2 + \mu_{23} \times \nu_1\right) - \left(\mu_{12} \times \mu_3 + \mu_{13} \times \mu_2 + \mu_{23} \times \mu_1\right).
\end{align*}

Let us check that  $\mu$ is nonnegative. To this end we prove that its density with respect to  $\nu = \nu_1 \times \nu_2 \times \nu_3$ is nonnegative almost everywhere. The density of  $\mu$ with respect to  $\nu$ has the form
\begin{align*}
    \frac{d\mu}{d\nu}(x_1, x_2, x_3) =&~ 4\rho_1(x_1)\rho_2(x_2)\rho_3(x_3) - 2\left(\rho_1(x_1)\rho_2(x_2) + \rho_1(x_1)\rho_3(x_3) + \rho_2(x_2)\rho_3(x_3)\right) \\
    &+2\left(\rho_{12}(x_1, x_2) + \rho_{13}(x_1, x_3) + \rho_{23}(x_2, x_3)\right)\\ &-\left(\rho_{12}(x_1, x_2)\rho_3(x_3) + \rho_{13}(x_1, x_3)\rho_2(x_2) + \rho_{23}(x_2, x_3)\rho_1(x_1)\right).
\end{align*}

Assumption  $m \le \rho_{ij}(x_i, x_j) \le M$ implies that, for  $\nu_i$-almost all $x_i$ the inequality  $m \le \rho_i(x_i) \le M$ holds, where $\rho_i = \frac{d\mu_i}{d\nu_i}$. 
The assumption of the theorem implies $1 < M \le \widehat{\lambda}_{32}m = \frac{3}{2}m$.  Thus, it is sufficient to check inequality
\[
4p_1p_2p_3 - 2(p_1p_2 + p_1p_3 + p_2p_3) + 2(p_{12} + p_{13} + p_{23}) - (p_1p_{23} + p_2p_{13} + p_3p_{12}) \ge 0
\]
for all $m \le p_i, p_{ij} \le \frac{3}{2}m$, $\frac{2}{3} < m < 1$, 
and for the proof of uniform boundedness it is sufficient to prove that there exists constant  $\eps(m) > 0$ such that  
\[
4p_1p_2p_3 - 2(p_1p_2 + p_1p_3 + p_2p_3) + 2(p_{12} + p_{13} + p_{23}) - (p_1p_{23} + p_2p_{13} + p_3p_{12}) \ge \eps(m).
\]

This expression is linear in every variable $p_i, p_{ij}$, thus for every fixed  $m$ every variable equals  $m$ or $\frac{3}{2}m$ at the minimum point. The coefficient of $p_{ij}$ equals $2 - p_k > 0$ provided $p_k \le \frac{3}{2}m < \frac{3}{2}$,
hence this function is increasing in  $p_{ij}$. Then at the minimum point one has $p_{ij} = m$ for all $1 \le i, j \le 3$. 
Finally, we reduce the proof to the following inequality we have to check:
\[
4p_1p_2p_3 - 2(p_1p_2 + p_1p_3 + p_2p_3) - m(p_1 + p_2 + p_3) + 6m \ge \eps(m)
\]
for all $m \le p_i \le \frac{3}{2}m$, $\frac{2}{3} < m < 1$.

Since the function is symmetric we have to check the following inequalities:
\begin{enumerate}
    \item $p_1 = p_2 = p_3 = m$: $4m^3 - 9m^2 + 6m > 0$ if $\frac{2}{3} < m < 1$;
    \item $p_1 = \frac{3}{2}m$, $p_2 = p_3 = m$: $6m^3 - \frac{23}{2}m^2 + 6m > 0$ if $\frac{2}{3} < m < 1$;
    \item $p_1 = p_2 = \frac{3}{2}m$, $p_3 = m$: $9m^3 - \frac{29}{2}m^2 + 6m > 0$ if $\frac{2}{3} < m < 1$;
    \item $p_1 = p_2 = p_3 = \frac{3}{2}m$: $\frac{27}{2}m^3 - 18m^2 + 6m > 0$ if $\frac{2}{3} < m < 1$.
\end{enumerate}
Every inequality can be easily checked and we complete the proof of nonnegativity of  $\mu$ and its uniform equivalence to  $\nu$.

It remains to check that  $\mu$ is uniting for  $\mu_{ij}$: 
\begin{align*}
\prj_{12}(\mu) =&~ 4\mu_1 \times \mu_2 - 2 \nu_1 \times \mu_2 - 2 \mu_1 \times \nu_2 - 2\mu_1 \times \mu_2 \\
&+ 2 \mu_{12} + 2 \mu_1 \times \nu_2 + 2 \nu_1 \times \mu_2 - \mu_{12} - \mu_1 \times \mu_2 - \mu_1 \times \mu_2 = \mu_{12}.
\end{align*}
In the same way we check that the desired identities hold for other projections. \end{proof}

One can prove another estimate for $\lambda_{32} = 2$. Unfortunately, the arguments in our proof can not be used to prove uniform equivalence of  $\mu$ and $\nu$.
\begin{proposition}\label{prop:density_condition2}
For the value $\lambda_{32} = 2$ the conclusion of \cref{thm:density_condition} holds.
\end{proposition}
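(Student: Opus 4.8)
By discretization and weak compactness it is enough to prove the statement for finite $X_i$: approximating the $\nu_i$ by finitely supported measures, a uniting measure for the general problem is obtained as a weak limit of uniting measures for the finite problems, the same reduction as for the dual existence criterion of the previous subsection. Note that $m\le\rho_\alpha\le M$, $M\le 2m$ and $\int\rho_{ij}\,d\nu_{ij}=1$ already force $m\in[\tfrac12,1]$ and $M\in[1,2]$.

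One cannot simply reuse a construction which is \emph{multilinear} in the densities. The signed uniting measure of \cref{thm:density_condition1} has density equal to $\rho_1\rho_2\rho_3+\sum_{\{i,j,k\}}(\rho_{ij}-\rho_i\rho_j)(2-\rho_k)$ with respect to $\nu=\nu_1\times\nu_2\times\nu_3$, and for $X_i$ the two-point space with $\nu_i$-weights $(\tfrac16,\tfrac56)$ one can choose consistent $\rho_{ij}$ with values in $[m,M]$, $M/m=2$, for which this density is strictly negative on a set of positive measure, whereas solving the corresponding $2\times2\times2$ transportation system by hand shows $\Pi(\mu_\alpha)\neq\emptyset$ all the same. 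So at $\lambda_{32}=2$ one must use the consistency identities $\int(\rho_{ij}(x_i,x_j)-\rho_i(x_i)\rho_j(x_j))\,d\nu_j(x_j)=0$, not only the pointwise bounds; this is also the reason the resulting $\mu$ need not be uniformly equivalent to $\nu$, and why \cref{thm:density_condition1}, which does yield uniform equivalence, stopped at $\tfrac32$.

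In the finite case I would verify the dual criterion of the previous subsection: $\Pi(\mu_\alpha)\neq\emptyset$ iff $\sum_{\{i,j\}}\int f_{ij}\,d\mu_{ij}\ge 0$ whenever $f_{12}(x_1,x_2)+f_{13}(x_1,x_3)+f_{23}(x_2,x_3)\ge0$ on $X$. Both $\sum_{\{i,j\}}f_{ij}$ and $\sum_{\{i,j\}}\int f_{ij}\,d\mu_{ij}$ are unchanged by the gauge moves $f_{12}\mapsto f_{12}+a(x_1)+b(x_2)$, $f_{13}\mapsto f_{13}-a(x_1)+c(x_3)$, $f_{23}\mapsto f_{23}-b(x_2)-c(x_3)$ (using $\prj_i\mu_{ij}=\mu_i$); choosing $a,b,c$ to bring the $f_{ij}$ to a canonical form in which the relevant partial $\nu_i$-averages vanish, and writing $\rho_{ij}=2m-\sigma_{ij}$ with $0\le\sigma_{ij}\le m$, one gets
\[
\sum_{\{i,j\}}\int f_{ij}\,d\mu_{ij}=2m\int\Bigl(\textstyle\sum_{\{i,j\}}f_{ij}\Bigr)\,d\nu-\sum_{\{i,j\}}\int f_{ij}\,\sigma_{ij}\,d\nu_{ij},
\]
where the first term is $\ge0$ because $\nu$ unites the marginals $\{\nu_{ij}\}$, and the second is to be bounded using $0\le\sigma_{ij}\le m$ together with the consistency of $\{\mu_\alpha\}$ and the normalization of the $f_{ij}$. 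Since the right-hand side depends linearly on each $\sigma_{ij}$ with sign-definite coefficients, and likewise on the extremal values of the $\rho_i$, the desired inequality reduces --- once the consistency relations are fed in --- to a short list of polynomial inequalities in $m$ and those extremal values, the same kind of endgame as for \cref{thm:density_condition1} but with $2m$ in place of $\tfrac32m$.

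The main obstacle is precisely this last coupling. Bounding $\sum\int f_{ij}\,d\mu_{ij}$ crudely, by splitting the $f_{ij}$ into positive and negative parts, only recovers $\lambda_{32}=\tfrac32$ (or worse); the extra room up to $\lambda_{32}=2$ has to come from the observation that the consistency constraints forbid all three error terms $\int f_{ij}\sigma_{ij}\,d\nu_{ij}$ --- equivalently, all three quantities $\rho_{ij}-\rho_i\rho_j$ --- from being simultaneously extremal. The same mechanism can be expressed on the primal side as the construction of a uniting signed measure $\mu^{*}+\eta$, where $\mu^{*}=\widetilde\mu_0-\widetilde\mu_1+\widetilde\mu_2$ is the signed measure of \cref{explicit_signed_construction} (density $1-\sum\rho_i+\sum\rho_{ij}$) and $\eta$ has all three two-dimensional projections zero and is chosen, cell by cell, to cancel the negative part of $d_{\mu^{*}}$: the set $\{d_{\mu^{*}}<0\}$ forces $\sum\rho_i$ near $3M$, which by consistency leaves enough mass in the remaining cells for this cancellation exactly when $M/m\le 2$.
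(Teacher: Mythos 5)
Your proposal is a plan, not a proof, and the gaps are substantial. You correctly observe that a multilinear expression with fixed coefficients in $\rho_i,\rho_{ij}$ cannot be nonnegative for all consistent families with $M/m\le 2$ (this is indeed why the construction of \cref{thm:density_condition1} stops at $3/2$), and you correctly sense that the consistency identities, not just the pointwise bounds, must be exploited. But neither of your two suggested mechanisms is carried out. Verifying Kellerer's dual criterion with gauge-normalized $f_{ij}$ ends, in your own words, at "a short list of polynomial inequalities" that you never write down; the primal-side correction $\mu^*+\eta$ is described only as a "cell by cell" cancellation on the set $\{d_{\mu^*}<0\}$, with the claim that consistency "leaves enough mass... exactly when $M/m\le 2$" left unargued. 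The reduction to finite $X_i$ by weak limits is likewise asserted without proof: one must check that the discretized families remain consistent and keep two-sided density bounds in the same ratio, which is not automatic. And the constraint you derive, $m\in[\tfrac12,1]$, is too weak; the proof genuinely needs $m\ge\tfrac23$, which does not follow from the raw hypotheses.

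The paper's actual route does produce the missing inequalities, and it is not the dual verification you propose. It maximizes $\xi(X)$ over the weakly compact set $\Delta=\{\xi\ge 0: m\le d(\mu_{ij}-\prj_{ij}\xi)/d\nu_{ij}\}$, extracts an extremal $\xi_{\max}$, and normalizes the residual to reduce to a consistent family in which $\nu$-a.e.\ at least one of the residual densities equals $m$ (\cref{lem:extreme_measures_property}). Pushed through the consistency identities via \cref{lem:pre_A4_cond} and \cref{lem:strong_ineq}, this yields the extra pointwise bounds $m\ge\tfrac23$ and $\rho_i\le\frac{m}{2}\bigl(3+\sqrt{3-2/m}\bigr)$. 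Only then is a multilinear signed combination written down, with coefficients depending on $m$ through $u=\sqrt{3-2/m}$, whose nonnegativity factors as $(m(u+3)-2p_1)(m(u+3)-2p_2)(m(u+3)-2p_3)\ge 0$ precisely under the bound just obtained. So the mechanism you gesture at — that consistency "forbids all three error terms from being simultaneously extremal" — is realized by an extremal-measure reduction on the primal side, after which a multilinear construction does work; you will not make progress without first establishing that reduction and the resulting sharpened density bounds.
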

\begin{proof}

Let  $0 < m \le M$ be constants from \cref{thm:density_condition}. Consider the following set:  
\[\Delta = \left\{\xi \text{ --  nonnegative measure on  $X_1 \times X_2 \times X_3$}\colon m \le \frac{d(\mu_{ij} - \prj_{ij}(\xi))}{d\nu_{ij}}\right\}.
\]
This set is not empty because it contains the trivial (zero) measure. In addition, $\Delta$ is weakly closed. From assumption $m \le \frac{d(\mu_{ij} - \prj_{ij}(\xi))}{d\nu_{ij}}$ we infer that  $\mu_i \ge \prj_i(\xi)$, 
hence  $\Delta$ is uniformly tight and the variations of measures from $\Delta$ are uniformly bounded. Then the Prokhorov theorem implies that  $\Delta$ is weakly compact. 
Hence there exists an extreme measure  $\xi_{\max}$, where functional $\xi_{\max}(X)$ attains its maximum.

\begin{lemma}\label{lem:extreme_measures_property}
For $\nu$-almost all $x \in X$ at least one of the numbers \[
\frac{d(\mu_{ij} - \prj_{ij}(\xi_{\max}))}{d\nu_{ij}}(x_i, x_j),\;1 \le i, j \le 3
\] 
equals $m$.
\end{lemma}

\begin{proof}
Assume the converse. Then there exists a positive number $\eps$, such that the set 
\[E_\eps = \left\{x \in X \colon \frac{d(\mu_{ij} - \prj_{ij}(\xi_m))}{d\nu_{ij}}(x_i, x_j) \ge m + \eps,\;1 \le i, j \le 3\right\}
\] 
satisfies $\nu(E_{\eps})>0$. Let $\xi_\Delta$ be the measure which density (with respect to  $\nu$) equals  $\eps$ on  $E_\eps$ and $0$  otherwise.
 It is easy to check that $\xi_{\max} + \xi_\Delta \in \Delta$, $(\xi_{\max} + \xi_\Delta)(X) > \xi_{\max}(X)$ and this contradicts to definition of $\xi_{\max}$.
\end{proof}

Consider the family of probability measures
\[
\mu_{ij}' = \frac{\mu_{ij} - \prj_{ij}(\xi_{\max})}{1 - \xi_{\max}(X)}, 1 \le i, j \le 3.
\]
Since  $\{\mu_{ij}\}$ is consistent, the family of measures $\{\mu'_{ij}\}$ is consistent too. Since $\xi_{\max} \in \Delta$, we have $m / \alpha \le d\mu'_{ij} / d\nu_{ij} \le M / \alpha$ almost everywhere, where $\alpha = 1 - \xi_{\max}(X)$. Hence, the family  $\{\mu'_{ij}\}$ satisfies assumptions of \cref{prop:density_condition2}. Moreover, if a measure $\mu'$ is uniting for $\mu'_{ij}$, then the measure $\mu = \alpha \mu' + \xi_{\max}$ is uniting for $\mu_{ij}$. Thus, it is sufficient to solve the problem only for  $\mu'_{ij}$.

Now, we replace $\mu_{ij}$ with $\mu'_{ij}$, $m$ and $M$ with $m / \alpha$ and $M / \alpha$ respectively. We may assume that densities $\rho_i = \frac{d\mu_i}{d\nu_i}$, $\rho_{ij} = \frac{d\mu_{ij}}{d\nu_{ij}}$ satisfying the following assumptions:
\begin{enumerate}
    \item \label{cond:A1}  $m \le \rho_{ij}(x_i, x_j) \le M$, $1 \le i, j \le 3$ for all $x \in X$.
    \item\label{cond:A2}  $\int_{X_j}\rho_{ij}(x_i, x_j)\,\nu_j(dx_j) = \rho_i(x_i)$ for all  $x_i \in X_i$.
    \item\label{cond:A3} For $\nu$-almost all $x \in X$ at least one of the numbers $\rho_{ij}(x_i, x_j)$, $1 \le i, j \le 3$, equals $m$.
\end{enumerate}

Assumptions  \ref{cond:A1} and \ref{cond:A2} are always fulfilled after changing  $\rho_i$ and $\rho_{ij}$  on a set of zero measure, and the last one follows from \cref{lem:extreme_measures_property}.
Under these assumptions one can prove the following lemma:

\begin{lemma}\label{lem:pre_A4_cond}
Assume that  $\rho_i, \rho_{ij}$ satisfy assumptions \ref{cond:A1}-\ref{cond:A3}. Then for  $\nu_{ij}$-almost all $(x_i, x_j) \in X_{ij}$ 
one of the following conditions holds: $\rho_{ij}(x_i, x_j) = m$ or $\rho_i(x_i) + \rho_j(x_j) \le m + M$.
\end{lemma}
\begin{proof}
Let $k \in \{1,2,3\}\setminus \{i, j\}$. Let us denote by $X^r_{ij}$ the set of couples $(x_i, x_j) \in X_{ij}$ such that for $\nu_k$-almost all $x_k \in X_k$ one of the numbers $\rho_{ij}(x_i, x_j)$, $\rho_{ik}(x_i, x_k)$ and $\rho_{jk}(x_j, x_k)$ equals $m$. 
Assumption \ref{cond:A3} implies that  $X_{ij}^r$ has  full measure with respect to $\nu_{ij}$.

Let $(x_i, x_j) \in X_{ij}^r$. Assume that $\rho_{ij}(x_i, x_j) > m$. The for  $\nu_k$-almost all $x_k \in X_k$ at least one of the numbers $\rho_{ik}(x_i, x_k)$ and $\rho_{jk}(x_j, x_k)$  equals $m$. 
In particular, $\rho_{ik}(x_i, x_k) + \rho_{jk}(x_j, x_k) \le m + M$ for $\nu_k$-almost all $x_k \in X_k$. Then we infer from  \ref{cond:A1}, \ref{cond:A2} 
\[
\rho_i(x_i) + \rho_j(x_j) = \int_{X_k} \rho_{ik}(x_i, x_k)~d\nu_k + \int_{X_k} \rho_{jk}(x_j, x_k)~d\nu_k \le m + M.
\]
\end{proof}

Changing, if necessary, density functions  $\rho_i$, $\rho_{ij}$ on a set of zero measure, we can assume, in addition, that the following holds:
\begin{enumerate}[resume]
    \item \label{cond:A4} For all $(x_i, x_j) \in X_{ij}$ one has $\rho_{ij}(x_i, x_j) = m$ or $\rho_i(x_i) + \rho_j(x_j) \le m + M$, $1 \le i, j \le 3$.
\end{enumerate}
\begin{lemma}
Let the density functions $\rho_i, \rho_{ij}$ satisfy assumptions \ref{cond:A1}-\ref{cond:A4}. Then for all $i \ne j$ and all $x_i \in X_i$ the following inequality holds:
\[
\nu_j\left(x_j \in X_j \colon \rho_j(x_j) \le m + M - \rho_i(x_i)\right) \ge \frac{\rho_i(x_i) - m}{M - m}.
\]
\end{lemma}
\begin{proof}
Fix a point $x_i \in X_i$, and denote by $A$ be the set of points $x_j \in X_j$ satisfying $\rho_{ij}(x_i, x_j) = m$. Then $\rho_i(x_i) = \int_{X_j}\rho_{ij}(x_i, x_j)~dx_j \le m\nu_j(A) + M(1 - \nu_j(A))$, which implies $\nu_j(A)~\le~\frac{M - \rho_i(x_i)}{M - m}$.

On the other hand assumption \ref{cond:A4} implies that for all $x_j \in X_j \backslash A$ the inequality $\rho_i(x_i) + \rho_j(x_j) \le m + M$ holds. Hence
 \[\nu_j\left(x_j \in X_j \colon \rho_j(X_j) \le m + M - \rho_i(x_i)\right) \ge \nu_j(X_j \backslash A) = 1 - \nu_j(A) \ge \frac{\rho_i(x_i) - m}{M - m}.\]
\end{proof}

Choosing a sequence $x_{i}^{(n)}$ such that $\rho_i(x^{(n)}_i) \to M_i = \sup_{x_i \in X_i} \rho_i(x_i)$ and passing to the limit one gets the following corollary:
\begin{corollary}\label{cons:measure_of_small_density}
Let $M_i = \sup_{x_i \in X_i} \rho_i(x_i)$. Then for all  $j \ne i$ the following inequality holds:
\[
\nu_j(x_j \in X_j \colon \rho_j(X_j) \le m + M - M_i) \ge \frac{M_i - m}{M - m}.
\]
\end{corollary}
\begin{lemma}\label{lem:strong_ineq}
Let $\rho_i, \rho_{ij}$ satisfy  assumptions \ref{cond:A1}-\ref{cond:A4} and $\frac{M}{m} \le 2$. Then inequalities  
\[
\frac{2}{3} \le m \le 1, \ \ \ p_i(x_i) \le \frac{m}{2}\left(3 + \sqrt{3 - \frac{2}{m}}\right)
\] hold for all  $x_i \in X_i$, $1 \le i \le 3$.
\end{lemma}
\begin{proof}
Let $M_i = \sup_{x_i \in X_i} \rho_i(x_i)$. Assume that  $M_1 \ge M_2$ and $M_1 \ge M_3$. It is sufficient to check that $\frac{3}{2}m \ge 1$ and $M_1 \le \frac{m}{2}\left(3 + \sqrt{3 - \frac{2}{m}}\right)$.

Assume that $\frac{3}{2}m \ge M_1$. Then, since $M_1 = \sup_{x_1 \in X_1}\rho_1(x_1)$, one has $M_1 \ge 1$. This implies $\frac{3}{2}m \ge M_1 \ge 1$. Moreover, $M_1 \le \frac{3}{2}m \le \frac{m}{2}\left(3 + \sqrt{3 - \frac{2}{m}}\right)$.

Consider the case $M_1 \ge \frac{3}{2}m$. Set $A = \{x_2 \in X_2 \colon \rho_2(x_2) \le m + M - M_1\}$. 
Then the following  holds:
\begin{align*}
1 &= \int_{X_2}\rho_2(X_2)~d\nu_2 \le (m + M - M_1)\nu_2(A) + M_2\left(1 - \nu_2(A)\right) \\
&\le (m + M - M_1)\nu_2(A) + M_1\left(1 - \nu_2(A)\right)
= (m + M - 2M_1)\nu_2(A) + M_1.
\end{align*}
\Cref{cons:measure_of_small_density} implies $\nu_2(A) \ge \frac{M_1 - m}{M - m} \ge \frac{M_1}{m} - 1$ (here we use  $M \le 2m$).
Applying this inequality and the inequality $M_1 \ge \frac{3}{2}m$ one gets
\begin{align*}
1 &\le (m + M - 2M_1)\nu_2(A) + M_1 \le (3m - 2M_1)\nu_2(A) + M_1 \\
&\le (3m - 2M_1)\left(\frac{M_1}{m} - 1\right) + M_1
=m\left(-2\left(\frac{M_1}{m}\right)^2 + 6\frac{M_1}{m} - 3\right).
\end{align*}
The function $-2x^2 + 6x - 3$ is decreasing on  $x \ge \frac{3}{2}$, hence
\[
1 \le m\left(-2\left(\frac{M_1}{m}\right)^2 + 6\frac{M_1}{m} - 3\right) \le m\left(-2\left(\frac{3}{2}\right)^2 + 6\cdot\frac{3}{2} - 3\right) = \frac{3}{2}m.
\]
Moreover, $-2\left(\frac{M_1}{m}\right)^2 + 6\frac{M_1}{m} - 3 \ge \frac{1}{m}$ , thus $\frac{M_1}{m} \le \frac{1}{2}\left(3 + \sqrt{3 - \frac{2}{m}}\right)$.
\end{proof}

Let us describe explicit constructions of uniting measures for  $m = \frac{2}{3}$ and $\frac{2}{3} < m \le 1$. If $m = \frac{2}{3}$, then $\rho_i(x_i) \le \frac{m}{2}\left(3 + \sqrt{3 - \frac{2}{m}}\right) = 1$ for all $x_i \in X_i$. 
Measures $\mu_i$ and $\nu_i$ are probability measures, $\frac{d\mu_i}{d\nu_i} \le 1$. Hence $\mu_i = \nu_i$. The desired measure is given by
\[
\mu = \mu_1\times\mu_{23} + \mu_2\times\mu_{13} + \mu_3\times\mu_{12} - 2\mu_1\times\mu_2\times\mu_3.
\]
This measure is nonnegative: $\frac{d\mu}{d\nu}(x_1, x_2, x_3) = \rho_{12}(x_1, x_2) + \rho_{13}(x_1, x_3) + \rho_{23}(x_2, x_3) - 2 \ge 0$ since $\rho_{ij}(x_i, x_j) \ge m = \frac{2}{3}$. 
In addition, it is uniting:
\[
\prj_{12}(\mu) = \mu_1 \times \mu_2 + \mu_2 \times \mu_1 + \mu_{12} - 2\mu_1 \times \mu_2 = \mu_{12},
\]
and the same for other projections.

Let us consider the case $\frac{2}{3} < m \le 1$. Set: $u = \sqrt{3 - \frac{2}{m}}$. Then $\frac{1}{m} = \frac{1}{2}(3 - u^2)$;  $u$ satisfies $0 < u \le 1$ under assumption $\frac{2}{3} < m \le 1$.
 The desired measure is given by
\begin{align*}
    \mu =&-\frac{8}{m^2u(u + 1)^3}\mu_1 \times \mu_2 \times \mu_3 + 2\frac{5u + 9}{u(u + 1)^3}\nu_1 \times \nu_2 \times \nu_3 \\
    &+ 4\frac{u + 3}{mu(u + 1)^3}\left(\nu_1 \times \mu_2 \times \mu_3 + \mu_1 \times \nu_2 \times \mu_3 + \mu_1 \times \mu_2 \times \nu_3\right)\\
    &- 2\frac{5u + 9}{u(u + 1)^3}\left(\mu_1 \times \nu_2 \times \nu_3 + \nu_1 \times \mu_2 \times \nu_3 + \nu_1 \times \nu_2 \times \mu_3 \right)\\
    &+ 2\frac{u + 2}{(u + 1)^2}\left(\mu_{23} \times \nu_1 + \mu_{13} \times \nu_2 + \mu_{12} \times \nu_3\right)\\
    &-\frac{2}{m(u + 1)^2}\left(\mu_{23} \times \mu_1 + \mu_{13} \times \mu_2 + \mu_{12} \times \mu_3\right).
\end{align*}
This measure is uniting for $\mu_{ij}$:
\begin{align*}
    \prj_{12}(\mu) =&~ \left(4\frac{u + 3}{mu(u + 1)^3} - 2\frac{5u + 9}{u(u + 1)^3} + 2\frac{u + 2}{(u + 1)^2}\right)\left(\nu_1\times\mu_2 + \mu_1\times\nu_2\right)\\
    &+\left(-\frac{8}{m^2u(u + 1)^3} + 4\frac{u + 3}{mu(u + 1)^3} - \frac{4}{m(u + 1)^2}\right)\mu_1 \times \mu_2\\
    &+\left(2\frac{5u + 9}{u(u + 1)^3} - 2\frac{5u + 9}{u(u + 1)^3}\right)\nu_1 \times \nu_2 +\left(2\frac{u + 2}{(u + 1)^2} - \frac{2}{m(u + 1)^2}\right)\mu_{12}\\
    =&~\mu_{12}.
\end{align*}
To prove the desired equality we substitute $\frac{1}{m} = \frac{1}{2}(3 - u^2)$ 
and check that all the terms are zero except the last one. In addition, the coefficient of $\mu_{12}$ equals $1$.
We do the same for the other projections.

To check nonnegativity of  $\mu$ it is sufficient to check that the following expression is nonnegative:
\begin{align*}
&-8p_1p_2p_3 + 4m(u + 3)(p_1p_2 + p_1p_3 + p_2p_3) - 2m^2(5u + 9)(p_1 + p_2 + p_3)\\
&+ 2m^2u(u + 1)(u + 2)(p_{12} + p_{13} + p_{23}) - 2mu(u + 1)(p_1p_{23} + p_2p_{13} + p_3p_{12})\\&+ 2m^2(5u + 9),
\end{align*}
where  $p_i = \rho_i(x_i)$, $p_{ij} = \rho_{ij}(x_i, x_j)$. One has  $m \le p_{ij} \le 2m$ by our assumption, $m \le p_i \le \frac{m}{2}\left(3 + \sqrt{3 - \frac{2}{m}}\right) = \frac{m}{2}(u + 3)$ and $\frac{2}{3} < m \le 1$ by \cref{lem:strong_ineq}.

This function is linear in  $p_{ij}$ with the coefficient 
\[
2m^2u(u + 1)(u + 2) - 2mu(u + 1)p_k \ge 2m^2u(u + 1)(u + 2) - m^2u(u + 1)(u + 3) \ge 0
\]
(here we use that $u \le 1$), hence  one can set $p_{ij} = m$ for all  $1 \le i, j \le 3$. In this case the expression is equal to
\begin{align*}
    &-8p_1p_2p_3 + 4m(u + 3)(p_1p_2 + p_1p_3 + p_2p_3) - 2m^2(5u + 9)(p_1 + p_2 + p_3)\\
&+ 6m^3u(u + 1)(u + 2) - 2m^2u(u + 1)(p_1 + p_2 + p_3)+ 2m^2(5u + 9)\\
=&-8p_1p_2p_3 + 4m(u + 3)(p_1p_2 + p_1p_3 + p_2p_3)-2m^2(u + 3)^2(p_1 + p_2 + p_3)\\
&+6m^3u(u + 1)(u + 2) -m^3(u^2 - 3)(5u + 9)
\\
=&~(m(u + 3) - 2p_1)(m(u + 3) - 2p_2)(m(u + 3) - 2p_3) \ge 0,
\end{align*}
this completes the proof of the well-posedness and the proof of \cref{prop:density_condition2}.
\end{proof}

One can prove many other sufficient conditions of existence of uniting measures.
One of the examples is given in the next theorem.

\begin{theorem}
Assume that a consistent family of measures  $\mu_{ij}$ sastisfies  $\mu_{ij} \ge \frac{2}{3} \mu_i \times \mu_j$, $1 \le i, j \le 3$. Then there exists a uniting measure.
\end{theorem}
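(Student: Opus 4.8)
The plan is to sidestep any compactness or minimax machinery and instead write down an explicit uniting measure, using the hypothesis only at the very end to check nonnegativity. First I would record that, since the family $\{\mu_{ij}\}$ is consistent, the one-dimensional marginals $\mu_i$ are well defined with $\prj_i(\mu_{ij}) = \mu_i$, and that the assumption $\mu_{ij}\ge\frac{2}{3}\mu_i\times\mu_j$ is exactly the statement that
\[
\sigma_{ij} := 3\mu_{ij} - 2\,\mu_i\times\mu_j
\]
is a nonnegative measure on $X_i\times X_j$ for each pair $\{i,j\}$. A one-line computation shows $\sigma_{ij}$ has total mass $3-2=1$ and $\prj_i(\sigma_{ij})=3\mu_i-2\mu_i=\mu_i$, so $\sigma_{ij}$ is itself a coupling of $\mu_i$ and $\mu_j$ (this fact is not strictly needed, but it clarifies the bookkeeping below).

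Next, guided by \cref{explicit_signed_construction} with the choice $\nu_i=\mu_i$ — which gives $\widetilde{\mu}_0=\mu_1\times\mu_2\times\mu_3$, $\widetilde{\mu}_1=3\,\mu_1\times\mu_2\times\mu_3$, and $\widetilde{\mu}_2=\mu_{12}\times\mu_3+\mu_{13}\times\mu_2+\mu_{23}\times\mu_1$ — the candidate uniting measure is
\[
\mu = \widetilde{\mu}_0-\widetilde{\mu}_1+\widetilde{\mu}_2 = \mu_{12}\times\mu_3+\mu_{13}\times\mu_2+\mu_{23}\times\mu_1-2\,\mu_1\times\mu_2\times\mu_3,
\]
which is precisely the measure appearing in the $m=\frac{2}{3}$ case of the proof of \cref{prop:density_condition2}. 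I would then verify directly that $\prj_{12}(\mu)=\mu_{12}$: the term $\mu_{12}\times\mu_3$ projects to $\mu_{12}$, each of $\mu_{13}\times\mu_2$ and $\mu_{23}\times\mu_1$ projects to $\mu_1\times\mu_2$ (using $\prj_1(\mu_{13})=\mu_1$, $\prj_2(\mu_{23})=\mu_2$), and $\mu_1\times\mu_2\times\mu_3$ projects to $\mu_1\times\mu_2$, so the total is $\mu_{12}+2\,\mu_1\times\mu_2-2\,\mu_1\times\mu_2=\mu_{12}$; the other two pairs are handled symmetrically. Hence $\mu$ is a (signed) uniting measure.

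The only real content is nonnegativity of $\mu$, and here is where the hypothesis enters. Substituting $\mu_{ij}=\frac{1}{3}\bigl(\sigma_{ij}+2\,\mu_i\times\mu_j\bigr)$ into the formula for $\mu$, the three terms $\frac{2}{3}(\mu_i\times\mu_j)\times\mu_k$ sum to $2\,\mu_1\times\mu_2\times\mu_3$, which exactly cancels the last term, leaving
\[
\mu = \frac{1}{3}\bigl(\sigma_{12}\times\mu_3+\sigma_{13}\times\mu_2+\sigma_{23}\times\mu_1\bigr).
\]
Each summand is a product of nonnegative measures, hence nonnegative, so $\mu\ge 0$ and therefore $\mu\in\Pi(\mu_{12},\mu_{13},\mu_{23})$. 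The step I expect to be the crux is simply \emph{spotting} this last rewriting, which trivializes the positivity check; once it is in hand, the whole argument is elementary bookkeeping with product measures, with no analytic input required.
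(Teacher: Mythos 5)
Your proposal is correct and is essentially the paper's own proof: the measure you arrive at, $\frac{1}{3}(\sigma_{12}\times\mu_3+\sigma_{13}\times\mu_2+\sigma_{23}\times\mu_1)$, is literally the same as the paper's $\bigl(\mu_{12}-\frac{2}{3}\mu_1\times\mu_2\bigr)\times\mu_3+\bigl(\mu_{13}-\frac{2}{3}\mu_1\times\mu_3\bigr)\times\mu_2+\bigl(\mu_{23}-\frac{2}{3}\mu_2\times\mu_3\bigr)\times\mu_1$, and the projection and nonnegativity checks are identical. The only difference is presentational: the paper writes the measure directly in the nonnegative-product form, whereas you derive that form in a final rewriting step; both are the same elementary bookkeeping.
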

\begin{proof}
The desired measure is given by
\[\mu = \left(\mu_{12} - \frac{2}{3} \mu_1 \times \mu_2\right) \times \mu_3 + 
\left(\mu_{13} - \frac{2}{3} \mu_1 \times \mu_3\right) \times \mu_2 +
\left(\mu_{23} - \frac{2}{3} \mu_2 \times \mu_3\right) \times \mu_1.\]

Indeed, one has
\[\prj_{12}(\mu) = \mu_{12} - \frac{2}{3} \mu_1 \times \mu_2 + \mu_1 \times \mu_2  - \frac{2}{3} \mu_1 \times \mu_2 + \mu_2 \times \mu_1 - \frac{2}{3} \mu_1 \times \mu_2 =  \mu_{12},\]
analogously for other projections. Thus  $\mu$ is uniting.
\end{proof}

Note that this construction does not allow to prove existence of a measure which is uniformly equivalent to something else. 

\section{Connection to the Monge--Kantorovich problem with linear constraints.}

\subsection{Monge--Kantorovich problem with linear constraints: definitions and basic facts}

D.~Zaev considered in  \cite{Zaev}  the multimarginal transportation problem with additional linear constraints. 
In this subsection we formulate basic definitions and theorems of his paper. 

Let $X_1, X_2, \dots, X_n$ be Polish spaces equipped with Borel $\sigma$-algebras, $X := X_1 \times \dots \times X_n$, $\mu_1, \dots, \mu_n$ are probability measures on  $X_1, \dots, X_n$ respectively.

Let $W$ be an arbitrary linear subspace in  $C_L(X, \mu_i)$. 
Let us consider the following subspace in the set of measures:
\[
\Pi_W(\mu_i) = \left\{\pi \in \Pi(\mu_i) \colon \int \omega~d\pi = 0 \text{ for all} \ \omega \in W \right\}.
\]
Finally, we are ready to formulate our constrained problem:

\begin{problem}[Monge--Kantorovich problem with linear constraints]
Given Polish spaces  $X = X_1 \times \dots X_n$, Borel probability measures  $\mu_i \in \mathcal{P}(X_i)$, a cost function $c \in C_L(X, \mu_i)$, and a linear subspace $W \subset C_L(X, \mu_i)$ find 
\[
\inf_{\pi \in \Pi_W(\mu)}\left\{\int_X c(x)~d\pi\right\}.
\]
\end{problem}

The following theorems  are main results of  \cite{Zaev}:

\begin{theorem}
Problem with additional linear constraints has a solution if the set $\Pi_W(\mu_i)$ is not empty.
\end{theorem}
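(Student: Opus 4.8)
The plan is to apply the direct method of the calculus of variations. Since $\Pi_W(\mu_i)$ is nonempty and every $\pi \in \Pi(\mu_i)$ satisfies $\int_X |c|\,d\pi \le \sum_{i=1}^n \int_{X_i} c_i\,d\mu_i < \infty$ for the dominating functions $c_i \in C_L(X_i,\mu_i)$ of $c$, the value $I := \inf_{\pi \in \Pi_W(\mu_i)} \int_X c\,d\pi$ is a finite real number. Fix a minimizing sequence $\pi_m \in \Pi_W(\mu_i)$ with $\int_X c\,d\pi_m \to I$. First I would extract a candidate minimizer by compactness. The set $\Pi(\mu_1,\dots,\mu_n)$ is tight: given $\eps>0$, pick compacts $K_i \subset X_i$ with $\mu_i(X_i\setminus K_i) < \eps/n$; then $\pi(X\setminus\prod_i K_i) \le \sum_i \mu_i(X_i\setminus K_i) < \eps$ for every $\pi \in \Pi(\mu_i)$, and $\prod_i K_i$ is compact in $X$. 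By Prokhorov's theorem we may pass to a subsequence with $\pi_m \rightharpoonup \pi$ weakly. Continuity of each projection $\prj_i$ gives $\prj_i(\pi) = \lim_m \prj_i(\pi_m) = \mu_i$, so $\pi \in \Pi(\mu_i)$.

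Second, I would check that $\pi$ respects the linear constraints, i.e. $\int_X \omega\,d\pi = 0$ for each $\omega \in W$. Choose $\omega_i \in C_L(X_i,\mu_i)$ with $\omega_i \ge 0$ and $|\omega(x)| \le \sum_i \omega_i(x_i) =: \psi(x)$. The crucial point is the uniform tail estimate
\[
\lim_{R\to\infty}\ \sup_m \int_{\{\psi > R\}} \psi\,d\pi_m = 0 .
\]
Indeed $\{\psi > R\} \subseteq \bigcup_j \{\omega_j > R/n\}$, so the integral is at most $\sum_{i,j} \int \omega_i(x_i)\,\mathbf{1}_{\{\omega_j > R/n\}}(x_j)\,d\pi_m$; the diagonal terms equal $\int_{\{\omega_i > R/n\}}\omega_i\,d\mu_i$, which are small uniformly in $m$ because $\omega_i \in L^1(\mu_i)$, while each off-diagonal term is bounded, after splitting $\omega_i$ at a level $L$, by $L\,\mu_j(\{\omega_j > R/n\}) + \int_{\{\omega_i > L\}}\omega_i\,d\mu_i$, which is made small by first choosing $L$ large and then $R$ large. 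Since $\int_X\psi\,d\pi = \sum_i\int\omega_i\,d\mu_i < \infty$ as well, we also have $\int_{\{\psi>R\}}\psi\,d\pi \to 0$. For the bounded continuous truncations $\omega^{(R)} := \max(-R,\min(R,\omega))$ one has $\int_X \omega^{(R)}\,d\pi_m \to \int_X\omega^{(R)}\,d\pi$ as $m\to\infty$, while $|\int_X\omega\,d\pi_m - \int_X\omega^{(R)}\,d\pi_m| \le \int_{\{\psi>R\}}\psi\,d\pi_m$ and similarly for $\pi$. A three-term estimate then yields $\int_X\omega\,d\pi = \lim_m \int_X\omega\,d\pi_m = 0$, so $\pi \in \Pi_W(\mu_i)$.

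Finally, I would prove lower semicontinuity of the objective along the sequence. Write $c = \varphi - \psi_c$ with $\psi_c := \sum_i c_i$ and $\varphi := c + \psi_c \ge 0$ continuous. Then $\int_X\psi_c\,d\pi_m = \sum_i\int c_i\,d\mu_i = \int_X\psi_c\,d\pi$ is a constant (the marginals are fixed), and for the nonnegative lower semicontinuous function $\varphi$ the portmanteau theorem gives $\int_X\varphi\,d\pi \le \liminf_m \int_X\varphi\,d\pi_m$. Subtracting the constant, $\int_X c\,d\pi \le \liminf_m\int_X c\,d\pi_m = I$; since $\pi \in \Pi_W(\mu_i)$ forces $\int_X c\,d\pi \ge I$, the measure $\pi$ is optimal.

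I expect the uniform integrability bookkeeping in the second step — passing the constraint $\int_X\omega\,d\pi = 0$ to the weak limit when $\omega$ is only dominated by, rather than equal to, a sum of one-variable $L^1$ functions — to be the main obstacle: the "bad" set $\{\psi > R\}$ is not a product set, which forces the splitting into diagonal and off-diagonal contributions together with a double truncation. Tightness, Prokhorov's theorem and the lower semicontinuity of $\varphi$ are routine.
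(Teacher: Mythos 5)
Your proof is correct. The paper does not actually give a proof of this statement: it is quoted from Zaev's paper \cite{Zaev} and stated without proof in the preamble to Section~4 ("The following theorems are main results of \cite{Zaev}"), so there is no in-paper argument to compare against. What you have written is the standard direct method: tightness of $\Pi(\mu_i)$ via the fixed marginals, Prokhorov compactness of a minimizing sequence, weak closedness of the constraint set $\Pi_W$, and lower semicontinuity of the cost after splitting off the dominating one-variable part. You have also correctly identified the only non-routine step, namely passing $\int_X\omega\,d\pi_m = 0$ to the limit for an unbounded $\omega\in W\subset C_L(X,\mu_i)$; the uniform tail estimate via the inclusion $\{\psi>R\}\subseteq\bigcup_j\{\omega_j>R/n\}$ and the double truncation (first at level $L$ in $\omega_i$, then at $R$ in $\omega_j$) is a correct and clean way to obtain uniform integrability of $\omega$ along the minimizing sequence, using only that the diagonal terms $\int_{\{\omega_i>R/n\}}\omega_i\,d\mu_i$ are $m$-independent and that $\pi_m(\{\omega_j>R/n\})=\mu_j(\{\omega_j>R/n\})$. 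The lower semicontinuity step (writing $c=\varphi-\psi_c$ with $\varphi\ge 0$ continuous and $\int\psi_c\,d\pi_m$ constant on $\Pi(\mu_i)$) is likewise standard and correct. In short: a valid proof, filling in a citation the paper leaves as a black box, by what is almost certainly the same route Zaev uses.
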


\begin{theorem}[Kantorovich duality]\label{zaev_duality}
Let $X_1, \dots, X_n, X = X_1 \times \dots \times X_n$ be Polish spaces, let $\mu_k \in \mathcal{P}(X_k)$, $k = 1, \dots, n$ and let $W$ be a linear subspace of $C_L(X, \mu_k)$ (or $C_b(X)$), $c \in C_L(X, \mu_i)$ (or $C_b(X)$). 
Then 
\[
\inf_{\pi \in \pi_W(\mu)}\int_X c~d\pi = \sup_{f + \omega \le c}\sum_{k = 1}^n\int_{X_k}f_k(x_k)~d\mu_k,
\]
where $f(x_1, \dots, x_n) = \sum_{k = 1}^n f_k(x_k)$, $f_k \in C_L(X_k, \mu_k)$ (or $C_b(X_k)$), $\omega \in W$.
\end{theorem}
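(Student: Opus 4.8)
The plan is to derive the identity from the Fenchel--Rockafellar theorem quoted above, in exactly the spirit of the proof of the Kellerer existence criterion given earlier. I would first settle the case of compact $X_i$ and bounded continuous $c$ and $\omega$, and only then remove compactness/boundedness by an approximation argument (this last step being the genuinely technical part of \cite{Zaev}). Throughout I assume $\Pi_W(\mu_i)\neq\emptyset$; otherwise the statement is handled separately, as in the Kellerer argument.

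In the compact, bounded setting put $E=C_b(X)$, so that $E^*$ is the space of finite signed Radon measures on $X$, and define convex functionals
\[
\Phi(u)=\begin{cases}0,& u\ge -c \text{ pointwise},\\ +\infty,&\text{otherwise},\end{cases}
\qquad
\Psi(u)=\begin{cases}-\sum_{k=1}^n\int_{X_k}f_k\,d\mu_k,& -u=\sum_{k=1}^n f_k(x_k)+\omega,\ f_k\in C_b(X_k),\ \omega\in W,\\ +\infty,&\text{otherwise}.\end{cases}
\]
The functional $\Psi$ is well defined: if $-u=\sum f_k+\omega=\sum f_k'+\omega'$, then $\sum(f_k-f_k')+\widetilde\omega=0$ with $\widetilde\omega=\omega-\omega'\in W$, and integrating this relation against any $\pi\in\Pi_W(\mu_i)$ gives $\sum\int(f_k-f_k')\,d\mu_k+\int\widetilde\omega\,d\pi=0$, where $\int\widetilde\omega\,d\pi=0$, so the value is independent of the representation. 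With these choices $\inf_E[\Phi+\Psi]=-\sup_{f+\omega\le c}\sum_k\int_{X_k}f_k\,d\mu_k$, and the hypotheses of Fenchel--Rockafellar hold: taking $f_1\equiv-\|c\|_\infty-1$ and all other data zero gives $u_0\equiv\|c\|_\infty+1$, at which $\Psi$ is finite and $\Phi\equiv 0$ on the unit ball, hence $\Phi$ is continuous there.

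It then remains to compute the two Legendre transforms, which is routine and mirrors the Kellerer computation: $\Phi^*(-\pi)=\int_X c\,d\pi$ if $\pi\ge 0$ and $+\infty$ otherwise (the minimizing $u$ is $u=-c$, admissible since $c$ is continuous), while $\Psi^*(\pi)=0$ if $\mathrm{pr}_k(\pi)=\mu_k$ for every $k$ and $\int_X\omega\,d\pi=0$ for every $\omega\in W$, and $+\infty$ otherwise — this is where the subspace $W$ enters, simply as an extra family of linear constraints on the dual measure. Consequently $\max_{\pi\in E^*}[-\Phi^*(-\pi)-\Psi^*(\pi)]=\max_{\pi\in\Pi_W(\mu_i)}\bigl(-\int_X c\,d\pi\bigr)=-\inf_{\pi\in\Pi_W(\mu_i)}\int_X c\,d\pi$, and equating the two sides of Fenchel--Rockafellar yields the duality; attainment of the maximum simultaneously reproves existence of an optimal $\pi$.

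To pass to Polish $X_i$ and $c,\omega\in C_L(X,\mu_i)$ I would either replace the uniform norm on $E$ by a weighted sup-norm with weight comparable to $1+\sum_k c_k(x_k)$, so that $c$ becomes ``bounded'' and $E^*$ is identified with the signed measures of the matching integrability, and re-run the argument while checking tightness of $\Pi_W(\mu_i)$ via Prokhorov's theorem; or else truncate $c$ to $c\wedge N$, apply the bounded case, and let $N\to\infty$, using the domination $|c|\le\sum_k c_k(x_k)$ together with dominated convergence on the primal side and monotonicity on the dual side. I expect this non-compact bookkeeping — pinning down the right pair of normed spaces in duality and establishing tightness/compactness there — to be the only real obstacle; the compact case is essentially mechanical once $\Phi$ and $\Psi$ are chosen as above. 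The details are precisely the content of \cite{Zaev}.
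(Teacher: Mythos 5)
Note first that the paper does not prove this theorem: it is quoted verbatim as one of the ``main results of \cite{Zaev},'' and the introduction explicitly remarks that Zaev's own argument ``is based on the general minimax principle,'' not on Fenchel--Rockafellar. So you are proposing an alternative derivation rather than reconstructing the paper's (nonexistent) proof.

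Your compact-space step is correct, and it runs exactly parallel to the Fenchel--Rockafellar proof of the Kellerer criterion that the paper does give in Section~3: the choice of $\Phi$ and $\Psi$, the two Legendre transforms, the constant continuity point $u_0$, and the well-definedness of $\Psi$ (integrating against a fixed element of $\Pi_W(\mu_i)$) all check out, and the observation that $W$ contributes only one more linear family of constraints on the dual measure is the right one.

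The non-compact extension is where your sketch has a genuine gap, and neither of the two continuations you outline closes it as stated; calling this part ``bookkeeping'' undersells it. For a non-compact Polish $X$ the norm dual $C_b(X)^*$ is a space of finitely-additive set functions, not of finite signed Borel measures, so the Riesz-representation step you use to compute $\Phi^*(-\pi)$ and $\Psi^*(\pi)$ is simply unavailable; replacing the sup-norm by a weighted sup-norm inherits the same defect. Truncating $c$ to $c\wedge N$ reduces $c\in C_L(X,\mu_i)$ to bounded $c$, but the resulting duality for bounded $c$ on a non-compact Polish space is precisely the still-unresolved case, so that reduction is circular; and passing $\inf_\pi\int c_N\,d\pi\to\inf_\pi\int c\,d\pi$ is not dominated convergence but a tightness-and-diagonal argument over the (weakly closed, uniformly tight) set $\Pi_W(\mu_i)$. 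A correct completion must either prove a posteriori that the abstract Fenchel--Rockafellar maximizer is tight, hence $\sigma$-additive, using the marginal constraints, or abandon Fenchel--Rockafellar in favor of a minimax theorem that never needs to identify $C_b(X)^*$ --- which is what Zaev actually does. Deferring to \cite{Zaev} is legitimate, since the paper itself cites rather than reproves the result; just be clear that what you are deferring is the substance of the theorem, not the cleanup.
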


\subsection{
A problem with linear constraints which is equivalent to the multistochastic problem}

Let us consider again the multistochastic Monge--Kantorovich problem on Polish spaces  $X_1, \dots, X_n$.  We are given  $\binom{n}{k}$ probability measures   $\mu_\alpha$ on  $X_\alpha$, where  $\alpha \in \mathcal{I}_{nk}$, and a cost function  $c: X \to \mathbb{R}$,
 $X = X_1 \times \dots \times X_n$.
Our aim is to construct an equivalent Monge--Kantorovich problem with linear constraints.
Then we can apply duality \cref{zaev_duality}.

In what follows we denote
\[
\widetilde{X} = \prod_{\alpha \in \mathcal{I}_{nk}} X_{\alpha}. 
\]
For every $\alpha \in \mathcal{I}_{nk}$
we define the corresponding natural projection ${\rm Pr}_{\alpha} \colon \widetilde{X} \to X_{\alpha}$.

\begin{definition}
For all  $\alpha \in \mathcal{I}_{nk}$ and  $i \in \alpha$ let us consider projection
$\widetilde{x}_\alpha^i := \prj_{X_i} \circ \prj_{X_\alpha}$. In what follows  $\widetilde{x}_\alpha^i$ denotes the projection operator
and, at the same time, the image of  $\widetilde{x} \in \widetilde{X}$ under action of this operator. 
The set  $\{\widetilde{x}\}_\alpha^i$ can be viewed as a set of coordinates of  $\widetilde{x}$ in $\widetilde{X}$.
\end{definition}

\begin{definition}
The subspace $P \subset \widetilde{X}$ will be defined as follows:
\[P = \left\{\widetilde{x} \in \widetilde{X} \colon \widetilde{x}_\alpha^i = \widetilde{x}_\beta^i \text{ for all } \alpha, \beta \in \mathcal{I}_{nk}, i \in \alpha \cap \beta\right\}.
\]
\end{definition}

The subspace  $P$ can be characterized in terms of a diagonal operator. The space  $\widetilde{X}$ is isomorphic to  $(X_1 \times \dots \times X_n)^{\binom{n - 1}{k - 1}} = X^{\binom{n-1}{k-1}}$: 
to verify this it is sufficient to interchange factors in the product of spaces $X_\alpha = \prod_{i \in \alpha}X_i$. Let  $\Delta$ be the diagonal mapping from
 $X$ onto $\widetilde{X} = X^{\binom{n-1}{k-1}}$. 
 It is easy to see that this mapping is well--defined, because it does not depend on permutation of spaces in the isomorphism $\widetilde{X} \cong (X_1 \times \dots \times X_n)^{\binom{n - 1}{k - 1}}$.
 Hence  $P$ is the image of $X$ under action $\Delta$ and  restriction of $\Delta$ on  $P$ acts bijectively.

The following properties of $\Delta$ are direct consequences of its definition:
\begin{proposition}
Operator $\Delta$ generates an operator $\Delta_*: \mathcal{P}(X) \to \mathcal{P}(\widetilde{X})$ acting on measures, which has the following properties:
\begin{enumerate}
    \item For every measure $\mu \in \mathcal{P}(X)$ the support of  $\Delta_*(\mu)$ is a subset of  $P$.
    \item Operator $\Delta_*$ is a bijection between  $\mathcal{P}(X)$  and the set of measures  $\mu \in \mathcal{P}(\widetilde{X})$ with the property $\mathrm{supp}(\mu) \subset P$.
    \item Every $\mu \in \mathcal{P}(X)$ and every $\alpha \in \mathcal{I}_{nk}$ satisfy $\prj_\alpha(\mu) = \prj_{X_\alpha}(\Delta_*(\mu))$.
    \item Let $\mu$ be an arbitrary probability measure on $X$ and let $c \in L^1(X, \mu)$. Let $\widetilde{c}$ be a measurable function on $\widetilde{X}$ such that $\widetilde{c}(\widetilde{x}) = c(\Delta^{-1}(\widetilde{x}))$ 
    for all $\widetilde{x} \in P$. Then $\widetilde{c} \in L^1(\widetilde{X}, \Delta_*(\mu))$ and $\int_X c~d\mu = \int_{\widetilde{X}}\widetilde{c}~d\Delta_*(\mu) = \int_{P}\widetilde{c}~d\Delta_*(\mu)$.
\end{enumerate}
\end{proposition}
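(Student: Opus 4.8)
The plan is to exploit that $\Delta \colon X \to \widetilde{X}$ is, by construction, a homeomorphism onto its image $P$, and then read off all four properties from the elementary theory of pushforward measures. First I would record the basic structural facts about $\Delta$ and $P$. The map $\Delta$ is continuous, since each of its $X_\alpha$-components equals the projection $\prj_\alpha \colon X \to X_\alpha$, and it is injective because from $\Delta(x)$ one recovers every coordinate $x_i$. The set $P$ is closed in $\widetilde{X}$: it is the intersection of the finitely many sets $\{\widetilde{x}_\alpha^i = \widetilde{x}_\beta^i\}$, each of which is the preimage of the diagonal of $X_i \times X_i$ under a continuous map, hence closed (the diagonal is closed since $X_i$ is metric). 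Moreover $\Delta(X) = P$ and the inverse $\Delta^{-1}\colon P \to X$ is again given by reading off coordinates, hence continuous, so $\Delta$ is a homeomorphism of $X$ onto the closed subset $P$; in particular it is a Borel isomorphism onto $P$, and $(\Delta^{-1})_*$ is a well-defined operator on measures concentrated on $P$.

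Next I would derive properties (1)--(3). For (1), note $\Delta_*(\mu)(\widetilde{X} \setminus P) = \mu(\Delta^{-1}(\widetilde{X}\setminus P)) = \mu(\emptyset) = 0$, and since $P$ is closed this forces $\mathrm{supp}(\Delta_*(\mu)) \subset P$. For (2), injectivity of $\Delta_*$ follows by applying $(\Delta^{-1})_*$ to both sides of $\Delta_*(\mu_1) = \Delta_*(\mu_2)$, which is legitimate because both sides are concentrated on $P$ by (1); surjectivity follows by sending a measure $\nu$ with $\mathrm{supp}(\nu)\subset P$ to $(\Delta^{-1})_*(\nu) \in \mathcal{P}(X)$, whose image under $\Delta_*$ is $\nu$ again (since $\Delta \circ \Delta^{-1} = \mathrm{id}$ on $P$ and $\nu(\widetilde X \setminus P) = 0$). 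For (3), using the identity $\prj_{X_\alpha}\circ \Delta = \prj_\alpha$ one gets, for every Borel set $A \subset X_\alpha$,
\[
\prj_{X_\alpha}(\Delta_*(\mu))(A) = \Delta_*(\mu)\bigl(\prj_{X_\alpha}^{-1}(A)\bigr) = \mu\bigl(\Delta^{-1}\prj_{X_\alpha}^{-1}(A)\bigr) = \mu\bigl(\prj_\alpha^{-1}(A)\bigr) = \prj_\alpha(\mu)(A).
\]

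Finally, property (4) is the change-of-variables formula for pushforward measures combined with the fact that $\widetilde{c}\circ\Delta = c$ on $X$; indeed $\widetilde{c}(\Delta(x)) = c(\Delta^{-1}(\Delta(x))) = c(x)$, and the values of $\widetilde{c}$ off $P$ play no role since $\Delta_*(\mu)$ is concentrated on $P$ by (1). Applying the formula to $|c|$ yields $\int_{\widetilde{X}}|\widetilde{c}|\,d\Delta_*(\mu) = \int_{P}|\widetilde{c}|\,d\Delta_*(\mu) = \int_X |\widetilde{c}\circ\Delta|\,d\mu = \int_X |c|\,d\mu < \infty$, so $\widetilde{c}\in L^1(\widetilde{X},\Delta_*(\mu))$, and the same computation without absolute values gives the claimed chain of equalities $\int_X c\,d\mu = \int_{\widetilde{X}}\widetilde{c}\,d\Delta_*(\mu) = \int_{P}\widetilde{c}\,d\Delta_*(\mu)$. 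I do not expect a genuine obstacle here: the proposition repackages standard measure-theoretic facts, and the only point requiring a moment's care is to verify that $P$ is Borel and that $\Delta$ restricts to a Borel isomorphism $X \to P$, which is what makes $(\Delta^{-1})_*$ meaningful in the argument for (2) and licenses writing the integrals over $P$ in (4).
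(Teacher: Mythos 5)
Your proof is correct and complete. The paper itself offers no argument for this proposition, presenting the four claims as ``direct consequences of the definition'' of $\Delta$, and your write-up simply supplies the routine verifications that are being taken for granted: $\Delta$ is a homeomorphism onto the closed set $P$ (hence a Borel isomorphism), after which (1)--(4) follow from the standard behaviour of pushforward measures, the identity $\prj_{X_\alpha}\circ\Delta=\prj_\alpha$, and the change-of-variables formula.
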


The following theorem is an immediate corollary of these properties
\begin{theorem} \label{weak_connection}
Let $c \in C_L(X, \mu_\alpha)$ be a function on $X$ and $\widetilde{c} \in C_L(\widetilde{X}, \mu_\alpha)$ be an extension of $c \circ \Delta^{-1}: P \subset \widetilde{X} \to \mathbb{R}$ 
onto the whole space $\widetilde{X}$. Then
\[
\inf_{\pi \in \Pi(X, \mu_\alpha)}\int_X c~d\pi = \inf_{\substack{\xi \in \Pi(\widetilde{X}, \mu_\alpha) \\ \mathrm{supp}(\xi) \subset P}}\int \widetilde{c}~d\xi.
\]
The minimum on the left-hand side is attained if and only if the minimum on the right-hand side is attained.
\end{theorem}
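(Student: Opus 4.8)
The plan is to exhibit the pushforward map $\Delta_*$ as a bijection between the feasible set $\Pi(X,\mu_\alpha)$ of the multistochastic problem and the feasible set $\{\xi\in\Pi(\widetilde X,\mu_\alpha):\mathrm{supp}(\xi)\subset P\}$ of the right-hand problem, and then to observe that $\Delta_*$ carries one objective functional onto the other. Once this is done, equality of the infima and equivalence of attainment are automatic, so the whole argument is essentially bookkeeping built on the four properties of $\Delta_*$ listed in the preceding Proposition.

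First I would assemble the bijection of feasible sets. By item (2) of the Proposition, $\Delta_*$ is a bijection from $\mathcal P(X)$ onto the set of probability measures on $\widetilde X$ concentrated on $P$, with inverse given by pushing forward under $\Delta|_P^{-1}\colon P\to X$. By item (3), $\prj_{X_\alpha}(\Delta_*(\mu))=\prj_\alpha(\mu)$ for every $\mu\in\mathcal P(X)$ and every $\alpha\in\mathcal I_{nk}$, hence $\mu\in\Pi(X,\mu_\alpha)$ if and only if $\Delta_*(\mu)\in\Pi(\widetilde X,\mu_\alpha)$. Intersecting with the support condition, $\Delta_*$ restricts to a bijection of $\Pi(X,\mu_\alpha)$ onto $\{\xi\in\Pi(\widetilde X,\mu_\alpha):\mathrm{supp}(\xi)\subset P\}$.

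Second I would check that the objective is preserved along this bijection. Since $c\in C_L(X,\mu_\alpha)$, the growth bound $|c|\le\sum_\alpha c_\alpha$ with $c_\alpha\in C_L(X_\alpha,\mu_\alpha)$ gives $c\in L^1(X,\mu)$ for every $\mu\in\Pi(X,\mu_\alpha)$, as noted after the statement of the primal problem; likewise $\widetilde c\in C_L(\widetilde X,\mu_\alpha)$ is integrable against every $\xi\in\Pi(\widetilde X,\mu_\alpha)$, so both infima are taken over genuinely well-defined functionals. Because $\widetilde c$ coincides with $c\circ\Delta^{-1}$ on $P$ and $\Delta_*(\mu)$ is concentrated on $P$, item (4) yields
\[
\int_X c\,d\mu=\int_P\widetilde c\,d\Delta_*(\mu)=\int_{\widetilde X}\widetilde c\,d\Delta_*(\mu)
\]
for each $\mu\in\Pi(X,\mu_\alpha)$; conversely, for $\xi$ feasible on the right the measure $\mu:=(\Delta|_P^{-1})_*\xi$ lies in $\Pi(X,\mu_\alpha)$ with $\Delta_*\mu=\xi$, whence $\int_{\widetilde X}\widetilde c\,d\xi=\int_X c\,d\mu$ by the same identity. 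Taking infima over the two bijectively identified feasible sets gives the stated equality, and $\pi$ realizes the left infimum precisely when $\Delta_*(\pi)$ realizes the right one, so one minimum is attained iff the other is.

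I do not expect a genuine obstacle; the only point that needs a moment's care is that the conclusion must not depend on which continuous extension $\widetilde c$ of $c\circ\Delta^{-1}$ is chosen. This is automatic because every competitor $\xi$ on the right is supported on $P$, so $\int\widetilde c\,d\xi$ depends only on $\widetilde c|_P=c\circ\Delta^{-1}$; the hypothesis $\widetilde c\in C_L(\widetilde X,\mu_\alpha)$ serves only to make the integrals well defined. (If one also wishes to see that such a $\widetilde c$ exists, apply Tietze's extension theorem on the closed set $P$ and then truncate between $\pm\sum_\alpha c_\alpha\circ\prj_{X_\alpha}$ to preserve the growth bound; but the theorem takes $\widetilde c$ as given.)
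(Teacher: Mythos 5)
Your proof is correct and follows exactly the route the paper intends: the paper states the theorem is ``an immediate corollary'' of the four listed properties of $\Delta_*$, and you spell out precisely how items (2), (3), and (4) combine to give the bijection of feasible sets and the preservation of the objective. Your added remark that the result is independent of the choice of the extension $\widetilde{c}$ (because every competitor $\xi$ is supported on $P$) is a sensible observation, though not needed for the statement as given.
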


Consider the distance function  $d_i$ on  $X_i$ and the family of functions  $\omega^i_{\alpha\beta}: \widetilde{X} \to \mathbb{R},$ 
\[\omega^i_{\alpha\beta}(\widetilde{x}) := \min(d_i(\widetilde{x}^i_\alpha, \widetilde{x}^i_\beta), 1)
\] 
for all $\alpha, \beta \in \mathcal{I}_{nk}$, $i \in \alpha \cap \beta$. Note that  every $\omega^i_{\alpha\beta}$ is a nonnegative, continuous, bounded from above function, hence 
$\omega^i_{\alpha\beta} \in C_b(\widetilde{X}) \subset C_L(\widetilde{X}, \mu_\alpha)$. In addition, if some measure  $\mu \in \mathcal{P}(\widetilde{X})$ satisfies  
$\int \omega^i_{\alpha\beta}~d\mu = 0$, then $\mathrm{supp}(\mu) \subset (\omega^i_{\alpha\beta})^{-1}(0) = \{\widetilde{x} \in \widetilde{X}: \widetilde{x}^i_\alpha = \widetilde{x}^i_\beta\}$.

Let us define  the space of linear restrictions:
\[W := \mathrm{span}\{\omega^i_{\alpha\beta}\} \subset C_b(\widetilde{X}) \subset C_L(\widetilde{X}, \mu_\alpha).
\] 
It  follows from the observations collected above that  for every $\pi \in \mathcal{P}(\widetilde{X})$ the equality $\int\omega~d\pi = 0$ holds for all 
$\omega \in W$ if and only if  $\mathrm{supp}(\pi) \subset P$. Hence  
\[\Pi_W(\widetilde{X}, \mu_\alpha) = \{\pi \in \Pi(\widetilde{X}, \mu_\alpha): \mathrm{supp}(\pi) \subset P\}.\]

Having this in mind, we can give another formulation of \cref{weak_connection}:
\begin{theorem}
Let  $c \in C_L(X, \mu_\alpha)$ be a function on  $X$ and $\widetilde{c} \in C_L(\widetilde{X}, \mu_\alpha)$ be an  extension of  $c \circ \Delta^{-1}: P \subset \widetilde{X} \to \mathbb{R}$ onto the entire space $\widetilde{X}$. 
Then 
\[
\inf_{\pi \in \Pi(X, \mu_\alpha)}\int_X c~d\pi = \inf_{\xi \in \Pi_W(\widetilde{X}, \mu_\alpha)}\int \widetilde{c}~d\xi,
\]
and the minimum on the left-hand side is attained if and only if it is attained on the right-hand side.
\end{theorem}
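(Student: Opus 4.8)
The plan is to read off the statement directly from \cref{weak_connection}, after rewriting the support constraint ``$\mathrm{supp}(\xi) \subset P$'' appearing there in terms of the linear restrictions $\omega^i_{\alpha\beta}$. The whole argument is bookkeeping, and I expect no genuine obstacle; the single point meriting a separate remark is that the infimum on the right-hand side does not depend on which admissible extension $\widetilde c$ one picks.

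First I would verify the set identity
\[
\Pi_W(\widetilde{X}, \mu_\alpha) = \bigl\{\xi \in \Pi(\widetilde{X}, \mu_\alpha) \colon \mathrm{supp}(\xi) \subset P \bigr\}.
\]
Recall that $W = \mathrm{span}\{\omega^i_{\alpha\beta}\}$ and that $P = \bigcap_{\alpha, \beta,\, i \in \alpha \cap \beta}(\omega^i_{\alpha\beta})^{-1}(0)$, because $\omega^i_{\alpha\beta}(\widetilde{x}) = \min(d_i(\widetilde{x}_\alpha^i, \widetilde{x}_\beta^i), 1)$ vanishes exactly on $\{\widetilde{x}_\alpha^i = \widetilde{x}_\beta^i\}$. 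If $\mathrm{supp}(\xi) \subset P$, then every $\omega^i_{\alpha\beta}$ vanishes $\xi$-almost everywhere, so $\int \omega\,d\xi = 0$ for all $\omega \in W$ by linearity. Conversely, if $\int \omega\,d\xi = 0$ for every $\omega \in W$, then in particular $\int \omega^i_{\alpha\beta}\,d\xi = 0$ for each $i, \alpha, \beta$; since $\omega^i_{\alpha\beta}$ is continuous and nonnegative, this forces $\mathrm{supp}(\xi) \subset (\omega^i_{\alpha\beta})^{-1}(0)$, and intersecting over all indices gives $\mathrm{supp}(\xi) \subset P$. This is exactly the chain of observations already assembled in the paragraph preceding the theorem. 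Plugging this identity into the right-hand infimum of \cref{weak_connection} yields
\[
\inf_{\pi \in \Pi(X, \mu_\alpha)}\int_X c~d\pi = \inf_{\substack{\xi \in \Pi(\widetilde{X}, \mu_\alpha) \\ \mathrm{supp}(\xi) \subset P}}\int \widetilde{c}~d\xi = \inf_{\xi \in \Pi_W(\widetilde{X}, \mu_\alpha)}\int \widetilde{c}~d\xi,
\]
and, the two index sets on the right being literally the same, the ``minimum attained iff minimum attained'' clause transfers verbatim from \cref{weak_connection}.

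It then remains to record that the right-hand side is well defined, i.e. independent of the extension: for any $\xi$ with $\mathrm{supp}(\xi) \subset P$ one has $\int \widetilde{c}\,d\xi = \int_P \widetilde{c}\,d\xi = \int_P c\circ\Delta^{-1}\,d\xi$, which uses only the values of $\widetilde c$ on $P$, where it is pinned down by $c$. For completeness I would also note that an admissible extension exists at all: $P$ is closed in $\widetilde{X}$ and $\Delta$ restricts to a homeomorphism $X \to P$, while the domination $|c(x)| \le \sum_\alpha c_\alpha(x_\alpha)$ carries over to $\widetilde{X}$ because each $X_\alpha$-coordinate of $X$ is one of the coordinates of $\widetilde X$; hence $c\circ\Delta^{-1}$ admits a continuous extension dominated by $\widetilde{x} \mapsto \sum_\alpha c_\alpha(\prj_{X_\alpha}(\widetilde{x}))$, e.g. by a Tietze-type argument, so that indeed $\widetilde c \in C_L(\widetilde{X}, \mu_\alpha)$. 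As noted, nothing beyond this routine check is required; the point of the reformulation is only that it places the multistochastic problem in the exact form to which \cref{zaev_duality} applies.
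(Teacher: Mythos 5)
Your proposal is correct and follows exactly the route the paper takes: it observes that the continuity and nonnegativity of the $\omega^i_{\alpha\beta}$ make $\int\omega\,d\xi=0$ for all $\omega\in W$ equivalent to $\mathrm{supp}(\xi)\subset P$, and then reads the claimed statement off \cref{weak_connection} by substituting one index set for the other. The remarks on well-definedness and existence of the extension match what the paper records separately (the latter in the lemma immediately following the theorem).
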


This theorem gives another formulation of the transportation problem with linear constraints which is equivalent to our multistochastic problem.
It remains to prove that there exists a function  $\widetilde{c}$ which satisfies our requirement.

\begin{lemma}
    a) Let $c \in C_b(X)$. There exists a function  $\widetilde{c} \in C_b(\widetilde{X})$ which is an extension of  $c \circ \Delta^{-1}$ onto $\widetilde{X}$. 
    b) Let  $c \in C_L(X, \mu_\alpha)$. There exists a function $\widetilde{c} \in C_L(\widetilde{X}, \mu_\alpha)$  which is an extension of  $c \circ \Delta^{-1}$ onto $\widetilde{X}$.
\end{lemma}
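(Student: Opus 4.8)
The plan is to bypass any soft extension theorem and instead write down an explicit continuous left inverse of $\Delta$. (One could also invoke Tietze--Urysohn, since $\widetilde{X}$ is Polish, hence normal, and $P$ is closed, being cut out by finitely many equalities $\widetilde{x}^i_\alpha = \widetilde{x}^i_\beta$; but the explicit map is cleaner and yields part (a) for free.) For each index $i \in \{1,\dots,n\}$ fix once and for all some $\alpha(i) \in \mathcal{I}_{nk}$ containing $i$ — this exists because $1 \le k < n$ — and define $\phi \colon \widetilde{X} \to X$ by
\[
\phi(\widetilde{x})_i := \widetilde{x}^{\,i}_{\alpha(i)} = \prj_{X_i}\bigl(\prj_{X_{\alpha(i)}}(\widetilde{x})\bigr).
\]
First I would check the two trivial facts: $\phi$ is continuous (a composition of coordinate projections), and $\phi \circ \Delta = \mathrm{id}_X$, since $(\Delta x)^i_{\alpha(i)} = x_i$. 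Because $\Delta$ is a bijection of $X$ onto $P$, the second identity forces $\phi|_P = \Delta^{-1}$. Hence in case (a) one simply takes $\widetilde{c} := c \circ \phi \in C_b(\widetilde{X})$: it is continuous, bounded by $\|c\|_\infty$, and agrees with $c\circ\Delta^{-1}$ on $P$.

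For (b) the same choice $\widetilde{c} = c\circ\phi$ need not lie in $C_L(\widetilde{X}, \mu_\alpha)$: from $|c(x)| \le \sum_{\alpha} c_\alpha(x_\alpha)$ one only gets $|c\circ\phi(\widetilde{x})| \le \sum_\alpha c_\alpha\bigl((\phi\widetilde{x})_\alpha\bigr)$, and the $\alpha$-th term here depends on the blocks $\widetilde{x}_{\alpha(i)}$, $i\in\alpha$, rather than on the single block $\widetilde{x}_\alpha$ — so it is not a "separable" dominator on $\widetilde{X}$. The fix I propose is to clamp. After replacing each $c_\alpha$ by $|c_\alpha|$ (which keeps it in $C(X_\alpha)\cap L^1(\mu_\alpha)$ and keeps the bound $|c|\le\sum_\alpha|c_\alpha|$), we may assume $c_\alpha\ge 0$. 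Set
\[
\widetilde{g}(\widetilde{x}) := \sum_{\alpha\in\mathcal{I}_{nk}} c_\alpha(\widetilde{x}_\alpha),
\]
a nonnegative continuous function on $\widetilde{X}$; on $P$, writing $\widetilde{x} = \Delta x$ and using $\widetilde{x}_\alpha = x_\alpha$, it dominates $|c\circ\Delta^{-1}|$. Then put
\[
\widetilde{c} := \max\bigl(-\widetilde{g},\ \min(c\circ\phi,\ \widetilde{g})\bigr).
\]
I would then verify: $\widetilde{c}$ is continuous (lattice operations on continuous functions); everywhere $|\widetilde{c}| \le \widetilde{g} = \sum_\alpha c_\alpha(\widetilde{x}_\alpha)$ with $c_\alpha \in C(X_\alpha)\cap L^1(\mu_\alpha)$, hence $\widetilde{c}\in C_L(\widetilde{X},\mu_\alpha)$; and on $P$ the inequalities $-\widetilde{g}\le c\circ\Delta^{-1}\le\widetilde{g}$ make the truncation inactive, so $\widetilde{c} = c\circ\Delta^{-1}$ there.

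There is no serious obstacle; the only point worth flagging is the one that makes the clamp necessary — the naive extension $c\circ\phi$ carries only a "mixed" dominating function, and one must go back to the bound on $c$ itself (together with $\widetilde{x}_\alpha = x_\alpha$ on $P$) to produce the honest separable dominator $\widetilde{g}$ and then truncate against it. Everything else (continuity of $\phi$ and the identity $\phi\circ\Delta=\mathrm{id}_X$, or alternatively closedness of $P$ and normality of $\widetilde{X}$ for the Tietze route, and the elementary lattice manipulations) is immediate.
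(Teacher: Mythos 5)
Your proof is correct and is essentially the paper's proof: the map $\phi$ you build from the coordinate choices $\alpha(i)\ni i$ is precisely the paper's $\mathrm{pr}$ (the projection of $\widetilde{X}\cong X^{\binom{n-1}{k-1}}$ onto one factor), part (a) is $c\circ\phi$, and part (b) is the same truncation of $c\circ\phi$ against $\pm\widetilde{g}$ that the paper writes out by cases. The only difference is that you explicitly replace each $c_\alpha$ by $|c_\alpha|$ before clamping, which is a worthwhile precaution the paper leaves implicit: without it, $\sum_\alpha c_\alpha(\widetilde{x}_\alpha)$ could be negative off $P$, and the two branches of the paper's case definition would not be ordered.
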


\begin{proof}
Let $\mathrm{pr}$ be the projection of $\widetilde{X} \cong X^{\binom{n - 1}{k - 1}}$ onto a fixed factor. It is easy to see that  $\mathrm{pr}$ is continuous and $\mathrm{pr} \circ \Delta = \mathrm{id}$ on $X$.

a) Assume that $c \in C_b(X)$ and $|c| \le M$ for some number $M$. Set $\widetilde{c}(\widetilde{x}) := c(\mathrm{pr}(\widetilde{x}))$.
Function $\widetilde{c}$ is continuous, $|\widetilde{c}| \le M$ and $\widetilde{c}(\widetilde{x}) = c(\Delta^{-1}(\widetilde{x}))$ for all $\widetilde{x} \in P$. 
Thus, $\widetilde{c}$ is an extension of $c \circ \Delta^{-1}$ onto $\widetilde{X}$ and $\widetilde{c} \in C_b(\widetilde{X})$.

b) Assume that $c \in C_L(X, \mu_\alpha)$. Then  $|c(x)| \le \sum_{\alpha \in \mathcal{I}_{nk}}f_\alpha(x_\alpha)$. Set 
\begin{align*}
    \widetilde{c}(\widetilde{x}) := \begin{cases}
    -\sum_{\alpha \in \mathcal{I}_{nk}}f_\alpha(\widetilde{x}_\alpha), &\text{ if } c(\mathrm{pr}(\widetilde{x})) < -\sum_{\alpha \in \mathcal{I}_{nk}}f_\alpha(\widetilde{x}_\alpha),\\
    \sum_{\alpha \in \mathcal{I}_{nk}}f_\alpha(\widetilde{x}_\alpha), &\text{ if } c(\mathrm{pr}(\widetilde{x})) > \sum_{\alpha \in \mathcal{I}_{nk}}f_\alpha(\widetilde{x}_\alpha), \\
    c(\mathrm{pr}(\widetilde{x})), &\text{ otherwise}.
    \end{cases}
\end{align*}
The function $\widetilde{c}$ constructed in this way is continuous, $|\widetilde{c}(\widetilde{x})| \le \sum_{\alpha \in \mathcal{I}_{nk}}f_\alpha(\widetilde{x}_\alpha)$ and
 $\widetilde{c}(\widetilde{x}) = c(\Delta^{-1}(\widetilde{x}))$ for all $\widetilde{x} \in P$. Thus, $\widetilde{c}$ is an extension of  $c \circ \Delta^{-1}$ onto
  $\widetilde{X}$ and $\widetilde{c} \in C_L(\widetilde{X}, \mu_\alpha)$.
\end{proof}

\Cref{zaev_duality} implies the following duality relation:

\begin{proposition}
Under assumptions of the previous theorem
\[
\inf_{\pi \in \Pi(X, \mu_\alpha)}\int_X c~d\pi = \sup_{f + \omega \le \widetilde{c}}\sum_{\alpha \in \mathcal{I}_{nk}}\int_{X_\alpha}f_\alpha(x_\alpha)~d\mu_\alpha,
\]
where $f(\widetilde{x}) = \sum_{\alpha \in \mathcal{I}_{nk}} f_\alpha(\widetilde{x}_\alpha)$, $f_\alpha \in C_L(X_\alpha, \mu_\alpha)$ (or $C_b(X_\alpha)$), $\omega \in W$.
\end{proposition}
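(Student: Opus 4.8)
The plan is to recognise the right-hand side as the dual functional of the linearly constrained transportation problem that was shown above to be equivalent to the multistochastic problem, and then to invoke Zaev's duality theorem \cref{zaev_duality}. First I would use the theorem immediately preceding this proposition, together with the lemma producing an admissible extension $\widetilde c$ of $c\circ\Delta^{-1}$, to rewrite the left-hand side as
\[
\inf_{\pi \in \Pi(X, \mu_\alpha)}\int_X c\,d\pi \;=\; \inf_{\xi \in \Pi_W(\widetilde X, \mu_\alpha)}\int_{\widetilde X}\widetilde c\,d\xi ,
\]
where $\widetilde X = \prod_{\alpha \in \mathcal I_{nk}} X_\alpha$, the marginals form the consistent system $\{\mu_\alpha\}_{\alpha \in \mathcal I_{nk}}$, and $W = \mathrm{span}\{\omega^i_{\alpha\beta}\}$.

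Next I would apply \cref{zaev_duality} directly to this transportation problem on $\widetilde X$ with linear-constraint space $W$ and cost $\widetilde c$. The thing to verify is that the data really fit Zaev's framework: the finite family of Polish factors is now indexed by $\mathcal I_{nk}$ in place of $\{1,\dots,n\}$, namely the spaces $X_\alpha$ with product $\widetilde X$; the cost satisfies $\widetilde c \in C_L(\widetilde X, \mu_\alpha)$ (and $\widetilde c \in C_b(\widetilde X)$ when $c \in C_b(X)$) by part b) (resp. a)) of the lemma above; and $W \subset C_b(\widetilde X) \subset C_L(\widetilde X, \mu_\alpha)$, since every $\omega^i_{\alpha\beta}$ is continuous and bounded. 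Granting this, \cref{zaev_duality} gives
\[
\inf_{\xi \in \Pi_W(\widetilde X, \mu_\alpha)}\int_{\widetilde X}\widetilde c\,d\xi \;=\; \sup_{f + \omega \le \widetilde c}\;\sum_{\alpha \in \mathcal I_{nk}}\int_{X_\alpha} f_\alpha(x_\alpha)\,d\mu_\alpha ,
\]
with $f(\widetilde x) = \sum_{\alpha \in \mathcal I_{nk}} f_\alpha(\widetilde x_\alpha)$, $f_\alpha \in C_L(X_\alpha, \mu_\alpha)$ (or $C_b(X_\alpha)$) and $\omega \in W$; chaining the two displays then gives the assertion.

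I do not expect a genuine obstacle: the argument is purely an application of the two preceding results. The only step requiring a little care is the check that the ingredients of the reformulated problem — the admissible cost $\widetilde c$ and the constraint subspace $W$ — lie in the function spaces demanded by \cref{zaev_duality}, which is exactly the content of the lemma and of the construction of $W$ above. One should also remark on the degenerate case: if $\Pi(\mu_\alpha)$ is empty, then $\Pi_W(\widetilde X, \mu_\alpha)$ is empty as well, both infima equal $+\infty$, and the duality identity of \cref{zaev_duality} still holds, so nothing extra is needed.
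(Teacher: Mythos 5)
Your argument is exactly the paper's (implicit) proof: chain the reformulation theorem and the extension lemma established just above, then apply Zaev's duality \cref{zaev_duality} to the transportation problem on $\widetilde{X}$ with marginals $\{\mu_\alpha\}$, constraint space $W$, and cost $\widetilde c$. Your verification that $\widetilde c$ and $W$ lie in the required function spaces is precisely the content the two preceding results are there to supply, and the degenerate-case remark is a harmless but correct addition.
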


Assume that for the family of functions  $f_\alpha$ there exists  $\omega \in W$ such that $\sum_{\alpha \in \mathcal{I}_{nk}}f_\alpha(\widetilde{x}_\alpha) + \omega(\widetilde{x}) \le \widetilde{c}(\widetilde{x})$ 
for all $\widetilde{x} \in \widetilde{X}$. 
In particular, this equality holds for all  $\widetilde{x} \in P$. Then for all  $x \in X$  \[\sum_{\alpha \in \mathcal{I}_{nk}}f_\alpha(\Delta(x)_\alpha) + \omega(\Delta(x)) \le \widetilde{c}(\Delta(x)).\] 
Moreover, $\widetilde{c}(\Delta(x)) = c(x)$, $\omega(\Delta(x)) = 0$, $\Delta(x)_\alpha = x_\alpha$, hence $\sum_{\alpha \in \mathcal{I}_{nk}}f_\alpha(x_\alpha) \le c(x)$ for all $x \in X$. One gets
\[
\sup_{f + \omega \le \widetilde{c}}\sum_{\alpha \in \mathcal{I}_{nk}}\int_{X_\alpha}f_\alpha(x_\alpha)~d\mu_\alpha \le \sup_{f \le c}\sum_{\alpha \in \mathcal{I}_{nk}}\int_{X_\alpha}f_\alpha(x_\alpha)~d\mu_\alpha.
\]
In addition, the following inequality holds:
\[
\inf_{\pi \in \Pi(X, \mu_\alpha)}\int_X c~d\pi \ge \sup_{f \le c}\sum_{\alpha \in \mathcal{I}_{nk}}\int_{X_\alpha}f_\alpha(x_\alpha)~d\mu_\alpha.
\]

Summarizing these results we get the following final version of our duality theorem:

\begin{theorem}[Kantorovich duality for non-compact spaces] \label{thm:noncompact_duality}
Assume we are given Polish spaces  $X_1, \dots, X_n$ and a family of measures  $\mu_\alpha \in \mathcal{P}(X_\alpha)$, where $\alpha \in \mathcal{I}_{nk}$. 
Let  $c \in C_L(X, \mu_\alpha)$ (or $C_b(X)$) be a cost function on $X$. Then
\[
\inf_{\pi \in \Pi(\mu_\alpha)}\int_X c~d\pi = \sup_{f \le c}\sum_{\alpha \in \mathcal{I}_{nk}}\int_{X_\alpha}f_\alpha~d\mu_\alpha,
\]
where the supremum is taken on the set of all $f_\alpha \in C_L(X, \mu_\alpha)$ (or $C_b(X_\alpha)$), $f(x) = \sum_{\alpha \in \mathcal{I}_{nk}}f(x_\alpha)$. If the set $\Pi(\mu_\alpha)$ is non-empty, the infimum on the left-hand side is attained. 
\end{theorem}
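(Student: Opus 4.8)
The plan is to reduce the statement, via the constructions already carried out in this section, to Zaev's duality theorem for the Kantorovich problem with linear constraints, and then to squeeze the two dual suprema together. Throughout I would treat $c\in C_L(X,\mu_\alpha)$; the case $c\in C_b(X)$ is the same with $C_b$ everywhere in place of $C_L$ (and it is anyway a special case, since a bounded continuous function is dominated by a constant, which lies in every $C_L(X_\alpha,\mu_\alpha)$ because the $\mu_\alpha$ are probability measures).

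First I would fix the extension $\widetilde c\in C_L(\widetilde X,\mu_\alpha)$ of $c\circ\Delta^{-1}$ produced by the truncation lemma above, and invoke \cref{weak_connection} in its reformulated version with the constraint space $W=\mathrm{span}\{\omega^i_{\alpha\beta}\}$, which gives
\[
\inf_{\pi\in\Pi(\mu_\alpha)}\int_X c\,d\pi=\inf_{\xi\in\Pi_W(\widetilde X,\mu_\alpha)}\int_{\widetilde X}\widetilde c\,d\xi .
\]
Then I would apply \cref{zaev_duality} on $\widetilde X$, viewed as the product of the $\binom{n}{k}$ Polish spaces $X_\alpha$ with marginals $\mu_\alpha$, constraint subspace $W\subset C_b(\widetilde X)$, and cost $\widetilde c$, to obtain
\[
\inf_{\xi\in\Pi_W(\widetilde X,\mu_\alpha)}\int_{\widetilde X}\widetilde c\,d\xi=\sup_{f+\omega\le\widetilde c}\ \sum_{\alpha\in\mathcal I_{nk}}\int_{X_\alpha}f_\alpha\,d\mu_\alpha ,
\]
the supremum running over $f(\widetilde x)=\sum_\alpha f_\alpha(\widetilde x_\alpha)$ with $f_\alpha\in C_L(X_\alpha,\mu_\alpha)$ and $\omega\in W$.

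It then remains to identify this last supremum with $\sup_{f\le c}\sum_\alpha\int f_\alpha\,d\mu_\alpha$, and here I would use the two elementary inequalities already isolated above. On one hand, restricting any admissible pair $(f,\omega)$ for $\widetilde c$ to the diagonal $P=\Delta(X)$ and using $\widetilde c\circ\Delta=c$, $\omega\circ\Delta=0$, $\Delta(x)_\alpha=x_\alpha$, one gets $\sum_\alpha f_\alpha(x_\alpha)\le c(x)$ on $X$, whence $\sup_{f+\omega\le\widetilde c}(\cdots)\le\sup_{f\le c}(\cdots)$. On the other hand, integrating $\sum_\alpha f_\alpha(x_\alpha)\le c(x)$ against any $\pi\in\Pi(\mu_\alpha)$ gives the trivial weak-duality bound $\sup_{f\le c}(\cdots)\le\inf_\pi\int_X c\,d\pi$. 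Chaining the four displays yields
\[
\sup_{f\le c}\sum_\alpha\int_{X_\alpha}f_\alpha\,d\mu_\alpha\ \le\ \inf_\pi\int_X c\,d\pi\ =\ \sup_{f+\omega\le\widetilde c}\sum_\alpha\int_{X_\alpha}f_\alpha\,d\mu_\alpha\ \le\ \sup_{f\le c}\sum_\alpha\int_{X_\alpha}f_\alpha\,d\mu_\alpha ,
\]
forcing equality throughout, which is the asserted absence of a duality gap. For the attainment clause, if $\Pi(\mu_\alpha)\neq\emptyset$ then $\Delta_*$ carries it into $\Pi_W(\widetilde X,\mu_\alpha)$, so the latter is non-empty; the existence theorem of \cite{Zaev} quoted above then supplies a minimizer $\xi$ of $\int_{\widetilde X}\widetilde c\,d\xi$, and the final clause of \cref{weak_connection} transfers it to a minimizer of $\int_X c\,d\pi$ on $\Pi(\mu_\alpha)$.

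I expect no serious obstacle: essentially all the heavy lifting sits in the constructions preceding the statement, so the proof is little more than bookkeeping. The one point I would keep straight is that \cref{zaev_duality} is being applied on $\widetilde X$ with its ``large'' marginals $\mu_\alpha$, indexed by $\alpha\in\mathcal I_{nk}$, rather than on $X$ with one-dimensional marginals, together with the fact (verified above) that each $\omega^i_{\alpha\beta}$ lies in $C_b(\widetilde X)$ and that $\int\omega\,d\xi=0$ for all $\omega\in W$ cuts out exactly the condition $\mathrm{supp}(\xi)\subset P$. A minor technical care is the passage from a general $c\in C_L$ to the hypotheses under which \cref{weak_connection} and the extension lemma were stated, which is precisely what the three-case truncation defining $\widetilde c$ is designed to handle.
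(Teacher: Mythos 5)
Your proposal is correct and follows essentially the same route as the paper: reduce to Zaev's constrained duality on $\widetilde X$ via the diagonal embedding $\Delta$ and the constraint space $W$, restrict admissible dual pairs to $P$ to compare the two suprema, close the circle with the trivial weak-duality bound, and transfer the minimizer back via $\Delta_*$. The only presentational difference is that you collapse into a single chain what the paper spreads over a proposition plus two surrounding displays; the mathematics is the same.
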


\section{Sufficient conditions for existence of a dual solution}
\subsection{Definition and properties of \texorpdfstring{$(n, k)$}{(n, k)}-functions}
\begin{definition}
Assume we are given Polish spaces $X_1, \dots, X_n$ and a positive integer $1 \le k < n$. A function $F \colon X \to [-\infty, + \infty)$ is called an $(n, k)$-function if there exists a tuple of functions $\{f_\alpha\}_{\alpha \in \mathcal{I}_{nk}}$, $f_\alpha \colon X_\alpha \to [-\infty, + \infty)$ satisfying
\[
F(x) = \sum_{\alpha \in \mathcal{I}_{nk}}f_\alpha(x_\alpha)
\]
for all $x \in X$. If $F(x) > -\infty$ for each $x$ (and therefore $f_\alpha(x_\alpha) > -\infty$ for all $x_\alpha \in X_\alpha$), $F$ is called a finite $(n, k)$-function.
\end{definition}
This definition is given without any additional assumptions on the functions $f_\alpha$ and the function $F$. We prove that for every $(n, k)$-function $F$ there exists a "regular" tuple of functions $\{f_\alpha\}$ such that $F(x) = \sum_{\alpha \in \mathcal{I}_{nk}}f_\alpha(x_\alpha)$ for all $x \in X$.

Let us introduce more notations. For $x_\alpha \in X_\alpha$, $x_\beta \in X_\beta$, such that  $\alpha \cap \beta = \varnothing$, we denote by $x_\alpha x_\beta$  a point from the space $X_{\alpha \sqcup \beta}$, whose coordinates will be the union of the coordinates $x_\alpha$ and $x_\beta$. In addition, we write $\textbf{n} = \{1, 2, \dots, n\}$.

\begin{proposition} \label{prop:nk_function_decomposition}
Let $F$ be a finite $(n, k)$-function defined on the space $X$. Fix $y \in X$. For each $\alpha \in \mathcal{I}_n$ we define a function $F_\alpha \colon x_\alpha \mapsto F(x_\alpha y_{\boldsymbol{n} \backslash \alpha})$ on the space $X_\alpha$. 

Then there exists a tuple of real numbers $\{\lambda_i\}_{i = 0}^k$ depending only on $n$ and $k$ such that
$F(x) = \sum_{\alpha \in \mathcal{I}_{nk}}\widehat{f}_\alpha(x_\alpha)$ for each $x \in X$, where 
\[
\widehat{f}_\alpha(x_\alpha) = \sum_{\beta \subseteq \alpha}\lambda_{|\beta|}F_\beta(x_\beta),
\ 
\alpha \in \mathcal{I}_{nk}.
\]
\end{proposition}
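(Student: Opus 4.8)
The strategy is to reduce the decomposition of a general finite $(n,k)$-function to a combinatorial identity about which coefficients $\lambda_{|\beta|}$ annihilate the lower-order "interference" terms. Start with \emph{some} decomposition $F(x) = \sum_{\alpha\in\mathcal{I}_{nk}}f_\alpha(x_\alpha)$, which exists by hypothesis; the point is that the specific $\widehat f_\alpha$ built from the restrictions $F_\beta$ will \emph{also} decompose $F$, and this is what we must verify. The natural approach is to plug the assumed decomposition into the formula for $\widehat f_\alpha$ and show that, after summing over $\alpha\in\mathcal{I}_{nk}$, everything collapses to $F(x)$ provided the $\lambda_i$ satisfy a triangular linear system — exactly the same mechanism already used in the proof of \cref{explicit_signed_construction}, where a triangular matrix with unit diagonal forces a unique solution.

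Concretely, first I would record how each restriction $F_\beta$ decomposes: since $F = \sum_{\gamma\in\mathcal{I}_{nk}} f_\gamma$, substituting $x_\beta y_{\mathbf n\setminus\beta}$ gives $F_\beta(x_\beta) = \sum_{\gamma\in\mathcal{I}_{nk}} f_\gamma\big((x_\beta y_{\mathbf n\setminus\beta})_\gamma\big)$, and the $\gamma$-term depends on $x$ only through the coordinates in $\gamma\cap\beta$ (the rest being frozen at $y$). Grouping by $\delta = \gamma\cap\beta$, one gets $F_\beta(x_\beta) = \sum_{\delta\subseteq\beta} g_\delta(x_\delta)$ for suitable functions $g_\delta$ depending on $y$, where $g_\delta$ collects all $f_\gamma$ with $\gamma\cap\beta = \delta$. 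Then $\widehat f_\alpha(x_\alpha) = \sum_{\beta\subseteq\alpha}\lambda_{|\beta|}\sum_{\delta\subseteq\beta} g_\delta(x_\delta) = \sum_{\delta\subseteq\alpha}\Big(\sum_{\delta\subseteq\beta\subseteq\alpha}\lambda_{|\beta|}\Big) g_\delta(x_\delta)$, so the inner coefficient is $\sum_{j=|\delta|}^{k}\binom{k-|\delta|}{j-|\delta|}\lambda_j$, depending only on $|\delta|$. Call this $c_{|\delta|}$.

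Next, summing over $\alpha\in\mathcal{I}_{nk}$: for fixed $\delta$ with $|\delta| = t \le k$, the number of $\alpha\in\mathcal{I}_{nk}$ containing $\delta$ is $\binom{n-t}{k-t}$, so $\sum_{\alpha}\widehat f_\alpha(x_\alpha) = \sum_{t=0}^{k} c_t \binom{n-t}{k-t}\sum_{|\delta|=t} g_\delta(x_\delta)$. The key observation is that $\sum_{t=0}^{k}\sum_{|\delta|=t} g_\delta(x_\delta) = \sum_{\delta}g_\delta(x_\delta) = F(x)$ (taking $\beta = \mathbf n$... more carefully: summing the $g_\delta$ over \emph{all} $\delta\subseteq\mathbf n$ with $|\delta|\le k$ recovers $\sum_{\gamma\in\mathcal{I}_{nk}} f_\gamma(x_\gamma) = F(x)$, since each $\gamma$ has $|\gamma\cap\mathbf n| = k$, but one must be a little careful: the $g_\delta$ were defined relative to a particular $\beta$, so I would instead directly evaluate $\sum_\alpha\widehat f_\alpha$ by tracking each original $f_\gamma$-contribution). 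So it suffices to choose $\{\lambda_i\}_{i=0}^k$ so that $c_t\binom{n-t}{k-t} = 1$ for all $0\le t\le k$, i.e.\ $\sum_{j=t}^{k}\binom{k-t}{j-t}\lambda_j = \binom{n-t}{k-t}^{-1}$. This is an upper-triangular system in $\lambda_k,\lambda_{k-1},\dots,\lambda_0$ with unit diagonal entries (the $j=t$ term has coefficient $\binom{0}{0}=1$), hence uniquely solvable, and the solution depends only on $n,k$ — exactly the assertion.

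The main obstacle is the bookkeeping in the middle step: carefully identifying, for a fixed original summand $f_\gamma(x_\gamma)$, exactly how many times and with what total coefficient it reappears in $\sum_{\alpha\in\mathcal{I}_{nk}}\widehat f_\alpha(x_\alpha)$, as a function only of $|\gamma\cap(\text{the various }\beta)|$ and not of the individual $f_\gamma$. The cleanest way to avoid errors is \emph{not} to pass through intermediate functions $g_\delta$ at all, but to write out $\sum_\alpha\widehat f_\alpha(x) = \sum_\alpha\sum_{\beta\subseteq\alpha}\lambda_{|\beta|}\sum_{\gamma\in\mathcal{I}_{nk}} f_\gamma\big((x_\beta y_{\mathbf n\setminus\beta})_\gamma\big)$ and, for each fixed $\gamma$, split the $\gamma$-sum according to the value of the set $\gamma\cap\beta$; since $f_\gamma\big((x_\beta y_{\mathbf n\setminus\beta})_\gamma\big)$ equals $f_\gamma(x_\gamma)$ precisely when $\gamma\subseteq\beta$ and otherwise equals a "frozen" value depending on fewer $x$-coordinates, one shows the total coefficient of $f_\gamma(x_\gamma)$ is $1$ and all frozen terms cancel, again via the triangular system. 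I would also remark at the end that finiteness of $F$ guarantees each $F_\beta$ and hence each $\widehat f_\alpha$ is finite and well-defined, so no $-\infty + \infty$ ambiguities arise.
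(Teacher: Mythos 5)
Your overall plan matches the paper's proof: substitute the hypothesized decomposition $F = \sum_\gamma f_\gamma$ into $\sum_\alpha \widehat f_\alpha$, track the total coefficient of each ``frozen'' term $f_\gamma(x_{\gamma'}y_\delta)$ (where $\gamma'\sqcup\delta = \gamma$), and impose a triangular linear system forcing all coefficients with $|\gamma'|<k$ to vanish. The gap is the one you flag yourself: the function $g_\delta$ depends on $\beta$, not merely on $\delta$. For $n=3$, $k=2$, taking $\beta=\{1\}$ gives $g_{\{1\}} = f_{12}(x_1,y_2)+f_{13}(x_1,y_3)$, while $\beta=\{1,2\}$ gives $g_{\{1\}} = f_{13}(x_1,y_3)$. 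So the collapse $\widehat f_\alpha = \sum_{\delta\subseteq\alpha}c_{|\delta|}g_\delta$ and the claim $\sum_\delta g_\delta = F$ both fail, and the system $\sum_{j\ge t}\binom{k-t}{j-t}\lambda_j = \binom{n-t}{k-t}^{-1}$ is not justified by what precedes it.

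The direct tracking you propose as the fix is precisely the paper's computation, but carrying it out yields a different-looking system. The number of pairs $(\alpha,\beta)$ with $\beta\subseteq\alpha\in\mathcal{I}_{nk}$, $|\beta|=t$ and $\gamma\cap\beta=\gamma'$ (with $a=|\gamma'|$) is $\binom{n-k}{t-a}\binom{n-t}{k-t}$, so the total coefficient of $f_\gamma(x_{\gamma'}y_\delta)$ in $\sum_\alpha\widehat f_\alpha$ is $P_a := \sum_{t\ge a}\lambda_t\binom{n-t}{k-t}\binom{n-k}{t-a}$, and the system to solve is $P_a=0$ for $0\le a<k$ together with $P_k=\lambda_k=1$. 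This is upper triangular with nonzero diagonal $\binom{n-a}{k-a}$, so the conclusion still goes through. Your system happens to have the same unique solution: the upper-triangular unimodular map $P \mapsto S$ with $S_a = \sum_{b\ge a}\binom{k-a}{b-a}P_b$ turns $P=(0,\dots,0,1)$ into $S=(1,\dots,1)$, and the Vandermonde identity $\sum_c\binom{k-a}{c}\binom{n-k}{t-a-c}=\binom{n-a}{t-a}$ shows $S_a = \binom{n-a}{k-a}\sum_{j\ge a}\binom{k-a}{j-a}\lambda_j$, which is your system. But that equivalence is not free, and the cleaner route is simply to write down the correct system from the direct count, as you sketch.
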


This representation of $F$ is regular in the following sense: if $F$ is a measurable / continuous / bounded function, then for all $\alpha \in \mathcal{I}_{nk}$ the function $\widehat{f}_\alpha$ is measurable / continuous / bounded too.

\begin{example}\label{ex:n1-function_decompostion}
Let $F$ be a finite $(n, 1)$-function. Fix $y = (y_1, y_2, \dots, y_n) \in X$. Let $\lambda_0 = \frac{1}{n} - 1$ and $\lambda_1 = 1$. Then
\[
\widehat{f}_i(x_i) = F_i(x_i) - \frac{n - 1}{n}F_{\varnothing} = F_i(y_1, \dots, y_{i - 1}, x_i, y_{i + 1}, \dots, y_n) - \frac{n - 1}{n}F(y_1, \dots, y_n).
\]
Since $F$ is a finite $(n, 1)$-function, there exists a tuple of functions $f_i \colon X_i \to \mathbb{R}$ such that $F(x_1, \dots, x_n) = f_1(x_1) + \dots + f_n(x_n)$ for all $x \in X$. One can easily verify that
\[
\widehat{f}_i(x_i) = f_i(x_i) - f_i(y_i) + \frac{1}{n}(f_1(y_1) + \dots + f_n(y_n)),
\]
and therefore $F(x) = \sum_{i = 1}^n\widehat{f}_i(x_i)$ for all $x \in X$.
\end{example}

\begin{example}\label{ex:32_function_decomposition}
Let $F$ be a finite $(3, 2)$-function. Fix $(y_1, y_2, y_3) \in X$. Let $\lambda_0 = 1/3$, $\lambda_1 = -1/2$ and $\lambda_2 = 1$. Then by construction
\begin{align*}
    &\widehat{f}_{12}(x_1, x_2) = F(x_1, x_2, y_3) - \frac{1}{2}F(x_1, y_2, y_3) - \frac{1}{2}F(y_1, x_2, y_3) + \frac{1}{3}F(y_1, y_2, y_3),\\
    &\widehat{f}_{13}(x_1, x_3) = F(x_1, y_2, x_3) - \frac{1}{2}F(x_1, y_2, y_3) - \frac{1}{2}F(y_1, y_2, x_3) + \frac{1}{3}F(y_1, y_2, y_3),\\
    &\widehat{f}_{23}(x_2, x_3) = F(y_1, x_2, x_3) - \frac{1}{2}F(y_1, x_2, y_3) - \frac{1}{2}F(y_1, y_2, x_3) + \frac{1}{3}F(y_1, y_2, y_3).
\end{align*}
Similarly to \cref{ex:n1-function_decompostion} we can verify that 
\[
F(x_1, x_2, x_3) = \widehat{f}_{12}(x_1, x_2) + \widehat{f}_{13}(x_1, x_3) + \widehat{f}_{23}(x_2, x_3)
\]
for all $x \in X$.
\end{example}
\begin{proof}[Proof of \cref{prop:nk_function_decomposition}]
Consider a function $\widehat{F} \colon X \to \mathbb{R}$ defined as follows:
\[
\widehat{F}(x) = \sum_{\alpha \in \mathcal{I}_{nk}}\widehat{f}_{\alpha}(x_\alpha).
\]
Since by construction $\widehat{f}(x_\alpha) = \sum_{\beta \subseteq \alpha}\lambda_{|\beta|}F_\beta(x_\beta)$, one has
\[
\widehat{F}(x) = \sum_{\beta \in \mathcal{I}_n}\sum_{\alpha \in \mathcal{I}_{nk}\colon \beta \subseteq \alpha}\lambda_{|\beta|}F_\beta(x_\beta).
\]

For every $\beta \in \mathcal{I}_n$, let us find the amount $A_{\beta}$ of numbers $\alpha \in \mathcal{I}_{nk}$ satisfying $\beta \subseteq \alpha$. If $|\beta| > k$, then there is no such $\alpha$. Otherwise, it can be easily verified that $A_{\beta}  = \binom{n - |\beta|}{k - |\beta|}$. Hence,
\[
\widehat{F}(x) = \sum_{\beta \in \mathcal{I}_n \colon |\beta| \le k}\binom{n - |\beta|}{k - |\beta|}\lambda_{|\beta|}F_\beta(x_\beta) = \sum_{t = 0}^k\lambda_t\binom{n - t}{k - t}\sum_{\beta \in \mathcal{I}_{nt}}F_\beta(x_\beta).
\]

Since $F$ is a finite $(n, k)$-function, there exists a tuple of functions $\{f_\alpha\}_{\alpha \in \mathcal{I}_{nk}}$, $f_\alpha\colon X_\alpha \to \mathbb{R}$, such that for all $x \in X$ we have \[
\sum_{\alpha \in \mathcal{I}_{nk}}f_\alpha(x_\alpha) = F(x).\] 
For each $\beta \in \mathcal{I}_n$ the function $F_\beta(x_\beta)$ can be represented as follows:
\[
F_\beta(x_\beta) = \sum_{\gamma, \delta \in \mathcal{I}_n}f_{\gamma \sqcup \delta}(x_\gamma y_\delta),
\]
where the sum is taken for all pairs of disjoint sets of indices $\gamma, \delta \in \mathcal{I}_n$ satisfying $\gamma \subseteq \beta$, $\delta \subseteq \boldsymbol{n} \backslash \beta$ and $|\gamma| + |\delta| = k$. Hence, the function $\widehat{F}(x)$ can be represented as follows:
\begin{equation}\label{eq:c_gamma_delta_decomposition}
\widehat{F}(x) = \sum_{t = 0}^k\lambda_t\binom{n - t}{k - t}\sum_{\beta \in \mathcal{I}_{nt}}F_\beta(x_\beta) = \sum_{\gamma, \delta \in \mathcal{I}_n}c_{\gamma, \delta}f_{\gamma, \delta}(x_\gamma y_\delta),
\end{equation}
where the last sum is taken for all pairs of disjoint sets of indices $\gamma$ and $\delta$ such that $|\gamma| + |\delta| = k$, and $c_{\gamma, \delta}$ is a linear combination of $\{\lambda_i\}_{i = 0}^k$ with constant coefficients.

Let us find the coefficient $c_{\gamma, \delta}$. To this end,  let us find  for each $0 \le t \le k$
the amount  of indices $\beta \in \mathcal{I}_{nt}$ 
satisfying $\gamma \subseteq \beta$ and $\delta \subseteq \boldsymbol{n} \backslash \beta$. If $t < |\gamma|$, then this quantity is trivially zero. Similarly, it is zero if $t > n - |\delta| = n - k + |\gamma|$. Otherwise, exactly $|\gamma|$ indices of $\beta$ are fixed, and we need to choose $t - |\gamma|$  indices from $n - |\gamma| - |\delta| = n - k$ available items. Hence, the amount of such $\beta$ is $\binom{n - k}{t - |\gamma|}$. Substituting this into equation \cref{eq:c_gamma_delta_decomposition} we get
\[
c_{\gamma, \delta} = \sum_{t = |\gamma|}^{\min(k, n - k + |\gamma|)}\lambda_t\binom{n - t}{k - t}\binom{n - k}{t - |\gamma|}.\]
In particular, the coefficient $c_{\gamma, \delta}$ depends only on $|\gamma|$. 

In order for the equality $F(x) = \widehat{F}(x)$ to hold, it is sufficient to require that the coefficients $c_{\gamma, \delta}$ satisfy the following equalities:
\begin{align*}
    c_{\gamma, \delta} = 
    \begin{cases}
    1, &\text{if $|\gamma| = k$,}\\
    0, &\text{otherwise}.
    \end{cases}
\end{align*}
We obtain the system of linear equations on $\lambda$
\begin{align*}
    \begin{dcases}
    &\sum_{t = a}^{\min(k, n - k + a)}\lambda_t\binom{n - t}{k - t}\binom{n - k}{t - a} = 0 \;\text{ for $0 \le a < k$},\\
    &\sum_{t = k}^{\min(k, n - k + k)}\lambda_t\binom{n - t}{k - t}\binom{n - k}{t - k} = \lambda_k = 1.
    \end{dcases}
\end{align*}

The matrix of this linear system is upper-triangular and all diagonal elements are not equal to $0$. Hence, this system admits a unique solution $\{\widehat{\lambda}_i\}_{i = 0}^k$. Thus, if $\widehat{f}_\alpha(x_\alpha) = \sum_{\beta \subseteq \alpha}\widehat{\lambda}_{|\beta|}F_\beta(x_\beta)$, then $F(x) = \sum_{\alpha \in \mathcal{I}_{nk}}\widehat{f}_\alpha(x_\alpha)$ for all $x \in X$.
\end{proof}

For $1 \le i \le n$, we fix a probability measure $\mu_i$ on the space $X_i$. For each $\alpha \in \mathcal{I}_n$ we denote by $\mu_\alpha$ the probability measure $\prod_{i \in \alpha}\mu_i$ on the space $X_\alpha$, and we denote by $\mu$ the probability measure $\prod_{1 \le i \le n}\mu_i$ on the space $X$. If a finite $(n, k)$-function $F$ is integrable (with respect to $\mu$), we expect that there exists a tuple of integrable functions $\{f_\alpha\}_{\alpha \in \mathcal{I}_{nk}}$ (with respect to $\mu_\alpha$) such that $F(x) = \sum_{\alpha \in \mathcal{I}_{nk}}f_\alpha(x_\alpha)$. Using \cref{prop:nk_function_decomposition}, we construct a tuple of integrable functions $\{f_\alpha\}$ such that $\norm{f_\alpha}_1$ differs from $\norm{F}_1$ by no more than a constant factor depending on $n$ and $k$.

To achieve this let us verify the following lemma:
\begin{lemma}\label{lem:point_with_good_sections}
Let $X_i$, $1 \le i \le n$, be  Polish spaces equipped with the Borel $\sigma$-algebras, and for every $i$ let $\mu_i$ be a probability measure one $X_i$. Let $c \colon X \to \mathbb{R}$ be an integrable function  on  $X$. Fix a point $y \in X$, and for each $\alpha \in \mathcal{I}_n$ let us denote by $c_\alpha$ the function $x_\alpha \mapsto c(x_\alpha y_{\boldsymbol{n} \backslash \alpha})$ defined on $X_\alpha$.

Then there exists a point $y \in X$ such that $\norm{c_\alpha}_1 \le 2^{n + 1}\norm{c}_1$ for all $\alpha \in \mathcal{I}_n$. For $\alpha = \varnothing$ the function $c_\varnothing$ is a constant function on the one-point space $X_\varnothing$ which is equal to $c(y)$, and $\norm{c_\varnothing}_1$ is just the absolute value of $c(y)$.
\end{lemma}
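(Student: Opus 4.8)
The plan is to combine Fubini's theorem with a union bound over the $2^n$ index sets $\alpha \in \mathcal{I}_n$, applying Markov's inequality once for each $\alpha$. The point is that each individual constraint ``$\|c_\alpha\|_1$ is not much bigger than $\|c\|_1$'' fails only on a set of small $\mu$-measure, and we need the complements of all $2^n$ such bad sets to intersect.

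First I would fix $\alpha \in \mathcal{I}_n$, write $\boldsymbol{n}\setminus\alpha$ for the complementary indices, and define $g_\alpha \colon X_{\boldsymbol{n}\setminus\alpha} \to [0,+\infty]$ by $g_\alpha(z) := \int_{X_\alpha}|c(x_\alpha z)|\,d\mu_\alpha(x_\alpha)$, where $\mu_\alpha = \prod_{i\in\alpha}\mu_i$. Since $c$ is integrable on $X$ and $\mu = \mu_\alpha \otimes \mu_{\boldsymbol{n}\setminus\alpha}$, Tonelli's theorem shows $g_\alpha$ is measurable with $\int g_\alpha\,d\mu_{\boldsymbol{n}\setminus\alpha} = \|c\|_1 < \infty$, and Fubini's theorem provides a $\mu_{\boldsymbol{n}\setminus\alpha}$-conull set of points $z$ for which the section $x_\alpha \mapsto c(x_\alpha z)$ is $\mu_\alpha$-measurable and integrable, with $L^1(\mu_\alpha)$-norm exactly $g_\alpha(z)$. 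Lifting to $X$, the cylinder function $\widetilde g_\alpha(y) := g_\alpha(y_{\boldsymbol{n}\setminus\alpha})$ satisfies $\int_X \widetilde g_\alpha\,d\mu = \|c\|_1$, so (assuming $\|c\|_1 > 0$; the case $\|c\|_1 = 0$ is trivial since then $c = 0$ $\mu$-a.e.) Markov's inequality gives $\mu\bigl(\{\widetilde g_\alpha > 2^{n+1}\|c\|_1\}\bigr) \le \|c\|_1 / (2^{n+1}\|c\|_1) = 2^{-(n+1)}$.

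Next I would form the union, over all $\alpha \in \mathcal{I}_n$, of these sets together with the finitely many $\mu$-null sets of points $y$ at which some section $c_\alpha$ fails to be $\mu_\alpha$-measurable or integrable. Since $|\mathcal{I}_n| = 2^n$, this ``bad'' set has $\mu$-measure at most $2^n \cdot 2^{-(n+1)} = \frac{1}{2} < 1$, hence its complement is nonempty. Any point $y$ in that complement does the job: for every $\alpha \in \mathcal{I}_n$ the section $c_\alpha \colon x_\alpha \mapsto c(x_\alpha y_{\boldsymbol{n}\setminus\alpha})$ is measurable and integrable and $\|c_\alpha\|_1 = g_\alpha(y_{\boldsymbol{n}\setminus\alpha}) = \widetilde g_\alpha(y) \le 2^{n+1}\|c\|_1$; the case $\alpha = \varnothing$ is exactly the stated degenerate one, with $c_\varnothing$ the constant $c(y)$ and $\|c_\varnothing\|_1 = |c(y)| \le 2^{n+1}\|c\|_1$.

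There is no deep obstacle here: the argument is a routine Fubini-plus-union-bound. The only thing that genuinely needs care is that the estimate must hold \emph{simultaneously} for all $2^n$ subsets, which is precisely why the union bound forces a constant that grows with $n$ — $2^{n+1}$ being the smallest convenient power of two strictly above $2^n$, and exactly the value making the bad set have measure $\le \frac{1}{2}$. A secondary bookkeeping point is to invoke Fubini to guarantee that the chosen $y$ yields honestly measurable sections $c_\alpha$; since this holds off a $\mu$-null set, it is absorbed harmlessly into the union bound.
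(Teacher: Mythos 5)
Your proof is correct and follows essentially the same route as the paper: Tonelli/Fubini to show the lifted section-norm function $\widetilde g_\alpha$ has $\mu$-integral $\norm{c}_1$, Markov's inequality to make each bad set have $\mu$-measure at most $2^{-(n+1)}$, and a union bound over the $2^n$ subsets $\alpha$ to leave a nonempty good set. The only cosmetic difference is that you lift directly to a cylinder function on $X$ whereas the paper works with the set $A_{\boldsymbol{n}\backslash\alpha} \subset X_{\boldsymbol{n}\backslash\alpha}$ and then pulls back through $\prj_{\boldsymbol{n}\backslash\alpha}^{-1}$; these are the same estimate.
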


\begin{proof}
For each $\alpha \in \mathcal{I}_n$ the spaces $X_\alpha \times X_{\boldsymbol{n} \backslash \alpha}$ and $X$ are canonically isomorphic, and therefore the function $c$ can be viewed as a function of two arguments $c(x_\alpha, y_{\boldsymbol{n} \backslash \alpha})$, where $x_\alpha \in X_\alpha$ and $y_{\boldsymbol{n} \backslash \alpha} \in X_{\boldsymbol{n} \backslash \alpha}$.

By the Fubini-Tonelli theorem, the function $|c(\cdot, y_{\boldsymbol{n} \backslash \alpha})|$ is integrable for $\mu_{\boldsymbol{n} \backslash \alpha}$-almost all $y_{\boldsymbol{n} \backslash \alpha}$ and
\begin{equation}\label{eq:Fubini-Tonelli_c}
\norm{c}_1 = \int_{X_{\boldsymbol{n} \backslash \alpha}}\left(\int_{X_\alpha}|c(x_\alpha, y_{\boldsymbol{n}\backslash \alpha})|\,\mu_{\alpha}(dx_\alpha)\right)\,\mu_{\boldsymbol{n} \backslash \alpha}(dy_{\boldsymbol{n} \backslash \alpha}).
\end{equation}

Consider the internal function from this expression: \[C_{\boldsymbol{n} \backslash \alpha}(y_{\boldsymbol{n} \backslash \alpha}) = \int_{X_\alpha}|c(x_\alpha, y_{\boldsymbol{n} \backslash \alpha})|\,\mu_\alpha(dx_\alpha).\]
This function is non-negative. In addition, it follows from \cref{eq:Fubini-Tonelli_c}, $C_{\boldsymbol{n} \backslash \alpha} \in L^1(X_{\boldsymbol{n} \backslash \alpha}, \mu_{\boldsymbol{n} \backslash \alpha})$ and $\norm{C_{\boldsymbol{n} \backslash \alpha}}_1 = \norm{c}_1$. Let
\[
A_{\boldsymbol{n} \backslash \alpha} = \left\{y_{\boldsymbol{n} \backslash \alpha} \in X_{\boldsymbol{n} \backslash \alpha} \colon C_{\boldsymbol{n} \backslash \alpha}(y_{\boldsymbol{n} \backslash \alpha}) > 2^{n + 1}\norm{c}_1\right\}.
\]
If $\norm{c}_1 = 0$, then $C_{\boldsymbol{n} \backslash \alpha}(y_{\boldsymbol{n} \backslash \alpha})$ is equal to 0 for $\mu_{\boldsymbol{n} \backslash \alpha}$-almost all points $y_{\boldsymbol{n} \backslash \alpha}$, and therefore $\mu_{\boldsymbol{n} \backslash \alpha}(A_{\boldsymbol{n} \backslash \alpha}) = 0$. Otherwise, it follows from Markov's inequality that
\[
\mu_{\boldsymbol{n} \backslash \alpha}(A_{\boldsymbol{n} \backslash \alpha})  \le \frac{1}{2^{n + 1}\norm{c}_1}\int_{X_{\boldsymbol{n} \backslash \alpha}}C_{\boldsymbol{n} \backslash \alpha}(y_{\boldsymbol{n} \backslash \alpha})\,\mu_{\boldsymbol{n} \backslash \alpha}(d_{\boldsymbol{n} \backslash \alpha}) = \frac{\norm{C_{\boldsymbol{n} \backslash \alpha}}_1}{2^{n + 1}\norm{c}_1} = \frac{1}{2^{n + 1}}.
\]
In both cases we conclude that $\mu_{\boldsymbol{n} \backslash \alpha}(A_{\boldsymbol{n} \backslash \alpha}) \le 2^{-n - 1}$. 

If $y \in \prj_{\boldsymbol{n} \backslash \alpha}^{-1}(X_{\boldsymbol{n} \backslash \alpha} \backslash A_{\boldsymbol{n} \backslash \alpha})$, then \[C_{\boldsymbol{n} \backslash \alpha}(y_{\boldsymbol{n} \backslash \alpha}) \le 2^{n + 1}\norm{c},\] and therefore the function $c_\alpha \colon x_\alpha \mapsto c(x_\alpha y_{\boldsymbol{n} \backslash \alpha})$ is integrable with respect to $\mu_\alpha$ and
\[
\norm{c_\alpha}_1 = C_{\boldsymbol{n} \backslash \alpha}(y_{\boldsymbol{n} \backslash \alpha}) \le 2^{n + 1}\norm{c}_1.
\]
Thus, if \[
y \in A = \bigcap_{\alpha \in \mathcal{I}_n} \prj_{\boldsymbol{n} \backslash \alpha}^{-1}(X_{\boldsymbol{n} \backslash \alpha} \backslash A_{\boldsymbol{n} \backslash \alpha}),
\]
then for all $\alpha \in \mathcal{I}_n$ the function $c_\alpha \colon x_\alpha \mapsto c(x_\alpha y_{\boldsymbol{n} \backslash \alpha})$ is integrable and $\norm{c_\alpha}_1 \le 2^{n + 1}\norm{c}_1$.

We only need to verify that $A$ is non-empty. We have
\[
\mu\left(\prj_{\boldsymbol{n} \backslash \alpha}^{-1}(X_{\boldsymbol{n} \backslash \alpha} \backslash A_{\boldsymbol{n} \backslash \alpha})\right) = \mu_{\boldsymbol{n} \backslash \alpha}\left(X_{\boldsymbol{n} \backslash \alpha} \backslash A_{\boldsymbol{n} \backslash \alpha}\right) = 1 - \mu_{\boldsymbol{n} \backslash \alpha}(A_{\boldsymbol{n} \backslash \alpha}) \ge 1 - \frac{1}{2^{n + 1}},
\]
and therefore
\[
\mu(A) \ge 1 - \frac{|\mathcal{I}_n|}{2^{n + 1}} \ge 1 - \frac{2^n}{2^{n + 1}} = \frac{1}{2}.
\]
Thus,  $A$ is a set of positive measure, and therefore $A \ne \varnothing$.
\end{proof}
\begin{theorem}\label{thm:existence_integrable_f_alpha_for_product}
For every $1 \le i \le n$, let $X_i$ be a Polish space equipped with the Borel $\sigma$-algebra, and let $\mu_i$ be a probability measure on  $X_i$. There exists a constant $C$ depending only on $n$ and $k$ such that for any finite $(n, k)$-function $F \in L^1(X, \mu)$ there exists a tuple of integrable functions $\{\widehat{f}_\alpha\}_{\alpha \in \mathcal{I}_{nk}}$, $\widehat{f}_\alpha \in L^1(X_\alpha, \mu_\alpha)$, such that
\[
F(x) = \sum_{\alpha \in \mathcal{I}_{nk}}\widehat{f}_\alpha(x_\alpha)
\]
for all $x \in X$ and $\lVert\widehat{f}_\alpha\rVert_1 \le C \cdot \norm{F}_1$ for all $\alpha \in \mathcal{I}_{nk}$.
\end{theorem}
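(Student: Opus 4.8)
The plan is to combine the two preceding results: the explicit pointwise decomposition of \cref{prop:nk_function_decomposition} together with the measure-theoretic selection of a good base point furnished by \cref{lem:point_with_good_sections}. The whole point is that both tools can be made to use one and the same point $y \in X$.

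First I would apply \cref{lem:point_with_good_sections} to $c = F$. Since $F$ is a finite $(n,k)$-function lying in $L^1(X,\mu)$, this yields a point $y \in X$ such that for \emph{every} $\alpha \in \mathcal{I}_n$ the section $F_\alpha \colon x_\alpha \mapsto F(x_\alpha y_{\boldsymbol n \backslash \alpha})$ belongs to $L^1(X_\alpha, \mu_\alpha)$ and satisfies $\norm{F_\alpha}_1 \le 2^{n+1}\norm{F}_1$. The essential feature is that a single choice of $y$ controls all coordinate sections simultaneously. I would then feed exactly this $y$ into \cref{prop:nk_function_decomposition}, which produces coefficients $\{\lambda_i\}_{i=0}^{k}$ depending only on $n$ and $k$ and the functions $\widehat f_\alpha(x_\alpha) = \sum_{\beta \subseteq \alpha}\lambda_{|\beta|}F_\beta(x_\beta)$, for which $F(x) = \sum_{\alpha \in \mathcal{I}_{nk}}\widehat f_\alpha(x_\alpha)$ for all $x \in X$.

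It remains to estimate the $L^1$-norms. For $\beta \subseteq \alpha$, the function $x_\alpha \mapsto F_\beta(x_\beta)$ on $X_\alpha$ has the same $L^1$-norm as $F_\beta$ on $X_\beta$, because $\mu_\alpha = \mu_\beta \times \mu_{\alpha \backslash \beta}$ is a product measure and the function does not depend on the extra coordinates. Hence, by the triangle inequality,
\[
\norm{\widehat f_\alpha}_1 \le \sum_{\beta \subseteq \alpha}|\lambda_{|\beta|}|\,\norm{F_\beta}_1 \le 2^{n+1}\norm{F}_1\sum_{t=0}^{k}\binom{k}{t}|\lambda_t|,
\]
so the theorem holds with the constant $C = 2^{n+1}\sum_{t=0}^{k}\binom{k}{t}|\lambda_t|$, which depends only on $n$ and $k$. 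Integrability (and measurability) of $\widehat f_\alpha$ is inherited from that of the sections $F_\beta$, which is part of the conclusion of \cref{lem:point_with_good_sections}.

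There is essentially no serious obstacle here beyond bookkeeping; the only point that genuinely needs care is the insistence that the \emph{same} $y$ be used for the decomposition and for the section bounds, which is exactly why \cref{lem:point_with_good_sections} was stated so as to control all sections at once rather than one $\alpha$ at a time. A secondary routine check is that the identity $F = \sum_\alpha \widehat f_\alpha$ from \cref{prop:nk_function_decomposition} holds as a genuine pointwise identity of functions, not merely $\mu$-almost everywhere, so that no null-set subtleties interfere when we pass to the $L^1$-estimates.
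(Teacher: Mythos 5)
Your proposal is correct and matches the paper's proof essentially line for line: apply \cref{lem:point_with_good_sections} to get a single point $y$ controlling all sections $F_\alpha$ at once, feed that same $y$ into \cref{prop:nk_function_decomposition} to obtain $\widehat{f}_\alpha = \sum_{\beta\subseteq\alpha}\lambda_{|\beta|}F_\beta$, and estimate by the triangle inequality to arrive at $C = 2^{n+1}\sum_{t=0}^{k}\binom{k}{t}|\lambda_t|$. Your added remark that $\|F_\beta\|_{L^1(\mu_\alpha)} = \|F_\beta\|_{L^1(\mu_\beta)}$ because $\mu_\alpha$ is a product measure is a correct (and slightly more explicit) justification of a step the paper takes for granted.
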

\begin{proof}
Consider a finite $(n, k)$-function $F$ defined on the space $X$. By \cref{lem:point_with_good_sections} there exists a point $y \in X$ such that the function $F_\alpha \colon x_\alpha \mapsto F(x_\alpha y_{\boldsymbol{n} \backslash \alpha})$ is integrable and $\norm{F_\alpha}_1 \le 2^{n + 1}\norm{F}_1$ for all $\alpha \in \mathcal{I}_n$.

By \cref{prop:nk_function_decomposition} there exists a tuple of real numbers $\{\lambda_i\}_{i = 0}^k$ such that  $F(x) = \sum_{\alpha \in \mathcal{I}_{nk}}\widehat{f}_\alpha(x_\alpha)$ for all $x \in X$, where
\[
\widehat{f}_\alpha(x_\alpha) = \sum_{\beta \subseteq \alpha}\lambda_{|\beta|}F_\beta(x_\beta),
\ 
\alpha \in \mathcal{I}_{nk}.
\]
 Since $F_\beta \in L^1(X_\beta, \mu_\beta)$ for all $\beta \in \mathcal{I}_n$, we conclude that $\widehat{f}_\alpha \in L^1(X_\alpha, \mu_\alpha)$. In addition,
\[
\norm{\widehat{f}_\alpha}_1 \le \sum_{\beta \subseteq \alpha}\left|\lambda_{|\beta|}\right| \cdot \norm{F_\beta}_1 \le 2^{n + 1}\norm{F}_1\sum_{\beta \subseteq \alpha}\left|\lambda_{|\beta|}\right| = 2^{n + 1}\norm{F}_1 \sum_{t = 0}^k\binom{k}{t}\left|\lambda_t\right|.
\]
Thus, we conclude that $\norm{\widehat{f}_\alpha}_1 \le C \cdot \norm{F}_1$, where
\[
C = 2^{n + 1}\sum_{t = 0}^k\binom{k}{t}|\lambda_t|,
\]
and this constant depends only on $n$ and $k$.
\end{proof}
\begin{example}
Let us find a constant $C$ explicitly for the case of the $(3, 2)$-problem. Consider a finite integrable $(3, 2)$-function $F$. There exists a point $y \in X = X_1 \times X_2 \times X_3$ such that $\norm{F_\alpha}_1 \le 16\norm{F}_1$ for all $\alpha \in \mathcal{I}_3$. By \cref{ex:32_function_decomposition} the functions
\begin{align*}
    &\widehat{f}_{12}(x_1, x_2) = F(x_1, x_2, y_3) - \frac{1}{2}F(x_1, y_2, y_3) - \frac{1}{2}F(y_1, x_2, y_3) + \frac{1}{3}F(y_1, y_2, y_3),\\
    &\widehat{f}_{13}(x_1, x_3) = F(x_1, y_2, x_3) - \frac{1}{2}F(x_1, y_2, y_3) - \frac{1}{2}F(y_1, y_2, x_3) + \frac{1}{3}F(y_1, y_2, y_3),\\
    &\widehat{f}_{23}(x_2, x_3) = F(y_1, x_2, x_3) - \frac{1}{2}F(y_1, x_2, y_3) - \frac{1}{2}F(y_1, y_2, x_3) + \frac{1}{3}F(y_1, y_2, y_3).
\end{align*}
satisfy the equation $F(x_1, x_2, x_3) = \widehat{f}_{12}(x_1, x_2) + \widehat{f}_{13}(x_1, x_3) + \widehat{f}_{23}(x_2, x_3)$ for all $(x_1, x_2, x_3) \in X$. All functions $\{\widehat{f}_{ij}\}$ are integrable with respect to $\mu_i \otimes \mu_j$. In addition,
\begin{align*}
\norm{\widetilde{f}_{12}}_1 &\le \norm{F(\cdot, \cdot, y_3)}_1 + \frac{1}{2}\norm{F(\cdot, y_2, y_3)}_1 + \frac{1}{2}\norm{F(y_1, \cdot, y_3)}_1 + \frac{1}{3}|F(y_1, y_2, y_3)|\\
& \le 16\left(1 + \frac{1}{2} + \frac{1}{2} + \frac{1}{3}\right)\norm{F}_1 < 38 \norm{F}_1.
\end{align*}
Similarly, $\norm{\widehat{f}_{13}}_1 < 38 \norm{F}_1$ and $\norm{\widehat{f}_{23}}_1 < 38 \norm{F}_1$, and therefore we can put $C = 38$. This constant estimate is crude, but we do not need to know the optimal value.
\end{example}

We want to generalize this property to a wider class of measures that are uniformly equivalent to the product of their projections to one-dimensional spaces.

\begin{definition}
We call the probability measure $\mu$ on the space $X$ \textit{reducible} if for $1 \le i \le n$ there exists a probability measure $\nu_i$ on spaces $X_i$ such that $\mu$ is uniformly equivalent to $\prod_{1 \le i \le n}\nu_i$.

We call the consistent set of probability measures $\{\mu_\alpha\}_{\alpha \in \mathcal{I}_{nk}}$ \textit{reducible} if there exists a uniting reducible measure $\mu \in \Pi(\mu_\alpha)$.
\end{definition}

If the probability measures $\mu$ and $\nu$ on the space $X$ are uniformly equivalent, then their projections are also uniformly equivalent: $\prj_\alpha \mu$ is uniformly equivalent to $\prj_\alpha \nu$ for all $\alpha \in \mathcal{I}_n$. In particular, if the set of measures $\mu_\alpha$ is reducible, then $\mu_i = \prj_i(\mu)$ is uniformly equivalent to $\nu_i$. Then the measure $\prod_{1 \le i \le n}\mu_i$ is uniformly equivalent to the measure $\prod_{1 \le n}\nu_i$. Hence, the following is true:

\begin{proposition}
A tuple of probability measures $\{\mu_\alpha\}_{\alpha \in \mathcal{I}_{nk}}$ is reducible if and only if there exists a uniting measure $\mu \in \Pi(\mu_\alpha)$, which is uniformly equivalent to $\prod_{1 \le i \le n}\mu_i$.
\end{proposition}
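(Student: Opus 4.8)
The plan is to prove the two implications separately; the substantive content is already essentially assembled in the paragraph preceding the statement, so the argument is short.

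The ``if'' direction is immediate: if some uniting measure $\mu \in \Pi(\mu_\alpha)$ is uniformly equivalent to $\prod_{i=1}^n \mu_i$, then taking $\nu_i := \mu_i$ in the definition of a reducible measure shows that $\mu$ itself is reducible, and since $\mu$ is then a uniting reducible measure, the tuple $\{\mu_\alpha\}$ is reducible by definition. No computation is needed here.

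For the ``only if'' direction I would start from a uniting reducible witness $\mu \in \Pi(\mu_\alpha)$, together with probability measures $\nu_i$ on $X_i$ and constants $0 < c \le C$ with $c\,\nu \le \mu \le C\,\nu$, where $\nu = \prod_{i=1}^n \nu_i$. The claim is that this same $\mu$ is already uniformly equivalent to $\prod_{i=1}^n \mu_i$, so it witnesses the right-hand side. First I would project the measure inequalities onto each $X_i$: since $\prj_i(\nu) = \nu_i$ and $\prj_i(\mu) = \prj_i(\mu_\alpha) = \mu_i$ (using consistency and $\mu \in \Pi(\mu_\alpha)$), this gives $c\,\nu_i \le \mu_i \le C\,\nu_i$, so each $\mu_i$ is uniformly equivalent to $\nu_i$ (replacing the projected Radon--Nikodym density by the version $\min(C,\max(c,\cdot))$ so the bounds hold at every point). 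Multiplying these $n$ uniform equivalences together --- the stability of uniform equivalence under finite products recalled above the statement --- yields that $\prod_i \mu_i$ is uniformly equivalent to $\nu = \prod_i \nu_i$. Composing this with the uniform equivalence of $\nu$ and $\mu$ (density bounds multiply under the chain rule for Radon--Nikodym derivatives) shows $\mu$ is uniformly equivalent to $\prod_i \mu_i$, which is exactly the desired conclusion.

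The only delicate point --- and the closest thing to an obstacle --- is purely cosmetic: the definition of uniform equivalence demands two-sided density bounds at every point rather than almost everywhere, while projecting a measure inequality produces the bound only almost everywhere; this is handled by choosing an appropriate everywhere-valid version of each density, as indicated above. Apart from that, the proof is bookkeeping with Radon--Nikodym densities, transitivity of uniform equivalence, and its behaviour under products.
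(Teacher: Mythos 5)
Your argument is correct and follows essentially the same route as the paper: project the uniform-equivalence bounds from $X$ to each $X_i$ to get $\mu_i$ uniformly equivalent to $\nu_i$, multiply these over $i$, and close with transitivity; the ``if'' direction is the trivial choice $\nu_i=\mu_i$. The only addition you make --- replacing the projected density by its truncation $\min(C,\max(c,\cdot))$ so that the everywhere-bounds required by the definition hold --- is a legitimate and welcome fix to a subtlety the paper leaves implicit.
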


If the set of measures $\mu_\alpha$ is reducible, then for all $\beta \in \mathcal{I}_{nt}$, $t \le k$, the measure $\mu_\beta$ is uniformly equivalent to $\prod_{i \in \beta}\mu_i$. It is easy to see that this condition is not sufficient.

\begin{example}\label{ex:discrete_nonuniform}
Let $X_1$, $X_2$ and $X_3$ be discrete spaces, each consisting of two elements $\{0, 1\}$. Define a probability measure $\mu_{ij}$ on the space $X_i \times X_j$ as follows:
\begin{align*}
    \mu_{ij}(x_i, x_j) = \begin{cases}
    \frac{1}{3}, &\text{if $x_i \ne x_j$},\\
    \frac{1}{6}, &\text{otherwise}.
    \end{cases}
\end{align*}

The tuple of measures $\{\mu_{ij}\}$ is consistent. In addition, every measure $\mu_{ij}$, $\{i, j\} \in \mathcal{I}_{3, 2}$, is uniformly equivalent to $\mu_i \otimes \mu_j$. The set $\Pi(\mu_{ij})$ is non-empty: consider the following measure $\mu$ on the space $X_1 \times X_2 \times X_3$: $\mu(x_1, x_2, x_3) = 0$ if $x_1 = x_2 = x_3$, otherwise $\mu(x_1, x_2, x_3) = 1/6$. It is easy to check that $\mu \in \Pi(\mu_{ij})$.

Let $\nu \in \Pi(\mu_{ij})$. Then the following equations hold:
\begin{align*}
    &\nu(0, 0, 0) + \nu(0, 0, 1) = \mu_{12}(0, 0) = \frac{1}{6}, \\
    &\nu(0, 0, 1) + \nu(0, 1, 1) = \mu_{13}(0, 1) = \frac{1}{3}, \\
    &\nu(0, 1, 1) + \nu(1, 1, 1) = \mu_{23}(1, 1) = \frac{1}{6}.
\end{align*}
From these equations we get $\nu(0, 0, 0) + \nu(1, 1, 1) = 0$. From the non-negativity of the measure we get $\nu(0, 0, 0) = \nu(1, 1, 1) = 0$, and then we easily verify that $\nu(x_1, x_2, x_3) = 1/6$ for the remaining points. Thus $\Pi(\mu_{i j})$ consists of a single measure that is not uniformly equivalent to $\mu_1 \otimes \mu_2 \otimes \mu_3$.
\end{example}

The following theorem generalizes \cref{thm:existence_integrable_f_alpha_for_product} to reducible tuples of measures.
\begin{theorem}\label{thm:existence_integrable_f_alpha_for_reducible}
For $1 \le i \le n$, let $X_i$ be a Polish space equipped with the Borel $\sigma$-algebra, and let $\mu$ be a reducible probability measure on $X$. Denote $\mu_\alpha = \prj_\alpha(\mu)$. Then there exists a constant $C_\mu$ such that for any finite $(n, k)$-function $F \in L^1(X, \mu)$ there exists a tuple of integrable functions $\{\widehat{f}_\alpha\}_{\alpha \in \mathcal{I}_{nk}}$, $\widehat{f}_\alpha \in L^1(X_\alpha, \mu_\alpha)$, such that
\[
F(x) = \sum_{\alpha \in \mathcal{I}_{nk}}\widehat{f}_\alpha(x_\alpha)
\]
for all $x \in X$ and \[
\norm{\widehat{f}_\alpha}_{L^1(\mu_\alpha)} \le C_\mu \cdot \norm{F}_{L^1(\mu)}\] 
for all $\alpha \in \mathcal{I}_{nk}$.
\end{theorem}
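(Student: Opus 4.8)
The plan is to reduce to \cref{thm:existence_integrable_f_alpha_for_product} by replacing $\mu$ with the product measure witnessing reducibility, applying the product-case result there, and then transporting the $L^1$ estimates back using uniform equivalence.

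First, since $\mu$ is reducible, fix probability measures $\nu_i$ on $X_i$ such that $\mu$ is uniformly equivalent to $\nu := \prod_{i=1}^n\nu_i$, say with $0 < m \le \frac{d\mu}{d\nu} \le M$. Equivalently $m\nu \le \mu \le M\nu$ as measures, and hence $\tfrac1M\mu \le \nu \le \tfrac1m\mu$. Consequently, for any measurable $g$ on $X$ one has $\norm{g}_{L^1(\nu)} \le \tfrac1m\norm{g}_{L^1(\mu)}$. Moreover, pushing the chain $m\nu \le \mu \le M\nu$ forward under $\prj_\alpha$ (testing on preimages $\prj_\alpha^{-1}(A)$) gives $m\nu_\alpha \le \mu_\alpha \le M\nu_\alpha$, where $\nu_\alpha = \prod_{i\in\alpha}\nu_i = \prj_\alpha(\nu)$; in particular, for measurable $g$ on $X_\alpha$ we get $\norm{g}_{L^1(\mu_\alpha)} \le M\norm{g}_{L^1(\nu_\alpha)}$.

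Now let $F$ be a finite $(n,k)$-function with $F \in L^1(X,\mu)$. By the above, $F \in L^1(X,\nu)$ with $\norm{F}_{L^1(\nu)} \le \tfrac1m\norm{F}_{L^1(\mu)}$. Since $\nu = \prod_i\nu_i$ is precisely of the product form required in \cref{thm:existence_integrable_f_alpha_for_product}, that theorem supplies a constant $C = C(n,k)$ and a tuple $\{\widehat{f}_\alpha\}_{\alpha\in\mathcal{I}_{nk}}$, $\widehat{f}_\alpha \in L^1(X_\alpha,\nu_\alpha)$, with $F(x) = \sum_{\alpha\in\mathcal{I}_{nk}}\widehat{f}_\alpha(x_\alpha)$ for all $x\in X$ and $\norm{\widehat{f}_\alpha}_{L^1(\nu_\alpha)} \le C\norm{F}_{L^1(\nu)}$. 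The decomposition identity is pointwise, so it is unaffected by the change of measure.

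Finally, transporting the estimate back to $\mu_\alpha$,
\[
\norm{\widehat{f}_\alpha}_{L^1(\mu_\alpha)} \le M\norm{\widehat{f}_\alpha}_{L^1(\nu_\alpha)} \le MC\norm{F}_{L^1(\nu)} \le \frac{MC}{m}\norm{F}_{L^1(\mu)}.
\]
Thus the theorem holds with $C_\mu := \tfrac{M}{m}\,C(n,k)$, a constant depending only on $n$, $k$, and the uniform-equivalence constants of $\mu$. The only point requiring any care is that uniform equivalence is inherited by the projections $\prj_\alpha$, which is immediate from testing the defining inequalities on cylinder sets; there is no genuine obstacle here beyond bookkeeping.
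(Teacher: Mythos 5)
Your proof is correct and essentially identical to the paper's: both use reducibility to pass from $\mu$ to the product measure $\nu$, apply \cref{thm:existence_integrable_f_alpha_for_product} there, and transfer the $L^1$ bounds back using $m\nu \le \mu \le M\nu$ and $m\nu_\alpha \le \mu_\alpha \le M\nu_\alpha$, arriving at the same constant $C_\mu = \frac{M}{m}C$.
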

\begin{proof}
Since $\mu$ is reducible, there exist probability measures $\nu_i \in \mathcal{P}(X_i)$ and positive reals $m$ and $M$ such that $m \cdot \nu \le \mu \le M \cdot \nu$, where $\nu = \prod_{1 \le i \le n}\nu_i$. 

Consider a finite $(n, k)$-function $F \in L^1(X, \mu)$. Since $\mu \ge m \cdot \nu$, the function $F$ is integrable with respect to $\nu$ and
\[
\norm{F}_{L^1(\nu)} \le \frac{1}{m}\norm{F}_{L^1(\mu)}.
\]

Denote $\nu_\alpha = \prod_{i \in \alpha}\nu_i$. It follows from \cref{thm:existence_integrable_f_alpha_for_product} that there exists a tuple of integrable functions $\{\widehat{f}_\alpha\}_{\alpha \in \mathcal{I}_{nk}}$, $\widehat{f}_\alpha \in L^1(X_\alpha, \nu_\alpha)$ such that
\[
F(x) = \sum_{\alpha \in \mathcal{I}_{nk}}\widehat{f}_\alpha(x_\alpha)
\]
for all $x \in X$ and \[
\norm{\widehat{f}_\alpha}_{L^1(\nu_\alpha)} \le C \cdot \norm{F}_{L^1(\nu)} \le \frac{C}{m}\norm{F}_{L^1(\mu)}\] 
for all $\alpha \in \mathcal{I}_{nk}$, where $C$ is a constant depending only on $n$ and $k$

Since $M \cdot \nu \ge \mu$, we have $M \cdot \nu_\alpha \ge \mu_\alpha$ for all $\alpha \in \mathcal{I}_{nk}$. Hence, the function $\widehat{f}_\alpha$ is integrable with respect to $\mu_\alpha$ and
\[
\norm{\widehat{f}_\alpha}_{L^1(\mu_\alpha)} \le M\norm{\widehat{f}_\alpha}_{L^1(\nu_{\alpha})} \le \frac{M}{m}C\norm{F}_{L^1(\mu)}
\]
for all $\alpha \in \mathcal{I}_{nk}$. Thus, we can put $C_\mu = \frac{M}{m}C$.
\end{proof}
\subsection{Existence of a dual solution for reducible tuples of measures}

First, we generalize the notion of the proper thickness of the set introduced in \cite{VerZatPet14}. 

\begin{definition}
Let $X_1, \dots, X_n$ be Polish spaces, and for each $\alpha \in \mathcal{I}_{nk}$ let $\mu_\alpha$ be a probability measure on the space $X_\alpha$. For a measurable set $A \subset X$ define its proper $(n, k)$-thickness as
\begin{equation}\label{eq:thickness_definition}
\mathrm{sth}(A) = \inf\left\{\sum_{\alpha \in \mathcal{I}_{nk}}\mu_\alpha(Y_\alpha) \colon Y_\alpha \subseteq X_\alpha, A \subseteq \bigcup_{\alpha \in \mathcal{I}_{nk}}\prj_{\alpha}^{-1}(Y_\alpha)\right\}.
\end{equation}
\end{definition}
We are going to use this notion in the particular case of sets with zero proper thickness.
\begin{proposition}
If $\mathrm{sth}(A) = 0$, then the infimum in \cref{eq:thickness_definition} is attained: there exist measurable subsets $Y_\alpha \subseteq X_\alpha$, $\alpha \in \mathcal{I}_{nk}$, such that $\mu_\alpha(Y_\alpha) = 0$ and $A \subseteq \bigcup_{\alpha \in \mathcal{I}_{nk}}\prj_\alpha^{-1}(Y_\alpha)$.
\end{proposition}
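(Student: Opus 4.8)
The plan is to run a Borel--Cantelli style argument that exploits the fact that $\mathcal{I}_{nk}$ is a \emph{finite} index set. Since $\mathrm{sth}(A) = 0$, for every $m \in \mathbb{N}$ I can choose measurable sets $Y_\alpha^{(m)} \subseteq X_\alpha$, $\alpha \in \mathcal{I}_{nk}$, such that
\[
A \subseteq \bigcup_{\alpha \in \mathcal{I}_{nk}} \prj_\alpha^{-1}\bigl(Y_\alpha^{(m)}\bigr), \qquad \sum_{\alpha \in \mathcal{I}_{nk}} \mu_\alpha\bigl(Y_\alpha^{(m)}\bigr) < 2^{-m}.
\]
If one reads the infimum in \cref{eq:thickness_definition} over arbitrary (not necessarily measurable) $Y_\alpha$ with $\mu_\alpha$ understood as outer measure, one first replaces each chosen $Y_\alpha^{(m)}$ by a Borel hull of the same outer measure, which is possible since $X_\alpha$ is Polish and $\mu_\alpha$ is Borel; the covering property and the bound on the sum are preserved.

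Next I set, for each $\alpha \in \mathcal{I}_{nk}$,
\[
Y_\alpha := \limsup_{m \to \infty} Y_\alpha^{(m)} = \bigcap_{M = 1}^\infty \bigcup_{m = M}^\infty Y_\alpha^{(m)},
\]
a measurable subset of $X_\alpha$. By monotonicity and countable subadditivity, for every $M$,
\[
\mu_\alpha(Y_\alpha) \le \mu_\alpha\Bigl(\bigcup_{m = M}^\infty Y_\alpha^{(m)}\Bigr) \le \sum_{m = M}^\infty \mu_\alpha\bigl(Y_\alpha^{(m)}\bigr) \le \sum_{m = M}^\infty 2^{-m} = 2^{1 - M},
\]
and letting $M \to \infty$ gives $\mu_\alpha(Y_\alpha) = 0$ for every $\alpha \in \mathcal{I}_{nk}$.

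It then remains to check that $A \subseteq \bigcup_{\alpha \in \mathcal{I}_{nk}} \prj_\alpha^{-1}(Y_\alpha)$, and this is the one step that needs a moment's care, because the index $\alpha$ witnessing the covering of a point may change with $m$. Fix $x \in A$. For every $m$ there is an index $\alpha(m) \in \mathcal{I}_{nk}$ with $x_{\alpha(m)} \in Y_{\alpha(m)}^{(m)}$. As $\mathcal{I}_{nk}$ is finite, by the pigeonhole principle some fixed index $\alpha_0$ equals $\alpha(m)$ for infinitely many $m$; for that $\alpha_0$ we have $x_{\alpha_0} \in Y_{\alpha_0}^{(m)}$ for infinitely many $m$, hence $x_{\alpha_0} \in \limsup_m Y_{\alpha_0}^{(m)} = Y_{\alpha_0}$, i.e. $x \in \prj_{\alpha_0}^{-1}(Y_{\alpha_0})$. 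Thus the family $\{Y_\alpha\}_{\alpha \in \mathcal{I}_{nk}}$ attains the infimum, which proves the proposition.
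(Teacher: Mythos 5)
Your proof is correct, and it takes a genuinely different and more elementary route than the paper's. The paper relaxes the set-covering problem to a functional one (indicator functions covering $\mathbbm{1}[A]$), then invokes the Koml\'os theorem to extract an almost-everywhere convergent sequence of Ces\`aro means $g_\alpha^{(t)} \to g_\alpha$, and finally recovers the sets as $Y_\alpha = \{g_\alpha > 0\}$. You instead work directly with sets: you take a fast-decaying minimizing sequence of covers, form $Y_\alpha = \limsup_m Y_\alpha^{(m)}$, get $\mu_\alpha(Y_\alpha) = 0$ from countable subadditivity (a Borel--Cantelli bound), and establish the covering via a pigeonhole argument on the finite index set $\mathcal{I}_{nk}$. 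The pigeonhole step is the crux, and it is handled correctly: for $x \in A$ the witnessing index $\alpha(m)$ may change with $m$, but since there are only finitely many indices, some $\alpha_0$ recurs infinitely often, placing $x_{\alpha_0}$ in the limsup. Your proof trades the Koml\'os theorem (a nontrivial $L^1$ compactness result) for elementary measure theory plus finiteness of $\mathcal{I}_{nk}$; this is a genuine simplification, although one should note the finiteness of the index set is used essentially here, whereas the Koml\'os argument would generalize to a countable family of constraints if one wished.
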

\begin{proof}
The proof follows the proof of \cite[Lemma 2.5.4]{VerZatPet14}. If for a tuple of measurable subsets $\{Y_\alpha\}_{\alpha \in \mathcal{I}_{nk}}$ we have $A \subseteq \bigcup_{\alpha \in \mathcal{I}_{nk}}\prj_{\alpha}^{-1}(Y_\alpha)$, then $f_\alpha = \mathbbm{1}[Y_\alpha]$ satisfy the inequality
\[
\sum_{\alpha \in \mathcal{I}_{nk}}f_\alpha(x_\alpha) \ge \mathbbm{1}[A](x)
\]
for all $x \in X$, where $\mathbbm{1}[A]$ is the characteristic function of the set $A$. Moreover, it is clear that \[
\sum_{\alpha \in \mathcal{I}_{nk}}\int_{X_\alpha}f_\alpha(x_\alpha)\,\mu_\alpha(dx_\alpha) = \sum_{\alpha \in \mathcal{I}_{nk}}\mu_\alpha(Y_\alpha).
\]

Since $\mathrm{sth}(A) = 0$, we can consider a minimizing sequence of tuples of functions $\{f_\alpha^{(t)}\}_\alpha$, $f^{(t)}_\alpha \colon X_\alpha \to [0, 1]$, such that
\[
\sum_{\alpha \in \mathcal{I}_{nk}}f^{(t)}_\alpha(x_\alpha) \ge \mathbbm{1}[A](x)
\]
for all $x \in X$ and
\[
\sum_{\alpha \in \mathcal{I}_{nk}}\int_{X_\alpha}f_\alpha^{(t)}(x_\alpha)\,\mu_\alpha(dx_\alpha) \xrightarrow[t \to \infty]{} 0.
\]
Since $f_\alpha^{(t)}$ is non-negative for all $\alpha \in \mathcal{I}_{nk}$ and for all $t$, we conclude that
\[
\int_{X_\alpha}f_\alpha^{(t)}(x_\alpha)\,\mu_\alpha(dx_\alpha) \xrightarrow[t \to \infty]{} 0 \;\text{for all $\alpha \in \mathcal{I}_{nk}$}.
\]
Let us recall the formulation of the Koml{\'o}s theorem.
\begin{theorem}[{{\cite[Theorem 4.7.24]{Bogachev2007}}}]
Let $\mu$ be a finite nonnegative measure on a space $X$, let $\{f_n\} \subset L^1(\mu)$, and let
\[
\sup_n\norm{f_n}_{L^1(\mu)} < \infty.
\]
Then, one can find a subsequence $\{g_n\} \subseteq \{f_n\}$ and a function $g \in L^1(\mu)$ such
that, for every sequence $\{h_n\} \subseteq \{g_n\}$, the arithmetic means $(h_1 + \dots + h_n)/n$ converge almost everywhere to $g$.
\end{theorem}

Using this theorem and passing, if necessary, to subsequences, we may assume that the sequence
\[
g^{(t)}_\alpha = \frac{1}{t}\left(f^{(1)}_\alpha + \dots + f^{(t)}_\alpha\right)
\]
converges to some integrable function $g_\alpha$ $\mu_\alpha$-almost everythere in $X_\alpha$ for all $\alpha \in \mathcal{I}_{nk}$. Thus, we can suppose that
\[
g_\alpha(x_\alpha) = \limsup_{t \to \infty}g_\alpha^{(t)}(x_\alpha)\; \text{ for all $x_\alpha \in X_\alpha$.}
\]

By construction we obtain  $0 \le g_\alpha(x_\alpha) \le 1$ for all $x_\alpha \in X_\alpha$. Also, since $\sum_{\alpha \in \mathcal{I}_{nk}}g_\alpha^{(t)}(x_\alpha) \ge \mathbbm{1}[A](x)$ for all $x \in X$ and for all $t$, we conclude that 
\begin{equation}\label{eq:g_alpha_ge_char_func}
\sum_{\alpha \in \mathcal{I}_{nk}}g_\alpha(x_\alpha) \ge \mathbbm{1}[A](x)\;\text{ for all $x \in X$.}
\end{equation}
In addition, since $|g_\alpha^{(t)}(x_\alpha)| \le 1$ it follows from the Lebesgue's dominated convergence theorem that
\[
\int_{X_\alpha}g_\alpha(x_\alpha)\,\mu_\alpha(dx_\alpha) = \lim_{t \to \infty}\int_{X_\alpha}g_\alpha^{(t)}(x_\alpha)\,\mu_\alpha(dx_\alpha) = \lim_{t \to \infty}\int_{X_\alpha}f_\alpha^{(t)}(x_\alpha)\,\mu_\alpha(dx_\alpha) = 0.
\]
Thus, since the function $g_\alpha$ is non-negative, we conclude that $g_\alpha(x_\alpha) = 0$ for $\mu_\alpha$-almost all $x_\alpha \in X_\alpha$.

Consider the tuple of sets $\{Y_\alpha\}_{\alpha \in \mathcal{I}_{nk}}$:
\[
Y_\alpha = \left\{x_\alpha \in X_\alpha \colon g_\alpha(x_\alpha) > 0\right\}.
\]
Since $g_\alpha$ is equal to $0$ almost everywhere on $X_\alpha$, we have $\mu_\alpha(Y_\alpha) = 0$. In addition, if $x \in A$, then it follows from inequality \cref{eq:g_alpha_ge_char_func} that $\sum_{\alpha \in \mathcal{I}_{nk}}g_\alpha(x_\alpha) \ge 1$, and therefore there exists at least one $\alpha \in \mathcal{I}_{nk}$ such that
$g_\alpha(x_\alpha) > 0$ or equivalently $x_\alpha \in Y_\alpha$.
Thus, $A \subseteq \bigcup_{\alpha \in \mathcal{I}_{nk}}\prj_\alpha^{-1}(Y_\alpha)$.
\end{proof}

\begin{definition}
We say that a measurable set $A \subset X$ is a zero $(n, k)$-thickness set if $\mathrm{sth}(A) = 0$, or equivalently if there exist a tuple of measurable subsets $Y_\alpha \subset X_\alpha$, $\alpha \in \mathcal{I}_{nk}$ such that $\mu_\alpha(Y_\alpha) = 0$ for all $\alpha$ and $A \subseteq \cup_{\alpha \in \mathcal{I}_{nk}}\prj_{\alpha}^{-1}(Y_\alpha)$.
\end{definition}

In addition to the standard dual multistochastic problem, we consider a more convenient relaxed dual problem. Let $c$ be a measurable cost function on the space $X$. Denote by \[\Psi_c(\mu_\alpha)\] the set of tuples of integrable functions $\{f_\alpha\}_{\alpha \in \mathcal{I}_{nk}}$, $f_\alpha \colon X_\alpha \to \mathbb{R}$ such that  inequality 
\[
\sum_{\alpha \in \mathcal{I}_{nk}}f_\alpha(x_\alpha) \le c(x_\alpha)
\]
holds at all points $x \in X$ except a zero $(n, k)$-thickness set. Then, in the relaxed dual problem we are looking for
\[
J = \sup\left\{\sum_{\alpha \in \mathcal{I}_{nk}}\int_{X_\alpha}f_\alpha(x_\alpha)\,\mu_\alpha(dx_\alpha) \colon \{f_\alpha\} \in \Psi_c(\mu_\alpha)\right\}
\]

If $\{f_\alpha\} \in \Psi_c(\mu_\alpha)$, then there exists a tuple of measurable subsets $Y_\alpha \subset X_\alpha$ such that $\mu_\alpha(Y_\alpha) = 0$ and
\[
\sum_{\alpha \in \mathcal{I}_{nk}}f_\alpha(x_\alpha) \le c(x) \text{ for all }x \not\in \bigcup_{\alpha \in \mathcal{I}_{nk}}\prj_{\alpha}^{-1}(Y_\alpha).
\]
Consider the tuple of functions $\{\widehat{f}_\alpha\}$ defined as follows: $\widehat{f}_\alpha(x_\alpha) = f_\alpha(x_\alpha)$ if $x_\alpha \not\in Y_\alpha$ and $\widehat{f}_\alpha(x_\alpha) = -\infty$ otherwise. For all $\alpha \in \mathcal{I}_{nk}$ the function $\widehat{f}_\alpha$ coincides with $f_\alpha$ almost everywhere with respect to $\mu_\alpha$, and therefore 
\[
\sum_{\alpha \in \mathcal{I}_{nk}}\int \widehat{f}_\alpha(x_\alpha)\,\mu_\alpha(dx_\alpha) = \sum_{\alpha \in \mathcal{I}_{nk}}\int f_\alpha(x_\alpha)\,\mu_\alpha(dx_\alpha)
\]
In addition, the inequality $\sum_{\alpha \in \mathcal{I}_{nk}}\widehat{f}_\alpha(x_\alpha) \le c(x)$ holds for all $x \in X$. Thus, having a  tuple of functions $\{f_\alpha\} \in \Psi_c(\mu_\alpha)$ one can construct another tuple of (not necessary real-valued) functions $\{\widehat{f}_\alpha\}$ satisfying the conditions of the standard dual problem with the same value of the dual functional. Therefore the supremum is the same for both standard and relaxed dual problems. 

In \cite{Kell} the following theorem was proved, establishing the existence of a dual solution in the multi-marginal case.
\begin{theorem}[Kellerer]\label{existence_multimarginal_dual_solution}
For every $1 \le i \le n$, let $X_i$ be a Polish space equipped with a Borel probability measure $\mu_i$. Let $c \colon X_1 \times \dots \times X_n \to [-\infty, +\infty]$ be a measurable cost function on the space $X_1 \times \dots \times X_n$. Suppose that there exists a tuple of integrable functions $\{c_i\}_{i = 1}^n$, $c_i: X_i \to (-\infty, +\infty]$ such that  inequality 
\[
|c(x_1, \dots, x_n)| \le \sum_{i = 1}^n c_i(x_i)
\]
holds for all $(x_1, \dots, x_n) \in X$.

Then the supremum in the relaxed dual Monge-Kantorovich problem
\[
\sup\left\{\sum_{i = 1}^n\int_{X_i}\varphi_i(x_i)\,\mu_i(dx_i) \colon \{\varphi_i\}_{i = 1}^n \in \Psi_c(\mu_i)\right\}
\]
is finite and attained.
\end{theorem}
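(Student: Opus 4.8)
This is the classical duality‑attainment theorem of Kellerer, so one may of course simply invoke \cite{Kell}; below I outline a self‑contained proof based on the Koml\'os theorem recalled above. First I would reduce to a nonnegative, integrably dominated cost. Replacing $c$ by $c+\sum_i c_i$ and, simultaneously, every candidate tuple $\{\varphi_i\}$ by $\{\varphi_i+c_i\}$ shifts the dual functional by the constant $-\sum_i\int c_i\,d\mu_i$ and does not affect feasibility, so, modifying $c$ on a zero $(n,1)$-thickness set if necessary, I may assume $0\le c(x)\le\sum_{i=1}^{n}C_i(x_i)$ with $C_i:=2c_i$ nonnegative, finite and in $L^1(\mu_i)$. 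Finiteness of $J$ is then immediate: $J\ge 0$ (take $\varphi_i\equiv0$), while weak duality — valid because every zero $(n,1)$-thickness set is $\pi$-null for each $\pi\in\Pi(\mu_i)$ — gives $J\le\inf_{\pi\in\Pi(\mu_i)}\int c\,d\pi\le\int c\,d\bigl(\bigotimes_{i=1}^n\mu_i\bigr)\le\sum_i\int C_i\,d\mu_i<\infty$.

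Next I would produce a maximising sequence with uniform $L^{1}$-bounds. Take tuples $\{\varphi_i^{(t)}\}_i$ with $\sum_i\int\varphi_i^{(t)}\,d\mu_i\to J$; after replacing each tuple by its $c$-transform — successively overwrite $\varphi_i^{(t)}(x_i)$ by $\inf_{x_{-i}}\bigl(c(x)-\sum_{j\ne i}\varphi_j^{(t)}(x_j)\bigr)$ — the tuple is feasible at every point, each integral $\int\varphi_i^{(t)}\,d\mu_i$ only increases, and shifting constants among the potentials lets me arrange $\int\varphi_i^{(t)}\,d\mu_i=0$ for $i<n$, hence $\int\varphi_n^{(t)}\,d\mu_n\to J$. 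Integrating the everywhere‑valid inequality $\varphi_i^{(t)}(x_i)\le c(x_i,x_{-i})-\sum_{j\ne i}\varphi_j^{(t)}(x_j)$ against $\bigotimes_{j\ne i}\mu_j$ then yields $\varphi_i^{(t)}(x_i)\le C_i(x_i)+D$ with a constant $D$ independent of $t$; together with the normalisation of the integrals this bounds $\int(\varphi_i^{(t)})^{+}\,d\mu_i$ and so $\sup_t\|\varphi_i^{(t)}\|_{L^1(\mu_i)}<\infty$ for every $i$.

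Finally I would pass to the limit using Koml\'os. Applying that theorem coordinatewise, through $n$ nested subsequence extractions, I get a subsequence along which the Ces\`aro averages $g_i^{(t)}:=\frac1t\sum_{s=1}^{t}\varphi_i^{(s)}$ converge $\mu_i$-a.e.\ to some $g_i\in L^1(\mu_i)$, for every $i$. The tuples $\{g_i^{(t)}\}$ are still feasible (convexity of the feasible set) and maximising (linearity of the functional), and $g_i^{(t)}\le C_i+D$ pointwise. Setting $g_i:=\limsup_t g_i^{(t)}$, one has $g_i\le C_i+D$ everywhere, $g_i$ agrees $\mu_i$-a.e.\ with the a.e.\ limit, and outside the zero $(n,1)$-thickness set $\bigcup_i\prj_i^{-1}(Y_i)$ — where $Y_i$ is the $\mu_i$-null union of the exceptional sets of the $g_i^{(t)}$ and of the set on which $g_i^{(t)}(x_i)$ fails to converge — we get $\sum_i g_i(x_i)=\lim_t\sum_i g_i^{(t)}(x_i)\le c(x)$, so $\{g_i\}\in\Psi_c(\mu_i)$. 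Fatou's lemma applied to $C_i+D-g_i^{(t)}\ge0$ gives $\int g_i\,d\mu_i\ge\lim_t\int g_i^{(t)}\,d\mu_i$, whence $\sum_i\int g_i\,d\mu_i\ge J$; since $J$ is the supremum over $\Psi_c(\mu_i)$, equality holds and $\{g_i\}$ is a dual optimiser.

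\textbf{Main obstacle.} The genuinely delicate point is the measurability of the $c$-transform $x_i\mapsto\inf_{x_{-i}}(\cdots)$ when $c$ is merely Borel, a sectional infimum of a Borel function being in general only universally measurable. I would handle this as in \cite{Kell}: prove the theorem first for lower semicontinuous $c$ (where the infimum may be taken over a countable dense set and stays measurable), then pass to a general measurable $c$ by monotone approximation, invoking a measurable selection theorem where needed. The remaining ingredients — the weak‑duality inequality, the $\pi$-negligibility of zero‑thickness sets, and the convexity/linearity remarks — are routine.
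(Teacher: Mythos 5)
The paper states this theorem without proof, simply crediting \cite{Kell}, so there is no in-text argument to compare yours against. Your sketch is nevertheless a correct, essentially self-contained proof in the classical Kellerer/Beiglb\"ock--Schachermayer spirit, and you rightly isolate Borel measurability of the $c$-transform as the one genuinely delicate step; handling it by lower semicontinuous approximation and a measurable selection argument is the standard remedy. One small bookkeeping point: the integral normalisation $\int\varphi_i^{(t)}\,d\mu_i=0$ for $i<n$ and the pointwise upper bound $\varphi_i^{(t)}\le C_i+D$ each use the other, so the order of operations (normalise, conjugate, re-normalise) has to be spelled out a little more carefully than your sketch does, but this is routine.

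It is worth comparing your route with what the paper \emph{does} prove, namely the multistochastic generalisation \cref{thm:existence_multistochastic_dual_solution}. There the uniform $L^1$ bound on a maximising sequence is obtained not by $c$-conjugation but by a different device: one first bounds $\|F^{(t)}\|_{L^1(\mu)}$ for the sums $F^{(t)}=\sum_\alpha f_\alpha^{(t)}$ (from above using $F^{(t)}\le\sum_\alpha c_\alpha$ a.e., from below by normalising $\int F^{(t)}\,d\mu$), and then invokes the decomposition result \cref{thm:existence_integrable_f_alpha_for_reducible} --- built on \cref{prop:nk_function_decomposition} and \cref{lem:point_with_good_sections} --- to replace each tuple by one with the same sum whose individual $L^1(\mu_\alpha)$-norms are dominated by a fixed multiple of $\|F^{(t)}\|_{L^1(\mu)}$. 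The Koml\'os/Ces\`aro/reverse-Fatou limiting step after that is exactly as in your proposal. This decomposition route avoids the $c$-transform and hence the measurability obstacle you flag, but the more substantive reason the paper adopts it is that your conjugation step does not survive the passage from $k=1$ to general $(n,k)$: in the analogue $f_\alpha\mapsto\inf_{x_{\{1,\dots,n\}\setminus\alpha}}\bigl(c(x)-\sum_{\beta\ne\alpha}f_\beta(x_\beta)\bigr)$ the potentials $f_\beta$ with $\beta\cap\alpha\neq\varnothing$ still depend on coordinates inside $\alpha$, so integrating out $x_{\{1,\dots,n\}\setminus\alpha}$ no longer yields a bound of the form $C_\alpha(x_\alpha)+D$, and the normalisation trick on the integrals has no clean counterpart. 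Your proof is fine as a proof of the cited $k=1$ theorem, but it is precisely the tool the paper had to replace in order to reach its generalisation.
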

We prove the multistochastic generalization of this theorem for the case of reducible tuple of projections.

\begin{theorem}\label{thm:existence_multistochastic_dual_solution}
For every $1 \le i \le n$, let $X_i$ be a Polish space, let $\{\mu_\alpha\}_{\alpha \in \mathcal{I}_{nk}}$, $\mu_\alpha \in \mathcal{P}(X_\alpha)$ be a reducible tuple of probability measures, and let $c \colon X \to [-\infty, +\infty]$ be a measurable cost function on the space $X$. Suppose that there exists a tuple of integrable functions $\{c_\alpha\}_{\alpha \in \mathcal{I}_{nk}}$, $c_\alpha \colon X_\alpha \to (-\infty, +\infty]$ such that the inequality
\[
|c(x)| \le \sum_{\alpha \in \mathcal{I}_{nk}}c_\alpha(x_\alpha)
\]
holds for all $x \in X$.

Then the supremum in the relaxed dual multistochastic Monge-Kantorovich problem
\begin{equation}\label{eq:relaxed_dual_sup}
J = \sup\left\{\sum_{\alpha \in \mathcal{I}_{nk}}\int_{X_\alpha}f_\alpha(x_\alpha)\,\mu_\alpha(dx_\alpha) \colon \{f_\alpha\}_{\alpha \in \mathcal{I}_{nk}} \in \Psi_c(\mu_\alpha)\right\}
\end{equation}
is finite and attained. 
\end{theorem}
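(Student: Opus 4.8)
The plan is a maximizing-sequence argument in the spirit of Kellerer's proof of \cref{existence_multimarginal_dual_solution}, where reducibility is used to recover the compactness that in the multimarginal case $(k=1)$ comes from normalizing the summands one coordinate at a time. By reducibility, fix a uniting measure $\mu\in\Pi(\mu_\alpha)$ (so $\mu_\alpha=\prj_\alpha\mu$) which is uniformly equivalent to $\prod_i\mu_i$. The key observation is that for any tuple $\{f_\alpha\}$ with $f_\alpha\in L^1(X_\alpha,\mu_\alpha)$ one has $\sum_\alpha\int f_\alpha\,d\mu_\alpha=\int_X\bigl(\sum_\alpha f_\alpha\circ\prj_\alpha\bigr)\,d\mu$, so the dual functional depends only on the finite $(n,k)$-function $F:=\sum_\alpha f_\alpha\circ\prj_\alpha$. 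For finiteness of $J$: if $\{f_\alpha\}\in\Psi_c(\mu_\alpha)$ with exceptional zero-$(n,k)$-thickness set $N=\bigcup_\alpha\prj_\alpha^{-1}(Y_\alpha)$, $\mu_\alpha(Y_\alpha)=0$, then $\mu(N)\le\sum_\alpha\mu_\alpha(Y_\alpha)=0$, so $F\le c\le G:=\sum_\alpha c_\alpha\circ\prj_\alpha$ $\mu$-a.e.\ and $\sum_\alpha\int f_\alpha\,d\mu_\alpha=\int_X F\,d\mu\le\sum_\alpha\int c_\alpha\,d\mu_\alpha<\infty$, giving $J<\infty$; and $J>-\infty$ since $\{-c_\alpha\}$ (redefined to $0$ on the $\mu_\alpha$-null set $\{c_\alpha=+\infty\}$) lies in $\Psi_c(\mu_\alpha)$.

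Next, choose a maximizing sequence $\{f^{(t)}_\alpha\}\in\Psi_c(\mu_\alpha)$ with $\sum_\alpha\int f^{(t)}_\alpha\,d\mu_\alpha\to J$, and put $F^{(t)}:=\sum_\alpha f^{(t)}_\alpha\circ\prj_\alpha$. Since $F^{(t)}\le G\in L^1(\mu)$ $\mu$-a.e.\ and $\int_X F^{(t)}\,d\mu\to J$, writing $\|F^{(t)}\|_{L^1(\mu)}=\int(F^{(t)})^+\,d\mu+\int(F^{(t)})^-\,d\mu$ with $\int(F^{(t)})^+\le\int G^+$ and $\int(F^{(t)})^-=\int(F^{(t)})^+-\int F^{(t)}$ shows $\sup_t\|F^{(t)}\|_{L^1(\mu)}<\infty$. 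Now apply \cref{thm:existence_integrable_f_alpha_for_reducible} to each $F^{(t)}$ to obtain $\widehat f^{(t)}_\alpha\in L^1(X_\alpha,\mu_\alpha)$ with $F^{(t)}=\sum_\alpha\widehat f^{(t)}_\alpha\circ\prj_\alpha$ on all of $X$ and $\|\widehat f^{(t)}_\alpha\|_{L^1(\mu_\alpha)}\le C_\mu\|F^{(t)}\|_{L^1(\mu)}$, hence $\sup_t\|\widehat f^{(t)}_\alpha\|_{L^1(\mu_\alpha)}<\infty$. The tuple $\{\widehat f^{(t)}_\alpha\}$ is again in $\Psi_c(\mu_\alpha)$ (same sum $F^{(t)}$, same exceptional set $N^{(t)}$) and again maximizing (same functional value).

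Applying the Komlós theorem quoted above successively over the finitely many $\alpha\in\mathcal I_{nk}$ and passing to a common subsequence (relabeled as the whole sequence), we may assume the Cesàro means $g^{(t)}_\alpha=\tfrac1t(\widehat f^{(1)}_\alpha+\dots+\widehat f^{(t)}_\alpha)$ converge $\mu_\alpha$-a.e.\ to integrable limits. Define $g_\alpha(x_\alpha)$ to be $\lim_t g^{(t)}_\alpha(x_\alpha)$ where this limit exists and $0$ otherwise; then $g_\alpha\in L^1(X_\alpha,\mu_\alpha)$. Averaging the feasibility inequalities gives $\sum_\alpha g^{(t)}_\alpha\circ\prj_\alpha\le c$ off $\bigcup_{s\le t}N^{(s)}$, and on the complement of $Z:=\bigcup_s N^{(s)}\cup\bigl(\bigcup_\alpha\prj_\alpha^{-1}(Z_\alpha)\bigr)$ — a zero-$(n,k)$-thickness set, where $Z_\alpha$ is the $\mu_\alpha$-null set on which $g^{(t)}_\alpha$ fails to converge — we may let $t\to\infty$ inside the finite sum to obtain $\sum_\alpha g_\alpha\circ\prj_\alpha\le c$; hence $\{g_\alpha\}\in\Psi_c(\mu_\alpha)$. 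Finally, $\bar F^{(t)}:=\tfrac1t\sum_{s\le t}F^{(s)}=\sum_\alpha g^{(t)}_\alpha\circ\prj_\alpha\le G$ $\mu$-a.e.\ and $\bar F^{(t)}\to\sum_\alpha g_\alpha\circ\prj_\alpha$ $\mu$-a.e., so the reverse Fatou lemma yields $J=\lim_t\int_X\bar F^{(t)}\,d\mu\le\int_X\sum_\alpha g_\alpha\circ\prj_\alpha\,d\mu=\sum_\alpha\int g_\alpha\,d\mu_\alpha$, while feasibility of $\{g_\alpha\}$ gives the reverse inequality. Thus $J$ is finite and attained at $\{g_\alpha\}$.

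The main obstacle is the step described in the second paragraph: in the multimarginal case one normalizes each $\varphi_i$ separately to get $L^1$-bounds, but for $k>1$ the summands $f_\alpha$ sit on overlapping coordinate blocks, and the only way I see to restore a uniform $L^1$-bound is to pass through the sum $F$ and re-decompose it via \cref{thm:existence_integrable_f_alpha_for_reducible} — which is exactly why reducibility is needed. A secondary, purely technical nuisance, familiar from Kellerer's proof, is the bookkeeping of the accumulating zero-$(n,k)$-thickness exceptional sets: passing to the limit in $\sum_\alpha g^{(t)}_\alpha$ is legitimate only where every $g^{(t)}_\alpha(x_\alpha)$ converges, since in general $\sum_\alpha\limsup_t g^{(t)}_\alpha\ge\limsup_t\sum_\alpha g^{(t)}_\alpha$ points the wrong way.
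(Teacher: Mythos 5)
Your proof is correct and follows essentially the same route as the paper's: reduce the dual functional to an integral of the finite $(n,k)$-function $F$ against a uniting measure $\mu$, bound $\|F^{(t)}\|_{L^1(\mu)}$ along a maximizing sequence, re-decompose each $F^{(t)}$ into norm-controlled summands via \cref{thm:existence_integrable_f_alpha_for_reducible} (the step where reducibility enters, exactly as you identified), then apply Koml\'os and the reverse Fatou lemma. The only presentational difference is that the paper first replaces $c$ and $c_\alpha$ by finite-valued, $\mu$- and $\mu_\alpha$-a.e.\ equal versions before starting, whereas you handle the $\{c_\alpha = +\infty\}$ set in passing; both are sound.
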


\begin{proof}
Replacing $c_\alpha$ with $|c_\alpha|$ we may assume that the function $c_\alpha$ is non-negative for all $\alpha \in \mathcal{I}_{nk}$. Let $c^*_\alpha \colon X_\alpha \to [0, +\infty)$ be an arbitrary finite integrable function such that $c^*_\alpha(x_\alpha) = c_\alpha(x_\alpha)$ for $\mu_\alpha$-almost all $x_\alpha \in X_\alpha$. Consider a function $c^*$ on the space $X$:
\begin{align*}
    c^*(x) = \begin{cases}
    c(x), &\text{if $c^*_\alpha(x_\alpha) = c_\alpha(x_\alpha)$ for all $x_\alpha \in X_\alpha$},\\
    0, &\text{otherwise}.
    \end{cases}
\end{align*}

It trivially follows from the construction that $c^*(x) = c(x)$ for all $x \in X$ except a zero $(n, k)$-thickness set. Hence, \[
\Psi_{c}(\mu_\alpha) = \Psi_{c^*}(\mu_\alpha).
\]
In addition, $|c^*(x)| \le \sum_{\alpha \in \mathcal{I}_{nk}}c^*_\alpha(x_\alpha)$ for all $x \in X$. In particular, since $c^*_\alpha(x_\alpha) < +\infty$ for all $x_\alpha \in X_\alpha$ and for all $\alpha \in \mathcal{I}_{nk}$, we conclude that $|c^*(x)| < +\infty$ for all $x \in X$. Thus, replacing $c$ with $c^*$ and replacing $c_\alpha$ with $c_\alpha^*$ for all $\alpha \in \mathcal{I}_{nk}$, we may assume that $|c(x)| < +\infty$ for all $x \in X$ and $0 \le c_\alpha(x_\alpha) < +\infty$ for all $x_\alpha \in X_\alpha$ and for all $\alpha \in \mathcal{I}_{nk}$.

Denote
\[
\widehat{J} = \sum_{\alpha \in \mathcal{I}_{nk}}\int_{X_\alpha}c_\alpha(x_\alpha)\,\mu_\alpha(dx_\alpha).
\]
The function $c_\alpha \colon X_\alpha \to [0, +\infty)$ is finite and integrable with respect to $\mu_\alpha$ for all $\alpha \in \mathcal{I}_{nk}$; in addition,
\[
\sum_{\alpha \in \mathcal{I}_{nk}}(-c_\alpha(x_\alpha)) \le c(x) \text{ for all $x \in X$.}
\]
Thus, $\{-c_\alpha\}_{\alpha \in \mathcal{I}_{nk}} \in \Psi_c(\mu_\alpha)$, and therefore the set $\Psi_c(\mu_\alpha)$ is non-empty and
\[
J \ge \sum_{\alpha \in \mathcal{I}_{nk}}\int_{X_\alpha}(-c_\alpha(x_\alpha))\,\mu_\alpha(dx_\alpha) = -\widehat{J}.
\]

Since the tuple of measures $\{\mu_\alpha\}$ is reducible, there exists a reducible measure $\mu \in \Pi(\mu_\alpha)$. Since $c_\alpha \in L^1(X_\alpha, \mu_\alpha)$, the extension of $c_\alpha$ to the space $X$ is integrable with respect to $\mu$. Thus, since $|c(x)| \le \sum_{\alpha \in \mathcal{I}_{nk}}c_\alpha(x_\alpha) \in L^1(X, \mu)$, we conclude that $c \in L^1(X, \mu)$.

Let $\{f_\alpha\}_{\alpha \in \mathcal{I}_{nk}} \in \Psi_c(\mu_\alpha)$. Since $f_\alpha \in L^1(X_\alpha, \mu_\alpha)$, the extension of $f_\alpha$ to the space $X$ is integrable with respect to $\mu$.  Hence,
\[
\sum_{\alpha \in \mathcal{I}_{nk}}\int_{X_\alpha}f_\alpha(x_\alpha)\,\mu_\alpha(dx_\alpha) = \int_X\sum_{\alpha \in \mathcal{I}_{nk}}f_\alpha(x_\alpha)\,\mu(dx)
\]
We have $\sum_{\alpha \in \mathcal{I}_{nk}}f_\alpha(x_\alpha) \le c(x)$ at all points except a zero $(n, k)$-thickness set. Since $\mu$ is a uniting measure, every set of zero $(n, k)$-thickness has zero measure with respect to $\mu$. Hence, $\sum_{\alpha \in \mathcal{I}_{nk}}f_\alpha(x_\alpha) \le c(x)$ for $\mu$-almost all $x \in X$, and therefore
\[
\int_X\sum_{\alpha \in \mathcal{I}_{nk}}f_\alpha(x_\alpha)\,\mu(dx) \le \int_X c(x)\,\mu(dx) \le \sum_{\alpha \in \mathcal{I}_{nk}}\int_{X_\alpha}c_\alpha(x_\alpha)\,\mu_\alpha(dx_\alpha) = \widehat{J}.
\]
Thus, we conclude that
\[
\sum_{\alpha \in \mathcal{I}_{nk}}\int_{X_\alpha}f_\alpha(x_\alpha)\,\mu_\alpha(dx_\alpha) \le \widehat{J} \;\text{ for all $\{f_\alpha\} \in \Psi_c(\mu_\alpha)$},
\]
and therefore $J \le \widehat{J}$. In particular, the supremum in \cref{eq:relaxed_dual_sup} is finite.

Consider the maximizing sequence of tuples of functions $\{f_\alpha^{(t)}\}_{\alpha \in \mathcal{I}_{nk}} \in \Psi_c(\mu_\alpha)$ such that
\[
\sum_{\alpha \in \mathcal{I}_{nk}}\int_{X_\alpha}f_\alpha^{(t)}(x_\alpha)\,\mu_\alpha(dx_\alpha) \xrightarrow[n \to \infty]{} J.
\]
We may assume that
\begin{equation}\label{eq:int_Fn_bounded from_below}
\sum_{\alpha \in \mathcal{I}_{nk}}\int_{X_\alpha}f_\alpha^{(t)}(x_\alpha)\,\mu_\alpha(dx_\alpha) \ge - \widehat{J}\;\text{ for all $t$.}
\end{equation}

For each $t$ consider a finite $(n, k)$-function $F^{(t)}(x) = \sum_{\alpha \in \mathcal{I}_{nk}}f_\alpha^{(t)}(x_\alpha)$. Let us bound the norm of the function $F^{(t)}$ from above. Since $F^{(t)}(x) \le c(x)$ for all points except a zero $(n, k)$-thickness set, and $c(x) \le \sum_{\alpha \in \mathcal{I}_{nk}}c_\alpha(x_\alpha)$ for all $x \in X$, we conclude that $F^{(t)}(x) \le \sum_{\alpha \in \mathcal{I}_{nk}}c_\alpha(x_\alpha)$ for $\mu$-almost all $x \in X$. Finally, since $\sum_{\alpha \in \mathcal{I}_{nk}}c_\alpha(x_\alpha) \ge 0$, we have
\[
F^{(t)}(x) + |F^{(t)}(x)| = \max(0, 2F^{(t)}(x)) \le 2\sum_{\alpha \in \mathcal{I}_{nk}}c_\alpha(x_\alpha)
\]
for $\mu$-almost all $x \in X$. Combining this with inequality \cref{eq:int_Fn_bounded from_below} we get
\[
\norm{F^{(t)}}_{L^1(\mu)} = \int_X |F^{(t)}(x)|\,\mu(dx) \le 2\sum_{\alpha \in \mathcal{I}_{nk}}\int_{X_\alpha}c_\alpha(x_\alpha)\mu_\alpha(dx_\alpha) - \int_X F^{(t)}(x)\,\mu(dx) \le 3\widehat{J}.
\]

Since $\mu$ is reducible, for each $t$ by \cref{thm:existence_integrable_f_alpha_for_reducible} there exists a tuple of finite integrable functions $\{\widehat{f}_\alpha^{(t)}\}_{\alpha \in \mathcal{I}_{nk}}$ such that the equation
\[
F^{(t)}(x) = \sum_{\alpha \in \mathcal{I}_{nk}}\widehat{f}^{(t)}_\alpha(x_\alpha)
\]
holds for all $x \in X$ and
\[
\norm{\widehat{f}^{(t)}_\alpha}_{L^1(\mu_\alpha)} \le C_\mu\norm{F^{(t)}}_{L^1(\mu)} \le 3C_\mu \widehat{J} = C
\]
for all $\alpha \in \mathcal{I}_{nk}$. In particular, $\{\widehat{f}^{(t)}_\alpha\} \in \Psi_c(\mu_\alpha)$ for all $t$, and this sequence of tuples is also maximizing. Thus, replacing $\{f_\alpha^{(t)}\}$ with $\{\widehat{f}_\alpha^{(t)}\}$, we may assume that the inequality
\[
\norm{f_\alpha^{(t)}}_{L^1(\mu_\alpha)} \le C
\]
holds for all $\alpha \in \mathcal{I}_{nk}$ and for all $t$.

In particular,
\[
\sup_t \norm{f_\alpha^{(t)}}_{L^1(\mu_\alpha)} < +\infty
\]
for all $\alpha \in \mathcal{I}_{nk}$. Hence, using the Komlo{\'s} theorem and passing, if necessary, to subsequences, we may assume that the sequence of functions
\[
g_\alpha^{(t)}(x_\alpha) = \frac{1}{t}\left(f^{(1)}_\alpha + \dots + f_\alpha^{(t)}\right), \ \ t \in \mathbb{N},
\]
converges to some function $g_\alpha \in L^1(X_\alpha, \mu_\alpha)$ $\mu_\alpha$-almost everywhere in $X_\alpha$ for all $\alpha \in \mathcal{I}_{nk}$.

For each $t$ consider the finite $(n, k)$-function
\[
G^{(t)}(x) = \sum_{\alpha \in \mathcal{I}_{nk}}g_\alpha^{(t)}(x_\alpha) = \frac{1}{t}\left(F^{(1)}(x) + \dots + F^{(t)}(x)\right).
\]
We have $G^{(t)}(x) \le c(x)$ for all $x \in X$ except a zero $(n, k)$-thickness set, and therefore $\{g^{(t)}_\alpha\}_{\alpha \in \mathcal{I}_{nk}} \in \Psi_c(\mu_\alpha)$ for all $t$. In addition, it follows from the properties of the Ce{\'s}aro mean that the sequence of tuples $\{g_\alpha^{(t)}\}_{\alpha \in \mathcal{I}_{nk}}$ is maximizing as well as $\{f_\alpha^{(t)}\}_{\alpha \in \mathcal{I}_{nk}}$.

Let us verify that $\{g_\alpha\}_{\alpha \in \mathcal{I}_{nk}} \in \Psi_c(\mu_\alpha)$. For every $t$ there exists a tuple of measurable subsets $\{A^{(t)}_\alpha\}_{\alpha \in \mathcal{I}_{nk}}$, $A_\alpha^{(t)} \subseteq X_\alpha$ such that $\mu_\alpha(A_\alpha^{(t)}) = 1$ and $G^{(t)}(x) \le c(x)$ for all $x \in X$ such that $x_\alpha \in A_\alpha^{(t)}$ for all $\alpha \in \mathcal{I}_{nk}$. In addition, for each $\alpha \in \mathcal{I}_{nk}$ there exists a measurable subset $A_\alpha' \subseteq X_\alpha$ such that $\mu_\alpha(A_\alpha') = 1$ and if $x_\alpha \in A_\alpha'$, then $g^{(t)}(x_\alpha) \to g_\alpha(x_\alpha)$ as $t \to \infty$.

For $\alpha \in \mathcal{I}_{nk}$, let
\[
A_\alpha = A_\alpha' \cap \left(\bigcap_{t = 1}^{\infty}A_\alpha^{(t)}\right).
\]
For any $x \in \bigcap_{\alpha \in \mathcal{I}_{nk}}\prj_\alpha^{-1}(A_\alpha)$ we have $\sum_{\alpha \in \mathcal{I}_{nk}}g^{(t)}_\alpha(x_\alpha) \le c(x)$ for all $t$ and
\[
\sum_{\alpha \in \mathcal{I}_{nk}}g^{(t)}_\alpha(x_\alpha) \xrightarrow[t \to \infty]{} \sum_{\alpha \in \mathcal{I}_{nk}}g_\alpha(x_\alpha).
\]
Thus, if $x \in \bigcap_{\alpha \in \mathcal{I}_{nk}}\prj_\alpha^{-1}(A_\alpha)$, then $\sum_{\alpha \in \mathcal{I}_{nk}}g_\alpha(x_\alpha) \le c(x)$, and therefore, since $\mu_\alpha(A_\alpha) = 1$, we conclude that $\{g_\alpha\}_{\alpha \in \mathcal{I}_{nk}} \in \Psi_c(\mu_\alpha)$.

Consider the finite $(n, k)$-function $G(x) = \sum_{\alpha \in \mathcal{I}_{nk}}g_\alpha(x_\alpha)$. We have
\[
G^{(t)}(x) \xrightarrow[t \to \infty]{} G(x)
\]
for all $x \in X$ except a zero $(n, k)$-thickness set, and therefore the sequence of functions $\{G^{(t)}\}$ converges pointwise to $G$ $\mu$-almost everywhere. In addition, 
\[
G^{(t)}(x) \le \sum_{\alpha \in \mathcal{I}_{nk}}c_\alpha(x_\alpha) \in L^1(X, \mu)
\]
for $\mu$-almost all $x$, and therefore it follows from the reverse Fatou lemma that
\[
J = \lim_{t \to \infty}\int_X G^{(t)}(x)\,\mu(dx) \le \int_X G(x)\,\mu(dx) = \sum_{\alpha \in \mathcal{I}_{nk}}\int_{X_\alpha}g_\alpha(x_\alpha)\,\mu_\alpha(x_\alpha). 
\]
Thus, the supremum in \cref{eq:relaxed_dual_sup} is attained on the tuple of functions $\{g_\alpha\}_{\alpha \in \mathcal{I}_{nk}}$.
\end{proof}

Combining this result with \cref{thm:noncompact_duality}, we get the following general duality theorem for the case of reducible projections.

\begin{theorem}[General duality theorem]
For every $1 \le i \le n$, let $X_i$ be a Polish space, let $\{\mu_\alpha\}_{\alpha \in \mathcal{I}_{nk}}$, $\mu_\alpha \in \mathcal{P}(X_\alpha)$ be a reducible tuple of probability measures, and let $c \in C_L(X, \mu_\alpha)$ be a continuous cost function on the space $X$. Then there exists a uniting measure $\pi \in \Pi(\mu_\alpha)$ and a tuple of integrable functions $\{f_\alpha\}_{\alpha \in \mathcal{I}_{nk}}$, $f_\alpha \colon X_\alpha \to [-\infty, +\infty)$, such that \[
\sum_{\alpha \in \mathcal{I}_{nk}}f_\alpha(x_\alpha) \le c(x) \text{ for all $x \in X$}
\]
and
\[
\int_X c(x)\,\pi(dx) = \sum_{\alpha \in \mathcal{I}_{nk}}\int_{X_\alpha}f_\alpha(x_\alpha)\,\mu_\alpha(dx_\alpha).
\]
In particular, $\pi$ is a solution to the related primal $(n, k)$-problem, and $\{f_\alpha\}_{\alpha \in \mathcal{I}_{nk}}$ is a solution to the related dual $(n, k)$-problem.
\end{theorem}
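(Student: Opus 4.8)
The plan is to simply glue together the two deep results that have already been established in the excerpt: the no-duality-gap statement \cref{thm:noncompact_duality} (Kantorovich duality for non-compact spaces) and the existence-of-a-dual-solution statement \cref{thm:existence_multistochastic_dual_solution} for reducible tuples of measures. First I would observe that since $c \in C_L(X, \mu_\alpha)$, there is by definition a tuple $c_\alpha \in C_L(X_\alpha, \mu_\alpha) \subset L^1(X_\alpha,\mu_\alpha)$ with $|c(x)| \le \sum_{\alpha} c_\alpha(x_\alpha)$, so the hypothesis of \cref{thm:existence_multistochastic_dual_solution} is met (after replacing $c_\alpha$ by $|c_\alpha|$, which keeps it integrable). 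Applying that theorem produces a tuple $\{g_\alpha\}_{\alpha \in \mathcal{I}_{nk}} \in \Psi_c(\mu_\alpha)$ attaining the supremum $J$ in the relaxed dual problem.

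Next I would upgrade the ``relaxed'' solution to a genuine dual solution. As already explained in the text preceding \cref{existence_multimarginal_dual_solution}, given $\{g_\alpha\} \in \Psi_c(\mu_\alpha)$ there is a tuple of $\mu_\alpha$-null sets $Y_\alpha$ with $\sum_\alpha g_\alpha(x_\alpha) \le c(x)$ off $\bigcup_\alpha \prj_\alpha^{-1}(Y_\alpha)$; redefining $f_\alpha(x_\alpha) = g_\alpha(x_\alpha)$ for $x_\alpha \notin Y_\alpha$ and $f_\alpha(x_\alpha) = -\infty$ otherwise gives a tuple of $[-\infty,+\infty)$-valued integrable functions with $\sum_\alpha f_\alpha(x_\alpha) \le c(x)$ now holding for \emph{all} $x \in X$, and with unchanged integrals, hence still attaining $J$.

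It then remains to identify $J$ with the primal optimal value and to produce the optimal $\pi$. Since $c \in C_L(X,\mu_\alpha)$ is continuous, \cref{thm:noncompact_duality} gives
\[
\inf_{\pi \in \Pi(\mu_\alpha)} \int_X c\,d\pi \;=\; \sup_{f \le c} \sum_{\alpha \in \mathcal{I}_{nk}} \int_{X_\alpha} f_\alpha\,d\mu_\alpha,
\]
and the supremum over continuous $f_\alpha$ on the right is at most the supremum over merely integrable $f_\alpha$, which is in turn at most the relaxed supremum $J$; conversely a standard approximation (or directly the inequality $J \le \widehat J$ chain used inside the proof of \cref{thm:existence_multistochastic_dual_solution}, integrating $\sum_\alpha f_\alpha \le c$ against any uniting $\mu$) shows $J \le \inf_\pi \int c\,d\pi$. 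Hence $J$ equals the common value. Since the reducible tuple $\{\mu_\alpha\}$ has $\Pi(\mu_\alpha) \neq \varnothing$ and $c$ is continuous (in particular bounded below on the relevant sets by an integrable function), \cref{thm:primal_solution_exists}, or more precisely the last sentence of \cref{thm:noncompact_duality}, guarantees the infimum is attained by some $\pi \in \Pi(\mu_\alpha)$. That $\pi$ together with $\{f_\alpha\}$ satisfies $\int_X c\,d\pi = \sum_\alpha \int_{X_\alpha} f_\alpha\,d\mu_\alpha$, which is exactly the asserted primal–dual optimality.

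The only mild subtlety — the ``main obstacle'', though it is more bookkeeping than difficulty — is matching the function classes: \cref{thm:noncompact_duality} phrases the dual supremum over \emph{continuous} $f_\alpha$, while \cref{thm:existence_multistochastic_dual_solution} delivers only integrable (indeed extended-real-valued) $f_\alpha$. This is resolved by noting that both suprema coincide with the primal value: the continuous-$f_\alpha$ supremum is sandwiched between (a lower bound coming from $C_L$-approximation in $L^1$, which does not decrease the dual functional in the limit) and the integrable-$f_\alpha$ supremum $J$, and $J$ in turn cannot exceed $\inf_\pi \int c\,d\pi$ because any uniting measure annihilates zero $(n,k)$-thickness sets. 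Once this identification is in place, the theorem is immediate.
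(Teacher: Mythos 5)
Your proposal is correct and takes essentially the same approach as the paper, which simply states the theorem as the combination of \cref{thm:noncompact_duality} (no duality gap, infimum attained) and \cref{thm:existence_multistochastic_dual_solution} (existence of a maximiser in the relaxed dual for reducible marginals). You correctly fill in the two small gaps the paper leaves implicit: converting a relaxed dual maximiser into a genuine one by setting $f_\alpha=-\infty$ on the exceptional null sets $Y_\alpha$, and verifying the sandwich $\sup_{C_L}\le J\le\inf_\pi\int c\,d\pi=\sup_{C_L}$ that identifies the relaxed supremum with the primal value. The aside about ``$C_L$-approximation in $L^1$'' is an unnecessary detour — the inclusion of admissible sets already gives the required inequality — but it does not affect the validity of the argument.
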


\subsection{Unreachability of the supremum in the dual problem in the irreducible case}
In contrast to the multi-marginal case, in the theorem proved above, the essential requirement is the irreducibility of the set of measures $\mu_\alpha$. In the following paragraph we construct a multistochastic $(3, 2)$-problem with a bounded continuous cost function such that the supremum in the corresponding dual problem can not be attained.

Let $X_1 = X_2 = X_3 = \mathbb{N}$. For $1 \le i \le 3$, the space $X_i$ is a Polish space equipped with the discrete topology. For each $n$ denote
\[
    A_n = \{(n + 1, n, n), (n, n + 1, n), (n, n, n + 1)\}.
\]
One can easily verify that these sets are pairwise disjoint.

Consider the measure $\mu_p$ on the space $X = X_1 \times X_2 \times X_3$ defined as follows:
\begin{align*}
\mu_p(n_1, n_2, n_3) = \begin{dcases}
\frac{2}{(\pi n)^2}, &\text{if $(n_1, n_2, n_3) \in A_n$ for some $n$},\\
0, &\text{otherwise}.
\end{dcases}
\end{align*}
We have
\[
\mu_p(X) = \sum_{n = 1}^\infty |A_n| \cdot \frac{2}{(\pi n)^2} = \frac{6}{\pi^2}\sum_{n = 1}^\infty\frac{1}{n^2} = 1,
\]
and therefore the measure $\mu_p$ is a probability measure.

Consider another measure $\mu_\eps$ on the space $X$: let $\mu_\eps(n_1, n_2, n_3) = 2^{-n_1-n_2-n_3}$ for all $(n_1, n_2, n_3) \in X$. We have
\[
\mu_\eps(X) = \sum_{(n_1, n_2, n_3) \in X}\frac{1}{2^{n_1 + n_2 + n_3}} = \left(\sum_{n_1 = 1}^\infty \frac{1}{2^{n_1}}\right)\cdot\left(\sum_{n_2 = 1}^\infty \frac{1}{2^{n_2}}\right) \cdot \left(\sum_{n_3 = 1}^\infty \frac{1}{2^{n_3}}\right) = 1,
\]
and therefore $\mu_\eps$ is a probability measure too.

\begin{lemma}\label{lem:pi_x_ge_0_on_An}
Consider the probability measure $\mu = (1 - \alpha)\mu_p + \alpha\mu_\eps$, where $0 \le \alpha \le 1$. For $\{i, j\} \in \mathcal{I}_{3, 2}$, denote $\mu_{ij} = \prj_{ij}(\mu)$. If $\gamma \in \Pi(\mu_{ij})$ is a uniting measure for the tuple of projections $\{\mu_{ij}\}$, which means it has the same projections as $\mu$, then \[
\gamma(x) \ge \frac{2(1 - \alpha)}{\pi^2}\left(\frac{1}{n^2} - \frac{1}{(n + 1)^2}\right) - \frac{\alpha}{2^n}
\]
for all $x \in A_n$ for all $n$.
\end{lemma}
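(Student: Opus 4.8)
The plan is to exploit the fact that, among the points carrying $\mu_p$-mass, almost none shares a two-dimensional projection with a given point of $A_n$; hence the constraints $\prj_{ij}(\gamma)=\mu_{ij}$ force $\gamma$ to concentrate nearly all of $\mu_{12}(n+1,n)$ on the single point $(n+1,n,n)$. Since the sets $A_n$ and the measures $\mu_p$, $\mu_\eps$ are invariant under permutations of the three coordinates, it suffices to prove the estimate for $x=(n+1,n,n)$; the points $(n,n+1,n)$ and $(n,n,n+1)$ follow by symmetry.

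First I would record the values of the relevant two-dimensional marginals by running through the three points of every $A_m$. Among all elements of all the $A_m$, the first coordinate exceeds the second only for the ``$(m+1,m,m)$''-type point, so the unique point of $\mathrm{supp}(\mu_p)$ with $\prj_{12}$-image $(n+1,n)$ is $(n+1,n,n)$ itself; thus $\mu_{12}(n+1,n)=(1-\alpha)\frac{2}{(\pi n)^2}+\alpha 2^{-(2n+1)}$. Similarly one checks that on the coordinate line $\{x_1=n+1\}$ the measure $\prj_{13}(\mu_p)$ is supported only at $x_3\in\{n,n+1,n+2\}$ (coming from $(n+1,n,n)\in A_n$, $(n+1,n+2,n+1)\in A_{n+1}$, $(n+1,n+1,n+2)\in A_{n+1}$), and that on $\{x_2=n\}$ the measure $\prj_{23}(\mu_p)$ is supported only at $x_3\in\{n-1,n,n+1\}$ (read within $\mathbb{N}$). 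These are short finite verifications.

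Next, using $\prj_{12}(\gamma)=\mu_{12}$ I would write the identity $\gamma(n+1,n,n)=\mu_{12}(n+1,n)-\sum_{k\neq n}\gamma(n+1,n,k)$ and bound the tail sum from above. By nonnegativity of $\gamma$, for every $k$ one has $\gamma(n+1,n,k)\le\min\{\prj_{13}(\gamma)(n+1,k),\prj_{23}(\gamma)(n,k)\}=\min\{\mu_{13}(n+1,k),\mu_{23}(n,k)\}$. The key point, read off from the support computation, is that for $k\neq n$ the index $k=n+1$ is the only one at which both $\mu_{13}(n+1,k)$ and $\mu_{23}(n,k)$ carry a $\mu_p$-contribution; for every other $k\neq n$ at least one of the two equals its $\alpha\mu_\eps$-part alone, hence is at most $\alpha 2^{-n-k}$. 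Therefore $\gamma(n+1,n,n+1)\le\mu_{13}(n+1,n+1)=(1-\alpha)\frac{2}{(\pi(n+1))^2}+\alpha 2^{-(2n+2)}$, while $\sum_{k\neq n,n+1}\gamma(n+1,n,k)\le\alpha 2^{-n}\sum_{k\neq n,n+1}2^{-k}$, and summing the geometric series gives $\sum_{k\neq n}\gamma(n+1,n,k)\le(1-\alpha)\frac{2}{(\pi(n+1))^2}+\alpha 2^{-n}$. Substituting into the identity and discarding the nonnegative term $\alpha 2^{-(2n+1)}$ yields exactly $\gamma(n+1,n,n)\ge\frac{2(1-\alpha)}{\pi^2}\bigl(\frac{1}{n^2}-\frac{1}{(n+1)^2}\bigr)-\frac{\alpha}{2^n}$.

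I expect the only delicate point to be the bookkeeping in the third step: pinning down, for the fibre over $(n+1,n)$, exactly which competing points $(n+1,n,k)$ can absorb a non-negligible amount of mass, i.e. the precise supports of $\prj_{13}(\mu_p)$ and $\prj_{23}(\mu_p)$ along the two relevant coordinate lines and the observation that they overlap, away from $k=n$, only at $k=n+1$. Everything else --- the geometric-series estimate, the small-$n$ boundary cases, and the reduction to one representative point of $A_n$ by symmetry --- is routine.
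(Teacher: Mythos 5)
Your proof is correct, and it takes a genuinely different route than the paper's. The paper argues by a dual certificate: it constructs an explicit tuple of test functions $\{f_{ij}\}$ with values in $\{-1,0,1\}$ (namely $f_{12}=\mathbbm{1}[\{(m+1,m)\}]$, $f_{13}=-\mathbbm{1}[\{m+1\}\times\{m-1,m+1\}]$, and $f_{23}=-\mathbbm{1}[\{m\}\times(\mathbb{N}\setminus\{m-1,m,m+1\})]$) whose sum $F=\sum f_{ij}$ satisfies $F\le\mathbbm{1}[\{(m+1,m,m)\}]$ everywhere, so that $\gamma(m+1,m,m)\ge\int F\,d\gamma=\sum\int f_{ij}\,d\mu_{ij}$, and then evaluates the three integrals. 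You instead argue on the primal side: you write the mass-balance identity $\gamma(n+1,n,n)=\mu_{12}(n+1,n)-\sum_{k\neq n}\gamma(n+1,n,k)$ and bound each competitor $\gamma(n+1,n,k)$ by $\min\{\mu_{13}(n+1,k),\mu_{23}(n,k)\}$ via nonnegativity, using the support analysis of $\prj_{13}(\mu_p)$ and $\prj_{23}(\mu_p)$ to identify $k=n+1$ as the only fibre where a $\mu_p$-contribution survives. The two arguments are essentially linear-programming dual to one another, and the bookkeeping is parallel (the paper's $\int f_{13}\,d\mu_{13}$ picks up exactly the $k\in\{n-1,n+1\}$ terms your min bounds, and $\int f_{23}\,d\mu_{23}$ matches your geometric tail), so the numerics land in the same place after discarding the same positive $O(\alpha 4^{-n})$ terms. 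The paper's dual presentation is slightly more compact and self-contained (the sign condition $F\le\mathbbm{1}[\{\cdot\}]$ replaces your case analysis over $k$), while your primal presentation makes the support structure of $\mu_p$ --- and the fact that the fibre over $(n+1,n)$ can leak mass only to $k=n+1$ at order $(1-\alpha)$ --- more explicit. Your symmetry reduction to a single representative of $A_n$ is also fine, since $\mu$ and the $A_n$ are permutation-invariant and therefore so is $\Pi(\mu_{ij})$.
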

\begin{proof}
First, let us find $\mu_{ij}$ explicitly. We have
\[
\prj_{ij}(\mu_\eps)(n_i, n_j) = \sum_{n = 1}^\infty\frac{1}{2^{n_i + n_j + n}} = \frac{1}{2^{n_i + n_j}} \text{ for all $(n_i, n_j) \in \mathbb{N}^2$}.
\]
In addition, one can easily verify that
\begin{align*}
    \prj_{ij}(\mu_p)(n_i, n_j) = \begin{dcases}
    0, &\text{if $|n_i - n_j| \ge 2$},\\
    \frac{2}{(\pi n)^2}, &\text{if $|
    n_i - n_j| \le 1$ and $\min(n_i, n_j) = n$}.
    \end{dcases}
\end{align*}
In particular, since $\mu_{ij} = (1 - \alpha)\prj_{ij}(\mu_p) + \alpha \prj_{ij}(\mu_\eps)$, we obtain the following equations:
\begin{align}\label{eq:mu_ij_value_in_natural_points}
\mu_{ij}(n_i, n_j) = \begin{dcases}
\frac{\alpha}{2^{n_i + n_j}} &\text{if $|n_i - n_j| \ge 2$},\\
\frac{2(1 - \alpha)}{(\pi n)^2} + \frac{\alpha}{2^{n_i + n_j}} &\text{if $|n_i - n_j| \le 1$ and $\min(n_i, n_j) = n$}.
\end{dcases}
\end{align}

Fix a positive integer $m$. Consider the following functions $f_{ij} \colon \mathbb{N}^2 \to \mathbb{R}$:
\begin{align*}
    f_{12}(n_1, n_2) &= \begin{cases}
    1,\hphantom{-} &\text{if $(n_1, n_2) = (m + 1, m)$},\\
    0, &\text{otherwise};
    \end{cases}\\
    f_{13}(n_1, n_3) &= \begin{cases}
    -1, &\text{if $n_1 = m + 1$ and $n_3 \in \{m - 1, m + 1\}$},\\
    0, &\text{otherwise};
    \end{cases}\\
    f_{23}(n_2, n_3) &= \begin{cases}
    -1, &\text{if $n_2 = m$ and $n_3 \not\in \{m - 1, m, m + 1\}$},\\
    0, &\text{otherwise}.
    \end{cases}
\end{align*}
The function $f_{ij}$ is bounded, and therefore is integrable with respect to $\mu_{ij}$. Using equations \cref{eq:mu_ij_value_in_natural_points} we get
\begin{align*}
    \int f_{12}\,d\mu_{12} &= \mu_{12}(m + 1, m) = \frac{2(1 - \alpha)}{\pi^2m^2} + \frac{\alpha}{2^{2m + 1}},\\
    \int f_{13}\,d\mu_{13} &= -\mu_{13}(m + 1, m - 1) - \mu_{13}(m + 1, m + 1) = -\frac{\alpha}{2^{2m}} - \frac{2(1 - \alpha)}{\pi^2(m + 1)^2} - \frac{\alpha}{2^{2m + 2}},\\
    \int f_{23}\,d\mu_{23} &= -\smashoperator{\sum_{n \not\in \{m - 1, m, m + 1\}}}\quad\mu_{23}(m, n)  = -\sum_{n = 1}^\infty\frac{\alpha}{2^{m + n}} +  \frac{\alpha}{2^{2m - 1}} + \frac{\alpha}{2^{2m}} + \frac{\alpha}{2^{2m + 1}}
    > -\frac{\alpha}{2^m} + \frac{\alpha}{2^{2m}}.
\end{align*}
Summarizing this, we obtain
\begin{align}\label{eq:sum_f_ij_integrals_sup_unreach}
\begin{split}
\int f_{12}\,d\mu_{12} &+ \int f_{13}\,d\mu_{13} + \int f_{23}\,d\mu_{23} \\
&> \frac{2(1 - \alpha)}{\pi^2}\left(\frac{1}{m^2} - \frac{1}{(m + 1)^2}\right) + \left(\frac{\alpha}{2^{2m + 1}} - \frac{\alpha}{2^{2m + 2}}\right) - \frac{\alpha}{2^m}\\
&>\frac{2(1 - \alpha)}{\pi^2}\left(\frac{1}{m^2} - \frac{1}{(m + 1)^2}\right) - \frac{\alpha}{2^m}.
\end{split}
\end{align}

Consider the $(3, 2)$-function \[F(n_1, n_2, n_3) = f_{12}(n_1, n_2) + f_{13}(n_1, n_3) + f_{23}(n_2, n_3).\]
Let us verify that $F(n_1, n_2, n_3) \le 0$ if $(n_1, n_2, n_3) \ne (m + 1, m, m)$. Indeed, since $f_{13} \le 0$ and $f_{23} \le 0$, we conclude that if $F(n_1, n_2, n_3) > 0$, then $f_{12}(n_1, n_2) > 0$, and therefore $(n_1, n_2) = (m + 1, m)$. If $n_3 \not\in \{m - 1, m, m + 1\}$, then by construction $f_{23}(m, n_3) = -1$, and $f_{13}(m + 1, n_3) = 0$, and therefore $F(m + 1, m, n_3) = 0$. Otherwise, if $n_3 \in \{m - 1, m + 1\}$, then $f_{13}(m + 1, n_3) = -1$ and $f_{23}(m, n_3) = 0$, and therefore $F(m + 1, m, n_3) = 0$ too.

In addition, $F(m + 1, m, m) = 1$, and therefore if $\gamma$ is a probability measure on the space $X$, then
\[
\int_X F(n_1, n_2, n_3)\,\gamma(dn_1, dn_2, dn_3) \le \gamma(m + 1, m, m).
\]
Combining this with inequality \cref{eq:sum_f_ij_integrals_sup_unreach}, we conclude that if $\gamma \in \Pi(\mu_{ij})$, then
\begin{align*}
    \gamma(m + 1, m, m) &\ge \int_X F(n_1, n_2, n_3)\,\gamma(dn_1, dn_2, dn_3)\\
    &=\int f_{12}\,d\mu_{12} + \int f_{13}\,d\mu_{13} + \int f_{23}\,d\mu_{23} \\
    &> \frac{2(1 - \alpha)}{\pi^2}\left(\frac{1}{m^2} - \frac{1}{(m + 1)^2}\right) - \frac{\alpha}{2^m}.
\end{align*}
For the remaining points of $A_m$ the inequality is proved in the same manner. 
\end{proof}

\begin{corollary}\label{cons:pi_x_ge_0_on_An}
There exists a real $\alpha_0 \in (0, 1)$ such that if $\gamma \in \Pi(\mu_{ij})$, then $\gamma(x) > 0$ for all $x \in A_n$ for all $n$, where $\mu_{ij} = \prj_{ij}((1 - \alpha_0)\mu_p + \alpha_0\mu_\eps)$.
\end{corollary}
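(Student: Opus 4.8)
The plan is to feed the explicit lower bound of \cref{lem:pi_x_ge_0_on_An} into an elementary one-variable estimate and then choose $\alpha_0$ small enough that this bound stays strictly positive uniformly in $n$. First I would abbreviate $r_n = \frac{2}{\pi^2}\bigl(\frac{1}{n^2} - \frac{1}{(n+1)^2}\bigr)$, so that \cref{lem:pi_x_ge_0_on_An} reads: for $\mu_{ij} = \prj_{ij}((1-\alpha)\mu_p + \alpha\mu_\eps)$ and every $\gamma \in \Pi(\mu_{ij})$ one has $\gamma(x) \ge (1-\alpha)r_n - \alpha 2^{-n}$ for all $x \in A_n$ and all $n$. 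The right-hand side is strictly positive exactly when $\alpha < \frac{r_n}{r_n + 2^{-n}}$, so it suffices to show that $\delta := \inf_{n \ge 1}\frac{r_n}{r_n + 2^{-n}}$ is strictly positive and then to take any $\alpha_0 \in (0,\delta)$. Since $r_n > 0$ for every $n$, we automatically have $\delta \le \frac{r_1}{r_1 + 1/2} < 1$, so such an $\alpha_0$ indeed lies in $(0,1)$.

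The one step that requires an argument is $\delta > 0$. Writing $\frac{r_n}{r_n + 2^{-n}} = \bigl(1 + 2^{-n}/r_n\bigr)^{-1}$ and using $r_n = \frac{2}{\pi^2}\cdot\frac{2n+1}{n^2(n+1)^2}$, which decays polynomially (of order $n^{-3}$) while $2^{-n}$ decays exponentially, I conclude $2^{-n}/r_n \to 0$; hence there is some $N$ with $\frac{r_n}{r_n + 2^{-n}} > \tfrac12$ for all $n \ge N$. For the finitely many indices $n < N$ each value $\frac{r_n}{r_n + 2^{-n}}$ is a positive real, so their minimum is a positive number $\delta_0$, and therefore $\delta \ge \min(\tfrac12,\delta_0) > 0$. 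Picking $\alpha_0 \in \bigl(0,\min(\tfrac12,\delta_0)\bigr)$ then forces $(1-\alpha_0)r_n - \alpha_0 2^{-n} > 0$ for every $n$, and \cref{lem:pi_x_ge_0_on_An} immediately gives $\gamma(x) > 0$ for all $x \in A_n$ and all $n$, which is the assertion of \cref{cons:pi_x_ge_0_on_An}.

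I do not expect any genuine obstacle: the entire content is the contrast between the polynomial tail bound $r_n \sim c\,n^{-3}$ inherited from the Basel-type construction of $\mu_p$ and the exponentially small perturbation $\alpha 2^{-n}$ coming from $\mu_\eps$, which makes the tail of the sequence $\frac{r_n}{r_n+2^{-n}}$ tend to $1$ and hence bounds its infimum away from $0$.
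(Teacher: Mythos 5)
Your proposal is correct and takes essentially the same route as the paper: the paper rearranges the positivity condition $(1-\alpha_0)r_n - \alpha_0 2^{-n} > 0$ into $\frac{2(1-\alpha_0)}{\pi^2\alpha_0} > \frac{2^{-n}}{1/n^2 - 1/(n+1)^2}$ and then invokes the fact that the right-hand side tends to $0$ to extract a uniform bound $M$, which is precisely your observation that $2^{-n}/r_n \to 0$ recast. The only cosmetic difference is that you isolate $\alpha$ as $\alpha < r_n/(r_n + 2^{-n})$ and argue the infimum is positive by a tail-plus-finitely-many split, whereas the paper bounds a single auxiliary quantity; both are the same estimate.
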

\begin{proof}
By \cref{lem:pi_x_ge_0_on_An} we only need to prove that there exists $\alpha_0 \in (0, 1)$ such that
the inequality
\[
\frac{2(1 - \alpha_0)}{\pi^2}\left(\frac{1}{n^2} - \frac{1}{(n + 1)^2}\right) - \frac{\alpha_0}{2^n} > 0
\]
holds for all $n \in \mathbb{N}$, or equivalently
\begin{equation}\label{eq:alpha_0_ineq}
\frac{2(1 - \alpha_0)}{\pi^2\alpha_0} > \frac{2^{-n}}{\frac{1}{n^2} - \frac{1}{(n + 1)^2}}.
\end{equation}

One can easily verify that the function in the right hand-side of the inequality converges to $0$, and therefore there exists a constant $M$ such that the inequality
\[
M \ge \frac{2^{-n}}{\frac{1}{n^2} - \frac{1}{(n + 1)^2}}
\]
holds for all positive integer $n$. Thus, the inequality \cref{eq:alpha_0_ineq} follows from
\[
\frac{2(1 - \alpha_0)}{\pi^2\alpha_0} > M
\]
and therefore every $\alpha_0$ such that $0 < \alpha_0 < 2 / (M\pi^2 + 2)$ is suitable.
\end{proof}

\begin{theorem}\label{thm:unreachibility_of_supremum}
Let $\alpha_0$ be the constant constructed in \cref{cons:pi_x_ge_0_on_An}. Let $\mu = (1 - \alpha_0)\mu_p + \alpha_0\mu_\eps$, and for $\{i, j\} \in \mathcal{I}_{3, 2}$ let $\mu_{ij} = \prj_{ij}(\mu)$. Consider the cost function $c \colon X \to \{0, 1\}$: $c(x) = 1$ if $x \in A_n$ for some $n$, and $c(x) = 0$ otherwise. Then the supremum in the corresponding dual $(3, 2)$-problem can not be attained.
\end{theorem}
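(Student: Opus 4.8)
The plan is to argue by contradiction: assume a dual solution $\{f_{ij}\}$ exists, use the no-duality-gap theorem together with \cref{cons:pi_x_ge_0_on_An} to show that the corresponding $(3,2)$-function $F=f_{12}+f_{13}+f_{23}$ is forced to equal $1$ at \emph{every} point of $\bigcup_n A_n$, and then extract from this rigidity a recursion that makes the diagonal values $f_{ij}(n,n)$ grow linearly in $n$, contradicting their $\mu_{ij}$-integrability. The hard part is the rigidity step: upgrading "$F=c$ almost everywhere'' (complementary slackness) to "$F=1$ on all of $\bigcup_n A_n$'', which is exactly what the preceding lemmas (uniform positivity of every uniting measure on the $A_n$) were designed to give.

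First I would record the setup. Since $X=\mathbb{N}^3$ is discrete and $c\in\{0,1\}$, we have $c\in C_b(X)$, and $\mu=(1-\alpha_0)\mu_p+\alpha_0\mu_\eps\in\Pi(\mu_{ij})$, so $\Pi(\mu_{ij})\neq\varnothing$. By \cref{thm:noncompact_duality} there is no duality gap, the common value $V=\inf_{\pi\in\Pi(\mu_{ij})}\int_X c\,d\pi$ of the primal and dual problems is finite, and the primal infimum is attained by some $\gamma^\ast\in\Pi(\mu_{ij})$. Moreover, from \cref{eq:mu_ij_value_in_natural_points} (with $\alpha=\alpha_0>0$) every $\mu_{ij}$ has full support on $\mathbb{N}^2$; hence every set of zero $(3,2)$-thickness is empty, so any admissible dual tuple satisfies $f_{12}(x_1,x_2)+f_{13}(x_1,x_3)+f_{23}(x_2,x_3)\le c(x)$ at \emph{every} $x\in X$, and each $f_{ij}$ is finite and $\mu_{ij}$-integrable. (In particular the standard and the relaxed dual problems coincide here.)

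Now suppose the dual supremum is attained by $\{f_{ij}\}$ and set $F=f_{12}+f_{13}+f_{23}$. Since $\gamma^\ast$ has marginals $\mu_{ij}$ and each $f_{ij}\in L^1(\mu_{ij})$, one has $\int_X F\,d\gamma^\ast=\sum_{ij}\int f_{ij}\,d\mu_{ij}=V=\int_X c\,d\gamma^\ast$, so $\int_X(c-F)\,d\gamma^\ast=0$ with $c-F\ge 0$; on the discrete space $X$ this forces $F(x)=c(x)$ at every point $x$ with $\gamma^\ast(x)>0$. By \cref{cons:pi_x_ge_0_on_An}, $\gamma^\ast(x)>0$ for all $x\in A_n$ and all $n$, hence
\[
F(x)=1 \qquad\text{for every } x\in\textstyle\bigcup_{n} A_n.
\]

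Finally I would turn this rigidity into a contradiction. Fix $n$. Writing $F=1$ at the three points $(n+1,n,n),(n,n+1,n),(n,n,n+1)$ of $A_n$ and $F\le 0$ at the three points $(n+1,n+1,n),(n+1,n,n+1),(n,n+1,n+1)$ — which a direct check shows lie in no $A_m$, so $c$ vanishes there — one observes that both the sum of the three equalities and the sum of the three inequalities contain exactly the same collection $O_n$ of "off-diagonal'' terms $f_{ij}(n+1,n),f_{ij}(n,n+1)$; abbreviating $s_n:=f_{12}(n,n)+f_{13}(n,n)+f_{23}(n,n)$, the first sum reads $s_n+O_n=3$ and the second reads $s_{n+1}+O_n\le 0$, so eliminating $O_n$ gives $s_{n+1}\le s_n-3$ for all $n\ge1$. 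Consequently $s_n\le s_1-3(n-1)$, so $|s_n|\ge\tfrac34 n$ for all large $n$. On the other hand, $\mu_{ij}(n,n)=\frac{2(1-\alpha_0)}{(\pi n)^2}+\frac{\alpha_0}{2^{2n}}\ge\frac{2(1-\alpha_0)}{\pi^2 n^2}$ by \cref{eq:mu_ij_value_in_natural_points}, so $f_{ij}\in L^1(\mu_{ij})$ yields $\sum_n\frac{|f_{ij}(n,n)|}{n^2}<\infty$ for each pair $ij$, hence $\sum_n\frac{|s_n|}{n^2}<\infty$; but $|s_n|\ge\tfrac34 n$ eventually forces $\sum_n\frac{|s_n|}{n^2}=\infty$. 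This contradiction shows that the dual supremum cannot be attained.
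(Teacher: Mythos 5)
Your proof is correct and follows essentially the same route as the paper's. The setup (no duality gap, full support of $\mu_{ij}$ forcing finiteness and ruling out nontrivial zero-thickness sets, complementary slackness combined with \cref{cons:pi_x_ge_0_on_An} to force $F=1$ on every $A_n$) is the same, and your elimination of $O_n$ to derive $s_{n+1}\le s_n-3$ is algebraically equivalent to the paper's $(3,2)$-function identity
\[
F(n+1,n+1,n+1)-F(n,n,n)=\sum_{x\in B_n}F(x)-\sum_{x\in A_n}F(x),
\]
and the final divergence of $\sum_n |s_n|/n^2$ against the lower bound $\mu_{ij}(n,n)\gtrsim n^{-2}$ matches the paper's conclusion. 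The only cosmetic difference is that the paper also notes $s_1=F(1,1,1)\le c(1,1,1)=0$ to get the clean bound $s_n\le -3(n-1)$, while you keep $s_1$ generic and absorb it asymptotically; both work.
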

\begin{proof}
The cost function $c$ is a bounded continuous function on the space $X$ equipped with the discrete topology. In addition, the set $\Pi(\mu_{ij})$ is non-empty, and therefore it follows from \cref{thm:noncompact_duality} that
\[
\min_{\gamma \in \Pi(\mu_{ij})}\int_X c\,d\gamma = \sup\left\{\sum \int_{X_{ij}} f_{ij}\,d\mu_{ij} \colon \sum f_{ij}(x_i, x_j) \le c(x_1, x_2, x_3)\right\}.
\]
Assume that the supremum in the dual problem is attained. Then there exists a uniting measure $\gamma \in \Pi(\mu_{ij})$ and a tuple of integrable functions $\{f_{ij}\}_{\{i, j\} \in \mathcal{I}_{3, 2}}$, $f_{ij} \colon X_{ij} \to [-\infty, +\infty)$ such that
\[
f_{12}(n_1, n_2) + f_{13}(n_1, n_3) + f_{23}(n_2, n_3) \le c(n_1, n_2, n_3)
\]
for all $(n_1, n_2, n_3) \in X$ and
\[
\int_{X_{12}} f_{12}\,d\mu_{12} + \int_{X_{13}} f_{13}\,d\mu_{13} + \int_{X_{23}} f_{23}\,d\mu_{23} = \int_X c\,d\gamma.
\]

It follows from equation \cref{eq:mu_ij_value_in_natural_points} that $\mu_{ij}(n_i, n_j) > 0$ for all pairs of positive integers $(n_i, n_j)$. Hence, since $f_{ij}$ is integrable with respect to $\mu_{ij}$, we conclude that $f_{ij}$ can not take value $-\infty$.

Consider the finite $(3, 2)$-function
\begin{equation}\label{eq:integer_32_func_F_definition}
F(n_1, n_2, n_3) = f_{12}(n_1, n_2) + f_{13}(n_1, n_3) + f_{23}(n_2, n_3).
\end{equation}
Since $f_{ij}$ is integrable with repsect to $\mu_{ij}$ and the measure $\gamma$ is uniting, the function $F$ is integrable with respect to $\gamma$ and
\[
\int_X F\,d\gamma = \int_{X_{12}} f_{12}\,d\mu_{12} + \int_{X_{13}} f_{13}\,d\mu_{13} + \int_{X_{23}} f_{23}\,d\mu_{23} = \int_X c\,d\gamma.
\]
Since in addition $F(n_1, n_2, n_3) \le c(n_1, n_2, n_3)$ for all $(n_1, n_2, n_3) \in X$, we conclude that $F(n_1, n_2, n_3) = c(n_1, n_2, n_3)$ $\gamma$-almost everywhere. It follows from \cref{cons:pi_x_ge_0_on_An} that $\gamma(x) > 0$ if $x \in A_n$ for some $n$, and therefore
\begin{equation}\label{eq:F_value_on_An}
F(n + 1, n, n) = F(n, n + 1, n) = F(n, n, n + 1) = 1
\end{equation}
for all $n \in \mathbb{N}$.

One can easily verify using equation \cref{eq:integer_32_func_F_definition} that for all $n \in \mathbb{N}$ we have
\begin{align*}
    F(n + 1, &n + 1, n + 1) - F(n, n, n) \\
    =&\quad F(n, n + 1, n + 1) + F(n + 1, n, n + 1) + F(n + 1, n + 1, n)\\
    &-F(n + 1, n, n) - F(n, n + 1, n) - F(n, n, n + 1).
\end{align*}
Since, $F(n_1, n_2, n_3) \le c(n_1, n_2, n_3)$ and $c(n_1, n_2, n_3) = 0$ if the point $(n_1, n_2, n_3)$ is not contained in the set $\sqcup_{n = 1}^\infty A_n$, the inequality
\[
F(n, n + 1, n + 1) + F(n + 1, n, n + 1) + F(n + 1, n + 1, n) \le 0
\]
holds for every positive integer $n$. In addition, it follows from equation \cref{eq:F_value_on_An} that
\[
F(n + 1, n, n) + F(n, n + 1, n) + F(n, n, n + 1) = 3.
\]
Summarizing this, we conclude that $F(n + 1, n + 1, n + 1) \le F(n, n, n) -3$, and therefore \[
F(n, n, n) \le F(1, 1, 1) - 3(n - 1) \le c(1, 1, 1) - 3(n - 1) = -3(n - 1).
\]
for all $n \in \mathbb{N}$.

In particular, we conclude that for all $n \in \mathbb{N}$ the following inequality holds:
\[
|f_{12}(n, n)| + |f_{13}(n, n)| + |f_{23}(n, n)| \ge 3(n - 1).
\]
Using this inequality and equation \cref{eq:mu_ij_value_in_natural_points}, we can bound from below the $\sum \norm{f_{ij}}_{L^1(\mu_{ij})}$:
\begin{align*}
    \norm{f_{12}}_{L^1(\mu_{12})} &+ \norm{f_{13}}_{L^1(\mu_{13})} + \norm{f_{23}}_{L^1(\mu_{23})} \\
    &\ge \sum_{n = 1}^\infty \left(|f_{12}(n, n)|\cdot\mu_{12}(n, n) + |f_{13}(n, n)|\cdot\mu_{13}(n, n) + |f_{23}(n, n)|\cdot\mu_{23}(n, n) \right)\\
    &>\frac{2(1 - \alpha_0)}{\pi^2}\sum_{n = 1}^\infty\frac{1}{n^2}\left(|f_{12}(n, n)| + |f_{13}(n, n)| + |f_{23}(n, n)|\right)\\
    &\ge \frac{2(1 - \alpha_0)}{\pi^2}\sum_{n = 1}^\infty\frac{3(n - 1)}{n^2} = +\infty.
\end{align*}
Thus, at least one the functions $f_{ij}$ is not integrable, and this contradiction proves \cref{thm:unreachibility_of_supremum}.
\end{proof}

The measure $\mu$ constructed in \cref{thm:unreachibility_of_supremum} is strictly positive at every point of the space $X$. In particular, this means that $\mu$ is equivalent to $\prj_{1}(\mu) \otimes \prj_{2}(\mu) \otimes \prj_{3}(\mu)$. Thus, we obtain the following proposition, which demonstrates that we cannot replace “uniform equivalence” with simple equivalence.

\begin{proposition}
\label{X123}
Let $X_1 = X_2 = X_3 = \mathbb{N}$. There exists a probability measure $\mu$ on the space $X = X_1 \times X_2 \times X_3$ and a cost function $c: X \to \{0, 1\}$ such that the following conditions hold:
\begin{enumerate}[label=\upshape{(\roman*)}]
\item measure $\mu$ is equivalent (but not uniformly equivalent) to $\mu_1 \otimes \mu_2 \otimes \mu_3$, where $\mu_i = \prj_i\mu$;
\item there is no optimal solution to the dual problem for the cost function $c$ and projections $\mu_{ij}$, where $\mu_i = \prj_i\mu$.
\end{enumerate}
\end{proposition}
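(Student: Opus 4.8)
The plan is to reuse the construction of \cref{thm:unreachibility_of_supremum} verbatim: set $\mu = (1-\alpha_0)\mu_p + \alpha_0\mu_\eps$, where $\alpha_0$ is the constant from \cref{cons:pi_x_ge_0_on_An}, put $\mu_i = \prj_i(\mu)$ and $\mu_{ij} = \prj_{ij}(\mu)$, and take $c\colon X\to\{0,1\}$ to be the cost function of \cref{thm:unreachibility_of_supremum} (equal to $1$ on $\bigcup_n A_n$ and $0$ elsewhere). With these choices condition (ii) of the proposition — nonexistence of an optimal dual solution for $c$ and the projections $\mu_{ij}$ — is exactly the conclusion of \cref{thm:unreachibility_of_supremum}, so nothing further is required there. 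It remains only to establish (i).

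The equivalence is immediate. Since $\mu_\eps(n_1,n_2,n_3) = 2^{-n_1-n_2-n_3} > 0$ for every $(n_1,n_2,n_3)\in\mathbb{N}^3$ and $\alpha_0 > 0$, the measure $\mu$ assigns strictly positive mass to every point of the countable space $X = \mathbb{N}^3$. Hence each marginal $\mu_i = \prj_i(\mu)$ is strictly positive on $\mathbb{N}$, and therefore $\mu_1\otimes\mu_2\otimes\mu_3$ is strictly positive on $X$; both $\mu$ and $\mu_1\otimes\mu_2\otimes\mu_3$ are thus equivalent to the counting measure on $X$, and in particular to one another.

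For the failure of \emph{uniform} equivalence I would argue by contradiction with the existence theory of Section 5. If $\mu$ were uniformly equivalent to $\mu_1\otimes\mu_2\otimes\mu_3$, then (taking $\nu_i = \mu_i$) the measure $\mu$ would be reducible, so the consistent tuple $\{\mu_{ij}\}$ would be a reducible tuple of projections. The cost $c$ is bounded and continuous on the discrete space $X$, hence satisfies the domination \eqref{Kell-bound} with constant, and therefore integrable, functions $C_{ij}\equiv\tfrac13$. Then \cref{thm:existence_multistochastic_dual_solution} together with the duality statement \cref{thm:noncompact_duality} would produce a tuple $\{f_{ij}\}$, $f_{ij}\colon X_{ij}\to[-\infty,+\infty)$, at which the dual supremum is attained, contradicting \cref{thm:unreachibility_of_supremum}. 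Hence $\mu$ is not uniformly equivalent to $\mu_1\otimes\mu_2\otimes\mu_3$, which finishes (i).

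There is essentially no obstacle here: the real work (the lower bound on the mass of $A_n$ in \cref{lem:pi_x_ge_0_on_An}, the choice of $\alpha_0$ in \cref{cons:pi_x_ge_0_on_An}, and the non-integrability estimate in \cref{thm:unreachibility_of_supremum}) is already done, and the only new content is the two elementary density observations above. If a self-contained argument for non-uniform-equivalence is preferred, one can instead estimate the marginals by hand: a short computation gives $\prj_1(\mu_p)(n) = \tfrac{4}{(\pi n)^2}+\tfrac{2}{(\pi(n-1))^2}$ for $n\ge 2$, so $\mu_i(n)\asymp n^{-2}$, and then $\tfrac{d\mu}{d(\mu_1\otimes\mu_2\otimes\mu_3)}$ is comparable to $n^4$ on the sets $A_n$ while decaying to $0$ along sequences of points outside $\bigcup_n A_n$; being bounded neither above nor below, it cannot witness uniform equivalence. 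Either route closes the proof.
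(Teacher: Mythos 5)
Your proposal is correct and follows the paper's route exactly: reuse the measure and cost from \cref{thm:unreachibility_of_supremum}, note that strict positivity of $\mu$ (thanks to the $\mu_\eps$ summand) yields equivalence of $\mu$ to $\mu_1\otimes\mu_2\otimes\mu_3$, and cite that theorem for (ii). The paper leaves the failure of uniform equivalence implicit; your contradiction via reducibility and \cref{thm:existence_multistochastic_dual_solution} is precisely the intended justification, and the direct density estimate ($d\mu/d(\mu_1\otimes\mu_2\otimes\mu_3)\asymp n^4$ on $A_n$) that you sketch as an alternative is also sound.
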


In the classical Monge-Kantorovich problem the dual solution may not exist provided $c$ is unbounded. In \cite{Schachermayer2009,Beiglbock2009} authors introduce the concept of strong $c$-monotonicity, which generalizes the $c$-monotonicity and allows us to find a generalized dual solution.

\begin{definition}
A Borel set $\Gamma \subseteq X \times Y$ is \textit{strongly $c$-monotone} if there exist Borel measurable functions $\varphi \colon X \to [-\infty, +\infty)$, $\psi \colon Y \to [-\infty, +\infty)$ such that $\varphi(x) + \psi(y) \le c(x, y)$ for all $(x, y) \in X \times Y$ and $\varphi(x) + \psi(y) = c(x, y)$ holds if $(x, y) \in \Gamma$. A transport plan $\pi \in \Pi(\mu, \nu)$ is \textit{strongly $c$-monotone} if $\pi$ is concentrated on a strongly $c$-monotone Borel set.
\end{definition}

One can easily verify that strong $c$-monotonicity implies $c$-monotonicity, and if there exists a solution to the dual problem, then every optimal transport plan is strongly $c$-monotone. In \cite{Beiglbock2009} authors prove that under general assumptions on the cost function the transport plan $\pi$ is optimal if and only if $\pi$ is strongly $c$-monotone.
\begin{theorem}[{{\cite[Theorem 3]{Beiglbock2009}}}]
Let $X$, $Y$ be Polish spaces equipped with Borel probability measures $\mu$, $\nu$, and let $c \colon X \times Y \to [0, \infty]$ be Borel measurable and $\mu \otimes \nu$-a.e. finite. Then a finite transport plan $\pi \in \Pi(\mu, \nu)$ is optimal if and only if it is strongly $c$-monotone.
\end{theorem}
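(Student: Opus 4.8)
The plan is to prove the two implications separately. Writing $\varphi\oplus\psi$ for the function $(x,y)\mapsto\varphi(x)+\psi(y)$: the ``if'' part is elementary, while the ``only if'' part is the substantial one and is exactly where \cite{Beiglbock2009} goes beyond the classical lower-semicontinuous theory. For the easy direction, suppose $\pi$ is concentrated on a strongly $c$-monotone Borel set $\Gamma$ with Borel potentials $\varphi\colon X\to[-\infty,+\infty)$, $\psi\colon Y\to[-\infty,+\infty)$ satisfying $\varphi\oplus\psi\le c$ everywhere and $\varphi\oplus\psi=c$ on $\Gamma$. Since $\pi$ is a finite plan, $c<+\infty$ $\pi$-a.e., so we may assume $\Gamma\subseteq\{c<+\infty\}$; then $\varphi$ is real-valued on $\mathrm{proj}_X(\Gamma)$ and $\psi$ on $\mathrm{proj}_Y(\Gamma)$, both of full measure, and (after a routine check that, using $c\ge0$ and $\int c\,d\pi<\infty$, all integrals below are well-defined and one may reduce to integrable potentials) for every $\pi'\in\Pi(\mu,\nu)$
\[
\int c\,d\pi'\ \ge\ \int(\varphi\oplus\psi)\,d\pi'\ =\ \int\varphi\,d\mu+\int\psi\,d\nu\ =\ \int(\varphi\oplus\psi)\,d\pi\ =\ \int c\,d\pi ,
\]
so $\pi$ is optimal.

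For the converse, let $\pi$ be optimal with $\int c\,d\pi<\infty$. The first step is to show $\pi$ is $c$-cyclically monotone, i.e.\ concentrated on a Borel set $\Gamma_0$ with $\sum_{i=1}^k c(x_i,y_i)\le\sum_{i=1}^k c(x_i,y_{\sigma(i)})$ for every finite family $\{(x_i,y_i)\}\subseteq\Gamma_0$ and every permutation $\sigma$. For lower-semicontinuous $c$ this is classical; for merely Borel $c$ it is the technical core of \cite{Beiglbock2009}. The argument is contrapositive: failure of $c$-cyclical monotonicity would produce an integer $k$ and a Borel set of positive $\pi^{\otimes k}$-measure of $k$-tuples whose cyclic rearrangement strictly decreases the total cost; disintegrating $\pi$ and splicing in a measurable modification supported on those tuples yields $\pi'\in\Pi(\mu,\nu)$ with $\int c\,d\pi'<\int c\,d\pi$, contradicting optimality. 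Making this precise for a Borel (non-semicontinuous) cost — where one cannot perturb on small neighbourhoods — requires a measurable selection argument to render the splicing measurable and the cost comparison legitimate.

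For the second step, since $c<+\infty$ $\pi$-a.e.\ we may take $\Gamma_0\subseteq\{c<+\infty\}$, fix $(x_0,y_0)\in\Gamma_0$, and use the R\"uschendorf chain formula
\[
\varphi(x)=\inf\Bigl\{\bigl(c(x,y_k)-c(x_k,y_k)\bigr)+\bigl(c(x_k,y_{k-1})-c(x_{k-1},y_{k-1})\bigr)+\dots+\bigl(c(x_1,y_0)-c(x_0,y_0)\bigr)\Bigr\},
\]
the infimum over $k\ge0$ and $(x_1,y_1),\dots,(x_k,y_k)\in\Gamma_0$, together with $\psi(y)=\inf_{x\in X}\bigl(c(x,y)-\varphi(x)\bigr)$. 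All subtracted terms are evaluated on $\Gamma_0$, hence finite, so the only pathology is $\varphi(x)=+\infty$ at some points; $c$-cyclical monotonicity gives $\varphi>-\infty$ on $\mathrm{proj}_X(\Gamma_0)$, gives $\varphi\oplus\psi\le c$ wherever $\varphi<+\infty$, and gives $\varphi\oplus\psi=c$ on a Borel set $\Gamma'\subseteq\Gamma_0$ with $\pi(\Gamma')=1$. One then checks $\{\varphi=+\infty\}$ is $\mu$-null, redefines $\varphi:=-\infty$ there — this is exactly why $[-\infty,+\infty)$-valued potentials are allowed — and adjusts $\psi$ so that $\varphi\oplus\psi\le c$ holds at \emph{every} point while $\varphi\oplus\psi=c$ still holds on $\Gamma'$; Borel measurability of $\varphi,\psi$ comes from the usual analytic-set and measurable-selection machinery, and then $\pi$ is concentrated on the strongly $c$-monotone set $\Gamma'$. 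I expect the main obstacles to be the $c$-cyclical monotonicity of optimal plans for Borel cost (first step) and the $+\infty$-bookkeeping needed to upgrade the subgradient inequality from almost-everywhere to everywhere (second step).
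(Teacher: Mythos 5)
This theorem appears in the paper only as a citation of \cite[Theorem 3]{Beiglbock2009} with no proof given, so there is no internal argument to compare against; your outline is a reconstruction of the Beiglb\"ock--Schachermayer proof, and its overall shape is right. One caution on the ``if'' direction: the potentials $\varphi,\psi$ supplied by strong $c$-monotonicity are in general \emph{not} $\mu$- and $\nu$-integrable, and the parenthetical claim that one may ``reduce to integrable potentials'' should not be taken for granted --- it is not what the published proof does, and the equalities $\int(\varphi(x)+\psi(y))\,d\pi'=\int\varphi\,d\mu+\int\psi\,d\nu=\int(\varphi(x)+\psi(y))\,d\pi$ are meaningless if those marginal integrals diverge. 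The correct repair is a symmetric truncation: set $\varphi_n=(\varphi\vee(-n))\wedge n$ and $\psi_n=(\psi\vee(-n))\wedge n$. A short case analysis using $c\ge 0$ shows that $\varphi_n(x)+\psi_n(y)\le c(x,y)$ for every $n$ and every $(x,y)$, and that on $\Gamma$, where $\varphi(x)+\psi(y)=c(x,y)\ge 0$, the sequence $\varphi_n(x)+\psi_n(y)$ increases monotonically to $c(x,y)$. Monotone convergence on the $\pi$ side and the reverse Fatou lemma (with integrable majorant $c$) on the $\pi'$ side then give
\[
\int c\,d\pi = \lim_n\Bigl(\int\varphi_n\,d\mu + \int\psi_n\,d\nu\Bigr) = \lim_n\int\bigl(\varphi_n(x)+\psi_n(y)\bigr)\,\pi'(dx,dy) \le \int c\,d\pi'.
\]
Your outline of the converse --- optimality implies $c$-cyclical monotonicity via a measurable splicing argument, then the R\"uschendorf chain and the $\pm\infty$ bookkeeping, with the $\mu\otimes\nu$-a.e.\ finiteness of $c$ entering precisely to make $\{\varphi=+\infty\}$ $\mu$-null --- matches the published strategy, and you are right that the measurable selection step is where the real technical work lies.
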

In particular, for every finite optimal transport plan $\pi$ there exist (not necessary integrable) functions $\varphi$, $\psi$ such that $\varphi(x) + \psi(x) \le c(x, y)$ and the equaility holds $\pi$-a.e. We can naturally generalize the concept of strong $c$-monotonicity to the multistochastic Monge-Kantorovich problem as follows.
\begin{definition}
A Borel set $\Gamma \subset X$ is \textit{strongly $c$-monotone} if there exist Borel measurable functions $\{f_\alpha\}_{\alpha \in \mathcal{I}_{nk}}$, $f_\alpha \colon X_\alpha \to [-\infty, +\infty)$ such that the inequality
\[
\sum_{\alpha \in \mathcal{I}_{nk}}f_\alpha(x_\alpha) \le c(x)
\]
holds for all $x \in X$ and the equality is achieved if $x \in \Gamma$. A transport plan $\pi \in \Pi(\mu_\alpha)$ is \textit{strongly $c$-monotone} if $\pi$ is concentrated on a strongly $c$-monotone Borel set $\Gamma$.
\end{definition}

We do not know whether exists a strongly $c$-monotone transport plan in the problem considered in \cref{thm:unreachibility_of_supremum}. In what follows, we construct another example of the $(3, 2)$-problem and prove that in this example there is no strongly $c$-monotone optimal transport plan.

As in the previous example, let $X_1 = X_2 = X_3 = \mathbb{N}$. For each $n$ denote 
\[
B_n = \{(n, n + 1, n + 1), (n + 1, n, n + 1), (n + 1, n + 1, n)\}.
\]
Consider the following measure $\mu$ defined on the space $X_1 \times X_2 \times X_3$ as follows:
\begin{align}\label{eq:mu_nonstrong_definition}
\mu(n_1, n_2, n_3) = \begin{dcases}
\frac{1}{(\pi n)^2} &\text{if $(n_1, n_2, n_3) \in A_n \sqcup B_n$ for some $n$,}\\
0 &\text{otherwise.}
\end{dcases}
\end{align}
One can check that $\mu$ is a probability measure. Finally, for $\{i, j\} \in \mathcal{I}_{3, 2}$ denote $\mu_{ij} = \prj_{ij}(\mu)$.
\begin{lemma}
The measure $\mu$ is the only uniting measure for the tuple of projections $\{\mu_{ij}\}$.
\end{lemma}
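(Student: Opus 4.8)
The plan is to show that the marginal constraints $\prj_{ij}\nu=\mu_{ij}$ together with nonnegativity force $\nu=\mu$. Write $c_n=\tfrac{1}{(\pi n)^2}$ and $c_0=0$. First I would record the shape of the two-dimensional marginals: a direct computation from \cref{eq:mu_nonstrong_definition} shows that each $\mu_{ij}$ is carried by the band $\{(a,b)\in\mathbb{N}^2:|a-b|\le1\}$, with $\mu_{ij}(n,n)=c_n+c_{n-1}$ and $\mu_{ij}(n,n\pm1)=2c_n$. Consequently any $\nu\in\Pi(\mu_{ij})$ is carried by the set of triples all of whose pairwise coordinate differences are at most $1$; such a triple is either $(n,n,n)$ or lies in one of the sets $A_n$ or $B_n$. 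Thus $\nu$ is completely described by the numbers $d_n=\nu(n,n,n)$, by $a^i_n$ ($=$ the $\nu$-mass of the point of $A_n$ whose $i$-th coordinate equals $n+1$) and by $b^i_n$ ($=$ the $\nu$-mass of the point of $B_n$ whose $i$-th coordinate equals $n$), for $n\ge1$ and $i\in\{1,2,3\}$.

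Next I would translate $\prj_{ij}\nu=\mu_{ij}$ into linear relations among these unknowns. The off-diagonal atoms $(n,n\pm1)$ of the marginals give equations of the form $a^i_n+b^j_n=2c_n$; running over the three projections and both off-diagonal positions produces, for each fixed $n$, a linear system whose solution set is cut out exactly by $a^1_n=a^2_n=a^3_n=:\alpha_n$, $b^1_n=b^2_n=b^3_n=:\beta_n$, and $\alpha_n+\beta_n=2c_n$. The diagonal atoms $(n,n)$ give, after using that $\mu$ is invariant under permutations of the coordinates (so all three projections yield the same relation), the family of equations
\[
d_n+\alpha_n+\beta_{n-1}=c_n+c_{n-1},\qquad n\ge1,
\]
with the conventions $\beta_0=c_0=0$ (so at $n=1$ this reads $d_1+\alpha_1=c_1$).

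Finally I would close the argument by a monotonicity-and-squeeze step. Put $x_n=\alpha_n-c_n$ for $n\ge1$ and $x_0=0$. Eliminating $\beta_{n-1}=2c_{n-1}-\alpha_{n-1}$ from the displayed relation gives $d_n=x_{n-1}-x_n$, so $d_n\ge0$ forces $(x_n)_{n\ge0}$ to be nonincreasing; hence $x_n\le x_0=0$ for all $n$. On the other hand $\alpha_n\ge0$ gives $x_n\ge-c_n$, and since $c_n\to0$ the limit $\ell=\lim_n x_n=\inf_n x_n$ (which exists, as $(x_n)$ is nonincreasing) satisfies $\ell\ge0$; therefore $0\le x_n\le0$, that is $x_n=0$ for every $n$. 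Consequently $\alpha_n=\beta_n=c_n$ and $d_n=0$, i.e. $\nu$ assigns mass $c_n$ to every atom of $A_n\sqcup B_n$ and nothing elsewhere, which is precisely $\nu=\mu$.

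I expect the only delicate point to be this last step: the off-diagonal and diagonal equations by themselves leave a one-parameter family of consistent signed candidates, and it is the joint use of the two nonnegativity constraints, the boundary value $x_0=0$ (which records the special form of the relation at $n=1$, there being no set $B_0$), and the decay $c_n\to0$ that collapses this family to $\mu$ alone. Everything else is elementary bookkeeping with the atoms of $A_n$, $B_n$ and the diagonal, and the coordinate symmetry of $\mu$ is what reduces the three diagonal equations (one per projection) to a single one.
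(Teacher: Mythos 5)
Your proof is correct and follows essentially the same strategy as the paper's. The paper directly parametrizes the candidate measure $\gamma$ by its diagonal values $a_k=\gamma(k,k,k)$ and asserts (``one can easily verify'') the closed-form expressions $\gamma=\mu\mp(a_1+\dots+a_n)$ on $A_n$, resp. $B_n$; your variables satisfy $x_n=-\sum_{k\le n}d_k$, $\alpha_n=c_n+x_n$, $\beta_n=c_n-x_n$, which are exactly those formulas, and your derivation of them from the six off-diagonal marginal equations is precisely the verification the paper omits. The final step is identical in substance: the paper uses nonnegativity on $A_n$ to get $\sum_{k\le n}a_k\le c_n\to0$ hence all $a_k=0$, while you phrase this as the squeeze $-c_n\le x_n\le 0$ forcing $x_n\equiv 0$; these are the same inequality in different clothes.
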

\begin{proof}
Let $\gamma \in \Pi(\mu_{ij})$. For $\{i, j\} \in \mathcal{I}_{3,2}$, the projection $\mu_{ij}$ is concentrated on the set $\{(n_i, n_j) \in \mathbb{N}^2 \colon |n_i - n_j| \le 1\}$, and therefore the transport plan $\gamma$ is concentrated on the set \[
S = \{(n_1, n_2, n_3) \in \mathbb{N}^3 \colon \max\{n_1, n_2, n_3\} - \min\{n_1, n_2, n_3\} \le 1\} = \bigsqcup_{k = 1}^\infty\left(\{(k, k, k)\} \sqcup A_k \sqcup B_k\right).\]

One can easily verify that $\gamma$ is uniquely defined by its values on the diagonal, and if we denote $a_k = \gamma(k, k, k)$, then we have
\begin{align}\label{eq:gamma_nonstrong_formula}
\begin{split}
\gamma(n_1, n_2, n_3) = \begin{cases}
\mu(n_1, n_2, n_3) - (a_1 + \dots + a_n) &\text{if $(n_1, n_2, n_3) \in A_n$ for some $n$,}\\
\mu(n_1, n_2, n_3) + (a_1 + \dots + a_n) &\text{if $(n_1, n_2, n_3) \in B_n$ for some $n$.}
\end{cases}
\end{split}
\end{align}
We have $\mu(n_1, n_2, n_3) = (\pi n)^{-2}$ for all $(n_1, n_2, n_3) \in A_n$, and therefore $a_1 + \dots + a_n \le (\pi n)^{-2}$ for all $n$. Thus, since all $a_n$ are nonnegative, we conclude that $\gamma(k, k, k) = a_k = 0$ for all $k$, and therefore $\gamma = \mu$ by equation \cref{eq:gamma_nonstrong_formula}.
\end{proof}

It follows from the previous lemma that $\mu$ is the unique optimal solution to the multistochastic problem with arbitrary bounded cost function. Next, we construct the cost function $c$ such that $\mu$ is not strongly $c$-monotone. The existence of this example demonstrates that we can not generalize the equivalence of optimality and strongly $c$-monotonicity to the multistochastic case.
\begin{theorem}
Let $\mu$ be the measure on $\mathbb{N}^3$ defined in equation \cref{eq:mu_nonstrong_definition}, and let $\mu_{ij} = \prj_{ij}(\mu)$. Consider the cost function $c \colon \mathbb{N}^3 \to \{0, 1\}$: $c(x) = 1$ if $x \in B_n$ for some $n$, and $c(x) = 0$ otherwise. Then there are no functions $\{f_{ij}\}$, $f_{ij} \colon \mathbb{N}^2 \to [-\infty, +\infty)$ such that \[f_{12}(n_1, n_2) + f_{13}(n_1, n_3) + f_{23}(n_2, n_3) \le c(n_1, n_2, n_3)\] for all $(n_1, n_2, n_3) \in \mathbb{N}^3$ and the equality holds $\mu$-a.e.
\end{theorem}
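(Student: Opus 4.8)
The plan is to reproduce, in a simpler form, the telescoping trick used in the proof of the preceding theorem (\cref{thm:unreachibility_of_supremum}), now playing the sets $A_n$ (where $c=0$) against the sets $B_n$ (where $c=1$). First I would assume, for contradiction, that such a tuple $\{f_{ij}\}$ exists, and set $F(n_1,n_2,n_3)=f_{12}(n_1,n_2)+f_{13}(n_1,n_3)+f_{23}(n_2,n_3)$. Because $\mu$ assigns strictly positive mass to every point of $\bigsqcup_n(A_n\sqcup B_n)$, a $\mu$-null set contains none of these points, so the hypothesis ``$F=c$ $\mu$-a.e.'' forces $F(x)=0$ for every $x\in A_n$ and $F(x)=1$ for every $x\in B_n$, for all $n$. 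Since each $f_{ij}$ is $[-\infty,+\infty)$-valued, the finiteness of these sums forces every summand to be finite at every pair of indices $(a,b)$ with $|a-b|\le1$; in particular $F(n,n,n)$ is a finite real for each $n$, and from $F\le c$ together with $c(n,n,n)=0$ we obtain $F(n,n,n)\le0$ for all $n$. Note that, unlike in \cref{thm:unreachibility_of_supremum}, no integrability of the $f_{ij}$ is needed here.

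The heart of the argument is the algebraic identity
\[
F(n+1,n+1,n+1)-F(n,n,n)=\sum_{x\in B_n}F(x)-\sum_{x\in A_n}F(x),
\]
valid for every $n$. I would verify it by expanding all nine values $F(\cdot)$ on the right in terms of the $f_{ij}$ and checking that, block by block ($f_{12}$, $f_{13}$, $f_{23}$ separately), the right-hand side collapses to $f_{ij}(n+1,n+1)-f_{ij}(n,n)$; summing the three blocks gives the left-hand side. This is exactly the identity exploited in \cref{thm:unreachibility_of_supremum}, written out with $A_n$ and $B_n$ as defined in the present section.

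Feeding $\sum_{x\in A_n}F(x)=0$ and $\sum_{x\in B_n}F(x)=3$ into this identity yields $F(n+1,n+1,n+1)=F(n,n,n)+3$, hence $F(n,n,n)=F(1,1,1)+3(n-1)$ for all $n$. Since $F(1,1,1)$ is a finite real, the right-hand side tends to $+\infty$; but $F(n,n,n)\le0$ for every $n$. This contradiction finishes the proof.

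The argument is essentially routine bookkeeping once the combinatorics of $A_n$ and $B_n$ is in place, so I do not expect a serious obstacle; the only point requiring a little care is the remark that equality of $F$ with $c$ on the positive-mass sets $A_n,B_n$ genuinely propagates finiteness to each $f_{ij}$ on the pairs $\{|a-b|\le1\}$, so that the telescoping identity is an identity of finite reals and the inequality $F(n,n,n)\le0$ is meaningful. Everything else is a direct, and in fact lighter, transcription of the reasoning already carried out for $A_n$ alone.
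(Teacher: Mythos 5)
Your proof is correct and hinges on exactly the telescoping identity the paper uses, comparing the values of $F$ on $A_n$ and $B_n$. The only departure is in the bookkeeping around $-\infty$: the paper runs the recurrence downward, concludes $F(n,n,n)=-\infty$ for every $n$, and only afterwards traces back to a specific finite equation (namely $F(1,1,2)=0$) that this violates; you instead observe upfront that the constraints $F|_{A_n}=0$, $F|_{B_n}=1$ already force every $f_{ij}(a,b)$ with $|a-b|\le 1$ to be finite, so $F(n,n,n)$ is a finite real bounded above by $0$, and then the identity $F(n+1,n+1,n+1)=F(n,n,n)+3$ drives $F(n,n,n)\to+\infty$, an immediate contradiction. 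This mildly streamlines the paper's final step but is otherwise the same argument.
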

\begin{proof}
Assuming the opposite, consider the following $(3, 2)$-function:
\begin{equation}
\label{f12f13f32}
F(n_1, n_2, n_3) = f_{12}(n_1, n_2) + f_{13}(n_1, n_3) + f_{23}(n_2, n_3).
\end{equation}

Since $\{f_{ij}\}$ satisfy the assumptions of the theorem, we have $F(n_1, n_2, n_3) = c(n_1, n_2, n_3)$ $\mu$-a.e. Hence, since $\mu(n_1, n_2, n_3) > 0$ for all $(n_1, n_2, n_3) \in A_n \sqcup B_n$, we get
\begin{align}\label{eq:F_nonstrong_on_AB}
\begin{split}
    &F(n + 1, n, n) = F(n, n + 1, n) = F(n, n, n + 1) = 0,\\
    &F(n, n + 1, n + 1) = F(n + 1, n, n + 1) = F(n + 1, n + 1, n) = 1
\end{split}
\end{align}
for all $n \in \mathbb{N}$.

Applying (\ref{f12f13f32}) one can easily verify the following equation:
\begin{align*}
    F(n, n, n) &+ F(n, n + 1, n + 1) + F(n + 1, n, n + 1) + F(n + 1, n + 1, n) \\
    &= F(n + 1, n + 1, n + 1) + F(n + 1, n, n) + F(n, n + 1, n) + F(n, n, n + 1).
\end{align*}
Combining this with equation \cref{eq:F_nonstrong_on_AB}, we get $F(n, n, n) + 3 = F(n + 1, n + 1, n + 1)$ for all $n$, and therefore the inequality \[
F(n, n, n) = F(n + k, n + k, n + k) - 3k \le c(n + k, n + k, n + k) - 3k \le 1 - 3k
\]
holds for all $n, k \in \mathbb{N}$. Thus, $F(n, n, n) = -\infty$ for all $n$. In particular, $F(1, 1, 1) = -\infty$, and therefore $f_{ij}(1, 1) = -\infty$ for some $\{i, j\} \in \mathcal{I}_{3, 2}$. Without loss of generality we may assume that $f_{12}(1, 1) = -\infty$. Then $F(1, 1, 2)$ is also equal to $-\infty$, and this contradicts equation \cref{eq:F_nonstrong_on_AB}.
\end{proof}
\section{Properties of the dual solution in \texorpdfstring{$(3,2)$}{(3,2)}-problem}
\subsection{Boundedness of the dual solution} In the classical Monge-Kantorovich problem for the bounded cost function $c(x, y)$ we can transform every solution to the dual problem to the bounded one, using Legendre transformation.
\begin{proposition}
Let $X$ and $Y$ be Polish spaces, let $\mu \in \mathcal{P}(X)$ and $\nu \in \mathcal{P}(Y)$, and let $c\colon X \times Y \to \mathbb{R}_+$ be a cost function. If $c$ is a bounded continuous cost function, then there exists a solution $(\varphi, \psi)$ to the related dual problem such that both $\varphi(x), \psi(y)$ lie between $-\norm{c}_{\infty}$ and $\norm{c}_{\infty}$ for all $x \in X$ and $y \in Y$.  
\end{proposition}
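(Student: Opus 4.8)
The plan is the standard $c$-transform (Legendre--Fenchel) regularization of an arbitrary optimal dual pair, followed by a normalization by an additive constant.

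\textbf{Step 1: an optimal pair exists.} Since $c$ is bounded and continuous, the classical Kantorovich duality for two marginals --- the case $n=2$ of \cref{existence_multimarginal_dual_solution} together with \cref{thm:noncompact_duality} --- provides integrable $\varphi_0\colon X\to[-\infty,\infty)$, $\psi_0\colon Y\to[-\infty,\infty)$ with $\varphi_0(x)+\psi_0(y)\le c(x,y)$ for all $x,y$ and $\int\varphi_0\,d\mu+\int\psi_0\,d\nu=\min_{\pi\in\Pi(\mu,\nu)}\int c\,d\pi$; if only the relaxed form is available, one first sets $\varphi_0=-\infty$ on a $\mu$-null set and $\psi_0=-\infty$ on a $\nu$-null set, as in the discussion following the definition of $\Psi_c(\mu_\alpha)$, without changing the dual value. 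Both $\varphi_0$ and $\psi_0$ are finite a.e., and each is bounded \emph{above}: fixing $y_1$ with $\psi_0(y_1)$ finite, $\varphi_0(x)\le c(x,y_1)-\psi_0(y_1)\le\|c\|_\infty-\psi_0(y_1)$ for all $x$, and symmetrically for $\psi_0$.

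\textbf{Step 2: $c$-transform.} For $h\colon X\to[-\infty,\infty)$ put $h^c(y)=\inf_{x\in X}\bigl(c(x,y)-h(x)\bigr)$, and symmetrically on $Y$. I will use two classical facts: if $h(x)+h'(y)\le c(x,y)$ then $h^c\ge h'$ and $(h,h^c)$ is again admissible; and the triple $c$-transform coincides with the single one. Hence, setting $\bar\psi=\varphi_0^{\,c}$ and $\bar\varphi=\bar\psi^{\,c}$, the pair $(\bar\varphi,\bar\psi)$ dominates $(\varphi_0,\psi_0)$ pointwise, is admissible, and its dual value is $\ge$ that of $(\varphi_0,\psi_0)$, so by weak duality it is again optimal; moreover $\bar\varphi$ and $\bar\psi$ are mutually $c$-conjugate, $\bar\varphi=\bar\psi^{\,c}$ and $\bar\psi=\bar\varphi^{\,c}$. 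Being infima of continuous functions they are upper semicontinuous, hence Borel; since $\varphi_0$ is bounded above and $0\le c\le\|c\|_\infty$, the function $\bar\psi(y)=\inf_x(c(x,y)-\varphi_0(x))$ is bounded below, and it is bounded above by $\|c\|_\infty-\varphi_0(x_0)$ at any finiteness point $x_0$ of $\varphi_0$; the same reasoning makes $\bar\varphi$ bounded. Thus $\bar\varphi,\bar\psi$ are bounded Borel, in particular integrable.

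\textbf{Step 3: normalization.} Set $t=\sup_x\bar\varphi(x)-\|c\|_\infty$ (finite by Step 2) and $\varphi=\bar\varphi-t$, $\psi=\bar\psi+t$. Since $\mu,\nu$ are probability measures, $(\varphi,\psi)$ is still admissible, optimal, and mutually $c$-conjugate: $\varphi=\psi^c$, $\psi=\varphi^c$. By the choice of $t$, $\sup_x\varphi(x)=\|c\|_\infty$, so $\varphi\le\|c\|_\infty$. Using $0\le c\le\|c\|_\infty$ and $\psi=\varphi^c$,
\[
-\|c\|_\infty=-\sup_x\varphi(x)\ \le\ \psi(y)=\inf_x\bigl(c(x,y)-\varphi(x)\bigr)\ \le\ \|c\|_\infty-\sup_x\varphi(x)=0 ,
\]
so $-\|c\|_\infty\le\psi\le0$. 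Finally, from $\varphi=\psi^c$ and $\psi\le 0$, $\varphi(x)=\inf_y\bigl(c(x,y)-\psi(y)\bigr)\ge-\sup_y\psi(y)\ge0\ge-\|c\|_\infty$. Hence $-\|c\|_\infty\le\varphi,\psi\le\|c\|_\infty$, and $(\varphi,\psi)$ is the claimed solution.

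The computation itself is short; the only point requiring care --- the \emph{main obstacle} --- is the bookkeeping around the $c$-transform: checking that it produces genuinely real-valued, integrable functions (handled by the a priori ``bounded above'' observation in Step 1 together with $0\le c\le\|c\|_\infty$) and that \emph{optimality}, not merely admissibility, is preserved (handled by the monotonicity of the dual functional under the $c$-transform together with weak duality). If the preceding duality theorem is invoked only in its relaxed form, one must also pass through the $-\infty$-redefinition on null sets, but this is routine.
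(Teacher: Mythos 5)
Your proof is correct and uses essentially the same method as the paper: normalize by an additive constant and regularize an optimal dual pair by iterated $c$-transforms, then read off the bounds from $0\le c\le\norm{c}_\infty$. The only cosmetic difference is the order of operations and the choice of normalizing constant (the paper shifts first at a point of equality supplied by a primal optimizer and then takes $c$-transforms, whereas you take $c$-transforms first and then shift by $\sup\bar\varphi - \norm{c}_\infty$); both are standard variants of the same argument.
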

\begin{proof}
The proof is an adaptation of the argument from the proof of \cite[Theorem 1.3]{Villani}. Let $(\varphi, \psi)$ be a solution to the dual problem provided by \cite[Theorem 2.4.3]{RR}. If $\pi$ is a solution to the related primal problem, then $\varphi(x) + \psi(y) = c(x, y)$ $\pi$-a.e. In particular, there exists a point $(x_0, y_0) \in X \times Y$ such that $\varphi(x_0) + \psi(y_0) = c(x_0, y_0) \ge 0$. For any real number $s$ the pair of functions $(\varphi - s, \psi + s)$ is also a solution to the dual problem. By a proper choice of $s$, we can ensure 
\[
\varphi(x_0) \ge 0, \;\psi(y_0) \ge 0.
\]

Since $\varphi(x) + \psi(y) \le c(x, y)$, we have $\varphi(x) \le c(x, y_0) - \phi(y_0) \le c(x, y_0)$ for all $x$, and $\psi(y) \le c(x_0, y) - \varphi(x_0) \le c(x_0, y)$ for all $y$. Consider the Legendre transformation of the function $\varphi$:
\[
\overline{\varphi}(x) = \inf_{y \in Y}(c(x, y) - \psi(y)).
\]
By construction, $\overline{\varphi}(x) + \psi(y) \le c(x, y)$ for all $x \in X$ and for all $y \in Y$. From the inequality $\varphi(x) \le c(x, y) - \psi(y)$ we see that $\overline{\varphi}(x) \ge \varphi(x)$ for all $x$. Since $\overline{\varphi}(x) \le c(x, y) - \psi(y)$ for all $y$, we have \begin{align*}
\overline{\varphi}(x) &\le c(x, y_0) - \psi(y_0) \le \norm{c}_{\infty},
\intertext{and it follows from the inequality $\psi(y) \le c(x_0, y)$ that}
\overline{\varphi}(x) &\ge \inf_{y \in Y}(c(x, y) - c(x_0, y)) \ge -\norm{c}_{\infty}.
\end{align*}
Hence, $\overline{\varphi}$ is an integrable function; since $\overline{\varphi}(x) \ge \varphi(x)$ for all $x$, we have \[
\int_X \overline{\varphi}(x)\,\mu(dx) + \int_Y\psi(y)\,\nu(dy) \ge \int_X \varphi(x)\,\mu(dx) + \int_Y\psi(y)\,\nu(dy),\]
and therefore $(\overline{\varphi}, \psi)$ is a solution to the dual problem.

Finally, define
\[
\overline{\psi}(y) = \inf_{x \in X}(c(x, y) - \overline{\varphi}(x)).
\]
By the same arguments we conclude that $(\overline{\varphi}, \overline{\psi})$ is a solution to the dual problem and $-\norm{c}_{\infty} \le \overline{\psi}(y) \le \norm{c}_{\infty}$ for all $y \in Y$.
\end{proof}
We want to generalize this observation to the multistochastic case.
\begin{definition}
Given finite measures $\mu$ and $\nu$ on the space $X$, we say that $\mu \ll_B \nu$ if there exists a positive real $M$ such that $\mu \le M \cdot \nu$.
\end{definition}
The following properties trivially follow from the
definition.
\begin{proposition}
Let $\mu$ and $\nu$ be finite measures on the space $X$. Suppose that $\mu \ll_B \nu$. Then
\begin{assertions}
    \item  $\mu$ is absolutely continuous with respect to $\nu$; 
    \item \label{st:l1_inclusion} $L^1(X, \mu) \supseteq L^1(X, \nu)$;
    \item \label{st:llB_prj} if $X = X_1 \times \dots \times X_n$, then $\prj_{\alpha}\mu \ll_B \prj_{\alpha}\nu$ for all $\alpha \in \mathcal{I}_n$.
\end{assertions}
\end{proposition}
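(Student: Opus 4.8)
The plan is to simply unwind the definition of $\ll_B$: by hypothesis there is a constant $M>0$ with $\mu \le M\cdot\nu$ as measures, and all three assertions fall out of this single pointwise inequality, so no substantial machinery is needed.

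For (i) I would take a measurable set $A$ with $\nu(A)=0$; then $0\le \mu(A)\le M\nu(A)=0$, so $\mu(A)=0$, which is exactly absolute continuity $\mu\ll\nu$. For (ii) I would first upgrade the set inequality $\mu\le M\nu$ to the integral inequality $\int_X g\,d\mu \le M\int_X g\,d\nu$ valid for every nonnegative measurable $g$; this is the standard passage from indicators to simple functions and then to arbitrary nonnegative measurable functions via the monotone convergence theorem. Applying it to $g=|f|$ for an arbitrary $f\in L^1(X,\nu)$ gives $\int_X|f|\,d\mu\le M\int_X|f|\,d\nu<\infty$, hence $f\in L^1(X,\mu)$, i.e. $L^1(X,\mu)\supseteq L^1(X,\nu)$. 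For (iii), given $\alpha\in\mathcal{I}_n$ and a measurable set $B\subseteq X_\alpha$, I would write $\prj_\alpha\mu(B)=\mu(\prj_\alpha^{-1}(B))\le M\,\nu(\prj_\alpha^{-1}(B))=M\,\prj_\alpha\nu(B)$, so $\prj_\alpha\mu\le M\cdot\prj_\alpha\nu$ with the very same constant $M$, which is precisely $\prj_\alpha\mu\ll_B\prj_\alpha\nu$.

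There is essentially no obstacle; the only points deserving a word of care are that a single constant $M$ serves for all three parts (in particular for every $\alpha$ in (iii), since it is inherited unchanged under taking preimages of projections), and that in (ii) one must invoke the monotone-convergence argument to move from the inequality between the measures to the inequality between integrals rather than asserting it directly.
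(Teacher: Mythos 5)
Your proof is correct and is exactly the elaboration the paper intends when it says these properties ``trivially follow from the definition'': part (i) from evaluating the inequality $\mu \le M\nu$ on $\nu$-null sets, part (ii) from upgrading it to integrals of nonnegative functions by monotone convergence, and part (iii) from evaluating it on preimages $\prj_\alpha^{-1}(B)$, preserving the same constant $M$. Nothing more is needed.
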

\begin{definition}
Let $X_1, \dots, X_n$ be Polish spaces, let $\pi \in \mathcal{P}(X)$, and let $\nu_\alpha$ be a probability measure on $X_\alpha$ for some $\alpha \in \mathcal{I}_n$ such that $\nu_\alpha \ll_B \pi_\alpha$. Let $\rho$ be a density function of $\nu_\alpha$ with respect to $\pi_\alpha$. Then denote by $\Up_\alpha(\nu_\alpha, \pi)$ the measure $\rho^*(x) \cdot \pi$, where $\rho^*(x) = \rho(x_\alpha)$ for all $x \in X$.
\end{definition}
\begin{proposition}
Let $X_1, \dots, X_n$ be Polish spaces, let $\pi \in \mathcal{P}(X)$, and let $\nu_\alpha$ be a probability measure on $X_\alpha$ for some $\alpha \in \mathcal{I}_n$ such that $\nu_\alpha \ll_B \pi_\alpha$. Then
\begin{assertions}
    \item \label{st:up_well_defined} the measure $\Up_\alpha(\nu_\alpha, \pi)$ is well-defined;
    \item \label{st:up_bounded} $\Up_\alpha(\nu_\alpha, \pi) \ll_B \pi$;
    \item \label{st:up_sup_prj} if $\beta \supseteq \alpha$, then $\prj_\beta(\Up_\alpha(\nu_\alpha, \pi)) = \Up_\alpha(\nu_\alpha, \pi_\beta)$;
    \item \label{st:up_sub_prj} if $\beta \subseteq \alpha$, then $\prj_\beta(\Up_\alpha(\nu_\alpha, \pi)) = \prj_\beta(\nu_\alpha)$;
    \item \label{st:up_decomposition} if $\pi = \mu_\alpha \otimes \mu_\beta$ for $\mu_\alpha \in \mathcal{P}(X_\alpha)$ and $\mu_\beta \in \mathcal{P}(X_\beta)$, then $\Up_\alpha(\nu_\alpha, \pi) = \nu_\alpha \otimes \mu_\beta$.
\end{assertions}
\end{proposition}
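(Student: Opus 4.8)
The plan is to read off all five assertions directly from the definition $\Up_\alpha(\nu_\alpha,\pi)=\rho^*\cdot\pi$, using only two elementary facts: that $\rho$ is $\pi_\alpha$-almost everywhere bounded, and that for any measurable $h$ on a factor space one has the change-of-variables identity $\int_X (h\circ\prj_\gamma)\,d\pi=\int_{X_\gamma} h\,d\pi_\gamma$.

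First I would fix $M$ with $\nu_\alpha\le M\pi_\alpha$ (possible since $\nu_\alpha\ll_B\pi_\alpha$), so that the density $\rho=d\nu_\alpha/d\pi_\alpha$ exists and satisfies $0\le\rho\le M$ $\pi_\alpha$-a.e., and its lift $\rho^*=\rho\circ\prj_\alpha$ is measurable with $0\le\rho^*\le M$ $\pi$-a.e. For \cref{st:up_well_defined} the point to check is independence of the chosen version of $\rho$: if $\rho_1=\rho_2$ holds $\pi_\alpha$-a.e., then $\{\rho_1\ne\rho_2\}$ is $\pi_\alpha$-null, hence $\prj_\alpha^{-1}\{\rho_1\ne\rho_2\}$ is $\pi$-null, so $\rho_1^*=\rho_2^*$ $\pi$-a.e.\ and $\rho^*\cdot\pi$ is unambiguous; moreover $\int_X\rho^*\,d\pi=\int_{X_\alpha}\rho\,d\pi_\alpha=\nu_\alpha(X_\alpha)=1$, so it is a probability measure. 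The same bound $\rho^*\le M$ $\pi$-a.e.\ gives $\rho^*\cdot\pi\le M\pi$, which is \cref{st:up_bounded}.

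For the three projection identities I would in each case integrate a bounded measurable test function against the pushforward and use that $\rho^*$ depends only on the $\alpha$-coordinates. For \cref{st:up_sup_prj} (case $\alpha\subseteq\beta$), projecting $X\to X_\beta\to X_\alpha$ coincides with $X\to X_\alpha$, so $(\pi_\beta)_\alpha=\pi_\alpha$, the density of $\nu_\alpha$ with respect to $(\pi_\beta)_\alpha$ is again $\rho$, and writing $\rho^\beta=\rho\circ\prj^\beta_\alpha$ on $X_\beta$ one gets, for bounded measurable $g$ on $X_\beta$, $\int g\,d\prj_\beta(\rho^*\pi)=\int_X g(x_\beta)\rho(x_\alpha)\,d\pi=\int_{X_\beta} g\,\rho^\beta\,d\pi_\beta=\int g\,d\Up_\alpha(\nu_\alpha,\pi_\beta)$. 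For \cref{st:up_sub_prj} (case $\beta\subseteq\alpha$), the integrand $x\mapsto g(x_\beta)\rho(x_\alpha)$ factors through $\prj_\alpha$, so $\int g\,d\prj_\beta(\rho^*\pi)=\int_{X_\alpha}(g\circ\prj^\alpha_\beta)\,\rho\,d\pi_\alpha=\int_{X_\alpha}(g\circ\prj^\alpha_\beta)\,d\nu_\alpha=\int_{X_\beta} g\,d\prj_\beta(\nu_\alpha)$. For \cref{st:up_decomposition}, necessarily $\beta=\boldsymbol{n}\setminus\alpha$; then $\pi_\alpha=\mu_\alpha$, so $\rho=d\nu_\alpha/d\mu_\alpha$, and for bounded measurable $g$ on $X=X_\alpha\times X_\beta$ Fubini yields $\int g\,d(\rho^*\pi)=\int_{X_\alpha}\rho(x_\alpha)\bigl(\int_{X_\beta}g(x_\alpha,x_\beta)\,d\mu_\beta\bigr)\,d\mu_\alpha=\int_{X_\alpha}\bigl(\int_{X_\beta}g\,d\mu_\beta\bigr)\,d\nu_\alpha=\int g\,d(\nu_\alpha\otimes\mu_\beta)$, so $\rho^*\cdot\pi=\nu_\alpha\otimes\mu_\beta$.

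I do not expect a genuine obstacle here: the whole argument is a chain of routine change-of-variables computations. The only places calling for a little care will be the passage from $\pi_\alpha$-almost-everywhere to $\pi$-almost-everywhere statements via pullback of null sets along $\prj_\alpha$ in \cref{st:up_well_defined}, and the bookkeeping of which index set is the ambient one versus the sub- or super-index set in the three pushforward formulas.
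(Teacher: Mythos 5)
Your proof is correct and follows essentially the same route as the paper: bound the density by $M$, pull the $\pi_\alpha$-null set back along $\prj_\alpha$ for well-definedness and boundedness, and then use that $\rho^*$ factors through $\prj_\alpha$ to push forward. The only cosmetic difference is that you verify the three projection identities by integrating bounded test functions, whereas the paper states the corresponding density relations (e.g.\ $\prj_\beta(\rho(x_\alpha)\cdot\pi)=\rho(x_\alpha)\cdot\pi_\beta$) directly — the underlying computation is the same.
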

\begin{proof}
\Cref{st:up_well_defined} is trivial: if $\nu_\alpha = \rho_1 \cdot \pi_\alpha = \rho_2 \cdot \pi_\alpha$, then $\rho_1(x_\alpha) = \rho_2(x_\alpha)$ for $\pi_\alpha$-a.e. $x_\alpha \in X_\alpha$, and therefore $\rho_1^*(x) = \rho_2^*(x)$ for $\pi$-a.e. $x \in X$. In addition, since $\nu_\alpha \ll_B \pi_\alpha$, there exists a positive real $M$ such that $\rho(x_\alpha) \le M$ for $\pi_\alpha$-a.e. $x_\alpha \in X_\alpha$, and therefore $\rho^*(x) \le M$ for $\pi$-a.e. $x \in X$. Hence, $\rho^* \in L^1(X, \pi)$ and the measure $\rho^* \cdot \pi$ is well-defined. Furthermore, since $\rho^* \le M$ $\pi$-a.e, we have $\Up(\nu_\alpha, \pi) \le M \cdot \pi$; thus, $\Up(\nu_\alpha, \pi) \ll_B \pi$ and \cref{st:up_bounded} holds.

We have $\Up(\nu_\alpha, \pi) = \rho(x_\alpha) \cdot \pi$. The function $\rho$ does not depend on coordinates $x_i$ for all $i \not\in \alpha$. Hence, if $\beta \supseteq \alpha$ and $\beta \in \mathcal{I}_n$, then $\prj_\beta(\rho(x_\alpha) \cdot \pi) = \rho(x_\alpha) \cdot \pi_\beta$. Since $\prj_\alpha(\pi_\beta) = \pi_\alpha$, we conclude that $\Up_\alpha(\nu_\alpha, \pi_\beta) = \rho(x_\alpha) \cdot \pi_\beta$. Thus, if $\beta \supseteq \alpha$, then $\prj_\beta(\Up_\alpha(\nu_\alpha, \pi)) = \Up_\alpha(\nu_\alpha, \prj)$, and this implies \cref{st:up_sup_prj}. In addition, we have $\prj_\alpha(\rho(x_\alpha) \cdot \pi) = \rho(x_\alpha) \cdot \pi_\alpha = \nu_\alpha$, and therefore $\prj_{\alpha}(\Up_\alpha(\nu_\alpha, \pi)) = \nu_\alpha$. Hence, if $\beta \subseteq \alpha$, then $\prj_\beta(\Up_\alpha(\nu_\alpha, \pi)) = \prj_\beta \circ \prj_\alpha(\Up_\alpha(\nu_\alpha, \pi)) = \prj_\beta(\nu_\alpha)$, and this implies \cref{st:up_sub_prj}.

Finally, suppose that $\pi = \mu_\alpha \otimes \mu_\beta$. Then $\pi_\alpha = \mu_\alpha$, and therefore $\nu_\alpha = \rho \cdot \pi_\alpha = \rho \cdot \mu_\alpha$. Thus, $\nu_\alpha \otimes \mu_\beta = (\rho(x_\alpha) \cdot \mu_\alpha) \otimes \mu_\beta = \rho(x_\alpha) \cdot \pi = \Up_\alpha(\nu_\alpha, \pi)$, and this implies \cref{st:up_decomposition}.
\end{proof}

Let $X_1$, $X_2$, $X_3$ be Polish spaces, let $\mu_i \in \mathcal{P}(X_i)$ for $1 \le i \le i$, and let $\mu_{ij} = \mu_i \otimes \mu_j$ for all $\{i, j\} \in \mathcal{I}_{3,2}$. Let $c \colon X \to \mathbb{R}_+$ be a nonnegative bounded continuous cost function. The space $\Pi(\mu_{ij})$ is non-empty, since $\mu_1 \otimes \mu_2 \otimes \mu_3 \in \Pi(\mu_{ij})$, and therefore by \cref{thm:noncompact_duality} there is no duality gap. In addition, since the family of measures $\{\mu_{ij}\}$ is reducible, by \cref{thm:existence_multistochastic_dual_solution} there exists a solution to the related dual problem. Thus, there exists a solution $\pi \in \Pi(\mu_{ij})$ to the primal problem and a solution $\{f_{ij}\}$, $f_{ij} \in L^1(X_{ij}, \mu_{ij})$ to the dual problem, and
\[
\int_{X}c\,d\pi = \int_{X_{12}}f_{12}\,d\mu_{12} + \int_{X_{13}}f_{13}\,d\mu_{13} + \int_{X_{23}}f_{23}\,d\mu_{23}.
\]
\begin{lemma}\label{lem:f_ij_integrability}
Let $\widetilde{\pi}$ be a probability measure on $X$. Suppose that there exists $\gamma \in \Pi(\mu_{ij})$ such that $\widetilde{\pi} \ll_B \gamma$. Then extensions of all $f_{12}$, $f_{13}$ and $f_{23}$ to the space $X$ are integrable with respect to the measure $\widetilde{\pi}$.
\end{lemma}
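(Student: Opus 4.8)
The plan is to deduce the $\widetilde{\pi}$-integrability of each extension $\widetilde{f}_{ij}\colon x \mapsto f_{ij}(x_i, x_j)$ directly from the already established fact that $f_{ij} \in L^1(X_{ij}, \mu_{ij})$, by pushing the bounded-domination relation $\ll_B$ down to the two-dimensional projections. This is a short computation that uses only the calculus of $\ll_B$ developed above, and in particular it does not use the dual feasibility of the tuple $\{f_{ij}\}$.

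First I would invoke the hypothesis $\widetilde{\pi} \ll_B \gamma$ with $\gamma \in \Pi(\mu_{ij})$ and apply \cref{st:llB_prj} with $\alpha = \{i, j\}$ to obtain $\prj_{ij}\widetilde{\pi} \ll_B \prj_{ij}\gamma$. Since $\gamma$ is a uniting measure, $\prj_{ij}\gamma = \mu_{ij}$, and hence $\prj_{ij}\widetilde{\pi} \ll_B \mu_{ij}$. By \cref{st:l1_inclusion} this gives the inclusion $L^1(X_{ij}, \mu_{ij}) \subseteq L^1(X_{ij}, \prj_{ij}\widetilde{\pi})$, so from $f_{ij} \in L^1(X_{ij}, \mu_{ij})$ we get $f_{ij} \in L^1(X_{ij}, \prj_{ij}\widetilde{\pi})$.

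It then remains to transfer this to $X$ via the change-of-variables formula for images of measures: since $\widetilde{f}_{ij} = f_{ij}\circ \prj_{ij}$ and $(\prj_{ij})_*\widetilde{\pi} = \prj_{ij}\widetilde{\pi}$, one has
\[
\int_X \bigl|\widetilde{f}_{ij}(x)\bigr|\,\widetilde{\pi}(dx) = \int_{X_{ij}} \bigl|f_{ij}(x_i, x_j)\bigr|\,\bigl(\prj_{ij}\widetilde{\pi}\bigr)(d(x_i, x_j)) < \infty,
\]
so $\widetilde{f}_{ij} \in L^1(X, \widetilde{\pi})$; applying this to the three pairs $\{1,2\}$, $\{1,3\}$, $\{2,3\}$ completes the argument. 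The only point of care is that $f_{ij}$ might take the value $-\infty$ on a $\mu_{ij}$-null set; but $\prj_{ij}\widetilde{\pi} \ll_B \mu_{ij}$ makes $\prj_{ij}\widetilde{\pi}$ absolutely continuous with respect to $\mu_{ij}$, so that set is also $\prj_{ij}\widetilde{\pi}$-null and $\widetilde{f}_{ij}$ is $\widetilde{\pi}$-a.e.\ finite, which legitimizes the displayed identity. I do not expect any serious obstacle in this lemma; it is essentially a bookkeeping consequence of the $\ll_B$ machinery.
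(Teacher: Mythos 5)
Your argument is the same as the paper's: push $\widetilde{\pi} \ll_B \gamma$ through $\prj_{ij}$ using \cref{st:llB_prj}, identify $\prj_{ij}\gamma = \mu_{ij}$, and invoke \cref{st:l1_inclusion} to get $f_{ij} \in L^1(X_{ij}, \prj_{ij}\widetilde{\pi})$, which is equivalent to $\widetilde{\pi}$-integrability of the extension. The change-of-variables step and the remark about possible $-\infty$ values are correct but routine and are left implicit in the paper.
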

\begin{proof}
The extension of $f_{ij}$ is integrable with respect to $\widetilde{\pi}$ if and only if $f_{ij} \in L^1(X_{ij}, \prj_{ij}(\widetilde{\pi}))$. Since $\widetilde{\pi} \ll_B \gamma$, by \cref{st:llB_prj} we have $\prj_{ij}(\widetilde{\pi}) \ll_B \prj_{ij}(\gamma) = \mu_{ij}$, and therefore by \cref{st:l1_inclusion} we conclude that $L^1(X_{ij}, \prj_{ij}(\widetilde{\pi})) \supseteq L^1(X_{ij}, \mu_{ij}) \owns f_{ij}$.
\end{proof}
Denote $F(x_1, x_2, x_3) = f_{12}(x_1, x_2) + f_{13}(x_1, x_3) + f_{23}(x_2, x_3)$.
\begin{lemma}
Let $\widetilde{\pi}$ be a probability measure on $X$. Suppose that there exists $\gamma \in \Pi(\mu_{ij})$ such that $\widetilde{\pi} \ll_B \gamma$. Then
\begin{assertions}
    \item \label{st:f_ij_integrability} the function $F$ and the extensions of all $f_{12}$, $f_{13}$ and $f_{23}$ to the space $X$ are integrable with respect to the measure $\widetilde{\pi}$;
    \item \label{st:int_F_bounded_from_above} $\displaystyle\int_X F\,d\widetilde{\pi} \le \norm{c}_{\infty}$;
    \item \label{st:int_F_bounded_from_below} if $\widetilde{\pi} \ll_B \pi$, then $\displaystyle\int_X F\,d\widetilde{\pi} \ge 0$.
\end{assertions}
\end{lemma}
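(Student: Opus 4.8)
The plan is to prove the three assertions in turn, each relying on the previous one, using only the $\ll_B$-calculus assembled just above and the no-duality-gap identity displayed immediately before \cref{lem:f_ij_integrability}. For \cref{st:f_ij_integrability} there is essentially nothing to do: since $\widetilde{\pi} \ll_B \gamma$ for some $\gamma \in \Pi(\mu_{ij})$, \cref{lem:f_ij_integrability} already gives that the extensions of $f_{12}$, $f_{13}$, $f_{23}$ to $X$ lie in $L^1(\widetilde{\pi})$, and hence so does their pointwise sum $F$.

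For \cref{st:int_F_bounded_from_above} the key input is that the dual solution satisfies the constraint $F(x) = f_{12}(x_1,x_2) + f_{13}(x_1,x_3) + f_{23}(x_2,x_3) \le c(x)$ for all $x \in X$ (this is how $\{f_{ij}\}$ is produced, via the general duality theorem; if one only has it outside a zero $(n,k)$-thickness set, that set is $\gamma$-null because $\gamma$ is uniting, hence $\widetilde{\pi}$-null since $\widetilde{\pi} \ll \gamma$). Using $F \in L^1(\widetilde{\pi})$ from the first part, $c \le \norm{c}_{\infty}$, and $\widetilde{\pi}(X) = 1$, I then get $\int_X F\,d\widetilde{\pi} \le \int_X c\,d\widetilde{\pi} \le \norm{c}_{\infty}$.

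For \cref{st:int_F_bounded_from_below} — the substantive claim — the strategy is complementary slackness with respect to the optimal pair $(\pi,\{f_{ij}\})$. Applying \cref{st:f_ij_integrability} with $\widetilde{\pi} = \pi$ and $\gamma = \pi$ shows $F \in L^1(\pi)$, so the integral of the sum may be split termwise and, since $\prj_{ij}(\pi) = \mu_{ij}$, one has $\int_X F\,d\pi = \sum_{ij}\int_{X_{ij}} f_{ij}\,d\mu_{ij} = \int_X c\,d\pi$ by the no-duality-gap identity. Together with $F \le c$ this forces $\int_X (c-F)\,d\pi = 0$ with $c - F \ge 0$, hence $F = c$ $\pi$-a.e., so $F \ge 0$ $\pi$-a.e. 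Finally, the extra hypothesis $\widetilde{\pi} \ll_B \pi$ gives in particular $\widetilde{\pi} \ll \pi$, so the set $\{F < 0\}$ is $\widetilde{\pi}$-null too, whence $\int_X F\,d\widetilde{\pi} \ge 0$.

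The only step requiring care is the legitimacy of writing $\int_X F\,d\pi = \sum_{ij}\int_{X_{ij}} f_{ij}\,d\mu_{ij}$: this needs genuine $\pi$-integrability of $F$ rather than merely the existence of $\int_X F\,d\pi$ in $[-\infty,\norm{c}_{\infty}]$, which is exactly why \cref{st:f_ij_integrability} is proved first and reused with $\widetilde{\pi} = \pi$. After that, the passage from "$c - F \ge 0$ and $\int (c-F)\,d\pi = 0$" to "$F = c$ $\pi$-a.e." is standard, and transporting this a.e.\ statement from $\pi$ to $\widetilde{\pi}$ uses only $\widetilde{\pi} \ll \pi$, the weakest consequence of $\widetilde{\pi} \ll_B \pi$. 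I expect no real obstacle here; the argument is bookkeeping around the duality identity and the $\ll_B$-lemmas of the preceding subsection.
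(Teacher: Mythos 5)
Your proof is correct and follows essentially the same route as the paper's: cite the preceding integrability lemma for (i), use $F \le c$ pointwise together with boundedness of $c$ for (ii), and for (iii) apply (i) to $\pi$ itself, split the integral termwise via $\prj_{ij}(\pi)=\mu_{ij}$, invoke the no-duality-gap identity to obtain $F=c$ $\pi$-a.e., and transfer this a.e.\ equality to $\widetilde{\pi}$ via $\widetilde{\pi}\ll\pi$. The parenthetical remark handling the relaxed (thickness-set) variant of the dual constraint is a minor extra care beyond what the paper states but does not change the argument.
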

\begin{proof}
\Cref{st:f_ij_integrability} trivially follows from \cref{lem:f_ij_integrability}. Since $\{f_{ij}\}$ is a solution to the dual problem, we have $F(x_1, x_2, x_3) \le c(x_1, x_2, x_3)$ for all $x \in X$. In particular,
\[
\int_X F\,d\widetilde{\pi} \le \int_X c\,d\widetilde{\pi} \le \norm{c}_{\infty},
\]
and this implies \cref{st:int_F_bounded_from_above}.

Since $\prj_{ij}(\pi) = \mu_{ij}$, by \cref{st:f_ij_integrability} the function $F \in L^1(X, \pi)$ and
\begin{align*}
\int_X F\,d\pi &= \int_{X_{12}}f_{12}\,d\pi + \int_{X_{13}}f_{13}\,d\pi + \int_{X_{23}}f_{23}\,d\pi\\
&= \int_{X_{12}}f_{12}\,d\mu_{12} + \int_{X_{13}}f_{13}\,d\mu_{13} + \int_{X_{23}}f_{23}\,d\mu_{23} = \int_X c\,d\pi.
\end{align*}
Since in addition $F(x_1, x_2, x_3) \le c(x_1, x_2, x_3)$ for all $x \in X$, we conclude that $F(x_1, x_2, x_3) = c(x_1, x_2, x_3)$ for $\pi$-a.e. $x \in X$. Thus, if $\widetilde{\pi} \ll_B \pi$, then $F(x_1, x_2, x_3) = c(x_1, x_2, x_3)$ $\widetilde{\pi}$-a.e., and therefore
\[
\int_X F\,d\widetilde{\pi} = \int_X c\,d\widetilde{\pi} \ge 0
\]
since $c \ge 0$. This implies \cref{st:int_F_bounded_from_below}.
\end{proof}
\begin{lemma} \label{lem:int_F_bounded_on_plane}
Let $(i, j, k)$ be a permutation of indices $(1, 2, 3)$. Let $\nu_i$ be a probability measure on $X_i$ such that $\nu_i \ll_B \mu_i$. Then $F \in L^1(X, \nu_i \otimes \mu_j \otimes \mu_k)$ and
\[
\int_X F\,d(\nu_i \otimes \mu_j \otimes \mu_k) \ge \int_X F\,d\pi - \norm{c}_{\infty}.
\]
\end{lemma}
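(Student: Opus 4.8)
The plan is to reduce the claimed inequality to a one–dimensional statement about the ``partial averages'' of $f_{12}$ and $f_{13}$, and then to prove that statement by testing the dual solution against a single auxiliary measure obtained by lifting $\nu_i$ along the primal optimizer $\pi$.

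By symmetry assume $(i,j,k)=(1,2,3)$ and set $\rho=d\nu_1/d\mu_1$, which is bounded because $\nu_1\ll_B\mu_1$. Integrability of $F$ with respect to $\nu_1\otimes\mu_2\otimes\mu_3$ follows from \cref{st:f_ij_integrability} applied with $\widetilde\pi=\nu_1\otimes\mu_2\otimes\mu_3\ll_B\mu_1\otimes\mu_2\otimes\mu_3\in\Pi(\mu_{ij})$. Since $\mu_{12}=\mu_1\otimes\mu_2$, $\mu_{13}=\mu_1\otimes\mu_3$, and $\mu_{23}=\mu_2\otimes\mu_3$ is the common $X_{23}$–projection of $\nu_1\otimes\mu_2\otimes\mu_3$ and of $\pi$, expanding $F=f_{12}+f_{13}+f_{23}$ gives
\[
\int_X F\,d(\nu_1\otimes\mu_2\otimes\mu_3)-\int_X F\,d\pi
=\int f_{12}\,d\big((\nu_1-\mu_1)\otimes\mu_2\big)+\int f_{13}\,d\big((\nu_1-\mu_1)\otimes\mu_3\big),
\]
so it remains to bound the right–hand side below by $-\norm{c}_{\infty}$.

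To this end I would test the dual functions against $\eta:=\Up_1(\nu_1,\pi)$. By \cref{st:up_bounded} we have $\eta\ll_B\pi$, hence $\int_X F\,d\eta\ge0$ by \cref{st:int_F_bounded_from_below}; and by \cref{st:up_sup_prj,st:up_decomposition} its projections are $\prj_{12}\eta=\nu_1\otimes\mu_2$ and $\prj_{13}\eta=\nu_1\otimes\mu_3$. The key point is that although $\lambda:=\prj_{23}\eta$ need not equal $\mu_{23}$, it automatically has the correct one–dimensional marginals $\prj_2\lambda=\mu_2$ and $\prj_3\lambda=\mu_3$ — these are already the marginals of $\nu_1\otimes\mu_2$ and of $\nu_1\otimes\mu_3$ — and $\lambda\ll_B\mu_{23}$ by \cref{st:llB_prj}. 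Now dual feasibility $f_{12}(x_1,x_2)+f_{13}(x_1,x_3)+f_{23}(x_2,x_3)\le c(x)\le\norm{c}_{\infty}$, integrated in $x_1$ against $\mu_1$, yields
\[
f_{23}(x_2,x_3)\le\norm{c}_{\infty}-p(x_2)-q(x_3),\qquad
p(x_2):=\int_{X_1}f_{12}\,d\mu_1,\quad q(x_3):=\int_{X_1}f_{13}\,d\mu_1,
\]
for $\mu_2\otimes\mu_3$–a.e.\ $(x_2,x_3)$; integrating this against $\lambda$ and using $\prj_2\lambda=\mu_2$, $\prj_3\lambda=\mu_3$ gives $\int f_{23}\,d\lambda\le\norm{c}_{\infty}-\int f_{12}\,d\mu_{12}-\int f_{13}\,d\mu_{13}$. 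Finally, expanding $0\le\int_X F\,d\eta=\int f_{12}\,d(\nu_1\otimes\mu_2)+\int f_{13}\,d(\nu_1\otimes\mu_3)+\int f_{23}\,d\lambda$ and inserting this bound produces
\[
\int f_{12}\,d(\nu_1\otimes\mu_2)+\int f_{13}\,d(\nu_1\otimes\mu_3)\ge\int f_{12}\,d\mu_{12}+\int f_{13}\,d\mu_{13}-\norm{c}_{\infty},
\]
which is exactly the required estimate for the right–hand side of the displayed identity.

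The step I expect to demand the most care is the passage from dual feasibility to the bound on $\int f_{23}\,d\lambda$: one must use Fubini (and $f_{ij}\in L^1(\mu_{ij})$) to know that $f_{12}(\cdot,x_2)$ and $f_{13}(\cdot,x_3)$ lie in $L^1(\mu_1)$ for $\mu_2\otimes\mu_3$–a.e.\ $(x_2,x_3)$, so that averaging the pointwise inequality in $x_1$ is legitimate and the resulting inequality holds $\lambda$–a.e.\ (here $\lambda\ll_B\mu_{23}$ guarantees that $f_{23},p,q$ are $\lambda$–integrable and that a $\mu_{23}$–null exceptional set is also $\lambda$–null). The remaining facts — finiteness of every integral in sight and the inequality $\int_X F\,d\eta\ge0$ — are supplied by $\eta\ll_B\pi$ via \cref{st:f_ij_integrability,st:int_F_bounded_from_below}, the latter being where optimality of $(\pi,\{f_{ij}\})$ (i.e.\ $F=c$ $\pi$–a.e.) enters.
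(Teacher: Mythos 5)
Your proof is correct and follows essentially the same route as the paper: both rest on lifting $\nu_1$ along the primal optimizer to $\Up_1(\nu_1,\pi)$, using $\int F\,d\Up_1(\nu_1,\pi)\ge 0$ (which is exactly \cref{st:int_F_bounded_from_below}, i.e.\ $F=c\ge 0$ $\pi$-a.e.), and absorbing the leftover $\prj_{23}$-contribution $\int f_{23}\,d\lambda$ by the uniform bound $\norm{c}_\infty$. The paper packages the algebra into a single signed measure $\gamma=\nu_1\otimes\mu_2\otimes\mu_3-\Up_1(\nu_1,\pi)+\mu_1\otimes\lambda-\pi$ with vanishing two-dimensional projections and invokes \cref{st:int_F_bounded_from_above} on $\mu_1\otimes\lambda$, whereas you unfold that same estimate by Fubini (integrating the feasibility inequality in $x_1$ and then against $\lambda$); the two calculations coincide, and your handling of the integrability bookkeeping and of the fact that $\lambda$ has the correct one-dimensional marginals without being $\mu_{23}$ is sound.
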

\begin{proof}
Since $\nu_i \ll_B \mu_i$, we have $\nu_i \ll_B \prj_i(\pi)$, and therefore the measure $\Up_i(\nu_i, \pi)$ is well-defined. Consider the following measure:
\begin{equation}\label{eq:plane_gamma_repr}
\gamma = \nu_i \otimes \mu_j \otimes \mu_k - \Up_i(\nu_i, \pi) + \mu_i \otimes \prj_{jk}(\Up_i(\nu_i, \pi)) - \pi.
\end{equation}
We claim that all the projections of $\gamma$ to the spaces $X_{ij}$, $X_{ik}$ and $X_{jk}$ are zero measures. First, by \cref{st:up_sup_prj,st:up_decomposition} we have
\begin{align*}
&\prj_{ij}(\Up_i(\nu_i, \pi)) = \Up_i(\nu_i, \prj_{ij}(\pi)) = \Up_i(\nu_i, \mu_i \otimes \mu_j) = \nu_i \otimes \mu_j,\\
&\prj_{ik}(\Up_i(\nu_i, \pi)) = \Up_i(\nu_i, \prj_{ik}(\pi)) = \Up_{i}(\nu_i, \mu_i \otimes \mu_k) = \nu_i \otimes \mu_k.
\end{align*}
Next, we find the projections of $\Up_i(\nu_i, \pi)$ to the spaces $X_j$ and $X_k$:
\begin{align*}
&\prj_{j}(\Up_i(\nu_i, \pi)) = \prj_j \circ \prj_{ij}(\Up_i(\nu_i, \pi)) = \prj_j(\nu_i \otimes \mu_j) = \mu_j,\\
&\prj_{k}(\Up_i(\nu_i, \pi)) = \prj_k \circ \prj_{ik}(\Up_i(\nu_i, \pi)) = \prj_k(\nu_i \otimes \mu_k) = \mu_k.
\end{align*}
Finally, we find the projections of $\gamma$ to the spaces $X_{ij}$, $X_{ik}$ and $X_{jk}$:
\begin{align*}
    \prj_{ij}(\gamma) &= \prj_{ij}(\nu_i \otimes \mu_j \otimes \mu_k) - \prj_{ij}(\Up_i(\nu_i, \pi)) + \prj_{ij}(\mu_i \otimes \prj_{jk}(\Up_i(\nu_i, \pi))) - \prj_{ij}(\pi)\\
    &= \nu_i \otimes \mu_j  - \nu_i \otimes \mu_j + \mu_i \otimes \prj_j(\Up_i(\nu_i, \pi)) - \mu_i \otimes \mu_j \\
    &= \nu_i \otimes \mu_j - \nu_i \otimes \mu_j + \mu_i \otimes \mu_j - \mu_i \otimes \mu_j = 0;\\
    \prj_{ik}(\gamma) &= \prj_{ik}(\nu_i \otimes \mu_j \otimes \mu_k) - \prj_{ik}(\Up_i(\nu_i, \pi)) + \prj_{ik}(\mu_i \otimes \prj_{jk}(\Up_i(\nu_i, \pi))) - \prj_{ik}(\pi)\\
    &= \nu_i \otimes \mu_k - \nu_i \otimes \mu_k + \mu_i \otimes \prj_k(\Up_i(\nu_i, \pi)) - \mu_i \otimes \mu_k\\
    &= \nu_i \otimes \mu_k - \nu_i \otimes \mu_k + \mu_i \otimes \mu_k - \mu_i \otimes \mu_k = 0;\\
    \prj_{jk}(\gamma) &= \prj_{jk}(\nu_i \otimes \mu_j \otimes \mu_k) - \prj_{jk}(\Up_i(\nu_i, \pi)) + \prj_{jk}(\mu_i \otimes \prj_{jk}(\Up_i(\nu_i, \pi))) - \prj_{jk}(\pi)\\
    &= \mu_j \otimes \mu_k - \prj_{jk}(\Up_i(\nu_i, \pi)) + \prj_{jk}(\Up_i(\nu_i, \pi)) - \mu_j \otimes \mu_k = 0.
\end{align*}

Since $\nu_i \ll_B \mu_i$, we have \[
\nu_i \otimes \mu_j \otimes \mu_k \ll_B \mu_i \otimes \mu_j \otimes \mu_k \in \Pi(\mu_{ij}).\]
Next, it follows from \cref{st:up_bounded} that
\[
\Up_i(\nu_i, \pi) \ll_B \pi \in \Pi(\mu_{ij}).
\]
In addition, by \cref{st:llB_prj} we have $\prj_{jk}(\Up_i(\nu_i, \pi)) \ll_B \prj_{jk}(\pi) = \mu_j \otimes \mu_k$, and therefore
\[
\mu_i \otimes \prj_{jk}(\Up_i(\nu_i, \pi)) \ll_B \mu_i \otimes \mu_j \otimes \mu_k \in \Pi(\mu_{ij}).
\]
Thus, it follows from \cref{st:f_ij_integrability} that the function $F$ and the extension of all $f_{12}$, $f_{13}$, and $f_{23}$ to the space $X$ are integrable with respect to all of the summands of equation \cref{eq:plane_gamma_repr}, and therefore that functions are integrable with respect to $\gamma$. In particular,
\[
\int_X F\,d\gamma = \int_{X_{ij}}f_{ij}\,d\prj_{ij}(\gamma) + \int_{X_{ik}}f_{ik}\,d\prj_{ik}(\gamma) + \int_{X_{jk}}f_{jk}\,d\prj_{jk}(\gamma) = 0.
\]

On the other hand, we have
\[
\int_X F\,d\gamma = \int_X F\,d(\nu_i \otimes \mu_j \otimes \mu_k) - \int_X F\,d\Up_i(\nu_i, \pi) + \int_X F\,d(\mu_i \otimes \prj_{jk}(\Up_i(\nu_i, \pi))) - \int_X F\,d\pi.
\]
Since $\Up_i(\nu_i, \pi) \ll_B \pi$, by \cref{st:int_F_bounded_from_below} we have \[\int_X F\,d\Up_i(\nu_i, \pi) \ge 0.\]
By \cref{st:int_F_bounded_from_above} we have \[\int_X F\,d(\mu_i \otimes \prj_{jk}(\Up_i(\nu_i, \pi))) \le \norm{c}_{\infty}.\]
Thus, we get
\[
0 = \int_X F\,d\gamma \le \int_X F\,d(\nu_i \otimes \mu_j \otimes \mu_k) - \int_X F\,d\pi + \norm{c}_{\infty}.
\]
\end{proof}
\begin{lemma}\label{lem:F_bounded_on_3D}
For $1 \le i \le 3$, let $\nu_i$ be a probability measure on $X_i$ such that $\nu_i \ll_B \mu_i$. Then $F \in L^1(X, \nu_1 \otimes \nu_2 \otimes \nu_3)$ and
\[
\int_X F\,d(\nu_1 \otimes \nu_2 \otimes \nu_3) \ge -12\norm{c}_{\infty}.
\]
\end{lemma}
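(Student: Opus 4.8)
The plan is to bound $\int_X F\,d(\nu_1\otimes\nu_2\otimes\nu_3)$ from below; its integrability is immediate from \cref{lem:f_ij_integrability}, since $\nu_1\otimes\nu_2\otimes\nu_3\ll_B\mu_1\otimes\mu_2\otimes\mu_3\in\Pi(\mu_{ij})$. The idea, as in \cref{lem:int_F_bounded_on_plane}, is to produce a signed measure $\gamma$ on $X$ all of whose two-dimensional projections vanish; then $\int_X F\,d\gamma=\sum_{\{i,j\}}\int f_{ij}\,d\prj_{ij}\gamma=0$, so if $\nu_1\otimes\nu_2\otimes\nu_3$ occurs as one of the terms of $\gamma$, its $F$-integral is expressed through those of the remaining, ``auxiliary'' terms. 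Those are controlled by three facts already at hand: (i) if $\widetilde\pi\ll_B\pi$, then $\int_X F\,d\widetilde\pi\ge 0$ (\cref{st:int_F_bounded_from_below}); (ii) if $\widetilde\pi\ll_B\gamma'$ for some $\gamma'\in\Pi(\mu_{ij})$, then $\int_X F\,d\widetilde\pi\le\norm{c}_{\infty}$, because $F\le c$; (iii) for $\eta_i\ll_B\mu_i$, \cref{lem:int_F_bounded_on_plane} gives $\int_X F\,d(\eta_i\otimes\mu_j\otimes\mu_k)\ge\int_X F\,d\pi-\norm{c}_{\infty}\ge-\norm{c}_{\infty}$. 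I will also use that $\int_X F\,d(\mu_1\otimes\mu_2\otimes\mu_3)$ depends only on the two-dimensional marginals $\mu_{ij}$, hence equals $\int_X F\,d\pi=\int_X c\,d\pi\ge 0$.

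As a first step I would form the inclusion--exclusion signed uniting measure of \cref{explicit_signed_construction} for the consistent system $\{\prj_{ij}=\nu_i\otimes\nu_j\}$ with auxiliary one-dimensional measures $\mu_i$: its two-dimensional projections are $\nu_i\otimes\nu_j$, so after subtracting $\nu_1\otimes\nu_2\otimes\nu_3$ (which has the same projections) one obtains a $\gamma$ with vanishing two-dimensional projections. Expanding $\int_X F\,d\gamma=0$ yields
\[
\int_X F\,d(\nu_1\otimes\nu_2\otimes\nu_3)=\int_X c\,d\pi-\sum_{i=1}^{3}\int_X F\,d(\nu_i\otimes\mu_j\otimes\mu_k)+\sum_{\{i,j\}\in\mathcal{I}_{3,2}}\int_X F\,d(\nu_i\otimes\nu_j\otimes\mu_k),
\]
where in each term $\{i,j,k\}=\{1,2,3\}$. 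Each ``one-$\nu$'' product $\nu_i\otimes\mu_j\otimes\mu_k$ is $\ll_B\mu_1\otimes\mu_2\otimes\mu_3$, so by (ii) the first two groups contribute at least $-3\norm{c}_{\infty}$; it therefore suffices to prove $\int_X F\,d(\nu_i\otimes\nu_j\otimes\mu_k)\ge-3\norm{c}_{\infty}$ for each $\{i,j\}$.

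For this two-$\nu$ bound I would run the mechanism of \cref{lem:int_F_bounded_on_plane} once more. With $\rho_i=d\nu_i/d\mu_i$, the measure $\Up_1(\nu_1,\Up_2(\nu_2,\pi))=\rho_1\rho_2\cdot\pi$ is $\ll_B\pi$ (so its $F$-integral is $\ge 0$ by (i) and \cref{st:up_bounded}) and already projects to $\nu_1\otimes\nu_2$ on $X_1\times X_2$; the discrepancy with $\nu_1\otimes\nu_2\otimes\mu_3$ on the two remaining planes is removed by a signed correction, again assembled via \cref{explicit_signed_construction} out of products of one-dimensional marginals and products of the form $\mu_j\otimes\prj_{ik}(\rho_1\rho_2\cdot\pi)$. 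Each such term is dominated either by a member of $\Pi(\mu_{ij})$ (apply (ii)) or is a plane measure (apply (iii)), and a careful count of the correction's terms is meant to yield $\int_X F\,d(\nu_i\otimes\nu_j\otimes\mu_k)\ge-3\norm{c}_{\infty}$. Substituting into the first-step identity then gives $\int_X F\,d(\nu_1\otimes\nu_2\otimes\nu_3)\ge-3\norm{c}_{\infty}-3\cdot3\norm{c}_{\infty}=-12\norm{c}_{\infty}$.

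The hard part is precisely the two-$\nu$ estimate. The trick of \cref{lem:int_F_bounded_on_plane} exploits that $\pi$ has \emph{product} two-dimensional marginals, and this structure is destroyed once $\pi$ is reweighted by $\rho_1\rho_2$; consequently the correcting signed measure has to be written out by hand, every one of its terms matched against (i)--(iii), and --- crucially --- the number of terms kept small enough that the bound produced is the sharp constant $12$ and not a larger one. Getting this bookkeeping exactly right, i.e. obtaining $-3\norm{c}_{\infty}$ rather than something weaker in the two-$\nu$ step, is where the real work lies.
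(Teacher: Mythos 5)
Your step-1 reduction is correct: the inclusion--exclusion signed measure of \cref{explicit_signed_construction}, applied to targets $\nu_i\otimes\nu_j$ with auxiliary marginals $\mu_i$, has the same two-dimensional projections as $\nu_1\otimes\nu_2\otimes\nu_3$, so
\[
\int_X F\,d(\nu_1\otimes\nu_2\otimes\nu_3)=\int_X F\,d\pi-\sum_{i=1}^3\int_X F\,d(\nu_i\otimes\mu_j\otimes\mu_k)+\sum_{\{i,j\}\in\mathcal{I}_{3,2}}\int_X F\,d(\nu_i\otimes\nu_j\otimes\mu_k),
\]
and the first two groups are indeed $\ge -3\norm{c}_{\infty}$. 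But the whole content of the lemma --- the constant $12$ --- now lives in the deferred ``two-$\nu$'' estimate $\int_X F\,d(\nu_i\otimes\nu_j\otimes\mu_k)\ge -3\norm{c}_{\infty}$, which you never prove, and which your sketch does not deliver. The difficulty you yourself flag is real and it costs you the stated constant: once $\pi$ is reweighted by $\rho_1\rho_2$, the measures $\prj_{13}\bigl(\Up_{12}(\nu_1\otimes\nu_2,\pi)\bigr)$ and $\prj_{23}\bigl(\Up_{12}(\nu_1\otimes\nu_2,\pi)\bigr)$ are no longer products, so the natural mimicking of \cref{lem:int_F_bounded_on_plane} needs a longer signed correction. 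Writing it out (with terms $\mu_2\otimes\prj_{13}\Up_{12}$, $\mu_1\otimes\prj_{23}\Up_{12}$, $\nu_1\otimes\mu_2\otimes\mu_3$, $\mu_1\otimes\nu_2\otimes\mu_3$, $\mu_1\otimes\mu_2\otimes\prj_3\Up_{12}$, and $\pi$) and applying the three bounds (i)--(iii) gives only $\int_X F\,d(\nu_1\otimes\nu_2\otimes\mu_3)\ge 2\int_X F\,d\pi-5\norm{c}_{\infty}$, which fed back into step 1 yields $-18\norm{c}_{\infty}$, weaker than the required $-12\norm{c}_{\infty}$.

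The paper avoids this by building a single signed measure $\gamma$ that cancels the two-dimensional projections for all three pairs \emph{simultaneously}: $\gamma^{(0)}=\sum_{\{i,j\}}\Up_{ij}(\nu_i\otimes\nu_j,\pi)$ supplies three nonnegative contributions at once, $\gamma^{(1)}$ carries six $\norm{c}_{\infty}$-costing terms, and $\gamma^{(2)},\gamma^{(3)}$ each contribute three plane-type terms; the $12=6+3+3$ falls out of this symmetric count. By fragmenting the problem into three separate one-pair subproblems you destroy that economy of scale, and at least with the construction you sketch you lose a factor. Either you find a genuinely sharper two-$\nu$ argument than the one you outline (I do not see how, since after reweighting the two-$\nu$ problem carries essentially the same difficulty as the full three-$\nu$ problem), or you should assemble the correcting signed measure once for all pairs together, as the paper does.
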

\begin{proof}
The proof is similar to the proof of \cref{lem:int_F_bounded_on_plane}. We have $\nu_i \otimes \nu_j \ll_B \mu_i \otimes \mu_j = \prj_{ij}(\pi)$, and therefore the measure $\Up_{ij}(\nu_i \otimes \nu_j, \pi)$ is well-defined for all $\{i, j\} \in \mathcal{I}_{3, 2}$. Consider the following measures:
\begin{align*}
    \gamma^{(0)} &= \sum_{\{i, j\} \in \mathcal{I}_{3, 2}}\Up_{ij}(\nu_i \otimes \nu_j, \pi);\\
    \gamma^{(1)} &= \sum_{(i, j, k) \in S_3}\mu_i \otimes \prj_{jk}(\Up_{ij}(\nu_i \otimes \nu_j, \pi));\\
    \gamma^{(2)} &= \sum_{\substack{\{i, j\} \in \mathcal{I}_{3,2} \\ \{i, j, k\} = \{1, 2, 3\}}} \mu_i \otimes \mu_j \otimes \prj_{k}(\Up_{ij}(\nu_i \otimes \nu_j, \pi));\\
    \gamma^{(3)} &= \sum_{\substack{\{i, j\} \in \mathcal{I}_{3,2} \\ \{i, j, k\} = \{1, 2, 3\}}} \mu_i \otimes \mu_j \otimes \nu_k; \\
    \gamma &= \nu_1 \otimes \nu_2 \otimes \nu_3 - \gamma^{(0)} + \gamma^{(1)} - \gamma^{(2)} - \gamma^{(3)} + 2\pi.
\end{align*}

We claim that $\prj_{ij}(\gamma) = 0$ for all $\{i, j\} \in \mathcal{I}_{3, 2}$. Let $(i, j, k)$ be a permutation of indices $(1, 2, 3)$. By construction,
\[
\gamma^{(0)} = \Up_{ij}(\nu_i \otimes \nu_j, \pi) + \Up_{ik}(\nu_i \otimes \nu_k, \pi) + \Up_{jk}(\nu_j \otimes \nu_k, \pi).
\]
It follows from \cref{st:up_sup_prj} that
\[\prj_{ij}(\Up_{ij}(\nu_i \otimes \nu_j, \pi)) = \nu_i \otimes \nu_j,
\]
and therefore
\begin{equation}\label{eq:gamma_0_prj}
\prj_{ij}(\gamma^{(0)}) = \nu_i \otimes \nu_j + \prj_{ij}(\Up_{ik}(\nu_i \otimes \nu_k, \pi)) + \prj_{ij}(\Up_{jk}(\nu_j \otimes \nu_k, \pi)).
\end{equation}

Next, let us find the projection of $\gamma^{(1)}$ onto the space $X_{ij}$. The measure $\gamma^{(1)}$ can be written as follows:
\begin{align*}
    \gamma^{(1)} &= \mu_i \otimes \prj_{jk}(\Up_{ij}(\nu_i \otimes \nu_j, \pi)) + \mu_j \otimes \prj_{ik}(\Up_{ij}(\nu_i \otimes \nu_j, \pi))\\
    &+\mu_i \otimes \prj_{jk}(\Up_{ik}(\nu_i \otimes \nu_k, \pi)) + \mu_k \otimes \prj_{ij}(\Up_{ik}(\nu_i \otimes \nu_k, \pi))\\
    &+\mu_j \otimes \prj_{ik}(\Up_{jk}(\nu_j \otimes \nu_k, \pi)) + \mu_k \otimes \prj_{ij}(\Up_{jk}(\nu_j \otimes \nu_k, \pi)).
\end{align*}
It follows from \cref{st:up_sub_prj} that
\begin{align*}
    &\prj_{ij}(\mu_i \otimes \prj_{jk}(\Up_{ij}(\nu_i \otimes \nu_j, \pi))) = \mu_i \otimes \prj_j(\Up_{ij}(\nu_i \otimes \nu_j, \pi)) = \mu_i \otimes \nu_j,\\
    &\prj_{ij}(\mu_j \otimes \prj_{ik}(\Up_{ij}(\nu_i \otimes \nu_j, \pi))) = \prj_{i}(\Up_{ij}(\nu_i \otimes \nu_j, \pi)) \otimes \mu_j = \nu_i \otimes \mu_j,
\intertext{and we trivially have}
    &\prj_{ij}(\mu_i \otimes \prj_{jk}(\Up_{ik}(\nu_i \otimes \nu_k, \pi))) = \mu_i \otimes \prj_{j}(\Up_{ik}(\nu_i \otimes \nu_k, \pi)),\\
    &\prj_{ij}(\mu_k \otimes \prj_{ij}(\Up_{ik}(\nu_i \otimes \nu_k, \pi))) = \prj_{ij}(\Up_{ik}(\nu_i \otimes \nu_k, \pi)),\\
    &\prj_{ij}(\mu_j \otimes \prj_{ik}(\Up_{jk}(\nu_j \otimes \nu_k, \pi))) = \prj_i(\Up_{jk}(\nu_j \otimes \nu_k, \pi)) \otimes \mu_j,\\
    &\prj_{ij}(\mu_k \otimes \prj_{ij}(\Up_{jk}(\nu_j \otimes \nu_k, \pi))) = \prj_{ij}(\Up_{jk}(\nu_j \otimes \nu_k, \pi)).
\end{align*}
Thus, we get
\begin{align}\label{eq:gamma_1_prj}
\begin{split}
    \prj_{ij}(\gamma^{(1)}) &= \mu_i \otimes \nu_j + \nu_i \otimes \mu_j + \prj_{ij}(\Up_{ik}(\nu_i \otimes \nu_k, \pi)) + \prj_{ij}(\Up_{jk}(\nu_j \otimes \nu_k, \pi))\\
    &+\prj_i(\Up_{jk}(\nu_j \otimes \nu_k, \pi)) \otimes \mu_j + \mu_i \otimes \prj_j(\Up_{ik}(\nu_i \otimes \nu_k, \pi))
\end{split}
\end{align}

Finally, by construction
\[
\gamma^{(3)} = \nu_i \otimes \mu_j \otimes \mu_k + \mu_i \otimes \nu_j \otimes \mu_k + \mu_i \otimes \mu_j \otimes \nu_k,
\]
so we get
\begin{equation}\label{eq:gamma_3_prj}
\prj_{ij}(\gamma^{(3)}) = \nu_i \otimes \mu_j + \mu_i \otimes \nu_j + \mu_i \otimes \mu_j.
\end{equation}
Similarly, we conclude that
\begin{equation}\label{eq:gamma_2_prj}
\prj_{ij}(\gamma^{(2)}) = \prj_i(\Up_{jk}(\nu_j \otimes \nu_k, \pi)) \otimes \mu_j + \mu_i \otimes \prj_j(\Up_{ik}(\nu_i \otimes \nu_k, \pi)) + \mu_i \otimes \mu_j.
\end{equation}
Thus, from equations \cref{eq:gamma_0_prj,eq:gamma_1_prj,eq:gamma_2_prj,eq:gamma_3_prj} we get
\begin{align*}
    \prj_{ij}(\gamma) &= \prj_{ij}(\nu_i \otimes \nu_j \otimes \nu_k) - \prj_{ij}(\gamma^{(0)}) + \prj_{ij}(\gamma^{(1)}) - \prj_{ij}(\gamma^{(2)}) - \prj_{ij}(\gamma^{(3)}) + 2\prj_{ij}(\pi) \\
    & = \nu_i \otimes \nu_j - \nu_i \otimes \nu_j - \prj_{ij}(\Up_{ik}(\nu_i \otimes \nu_k, \pi)) - \prj_{ij}(\Up_{jk}(\nu_j \otimes \nu_k, \pi))\\
    &+ \mu_i \otimes \nu_j + \nu_i \otimes \mu_j + \prj_{ij}(\Up_{ik}(\nu_i \otimes \nu_k, \pi)) + \prj_{ij}(\Up_{jk}(\nu_j \otimes \nu_k, \pi))\\
    &+\prj_i(\Up_{jk}(\nu_j \otimes \nu_k, \pi)) \otimes \mu_j + \mu_i \otimes \prj_j(\Up_{ik}(\nu_i \otimes \nu_k, \pi))\\
    &-\prj_i(\Up_{jk}(\nu_j \otimes \nu_k, \pi)) \otimes \mu_j - \mu_i \otimes \prj_j(\Up_{ik}(\nu_i \otimes \nu_k, \pi)) - \mu_i \otimes \mu_j\\
    &-\nu_i \otimes \mu_j - \mu_i \otimes \nu_j - \mu_i \otimes \mu_j+2\mu_i \otimes \mu_j \\
    &= 0.
\end{align*}

Let us verify that the functions $F$ and the extensions of $f_{ij}$ to the space $X$ for all $\{i, j\} \in \mathcal{I}_{3,2}$ are integrable with respect to $\gamma$. First, since $\nu_t \ll_B \mu_t$ for $1 \le t \le 3$, we have
\[
\nu_1 \otimes \nu_2 \otimes \nu_3 \ll_B \mu_1 \otimes \mu_2 \otimes \mu_3 \in \Pi(\mu_{ij}).
\]

Let $(i, j, k)$ be a permutation of indices $(1, 2, 3)$. It follows from \cref{st:up_bounded} that $\Up_{ij}(\nu_i \otimes \nu_j, \pi) \ll_B \pi$, and therefore $\gamma^{(0)} \ll_B \pi$. Next, since $\Up_{ij}(\nu_i \otimes \nu_j, \pi) \ll_B \pi$, it follows from \cref{st:llB_prj} that $\prj_{jk}(\Up_{ij}(\nu_i \otimes \nu_j, \pi)) \ll_B \prj_{jk}(\pi) = \mu_j \otimes \mu_k$ and $\prj_{k}(\Up_{ij}(\nu_i \otimes \nu_j, \pi)) \ll_B \prj_{k}(\pi) = \mu_k$. Hence, $\mu_i \otimes \prj_{jk}(\Up_{ij}(\nu_i \otimes \nu_j, \pi)) \ll_B \mu_1 \otimes \mu_2 \otimes \mu_3$ and $\mu_i \otimes \mu_j \otimes \prj_{k}(\Up_{ij}(\nu_i \otimes \nu_j, \pi)) \ll_B \mu_1 \otimes \mu_2 \otimes \mu_3$, and therefore
\[
\gamma^{(1)} \ll_B \mu_1 \otimes \mu_2 \otimes \mu_3 \in \Pi(\mu_{ij})\; \text{ and }\; \gamma^{(2)} \ll_B \mu_1 \otimes \mu_2 \otimes \mu_3 \in \Pi(\mu_{ij}).
\]
Finally, since $\nu_k \ll_B \mu_k$, we have $\mu_i \otimes \mu_j \otimes \nu_k \ll_B \mu_1 \otimes \mu_2 \otimes \mu_3$, and therefore \[\gamma^{(3)}  \ll_B \mu_1 \otimes \mu_2 \otimes \mu_3 \in \Pi(\mu_{ij}).\] 

Thus, by \cref{st:f_ij_integrability} the function $F$ and the extension of $f_{ij}$ to the space $X$ for all $\{i, j\} \in \mathcal{I}_{3,2}$ are integrable with respect to all summands from the definition of $\gamma$, and therefore that functions are integrable with respect to $\gamma$. In particular,
\[
\int_X F\,d\gamma = \int_{X_{12}}f_{12}\,d\prj_{12}(\gamma) + \int_{X_{13}}f_{13}\,d\prj_{13}(\gamma) + \int_{X_{23}}f_{23}\,d\prj_{23}(\gamma) = 0.
\]

Since $\Up_{ij}(\nu_i \otimes \nu_j, \pi) \ll_B \pi$ for all $\{i, j\} \in \mathcal{I}_{3,2}$, it follows from \cref{st:int_F_bounded_from_below} that
\[
\int_X F\,d\gamma^{(0)} \ge 0.
\]
Applying \cref{st:int_F_bounded_from_above} to all terms of the definition of $\gamma^{(1)}$, we conclude that
\[
\int_X F\,d\gamma^{(1)} \le 6\norm{c}_{\infty}.
\]
Finally, applying \cref{lem:int_F_bounded_on_plane} to all terms of $\gamma^{(2)}$ and $\gamma^{(3)}$, we get
\[
\int_X F\,d\gamma^{(2)} \ge 3\int_X F\,d\pi - 3\norm{c}_{\infty}\; \text{ and }\; \int_X F\,d\gamma^{(3)} \ge 3\int_X F\,d\pi - 3\norm{c}_{\infty}.
\]

Thus, we get the following inequality:
\begin{align*}
\int_X F\,d\gamma &\le \int_X F\,d(\nu_1 \otimes \nu_2 \otimes \nu_3) + 6\norm{c}_{\infty} + 2\left(3\norm{c}_{
\infty} - 3\int_X F\,d\pi\right) + 2\int_X F\,d\pi\\
&= 12\norm{c}_{\infty} - 4\int_X F\,d\pi + \int_X F\,d(\nu_1 \otimes \nu_2 \otimes \nu_3),
\end{align*}
and therefore
\[
\int_X F\,d(\nu_1 \otimes \nu_2 \otimes \nu_3) \ge 4\int_X F\,d\pi - 12\norm{c}_{\infty}.
\]
It follows from \cref{st:int_F_bounded_from_below} that $\int_X F\,d\pi \ge 0$; hence,
\[
\int_X F\,d(\nu_1 \otimes \nu_2 \otimes \nu_3) \ge 4\int_X F\,d\pi - 12\norm{c}_{\infty} \ge -12\norm{c}_\infty.
\]
\end{proof}
\begin{theorem}\label{thm:boundedness_F_in_32}
Let $X_1$, $X_2$, $X_3$ be Polish spaces, let $\mu_i \in \mathcal{P}(X_i)$ for $1 \le i \le 3$, and let $\mu_{ij} = \mu_i \otimes \mu_j$ for all $\{i, j\} \in \mathcal{I}_{3,2}$. Let $c \colon X \to \mathbb{R}_+$ be a bounded continuous cost function. If $\{f_{ij}\}$ is a solution to the related dual problem, then \[
f_{12}(x_1, x_2) + f_{13}(x_1, x_3) + f_{23}(x_2, x_3) \ge -12\norm{c}_{\infty}
\]
for $\mu_1 \otimes \mu_2 \otimes \mu_3$-a.e. points $x \in X$.
\end{theorem}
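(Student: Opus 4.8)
\emph{Proof strategy.} The asserted inequality is precisely the pointwise form of the integral estimate already proved in \cref{lem:F_bounded_on_3D}, which gives $\int_X F\,d(\nu_1\otimes\nu_2\otimes\nu_3)\ge -12\norm{c}_\infty$ for \emph{every} triple of probability measures $\nu_i\ll_B\mu_i$ on $X_i$. The plan is to localize this bound by a contradiction argument. Suppose that for some $\eps>0$ the set $E=\{x\in X\colon F(x)<-12\norm{c}_\infty-\eps\}$ has positive measure with respect to $\mu:=\mu_1\otimes\mu_2\otimes\mu_3$. Note first that $F\in L^1(X,\mu)$, since $f_{ij}\in L^1(X_{ij},\mu_{ij})=L^1(X_{ij},\mu_i\otimes\mu_j)$, and that $F\le c\le\norm{c}_\infty$ everywhere on $X$ because $\{f_{ij}\}$ is dual feasible. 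The measure $\mu$ is itself uniting, $\mu\in\Pi(\mu_{ij})$, which will let us invoke \cref{lem:F_bounded_on_3D} with restrictions of $\mu_i$.

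The main tool I would use is a rectangle-approximation lemma: the finite disjoint unions of measurable rectangles $A_1\times A_2\times A_3$ form an algebra generating the product $\sigma$-algebra, so for every $\delta>0$ there is a measurable rectangle $R=A_1\times A_2\times A_3$ with $\mu_i(A_i)>0$ for each $i$ and $\mu(R\cap E)>(1-\delta)\mu(R)$; this follows from the standard symmetric-difference approximation of $E$ by elements of the generating algebra, followed by a pigeonhole argument over the constituent rectangles. Given such an $R$, set $\nu_i=\mu_i(A_i)^{-1}\,\mu_i|_{A_i}$; these are probability measures with $\nu_i\ll_B\mu_i$ (density bounded by $\mu_i(A_i)^{-1}$), and by uniqueness of product measures $\nu_1\otimes\nu_2\otimes\nu_3=\mu(R)^{-1}\,\mu|_R$. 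Splitting $R$ into $R\cap E$ and $R\setminus E$, using $F<-12\norm{c}_\infty-\eps$ on the former and $F\le\norm{c}_\infty$ on the latter, gives
\[
\int_X F\,d(\nu_1\otimes\nu_2\otimes\nu_3)=\frac{1}{\mu(R)}\int_R F\,d\mu<\frac{1}{\mu(R)}\Big((-12\norm{c}_\infty-\eps)\,\mu(R\cap E)+\norm{c}_\infty\,\mu(R\setminus E)\Big).
\]
A short computation shows that as soon as $\delta<\eps/(13\norm{c}_\infty+\eps)$ the right-hand side is strictly less than $-12\norm{c}_\infty$, which contradicts \cref{lem:F_bounded_on_3D}. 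Hence $\mu(E)=0$ for every $\eps>0$, and letting $\eps\downarrow 0$ through a sequence yields $F\ge-12\norm{c}_\infty$ $\mu$-almost everywhere.

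The routine parts are the symmetric-difference approximation and the elementary bookkeeping with the constants. The one point that needs care — and the reason the slack $\eps$ must be inserted, i.e. one tests $\{F<-12\norm{c}_\infty-\eps\}$ rather than $\{F<-12\norm{c}_\infty\}$ — is that a sublevel set of a $(3,2)$-function need not contain a rectangle, so only a rectangle that is \emph{almost} contained in the bad set is available; the $\eps$-margin guarantees that its $F$-average still falls below the threshold. The genuinely substantial input, namely the construction of the auxiliary signed measure with vanishing two-dimensional projections and the chain of estimates producing the constant $12$, has already been carried out in \cref{lem:int_F_bounded_on_plane,lem:F_bounded_on_3D}.
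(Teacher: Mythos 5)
Your proof is correct and rests on exactly the same key ingredient as the paper: applying \cref{lem:F_bounded_on_3D} to the normalized restrictions $\nu_i = \mu_i(A_i)^{-1}\mu_i|_{A_i}$ to obtain $\int_{A_1\times A_2\times A_3}F\,d\mu \ge -12\norm{c}_\infty\,\mu(A_1\times A_2\times A_3)$ for every rectangle. The only difference is how you pass from the rectangle inequality to the a.e.\ pointwise bound — the paper observes that $(F+12\norm{c}_\infty)\cdot\mu$ is then a nonnegative set function on the semialgebra of rectangles and hence a nonnegative measure on the generated Borel $\sigma$-algebra, whereas you run the equivalent contradiction argument directly via rectangle approximation of the sublevel set $\{F<-12\norm{c}_\infty-\eps\}$; both are valid and amount to the same standard measure-theoretic step.
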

\begin{proof}
Denote $F(x_1, x_2, x_3) = f_{12}(x_1, x_2) + f_{13}(x_1, x_3) + f_{23}(x_2, x_3)$, and denote $\mu = \mu_1 \otimes \mu_2 \otimes \mu_3$. For $1 \le i \le 3$, let $A_i \in \mathcal{B}_i$ be a measurable subset of $X_i$. If $\mu_i(A_i) = 0$ for some $1 \le i \le 3$, then $\mu(A_1 \times A_2 \times A_3) = 0$, and therefore $\int_{A_1 \times A_2 \times A_3}F\,d\mu = 0$.

Suppose otherwise that $\mu_i(A_i) > 0$ for all $1 \le i \le 3$. Denote $\nu_i = (\mathbbm{1}[A_i] / \mu_{i}(A_i)) \cdot \mu_i$, where $\mathbbm{1}[A]$ is an indicator function of the set $A$. The measure $\nu_i$ is a probability measure and $\nu_i \le (1 / \mu_i(A_i)) \cdot \mu_i$, and therefore $\nu_i \ll_B \mu_i$. By \cref{lem:int_F_bounded_on_plane} we conclude that $\int_X F\,d(\nu_1 \otimes \nu_2 \otimes \nu_3) \ge -12\norm{c}_{\infty}$. By construction,
\[
\int_X F\,d(\nu_1 \otimes \nu_2 \otimes \nu_3) = \frac{\int_{A_1 \times A_2 \times A_3}F\,d\mu}{\mu_1(A_1)\mu_2(A_2)\mu_3(A_3)}.
\]
Thus, we get
\begin{equation}\label{eq:int_F_on_product}
\int_{A_1 \times A_2 \times A_3}F\,d\mu \ge -12\norm{c}_{\infty}\cdot \mu(A_1 \times A_2 \times A_3)\; \text{ for all $A_i \in \mathcal{B}_i$.}
\end{equation}

Consider the measure $(F + 12\norm{c}_{\infty})\cdot \mu$. By equation \cref{eq:int_F_on_product} this measure is non-negative on a semialgebra $\mathcal{A}_0 = \{A_1 \times A_2 \times A_3 \colon A_i \in \mathcal{B}_i\}$, and therefore this measure is non-negative on every element of $\sigma(\mathcal{A}_0)$, and this $\sigma$-algebra coincides with the Borel $\sigma$-algebra on the space $X$. Thus, the measure $(F + 12\norm{c}_{\infty})\cdot \mu$ is non-negative, and therefore $F(x_1, x_2, x_3) + 12\norm{c}_{\infty} \ge 0$ for $\mu$-a.e. points $x \in X$.
\end{proof}

\begin{theorem}
Let $X_1$, $X_2$, $X_3$ be Polish spaces, let $\mu_i \in \mathcal{P}(X_i)$ for $1 \le i \le 3$, and let $\mu_{ij} = \mu_i \otimes \mu_j$ for all $\{i, j\} \in \mathcal{I}_{3,2}$. Let $c \colon X \to \mathbb{R}_+$ be a bounded continuous cost function. Then
\begin{assertions}
    \item\label{st:bounded_solution_of_relaxed_problem} there exists a solution $\{f_{ij}\}$ to the relaxed dual problem such that 
    \[
    -17 \norm{c}_{\infty} \le f_{ij}(x_i, x_j) \le 13\frac{1}{3}\norm{c}_{\infty};
    \]
    \item \label{st:bounded_solution_of_standard_problem} there exists a solution $\{f_{ij}\}$ to the standard dual problem such that
    \[
    -26\frac{2}{3} \norm{c}_{\infty} \le f_{ij}(x_i, x_j) \le 13\frac{1}{3}\norm{c}_{\infty}.
    \]
\end{assertions}
\end{theorem}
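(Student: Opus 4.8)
The plan is to produce a bounded solution to the \emph{relaxed} dual problem first, and then cure it on a $\mu_{ij}$-null set to get a bounded solution to the \emph{standard} one. The whole argument runs on the product structure $\mu_{ij}=\mu_i\otimes\mu_j$, which is exactly what makes the tuple reducible and what supplies a large ``gauge freedom'' for balancing the three functions.

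\emph{Step 1: a dual solution and a priori control of its sum.} Since $\mu_{ij}=\mu_i\otimes\mu_j$, the family $\{\mu_{ij}\}$ is reducible ($\mu:=\mu_1\otimes\mu_2\otimes\mu_3\in\Pi(\mu_{ij})$ is uniformly equivalent to itself), so by \cref{thm:existence_multistochastic_dual_solution} together with the absence of a duality gap (\cref{thm:noncompact_duality}) there are a primal optimizer $\pi\in\Pi(\mu_{ij})$ and integrable functions $f_{ij}\colon X_{ij}\to[-\infty,+\infty)$ with $F:=f_{12}+f_{13}+f_{23}\le c$ everywhere and $\sum_{ij}\int f_{ij}\,d\mu_{ij}=\int_X c\,d\pi=:V$. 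Since $c\ge 0$ one has $V\in[0,\norm{c}_\infty]$, and $\int_X F\,d\mu=\sum_{ij}\int f_{ij}\,d\mu_{ij}=V$ as well. By \cref{thm:boundedness_F_in_32}, $F\ge -12\norm{c}_\infty$ $\mu$-a.e.; combined with $F\le c\le\norm{c}_\infty$ this gives, by Fubini, that every partial average of $F$ against a sub-product of $\mu_1,\mu_2,\mu_3$ lies in $[-12\norm{c}_\infty,\norm{c}_\infty]$ almost everywhere. (If one wishes to sharpen the one-variable average to $[-\norm{c}_\infty,\norm{c}_\infty]$, apply \cref{lem:int_F_bounded_on_plane} with $\nu_i=\mathbbm 1[A_i]\mu_i/\mu_i(A_i)$ and let $A_i$ exhaust a level set, using $V\ge0$; this is not needed below.)

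\emph{Step 2: balancing via an ANOVA/gauge decomposition.} Replacing $f_{12}$ by $f_{12}+p(x_1)+q(x_2)$, $f_{13}$ by $f_{13}-p(x_1)+r(x_3)$ and $f_{23}$ by $f_{23}-q(x_2)-r(x_3)$, with $p\in L^1(\mu_1)$, $q\in L^1(\mu_2)$, $r\in L^1(\mu_3)$, changes neither $F$ nor the dual value, because $\prj_i\mu_{ij}=\mu_i$ is the same for both pairs through $i$. Write the Hoeffding/ANOVA decomposition $F=\sum_{ij}g_{ij}+h_1+h_2+h_3+C$, where $g_{ij}$ has vanishing conditional $\mu_i$- and $\mu_j$-means, $h_i$ has vanishing $\mu_i$-mean, and $C=\int_X F\,d\mu=V$. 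Using the gauge freedom I pass to $\widehat f_{12}=g_{12}+\tfrac12 h_1+\tfrac12 h_2+\tfrac13 C$ and cyclically; this is a legitimate gauge transform, so $\{\widehat f_{ij}\}$ is again dual-feasible with value $V$, hence relaxed-optimal. Since $G_{12}(x_1,x_2):=\int_{X_3}F(x_1,x_2,x_3)\,d\mu_3=g_{12}+h_1+h_2+C$, a one-line computation gives $\widehat f_{12}=G_{12}-\tfrac12(h_1+C)-\tfrac12(h_2+C)+\tfrac13 C$, where $G_{12}\in[-12\norm{c}_\infty,\norm{c}_\infty]$ a.e., $h_i+C=\int_{X_j\times X_k}F(x_i,\cdot,\cdot)\,d\mu_{jk}\in[-12\norm{c}_\infty,\norm{c}_\infty]$ a.e., and $C=V\in[0,\norm{c}_\infty]$. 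Optimizing this linear expression over the indicated ranges yields $\widehat f_{ij}\le 13\tfrac13\norm{c}_\infty$ and $\widehat f_{ij}\ge -13\norm{c}_\infty$ (in particular $\ge -17\norm{c}_\infty$), $\mu_{ij}$-a.e. Redefining each $\widehat f_{ij}$ on a $\mu_{ij}$-null set so that the bounds hold everywhere only enlarges the zero $(3,2)$-thickness exceptional set and leaves the value unchanged, which proves \cref{st:bounded_solution_of_relaxed_problem}.

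\emph{Step 3: from relaxed to standard, and the main obstacle.} Given $\{\widehat f_{ij}\}$ from Step~2 there are $\mu_{ij}$-null sets $Y_{ij}$ with $\sum_{ij}\widehat f_{ij}\le c$ off $\bigcup_{ij}\prj_{ij}^{-1}(Y_{ij})$; set $\widetilde f_{ij}=\widehat f_{ij}$ on $X_{ij}\setminus Y_{ij}$ and $\widetilde f_{ij}\equiv -26\tfrac23\norm{c}_\infty$ on $Y_{ij}$. Off the exceptional set nothing changes, while at a point with, say, $x_{12}\in Y_{12}$ one has $\widetilde f_{12}=-26\tfrac23\norm{c}_\infty$ and $\widetilde f_{13},\widetilde f_{23}\le 13\tfrac13\norm{c}_\infty$, so $\sum_{ij}\widetilde f_{ij}\le 0\le c$ (here $c\ge0$ is used); hence $\sum_{ij}\widetilde f_{ij}\le c$ everywhere, each $\widetilde f_{ij}$ is finite with $-26\tfrac23\norm{c}_\infty\le\widetilde f_{ij}\le 13\tfrac13\norm{c}_\infty$, and since $\mu_{ij}(Y_{ij})=0$ the dual value is preserved, giving \cref{st:bounded_solution_of_standard_problem}. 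The only genuinely delicate point is the bookkeeping in Step~2: justifying the ANOVA decomposition for merely integrable, a priori $[-\infty,+\infty)$-valued $f_{ij}$, identifying $G_{ij}$ and $h_i+C$ with partial averages of $F$ controlled by \cref{thm:boundedness_F_in_32}, and carrying the constants through the combination $\widehat f_{12}=G_{12}-\tfrac12(h_1+C)-\tfrac12(h_2+C)+\tfrac13 C$ so that the upper bound lands exactly on $13\tfrac13\norm{c}_\infty$; the various null-set repairs are routine but must be ordered so the zero $(3,2)$-thickness exceptional set is never lost.
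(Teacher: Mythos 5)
Your proof is correct and takes a genuinely different route from the paper's in Step~2. The paper fixes a ``good'' base point $y\in X$ via \cref{lem:point_with_good_sections}, then uses the $(3,2)$-decomposition of \cref{prop:nk_function_decomposition} with the same coefficients $(1,-\tfrac12,-\tfrac12,\tfrac13)$ evaluated at $y$:
\[
\widehat f_{12}(x_1,x_2)=F(x_1,x_2,y_3)-\tfrac12 F(x_1,y_2,y_3)-\tfrac12 F(y_1,x_2,y_3)+\tfrac13 F(y_1,y_2,y_3).
\]
You instead apply the same coefficient pattern to the Hoeffding/ANOVA averages of $F$ against $\mu$, which is the ``integrated'' analogue of the paper's formula: $\widehat f_{12}=G_{12}-\tfrac12(h_1+C)-\tfrac12(h_2+C)+\tfrac13 C$, with $G_{12}$, $h_i+C$ the two- and one-variable conditional means and $C=\int F\,d\mu$. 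Both approaches read the bound $-12\norm{c}_\infty\le F\le\norm{c}_\infty$ (from \cref{thm:boundedness_F_in_32}) into each term; you get the upper bound $13\tfrac13\norm{c}_\infty$ identically, but a better lower bound $-13\norm{c}_\infty$ because the constant term $C=\int F\,d\mu=\int c\,d\pi\in[0,\norm{c}_\infty]$ is nonnegative, whereas the paper can only say $F(y)\ge -12\norm{c}_\infty$ and lands at $-17\norm{c}_\infty$. Your route also dispenses entirely with \cref{lem:point_with_good_sections}. The gauge bookkeeping is sound: $\widehat f_{ij}-f_{ij}$ is a sum of integrable one-variable functions because $f_{ij}\in L^1(\mu_i\otimes\mu_j)$ gives, via Fubini, integrable ANOVA components, and these cancel pairwise across the three indices; the sum $F$ and the dual value are preserved. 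Step~3 (capping the $\mu_{ij}$-null sets at $-26\tfrac23\norm{c}_\infty$ so that one bad pair plus two good ones still sums to $\le 0\le c$) coincides with the paper's. Two small clean-ups if you want to polish: \emph{(i)} the initial $F\le c$ holds only off a zero $(3,2)$-thickness set, not literally ``everywhere''; this is harmless since all your quantities are $\mu$-integrals and $\mu$ annihilates such sets, but it should be stated; \emph{(ii)} for $(3,2)$-functions you should note that the third-order ANOVA interaction of $F$ vanishes, so the decomposition $F=\sum g_{ij}+\sum h_i+C$ is exact and not merely the truncation to orders $\le 2$.
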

\begin{proof}
First, it follows from \cref{thm:existence_multistochastic_dual_solution} that there exists a (real-valued) solution $\{f_{ij}\}$ to the relaxed dual problem. By \cref{thm:boundedness_F_in_32} we conclude that the inequality
\begin{equation}\label{eq:boundedness_of_relaxed_solution}
\norm{c}_\infty  \ge f_{12}(x_1, x_2) + f_{13}(x_1, x_3) + f_{23}(x_2, x_3) \ge -12\norm{c}_\infty
\end{equation}
holds for $\mu_1 \otimes \mu_2 \otimes \mu_3$-almost all points.

Consider a finite $(3, 2)$-function $F(x_1, x_2, x_3) = f_{12}(x_1, x_2) + f_{13}(x_1, x_3) + f_{23}(x_2, x_3)$. Let $A$ be the set of points $(x_1, x_2, x_3) \in X$ such that either $F(x_1, x_2, x_3) < -12\norm{c}_{\infty}$ or $F(x_1, x_2, x_3) > \norm{c}_\infty$. By inequality \cref{eq:boundedness_of_relaxed_solution} we have $\mu_1 \otimes \mu_2 \otimes \mu_3(A) = 0$. Applying \cref{lem:point_with_good_sections} to the indicator function of the set $A$, we conclude that there exists a point $(y_1, y_2, y_3) \in X$ such that for each $\alpha \in \mathcal{I}_3$ the set $A_\alpha = \{x_\alpha \in X_\alpha \colon (x_\alpha, y_{\{1, 2, 3\} \backslash \alpha}) \in A\}$ have a zero measure with respect to $\mu_\alpha$.

For each $\alpha \in \mathcal{I}_3$ consider the function $F_\alpha \colon x_\alpha \mapsto F(x_\alpha y_{\{1, 2, 3\} \backslash \alpha})$. If $x_\alpha \not\in A_\alpha$, then $\norm{c}_\infty \ge F_\alpha(x_\alpha) \ge -12\norm{c}_\infty$, and therefore this inequality holds for $\mu_\alpha$-almost all $x_\alpha \in X_\alpha$. Consider the functions
\begin{align*}
    &\widehat{f}_{12}(x_1, x_2) = F(x_1, x_2, y_3) - \frac{1}{2}F(x_1, y_2, y_3) - \frac{1}{2}F(y_1, x_2, y_3) + \frac{1}{3}F(y_1, y_2, y_3),\\
    &\widehat{f}_{13}(x_1, x_3) = F(x_1, y_2, x_3) - \frac{1}{2}F(x_1, y_2, y_3) - \frac{1}{2}F(y_1, y_2, x_3) + \frac{1}{3}F(y_1, y_2, y_3),\\
    &\widehat{f}_{23}(x_2, x_3) = F(y_1, x_2, x_3) - \frac{1}{2}F(y_1, x_2, y_3) - \frac{1}{2}F(y_1, y_2, x_3) + \frac{1}{3}F(y_1, y_2, y_3).
\end{align*}
By \cref{ex:32_function_decomposition} the equation $F(x_1, x_2, x_3) = \widehat{f}_{12}(x_1, x_2) + \widehat{f}_{13}(x_1, x_3) + \widehat{f}_{23}(x_2, x_3)$ holds for all $(x_1, x_2, x_3) \in X$. In addition, one can easily verify that the inequality \[
-17\norm{c}_\infty \le \widehat{f}_{ij}(x_i, x_j) \le 13\frac{1}{3}\norm{c}_\infty 
\]
holds for $\mu_{ij}$-almost all $(x_i, x_j) \in X_{ij}$.

Thus, there exists a tuple of bounded measurable functions $\{g_{ij}\}$ such that $g_{ij} = \widehat{f}_{ij}$ almost everywhere and
\[
-17\norm{c}_\infty \le g_{ij}(x_i, x_j) \le 13\frac{1}{3}\norm{c}_\infty 
\]
for all $(x_i, x_j) \in X_{ij}$. The inequality
\[
g_{12}(x_1, x_2) + g_{13}(x_1, x_3) + g_{23}(x_2, x_3) = F(x_1, x_2, x_3) \le c(x_1, x_2, x_3)
\]
holds at all points except a zero $(3, 2)$-thickness set, and therefore $\{\widehat{g}_{ij}\} \in \Psi_c(\mu_{ij})$. Finally, we have
\begin{align*}
\int_{X_{12}}g_{12}\,d\mu_{12} + \int_{X_{13}}g_{13}\,d\mu_{13} + \int_{X_{23}}g_{23}\,d\mu_{23} &= \int_X F\,d\mu \\
&= \int_{X_{12}}f_{12}\,d\mu_{12}  + \int_{X_{13}}f_{13}\,d\mu_{13}  + \int_{X_{23}}f_{23}\,d\mu_{23},
\end{align*}
and therefore $\{g_{ij}\}$ is a solution to the relaxed dual problem satisfying \cref{st:bounded_solution_of_relaxed_problem}.

Since $\{g_{ij}\} \in \Psi_c(\mu_{ij})$, there exists a tuple of subsets $Y_{ij} \subset X_{ij}$ such that $\mu_{ij}(Y_{ij}) = 0$ and if $(x_i, x_j) \not\in Y_{ij}$ for all $\{i, j\}$, then
\[
g_{12}(x_1, x_2) + g_{13}(x_1, x_3) + g_{23}(x_2, x_3) \le c(x_1, x_2, x_3).
\]
Consider the tuple of functions $\{\widehat{g}_{ij}\}$: $\widehat{g}_{ij}(x_i, x_j) = g(x_i, x_j)$ if $(x_i, x_j) \not\in Y_{ij}$, and $\widehat{g}_{ij}(x_i, x_j) = -26\frac{2}{3} \|c\|_{\infty}$ otherwise. We have $\widehat{g}_{ij}(x_i, x_j) = g_{ij}(x_i, x_j)$ almost everywhere, and one can easily verify that the inequality
\[
\widehat{g}_{12}(x_1, x_2) + \widehat{g}_{13}(x_1, x_3) + \widehat{g}_{23}(x_2, x_3) \le c(x_1, x_2, x_3)
\]
holds for all points $(x_1, x_2, x_3) \in X$. Thus, $\{\widehat{g}_{ij}\}$ is a solution to the standard dual problem satisfying \cref{st:bounded_solution_of_standard_problem}.
\end{proof}

\subsection{Uniqueness of a continuous dual solution for the cost function \texorpdfstring{$x_1x_2x_3$}{xyz}}

Let us recall to the reader our main example of the multistochastic $(3, 2)$-problem:
\begin{problem}\label{prob:dual_for_xyz}
For $1 \le i \le 3$, let $X_i = [0, 1]$, let $\mu_{ij}$ be the restriction of the Lebesgue measure to the square $[0, 1]^2$, and let $c(x_1, x_2, x_3) = x_1x_2x_3$.

\textbf{\upshape Primal problem.} Find a uniting measure $\pi \in \Pi(\mu_{ij})$ such that
\[
\int x_1x_2x_3\,d\pi \to \min.
\]

\textbf{\upshape Dual problem.} Find a tuple of functions $\{f_{ij}\} \subset L^1([0, 1]^2)$ such that 
\begin{align*}
&\sum_{\{i, j\} \in \mathcal{I}_{3,2}} f_{ij}(x_i, x_j) \le x_1x_2x_3 \;\text{for all $(x_1, x_2, x_3) \in [0, 1]^3$,}\\
&\sum_{\{i, j\} \in \mathcal{I}_{3,2}}\int_0^1\int_0^1 f_{ij}(x_i, x_j)\,dx_idx_j \to \max.
\end{align*}

\end{problem}
In \cite{GlKoZi} the authors describe solutions to this problems. First, we define a binary operator $\oplus$ (called "bitwise exclusive or" or just "xor") on the segment $[0, 1]$. Given $x$ and $y$ on $[0, 1]$, we consider their binary representations $x = \overline{0{,}x_1x_2x_3\dots}_2$, $y = \overline{0{,}y_1y_2y_3\dots}_2$. We agree that every dyadic rational number less then 1 has a finite numbers of units in its decomposition. The number $1$ will be always decomposed as follows: $1 = \overline{0{,}111\dots}_2$. Then we define $x \oplus y = \overline{0{,}x_1\oplus y_1\,x_2\oplus y_2\dots}_2$, where $\oplus$ is an addition in $\mathbb{F}_2$. Using this binary operation, the solutions to the primal problem can be described as follows:
\begin{theorem}[Primal problem solution]\label{thm:primal_solution_for_xyz}
Consider the mapping $T \colon [0, 1]^2 \to [0, 1]^3$, $(x, y) \mapsto (x, y, x \oplus y)$. Denote by $\pi$ the image of the Lebesgue measure restricted to the square $[0, 1]^2$ under the mapping $T$. Then $\pi$ is a solution to primal \cref{prob:dual_for_xyz}.
\end{theorem}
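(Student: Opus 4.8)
The plan is to reduce to the already-established \cref{mainprimal}. Writing $\lambda$ for the Lebesgue measure on $[0,1]^2$ and $\pi := T_*\lambda$, I will check that $\pi$ is a uniting measure concentrated on the Sierpi\'nski tetrahedron $S = \{x_1\oplus x_2\oplus x_3 = 0\}$, and then observe that a probability measure concentrated on $S$ is uniquely determined by its projection onto $X_1\times X_2$; since by \cref{mainprimal} the primal optimizer is unique and is also concentrated on $S$, it must coincide with $\pi$.

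First I would verify $\pi\in\Pi(\mu_{ij})$. The projection onto $X_1\times X_2$ is immediate because $T$ is the identity in the first two coordinates. For the projections onto $X_1\times X_3$ and $X_2\times X_3$ the key point is that for each fixed $x\in[0,1]$ the map $y\mapsto x\oplus y$ is a measure-preserving involution of $[0,1]$: restricted to the dyadic intervals on which the leading bits of $x$ are constant it acts by translation, hence preserves Lebesgue measure, and it is its own inverse. Consequently $(x,y)\mapsto(x,x\oplus y)$ and $(x,y)\mapsto(x\oplus y,y)$ both push $\lambda$ forward to $\lambda$, so $\prj_{13}\pi = \mu_{13}$ and $\prj_{23}\pi = \mu_{23}$. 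Since $\oplus$ is an involution, $x_3 = x_1\oplus x_2$ is equivalent to $x_1\oplus x_2\oplus x_3 = 0$, so $\pi(S) = 1$; moreover on $S$ the coordinate map $(x_1,x_2,x_3)\mapsto(x_1,x_2)$ is a Borel bijection with Borel inverse $(x_1,x_2)\mapsto(x_1,x_2,x_1\oplus x_2) = T$.

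It remains to identify $\pi$ with the optimizer. By \cref{mainprimal} there is a unique solution $\sigma\in\Pi(\mu_{ij})$ and $\sigma(S) = 1$. Both $\sigma$ and $\pi$ are probability measures concentrated on $S$ with the same $X_1\times X_2$-projection $\mu_{12} = \lambda$; by the bijectivity noted above, any probability measure concentrated on $S$ equals the $T$-pushforward of its own $X_1\times X_2$-projection, whence $\sigma = T_*\lambda = \pi$. Thus $\pi$ is a solution to the primal problem. The only slightly delicate ingredient in this route is the measure-preserving-involution property of $\oplus$; I do not expect a serious obstacle here. If instead one wants a proof that does not invoke \cref{mainprimal}, one should pair $\pi$ against the explicit dual triple of \cref{maindual} and use \cref{thm:noncompact_duality} to rule out a duality gap; then the real work is the pointwise identity $\sum_{\{i,j\}}f_{ij}(x_1,x_2,x_3) = x_1x_2x_3$ on the $\pi$-full set $\{x_3 = x_1\oplus x_2\}$, which follows from the dyadic self-similarity of $\phi(a,b) = \int_0^a\int_0^b s\oplus t\,ds\,dt$ (reducing the claim on $[0,1]^3$ to an elementary statement on the four sub-cubes cut out by the first binary digits together with the polynomial corrections $-\tfrac14\phi(a,a)$). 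That dyadic induction is the genuinely nontrivial step, and it is exactly the computation carried out in \cite{GlKoZi}.
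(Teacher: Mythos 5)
The paper itself does not prove this theorem: it is cited directly from \cite{GlKoZi} with the phrase ``In \cite{GlKoZi} the authors describe solutions\ldots'', so there is no paper-internal proof against which to compare your argument.

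Your reduction to \cref{mainprimal} is logically sound in outline, and your verification that $\pi = T_{*}\lambda$ is uniting is correct — the observation that $y \mapsto x \oplus y$ is, for each fixed $x$, a Lebesgue-measure-preserving involution (it permutes the dyadic intervals of each generation) does the job. But one step is stated incorrectly: on $S$ the coordinate map $(x_1,x_2,x_3) \mapsto (x_1,x_2)$ is \emph{not} a Borel bijection. The Sierpi\'nski tetrahedron $S$ is strictly larger than the graph $\Gamma = \{(x,y,x\oplus y)\}$; for instance both $(1/2,1/2,0)$ and $(1/2,1/2,1)$ lie in $S$ because $1/2$ has two binary expansions. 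So ``any probability measure concentrated on $S$ equals the $T$-pushforward of its own $X_1\times X_2$-projection'' is not literally a consequence of injectivity. What actually saves the argument — and is exactly what the paper proves later in \cref{lem:optimal_pi_at_most_one} — is that $S \setminus \Gamma$ has a $\mu_{12}$-null projection onto $X_1\times X_2$ (one of the first two coordinates must be a dyadic rational), so any measure concentrated on $S$ with Lebesgue $(x_1,x_2)$-projection is automatically concentrated on $\Gamma$, where the bijection argument is valid. Once you insert this measure-zero observation the chain ``$\pi$ uniting and concentrated on $S$'' $\Rightarrow$ ``$\pi$ coincides with the unique optimizer of \cref{mainprimal}'' closes correctly.

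Two further remarks. First, note the partial circularity: \cref{mainprimal} and the statement you are proving are both quoted from \cite{GlKoZi}, so your route establishes an implication between two results of the same source rather than an independent proof. Second, your sketched alternative — exhibiting $\pi$ as primal-feasible and pairing it against the explicit dual triple of \cref{maindual} via \cref{thm:noncompact_duality}, with the genuine work being the pointwise identity $\sum f_{ij} = x_1x_2x_3$ on $\Gamma$ by dyadic self-similarity — is indeed the self-contained route, and as you note it is the computation actually carried out in \cite{GlKoZi}.
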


In \cite{GlKoZi} the authors show that $\pi$ is concentrated on the set \[
\{(x, y, z) \in [0, 1]^3 \colon x \oplus y \oplus z = 0\},\] and this set is a self-similar fractal, which is called "Sierpi{\'n}sky tetrahedron". Let us verify  for the completeness of the picture the following description of the support of $\pi$.
\begin{definition}
Denote by $J_n^{a_1, a_2, a_3}$ the image of $[0, 1]^3$ under the mapping
\[(x_1, x_2, x_3) \mapsto \left(\frac{a_1 + x_1}{2^n}, \frac{a_2 + x_2}{2^n}, \frac{a_3 + x_3}{2^n}\right).
\]
Let 
\[
J_n = \bigcup_{\substack{0 \le a_i < 2^n \\ a_1 \oplus a_2 \oplus a_3 = 0}}J_n^{a_1, a_2, a_3},
\]
One can find  images of $J_1$, $J_2$ and $J_3$ on \cref{fig:sets_J}. Denote
\[
S = \bigcap_{n \ge 1}J_n.
\]
The set $S$ is called \textit{Sierpi{\'n}sky tetrahedron}.
\end{definition}
\begin{figure}

\begin{minipage}[b]{0.32\textwidth}
    \centering
    \includegraphics[width=\textwidth]{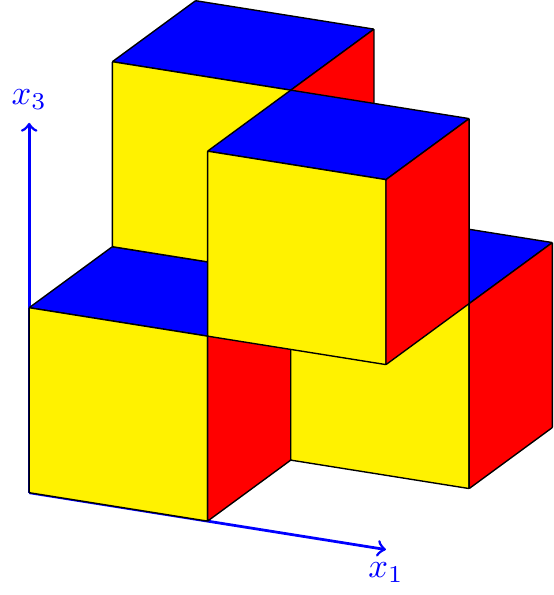}
\end{minipage}
\begin{minipage}[b]{0.32\textwidth}
    \centering
    \includegraphics[width=\textwidth]{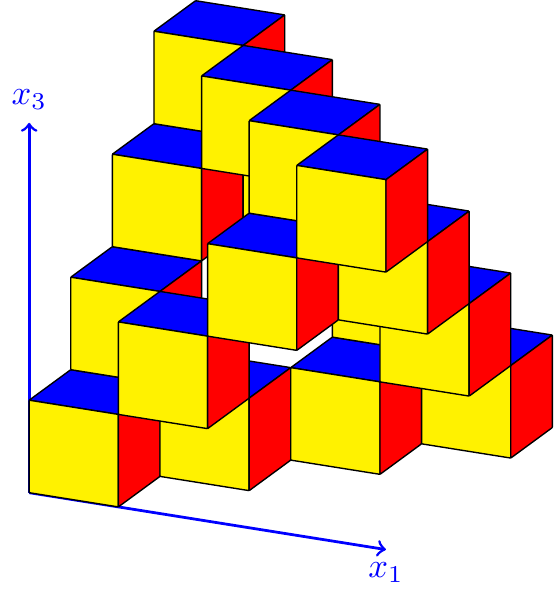}
\end{minipage}
\begin{minipage}[b]{0.32\textwidth}
    \centering
    \includegraphics[width=\textwidth]{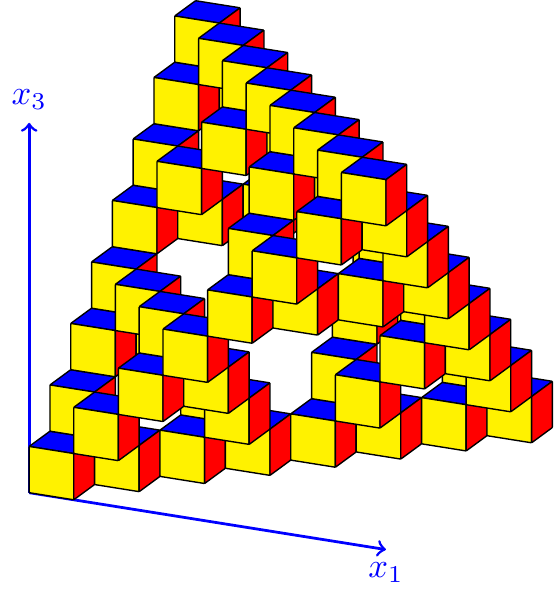}
\end{minipage}

\caption{The sets $J_1$, $J_2$ and $J_3$}
    \label{fig:sets_J}
\end{figure}
\begin{lemma}\label{lem:Jn_binary_representation}
The set $J_n$ contains a point $(x_1, x_2, x_3)$ if and only if there exist binary representations of each coordinates $x_i = \sum_{k = 1}^\infty x_{i,k} / 2^k$ such that $x_{1,k} \oplus x_{2,k} \oplus x_{3,k} = 0$ for all $1 \le k \le n$.
\end{lemma}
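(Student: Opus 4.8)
The plan is to unwind both sides of the equivalence into statements about binary digits of the coordinates, after which the claim becomes a bookkeeping exercise about the non-uniqueness of binary expansions at dyadic points.

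First I would record the coordinate-wise description of the building blocks. Since the map defining $J_n^{a_1,a_2,a_3}$ acts separately on each coordinate, one has $J_n^{a_1,a_2,a_3} = \prod_{i=1}^3 [\,a_i 2^{-n},\,(a_i+1)2^{-n}\,]$, so a point $(x_1,x_2,x_3)$ lies in it exactly when $x_i = (a_i+t_i)2^{-n}$ for some $t_i\in[0,1]$, $i=1,2,3$. Writing the $n$-bit integer $a_i$ as $a_i=\sum_{k=1}^n a_{i,k}2^{n-k}$ with $a_{i,k}\in\{0,1\}$, the condition $a_1\oplus a_2\oplus a_3 = 0$ is literally $a_{1,k}\oplus a_{2,k}\oplus a_{3,k}=0$ for every $1\le k\le n$, and $a_i2^{-n}=\sum_{k=1}^n a_{i,k}2^{-k}$.

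For the implication ``$\Leftarrow$'', given digit sequences with $x_{1,k}\oplus x_{2,k}\oplus x_{3,k}=0$ for $k\le n$, I would set $a_i=\sum_{k=1}^n x_{i,k}2^{n-k}$; then $0\le a_i\le 2^n-1$, the bitwise XOR of the $a_i$ vanishes, and $x_i-a_i2^{-n}=\sum_{k>n}x_{i,k}2^{-k}\in[0,2^{-n}]$, so $(x_1,x_2,x_3)\in J_n^{a_1,a_2,a_3}\subseteq J_n$. For ``$\Rightarrow$'', I would take $a_i$ and $t_i\in[0,1]$ with $x_i=(a_i+t_i)2^{-n}$ and $a_1\oplus a_2\oplus a_3=0$, expand each $a_i$ bitwise as above, and then pick \emph{any} binary expansion $t_i=\sum_{j\ge1}b_{i,j}2^{-j}$ — all ones when $t_i=1$, a finite one otherwise — to obtain
\[
x_i=\sum_{k=1}^n\frac{a_{i,k}}{2^k}+\sum_{j\ge1}\frac{b_{i,j}}{2^{n+j}},
\]
a binary representation of $x_i$ whose first $n$ digits are exactly $a_{i,1},\dots,a_{i,n}$; the XOR condition on the $a_i$ then yields the claim.

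The only step that requires a moment's thought is the boundary case $t_i=1$: the terminating expansion of $(a_i+1)2^{-n}$ does not, in general, begin with the digits of $a_i$, and the lemma would fail if one insisted on the canonical expansion there. Because the statement asks only for the \emph{existence} of suitable digit sequences, using the non-terminating expansion at such dyadic points resolves the issue — this is the single subtle point, and the rest is the routine translation between the two descriptions.
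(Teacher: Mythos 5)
Your proof is correct and follows essentially the same route as the paper's: decompose $J_n^{a_1,a_2,a_3}$ as a product of dyadic intervals, write $x_i=(a_i+t_i)2^{-n}$, expand $a_i$ into its $n$ bits, and concatenate with a binary expansion of the remainder $t_i$. Your explicit treatment of the $t_i=1$ boundary case (choosing the non-terminating expansion so the first $n$ digits match those of $a_i$) makes clear a point the paper passes over more quickly, but the argument is the same.
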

\begin{proof}
First, suppose that $(x_1, x_2, x_3) \in J_n$. By construction, there exist integers $a_1, a_2, a_3$ such that $0 \le a_i < 2^n$, bitwise xor of $a_1, a_2$ and $a_3$ is zero, and $(x_1, x_2, x_3) \in J_n^{a_1, a_2, a_3}$. Since
\[
J_n^{a_1, a_2, a_3} = \left[\frac{a_1}{2^n}, \frac{a_1 + 1}{2^n}\right] \times \left[\frac{a_2}{2^n}, \frac{a_2 + 1}{2^n}\right] \times \left[\frac{a_3}{2^n}, \frac{a_3 + 1}{2^n}\right],
\]
we conclude that $x_i = (a_i + y_i) / 2^n$ for all $1 \le i \le 3$, where $0 \le y_i \le 1$. 

Since $a_i < 2^n$, the binary representation of $a_i$ contains at most $n$ digits. Let $\overline{a_{i,1}a_{i,2}\dots a_{i,n}}_2$ be the binary representation of $a_i$ supplemented by zeros up to length $n$. Since $a_1 \oplus a_2 \oplus a_3 = 0$, we have $a_{1, k} \oplus a_{2,k} \oplus a_{3,k} = 0$ for all $1 \le k \le n$. Hence, if $y_i = \sum_{k = 1}^\infty y_{i,k} / 2^k$, then 
\[
x_i = \sum_{k = 1}^n \frac{a_{i,k}}{2^k} + \sum_{k = n + 1}^{\infty}\frac{y_{i, k - n}}{2^k}
\]
provided by $x_i = (a_i + y_i) / 2^n$. This equation provides a binary representation of each coordinates $x_i = \sum_{k = 1}^{\infty}x_{i,k} / 2^k$ such that $x_{1,k} \oplus x_{2,k} \oplus x_{3,k} = 0$ for all $1 \le k \le n$.

Suppose that $(x_1, x_2, x_3)$ is a point on $[0, 1]^3$ and $x_i = \sum_{k = 1}^{\infty}x_{i,k} / 2^k$ for $1 \le i \le 3$, where all $x_{i,k}$ are 0 or 1, and $x_{1,k} \oplus x_{2,k} \oplus x_{3, k} = 0$ for all $1 \le k \le n$. Denote by $a_i$ an integer formed by the first $n$ digits of $x_i$ after radix point. We have $x_i = (a_i + y_i) / 2^n$ for $1 \le i \le 3$, where $0 \le y_i \le 1$, and therefore $(x_1, x_2, x_3) \in J_n^{a_1, a_2, a_3}$. In addition, $0 \le a_i < 2^n$, and since $x_{1, k} \oplus x_{2, k} \oplus x_{3, k} = 0$ for all $1 \le k \le n$, we conclude that $a_1 \oplus a_2 \oplus a_3 = 0$. Thus, $(x_1, x_2, x_3) \in J_n^{a_1, a_2, a_3 } \subset J_n$.
\end{proof}
Using that, we can describe all points of the Sierpi{\'n}sky tetrahedron in terms of their binary representations.
\begin{proposition} \label{prop:S_binary_representation}
The Sierpi{\'n}sky tetrahedron $S$ contains a point $(x_1, x_2, x_3)$ if and only if there exist binary representations of each coordinates $x_i = \sum_{k = 1}^{\infty}x_{i,k}/2^k$ such that \[
    x_{1,k} \oplus x_{2,k} \oplus x_{3,k} = 0 \text{ for all $k$};
    \]
\end{proposition}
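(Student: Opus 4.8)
The plan is to derive \cref{prop:S_binary_representation} from \cref{lem:Jn_binary_representation} by a compactness argument. Recall that $S = \bigcap_{n \ge 1}J_n$, so a point $(x_1, x_2, x_3) \in [0,1]^3$ lies in $S$ if and only if it lies in $J_n$ for every $n$. The ``if'' direction of the proposition is then immediate: if there are binary representations $x_i = \sum_{k \ge 1}x_{i,k}/2^k$ with $x_{1,k}\oplus x_{2,k}\oplus x_{3,k} = 0$ for all $k$, then in particular this holds for $1 \le k \le n$, so $(x_1,x_2,x_3)\in J_n$ by \cref{lem:Jn_binary_representation}, for every $n$, whence $(x_1,x_2,x_3)\in S$.

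For the converse, suppose $(x_1,x_2,x_3)\in J_n$ for all $n$. One cannot simply let $n\to\infty$ in \cref{lem:Jn_binary_representation}, because the representations it produces for different values of $n$ need not agree: a dyadic rational has two binary expansions, and the lemma only controls the first $n$ digits. To fix this I would pass to the compact space $\{0,1\}^{\mathbb{N}}$ with the product topology, on which the evaluation map $(b_k)_{k\ge 1}\mapsto \sum_{k\ge 1}b_k 2^{-k}$ is continuous. For $i = 1,2,3$ let $B_i\subset\{0,1\}^{\mathbb{N}}$ be the set of binary expansions of $x_i$; this is the preimage of $\{x_i\}$ under the evaluation map, hence closed, and therefore compact. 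For each $n$ put
\[
R_n = \Bigl\{(b^{(1)},b^{(2)},b^{(3)})\in B_1\times B_2\times B_3 \colon b^{(1)}_k\oplus b^{(2)}_k\oplus b^{(3)}_k = 0 \ \text{ for } 1\le k\le n\Bigr\}.
\]
Each $R_n$ is closed (it is $B_1\times B_2\times B_3$ intersected with finitely many clopen conditions on individual coordinates), the sequence is decreasing, $R_{n+1}\subseteq R_n$, and each $R_n$ is nonempty because $(x_1,x_2,x_3)\in J_n$ supplies, via \cref{lem:Jn_binary_representation}, a triple of expansions satisfying the first $n$ xor constraints. By compactness of $B_1\times B_2\times B_3$ and the finite intersection property, $\bigcap_{n\ge 1}R_n\ne\varnothing$; any element of this intersection is a triple of binary expansions of $x_1,x_2,x_3$ for which $b^{(1)}_k\oplus b^{(2)}_k\oplus b^{(3)}_k = 0$ for every $k$, which is exactly the asserted characterization.

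The only genuinely delicate point is the one just highlighted: the non-uniqueness of binary expansions of dyadic rationals means the digit-by-digit constraints coming from the various $J_n$ need not be realized by one fixed expansion, and this is precisely what the compactness argument (equivalently, a K\"onig's lemma argument on the finitely branching tree of admissible finite prefixes) resolves. Everything else --- closedness of $B_i$, closedness of the xor conditions, and the translation between $J_n$-membership and digit constraints --- is routine and already packaged in \cref{lem:Jn_binary_representation}.
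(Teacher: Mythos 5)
Your proof is correct, and the ``if'' direction is identical to the paper's. The ``only if'' direction, however, takes a genuinely different route. You organize the problem as a compactness (equivalently, K\"onig's lemma) argument on $\{0,1\}^{\mathbb N}$: the sets $B_i$ of binary expansions of $x_i$ are closed, hence compact, the constraint sets $R_n$ form a decreasing chain of nonempty compact sets, and the finite intersection property supplies a coherent triple of expansions. The paper instead exploits a much cruder but very concrete finiteness: each real in $[0,1]$ has at most two binary expansions, so there are at most eight candidate triples of expansions of $(x_1,x_2,x_3)$; since for every $n$ at least one of those eight triples satisfies the first $n$ xor constraints, the pigeonhole principle gives a single triple that works for infinitely many $n$, and monotonicity of the constraint in $n$ upgrades ``infinitely many'' to ``all.'' Both arguments are valid and both are aimed at exactly the subtlety you flagged, namely that the expansions produced by \cref{lem:Jn_binary_representation} for different $n$ need not agree. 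The paper's version is more elementary and self-contained (no topology on sequence space is invoked); yours is more systematic and would extend without change to situations where the set of admissible expansions of a single coordinate is infinite but still forms a finitely branching tree. For the specific statement at hand, the explicit bound of eight triples makes the paper's pigeonhole step essentially immediate, so neither approach is meaningfully shorter than the other.
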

\begin{proof}
Suppose that $(x_1, x_2, x_3)$ is a point on $[0, 1]^3$ and $x_i = \sum_{k = 1}^{\infty}x_{i,k} / 2^k$ for $1 \le i \le 3$, where all $x_{i,k}$ are 0 or 1, and $x_{1,k} \oplus x_{2,k} \oplus x_{3, k} = 0$ for all $k$. Then it follows from \cref{lem:Jn_binary_representation} that $(x_1, x_2, x_3)$ is contained in $J_n$ for all $n$. Thus, \[
(x_1, x_2, x_3) \in \bigcap_{n \ge 1}J_n = S.\]

Suppose that $(x_1, x_2, x_3) \in S$. Then $(x_1, x_2, x_3) \in J_n$ for all $n$, and therefore there exist binary representations of each coordinates $x_i = \sum_{k = 1}^\infty x_{i,k}^n / 2^k$ such that $x_{1, k}^n \oplus x_{2,k}^n \oplus x_{2,k}^n = 0$ for all $1 \le k \le n$. For any nonnegative real number, there are at most two binary representations of this number, and therefore there exist at most eight tuples of binary representations of the point $(x_1, x_2, x_3)$. Hence, there exists at least one of them $x_i = \sum_{k = 1}^\infty x_{i, k}/2^k$ such that the property $x_{1, k} \oplus x_{2, k} \oplus x_{3, k} = 0$ for all $1 \le k \le n$ holds for an infinite number of $n$. Thus, $x_{1, k} \oplus x_{2, k} \oplus x_{3, k} = 0$ for all $k$.


\end{proof}

\begin{proposition}
The Sierpi{\'n}sky tetrahedron $S$ has the following properties:
\begin{assertions}
    \item \label{st:S_closed} the set $S$ a closed subset of $[0, 1]^3$;
    \item \label{st:S_contains_xor} a point $(x, y, x \oplus y)$ is contained in $S$ for all $x, y \in [0, 1]$;
    \item \label{st:S_fractal} if $S^{a_1, a_2, a_3}_n$ is the image of $S$ under a mapping \[(x_1, x_2, x_3) \mapsto \left(\frac{a_1 + x_1}{2^n}, \frac{a_2 + x_2}{2^n}, \frac{a_3 + x_3}{2^n}\right),\] then
    
    \[
    S = \bigcup_{\substack{0 \le a_i < 2^n, \\ a_1 \oplus a_2 \oplus a_3 = 0}}S^{a_1, a_2, a_3}_n
    \]
\end{assertions}
\end{proposition}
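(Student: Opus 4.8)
The plan is to deduce all three assertions from the binary-representation description of the Sierpi\'nsky tetrahedron in \cref{prop:S_binary_representation}, together with the elementary structure of the sets $J_n$. The argument for \cref{st:S_closed} is immediate: by definition $S = \bigcap_{n\ge 1}J_n$, and each $J_n$ is a \emph{finite} union of the closed boxes $J_n^{a_1,a_2,a_3} = \prod_{i=1}^3[a_i2^{-n},(a_i+1)2^{-n}]$, hence closed; an intersection of closed sets is closed.

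For \cref{st:S_contains_xor} I would fix $x,y\in[0,1]$ and take the binary expansions $x = \overline{0{,}x_1x_2\dots}_2$ and $y = \overline{0{,}y_1y_2\dots}_2$ selected by the convention that was used to define $\oplus$. By the very definition of $\oplus$ one has $x\oplus y = \overline{0{,}(x_1\oplus y_1)(x_2\oplus y_2)\dots}_2$, so setting $z_k := x_k\oplus y_k$ produces a binary expansion of $z := x\oplus y$ for which $x_k\oplus y_k\oplus z_k = 0$ at every position. Applying \cref{prop:S_binary_representation} to the triple of expansions $\{(x_k)\},\{(y_k)\},\{(z_k)\}$ yields $(x,y,x\oplus y)\in S$.

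For \cref{st:S_fractal} I would establish the two inclusions separately, again via \cref{prop:S_binary_representation}. For ``$\supseteq$'', let $(x_1,x_2,x_3)\in S_n^{a_1,a_2,a_3}$ with $a_1\oplus a_2\oplus a_3 = 0$; write $x_i = (a_i+y_i)2^{-n}$ with $(y_1,y_2,y_3)\in S$, take the length-$n$ binary word $\overline{a_{i,1}\dots a_{i,n}}_2$ of $a_i$, and, using \cref{prop:S_binary_representation}, pick expansions $y_i = \sum_{k\ge1}y_{i,k}2^{-k}$ with $y_{1,k}\oplus y_{2,k}\oplus y_{3,k} = 0$ for all $k$. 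Then $x_i = \sum_{k=1}^n a_{i,k}2^{-k} + \sum_{k>n}y_{i,k-n}2^{-k}$ is a binary expansion of $x_i$ whose digits satisfy the digit-wise xor condition for $k\le n$ (because $a_1\oplus a_2\oplus a_3 = 0$) and for $k>n$ (by the choice of expansions of $y$), so $(x_1,x_2,x_3)\in S$. For ``$\subseteq$'', let $(x_1,x_2,x_3)\in S$, fix digit-wise-xor-zero expansions $x_i = \sum_{k\ge1}x_{i,k}2^{-k}$, let $a_i$ be the integer formed by the first $n$ of these digits, and set $y_i := 2^nx_i - a_i\in[0,1]$. Then $0\le a_i<2^n$ and $a_1\oplus a_2\oplus a_3 = 0$ (read off the first $n$ digits), while the shifted digits $y_{i,k} := x_{i,n+k}$ still satisfy $y_{1,k}\oplus y_{2,k}\oplus y_{3,k} = 0$, so $(y_1,y_2,y_3)\in S$ by \cref{prop:S_binary_representation} and $(x_1,x_2,x_3) = \bigl((a_i+y_i)2^{-n}\bigr)_{i=1}^3 \in S_n^{a_1,a_2,a_3}$.

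There is no serious obstacle; the only point requiring attention is the non-uniqueness of binary expansions at dyadic rationals, and this is precisely absorbed by the existential quantifier over expansions in \cref{prop:S_binary_representation}, so at each step we are free to select a convenient expansion. If one wished to avoid expansions in \cref{st:S_fractal} altogether, the same conclusion can be obtained set-theoretically from $S = \bigcap_m J_m$, the identity $J_{n+m}\cap J_n^{a_1,a_2,a_3} = \phi_n^{a_1,a_2,a_3}(J_m)$ for the contraction $\phi_n^{a_1,a_2,a_3}\colon(x_i)\mapsto\bigl((a_i+x_i)2^{-n}\bigr)_i$, and the observation that a finite union commutes with the intersection of the decreasing families $\{J_{n+m}\cap J_n^{a_1,a_2,a_3}\}_{m\ge1}$.
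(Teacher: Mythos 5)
Your proposal is correct and follows essentially the same route as the paper's proof: closedness from $S=\bigcap_n J_n$ with each $J_n$ a finite union of closed boxes, (b) immediately from the digit description in \cref{prop:S_binary_representation}, and both inclusions of (c) by splitting binary expansions at position $n$. The concluding set-theoretic alternative for (c) is a nice extra observation but is not part of the paper's argument.
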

\begin{proof}
The set $J_n$ is closed since $J_n$ is a finite union of closed sets. Thus, since $S$ is an intersection of the closed sets $J_n$, we conclude that $S$ is closed too, and this implies \cref{st:S_closed}.

\Cref{st:S_contains_xor} trivially holds by \cref{prop:S_binary_representation}.

Let us verify \cref{st:S_fractal}. Suppose that $(x_1, x_2, x_3) \in S$. By \cref{prop:S_binary_representation}, there exist binary representations $x_i = \sum_{k = 1}^nx_{i,k} / 2^k$ such that $x_{1, k} \oplus x_{2, k} \oplus x_{3, k} = 0$ for all k. Denote by $a_i$ an integer formed by the first $n$ digits of $x_i$ after radix point. We have $0 \le a_i < 2^n$, and since $x_{1, k} \oplus x_{2, k} \oplus x_{3, k} = 0$ for all $k$, we conclude that $a_1 \oplus a_2 \oplus a_3 = 0$. In addition, $x_i = (a_i + y_i) / 2^n$, where $y_i = \sum_{k = 1}^n x_{i,n + k} / 2^k$. By \cref{prop:S_binary_representation} $(y_1, y_2, y_3) \in S$, and therefore $(x_1, x_2, x_3) \in S^{a_1, a_2, a_3}_n$. Thus
\[
S \subseteq \bigcup_{\substack{0 \le a_i < 2^n, \\ a_1 \oplus a_2 \oplus a_3 = 0}}S^{a_1, a_2, a_3}_n
\]

Suppose that $(x_1, x_2, x_3) \in S_n^{a_1, a_2, a_3}$, where $0 \le a_i < 2^n$ and $a_1 \oplus a_2 \oplus a_3 = 0$. Since $a_i < 2^n$, the binary representation of $a_i$ contains at most $n$ digits. Let $\overline{a_{i,1}a_{i,2}\dots a_{i,n}}_2$ be the binary representation of $a_i$ supplemented by zeros up to length $n$. Since $a_1 \oplus a_2 \oplus a_3 = 0$, we have $a_{1, k} \oplus a_{2,k} \oplus a_{3,k} = 0$ for all $1 \le k \le n$. 

By construction, there exists a point $(y_1, y_2, y_3) \in S$ such that $x_i = (a_i + y_i) / 2^n$. By \cref{prop:S_binary_representation}, there exist binary representations $y_i = \sum_{k = 1}^\infty y_{i,k} / 2^k$ such that $y_{1,k} \oplus y_{2,k} \oplus y_{3,k} = 0$. Hence,
\[
x_i = \frac{a_i + y_i}{2^n} = \sum_{k = 1}^n\frac{a_{i,k}}{2^k} + \sum_{k = n + 1}^{\infty}\frac{y_{i, k - n}}{2^k},
\]
and therefore by \cref{prop:S_binary_representation} $(x_1, x_2, x_3) \in S$. Thus,
\[
S \supseteq \bigcup_{\substack{0 \le a_i < 2^n, \\ a_1 \oplus a_2 \oplus a_3 = 0}}S^{a_1, a_2, a_3}_n,
\]
and this completes the proof of \cref{st:S_fractal}.
\end{proof}

Following the proof of the main result in \cite{GlKoZi} the reader can extract  the following statement:
\begin{theorem}\label{thm:optimal_xyz_on_Jn}
For $1 \le i \le 3$, let $X_i = [0, 1]$, let $\mu_{ij}$ be the Lebesgue measure restricted to the square $[0, 1]^2$, and let $c(x_1, x_2, x_3) = x_1x_2x_3$. If the measure $\pi$ is uniting for $\{\mu_{ij}\}$ and $\supp(\pi) \nsubseteq J_n$ for some $n$, then there exists a measure $\widetilde{\pi} \in \Pi(\mu_{ij})$ such that
\[
\int_{[0, 1]^3}x_1x_2x_3\,\widetilde{\pi}(dx_1, dx_2, dx_3) < \int_{[0, 1]^3}x_1x_2x_3\,\pi(dx_1, dx_2, dx_3).
\]
\end{theorem}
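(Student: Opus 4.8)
The plan is to show that if $\pi \in \Pi(\mu_{ij})$ has support not contained in $J_n$, then one can locally redistribute mass inside a single dyadic cube $J_n^{a_1,a_2,a_3}$ to strictly decrease the cost, while keeping all three two-dimensional projections fixed. First I would fix $n$ and consider the decomposition $[0,1]^3 = \bigcup_{0 \le a_i < 2^n} J_n^{a_1,a_2,a_3}$ into $2^{3n}$ congruent cubes of side $2^{-n}$. The key combinatorial observation is that the condition $\supp(\pi) \nsubseteq J_n$ means $\pi$ puts positive mass on some cube $Q = J_n^{a_1,a_2,a_3}$ with $a_1 \oplus a_2 \oplus a_3 \ne 0$. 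The consistency of the projections with the Lebesgue measures forces a rigid structure: projecting the partition of $[0,1]^3$ into cubes down to each coordinate square gives the dyadic partition of $[0,1]^2$ into $2^{2n}$ squares, each of $\mu_{ij}$-mass $4^{-n}$, and summing $\pi$-masses of cubes over fibers must reproduce these values.

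The core of the argument is a \textbf{local exchange lemma}: if $\pi$ assigns positive mass to a cube $Q$ over a ``bad'' triple $(a_1,a_2,a_3)$ with $a_1\oplus a_2\oplus a_3 \ne 0$, then among the cubes sharing pairwise projections with $Q$ one can find a ``good'' cube over a triple $(b_1,b_2,b_3)$ with $b_1\oplus b_2\oplus b_3 = 0$ and arrange a signed measure supported on a finite collection of cubes, with zero marginals, that moves mass from bad cubes toward good ones. Concretely, I would first rescale: inside each cube the cost $x_1x_2x_3$ differs from the value at a reference corner by a perturbation of order $2^{-n}$ times an affine function, so the leading-order comparison of costs between two cubes $J_n^{a}$ and $J_n^{b}$ with the same pairwise projections reduces to comparing $\frac{a_1}{2^n}\cdot\frac{a_2}{2^n}\cdot\frac{a_3}{2^n}$-type terms; since moving mass from a bad cube to a good one within the same ``pairwise-projection class'' does not change $a_i+a_j$ for any pair but does change the third coordinate, the trilinear cost strictly decreases in the appropriate direction (this is exactly the mechanism of the original optimality proof in \cite{GlKoZi}, now applied one dyadic level at a time). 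I would invoke \cref{thm:primal_solution_exists} or a direct compactness argument to ensure that after this finite modification we still have an element of $\Pi(\mu_{ij})$, and quantify the strict decrease by choosing the exchanged mass $\varepsilon > 0$ small enough that the error terms of size $O(\varepsilon \cdot 2^{-n})$ (from the non-constancy of $c$ within cubes) do not absorb the first-order gain of size $c_0 \varepsilon$ coming from the trilinear structure.

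The step I expect to be the main obstacle is constructing the \emph{marginal-preserving} local exchange: one cannot simply move mass between two cubes, because changing $\pi|_Q$ alters two of the three projections. The fix, as in the classical three-marginal ``merge'' construction, is to use a cycle of (at least) four cubes forming a combinatorial ``Latin square''-type configuration — mass is pushed along a closed loop $Q^{(1)} \to Q^{(2)} \to Q^{(3)} \to Q^{(4)} \to Q^{(1)}$ where consecutive cubes share one pair of coordinates — so that the alternating signed sum has all three marginals equal to zero. Verifying that such a loop can always be chosen so that the net cost change is strictly negative, and that the loop can be realized with the actual mass available (i.e.\ that $\pi$ has enough mass on the cubes where we need to remove it), is the delicate bookkeeping; it will rely on the fact that the row/column sums of the $\pi$-masses over each $2^n \times 2^n$ ``slab'' are all equal to the fixed Lebesgue values, so a bad cell forces a compensating bad cell elsewhere in its row and column, producing the required closed cycle. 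Once the lemma is in place, applying it to the (nonempty, by hypothesis) collection of bad cubes with positive $\pi$-mass yields the desired $\widetilde{\pi}$ with strictly smaller cost, completing the proof.
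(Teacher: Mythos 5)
The paper does not give its own proof of this theorem — it defers explicitly to \cite{GlKoZi} — but your overall strategy, a marginal-preserving local exchange within a dyadic block driven by the negativity of the mixed third difference of the trilinear cost, is indeed the right mechanism. The operative identity is that for $x_i^0 < x_i^1$ one has $\sum_{\epsilon\in\{0,1\}^3}(-1)^{\epsilon_1+\epsilon_2+\epsilon_3}\,x_1^{\epsilon_1}x_2^{\epsilon_2}x_3^{\epsilon_3} = -(x_1^1-x_1^0)(x_2^1-x_2^0)(x_3^1-x_3^0) < 0$, so the four ``even'' corners ($\epsilon_1\oplus\epsilon_2\oplus\epsilon_3=0$) always carry strictly less cost than the four ``odd'' ones. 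However, the concrete exchange you describe does not preserve the marginals, and this is a fatal flaw rather than ``delicate bookkeeping.'' A signed combination supported on four cubes $Q^{(1)}\to Q^{(2)}\to Q^{(3)}\to Q^{(4)}\to Q^{(1)}$ in which consecutive cubes share a pairwise projection can never have all three $\prj_{ij}$-marginals zero unless it is identically zero: on a $2\times2\times2$ block the kernel of the three projection maps acting on the eight cell masses is one-dimensional and is spanned by the full alternating tensor $(-1)^{\epsilon_1+\epsilon_2+\epsilon_3}$, whose support is all eight cells. (Direct check: $\sum_{\epsilon_3}m_\epsilon=0$, $\sum_{\epsilon_2}m_\epsilon=0$, $\sum_{\epsilon_1}m_\epsilon=0$ force $m_\epsilon=t(-1)^{\epsilon_1+\epsilon_2+\epsilon_3}$.) The valid elementary exchange must therefore simultaneously involve all eight sub-cubes of a block, not a 4-cycle. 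Your remark about moving mass ``within the same pairwise-projection class'' while ``changing the third coordinate'' is likewise internally inconsistent: fixing all three pairwise projections $(a_i,a_j)$ fixes all three $a_i$ and leaves nothing to change.

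A second gap is that you manipulate abstract cell masses, but the construction must move genuine measure while preserving the continuous Lebesgue marginals $\mu_{ij}$ exactly. One needs an explicit fiberwise transport inside the $2\times2\times2$ block that shifts the conditional of $\pi$ from odd cells to even cells so that each $\prj_{ij}$-fiber is unchanged; once such a rearrangement is built, the cost comparison is exact by the pointwise identity above, and there is no $O(\varepsilon\cdot 2^{-n})$ error term to beat. The expectation of such an error (and the appeal to \cref{thm:primal_solution_exists} to ``ensure membership in $\Pi(\mu_{ij})$,'' which that theorem does not provide) is a symptom that the marginal-preserving rearrangement has not actually been constructed. Supplying these two missing pieces is exactly the content of the argument in \cite{GlKoZi} that the present theorem is extracted from.
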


If $\pi$ is a solution to primal \cref{prob:dual_for_xyz}, then it follows from \cref{thm:optimal_xyz_on_Jn} that $\supp(\pi) \subseteq J_n$ for all $n$. Hence, $\supp(\pi) \subseteq \cap_{n \ge 1} J_n$, and this implies the following proposition.
\begin{proposition}\label{prop:optimal_xyz_on_J}
If $\pi$ is a solution to primal \cref{prob:dual_for_xyz}, then $\supp(\pi) \subseteq S$, where $S$ is the Sierpi{\'n}sky tetrahedron.
\end{proposition}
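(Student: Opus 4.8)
The plan is to deduce this proposition directly from \cref{thm:optimal_xyz_on_Jn}, using nothing beyond the definition $S = \bigcap_{n \ge 1} J_n$ and the fact that the support of a measure is closed. First I would fix a solution $\pi$ to the primal part of \cref{prob:dual_for_xyz}; such a $\pi$ exists because $\mu_1 \otimes \mu_2 \otimes \mu_3 \in \Pi(\mu_{ij})$, so $\Pi(\mu_{ij})$ is non-empty, and the cost $c(x_1,x_2,x_3) = x_1x_2x_3$ is non-negative and (lower semi)continuous, so \cref{thm:primal_solution_exists} applies. Optimality of $\pi$ means precisely that $\int_{[0,1]^3} x_1x_2x_3\,d\pi \le \int_{[0,1]^3} x_1x_2x_3\,d\widetilde\pi$ for every $\widetilde\pi \in \Pi(\mu_{ij})$.

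Next I would argue by contraposition. Suppose, for contradiction, that $\supp(\pi) \nsubseteq J_n$ for some $n \in \mathbb{N}$. Since $\pi$ is uniting for $\{\mu_{ij}\}$, \cref{thm:optimal_xyz_on_Jn} yields a uniting measure $\widetilde\pi \in \Pi(\mu_{ij})$ with
\[
\int_{[0,1]^3} x_1x_2x_3\,\widetilde\pi(dx_1,dx_2,dx_3) < \int_{[0,1]^3} x_1x_2x_3\,\pi(dx_1,dx_2,dx_3),
\]
which contradicts the optimality of $\pi$. Hence $\supp(\pi) \subseteq J_n$ for every $n$, and since $\supp(\pi)$ is a fixed (closed) subset of $[0,1]^3$ contained in each $J_n$, it is contained in $\bigcap_{n \ge 1} J_n = S$, which is the claimed inclusion.

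I do not expect a genuine obstacle at this step: all the substantive work — the dyadic, self-similar rearrangement showing that any portion of the mass lying outside $J_n$ can be redistributed to strictly decrease $\int x_1x_2x_3\,d\pi$ while keeping all three two-dimensional marginals $\mu_{12},\mu_{13},\mu_{23}$ fixed — is already encapsulated in \cref{thm:optimal_xyz_on_Jn}, whose proof follows the argument of \cite{GlKoZi}. The only point worth stating explicitly is that $S$ is closed (as an intersection of the closed sets $J_n$, cf.\ \cref{st:S_closed}) and equals $\bigcap_n J_n$ by definition, so no limiting or approximation argument is needed to pass from ``$\supp(\pi) \subseteq J_n$ for all $n$'' to ``$\supp(\pi) \subseteq S$''.
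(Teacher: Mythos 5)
Your proposal is correct and follows essentially the same route as the paper: invoke \cref{thm:optimal_xyz_on_Jn} to rule out $\supp(\pi) \nsubseteq J_n$ for any $n$, then pass from $\supp(\pi) \subseteq J_n$ for all $n$ to $\supp(\pi) \subseteq \bigcap_n J_n = S$. The extra remarks you add (existence of a minimizer, closedness of $S$) are harmless but not needed, since the statement is conditional on $\pi$ being a solution and the final inclusion is a pure set-theoretic consequence of the definition of $S$.
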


Using that, let us prove that there exists a unique solution to primal \cref{prob:dual_for_xyz}.
\begin{lemma}\label{lem:optimal_pi_at_most_one}
There exists at most one measure $\pi$ on $[0, 1]^3$ such that $\supp(\pi) \subseteq S$ and $\prj_{12}(\pi)$ coincides with the Lebesgue measure $\mu_{12}$ on the square $[0, 1]^2$. 
\end{lemma}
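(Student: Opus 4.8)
We prove this by identifying $S$, up to a $\mu_{12}$-null set in the first two coordinates, with the graph of the measurable map $(x_1,x_2)\mapsto x_1\oplus x_2$, and then invoking the elementary fact that a probability measure supported on a graph is uniquely determined by its projection onto the base.

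First I would record two reductions. Since $S$ is closed (\cref{st:S_closed}) and $\supp(\pi)\subseteq S$, the open set $[0,1]^3\setminus S$ is disjoint from $\supp(\pi)$, so $\pi(S)=1$; and since $\prj_{12}(\pi)=\mu_{12}$ is a probability measure, $\pi$ is itself a probability measure. Next, let $D\subset(0,1)$ be the countable set of dyadic rationals, which — with the conventions for binary expansions fixed before \cref{thm:primal_solution_for_xyz} — is exactly the set of points of $[0,1]$ admitting more than one binary representation, and put $U=([0,1]\setminus D)^2$. Because $D$ is countable, $\mu_{12}\big([0,1]^2\setminus U\big)=0$. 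The binary digit functions $x\mapsto x_k$ (for $x=\sum_k x_k2^{-k}$) are Borel, hence $T_0(x_1,x_2):=x_1\oplus x_2=\sum_{k\ge1}(x_{1,k}\oplus x_{2,k})2^{-k}$ is a Borel function on $[0,1]^2$. The key observation is that if $(x_1,x_2,x_3)\in S$ and $(x_1,x_2)\in U$, then $x_3=T_0(x_1,x_2)$: indeed $x_1,x_2$ now have unique binary representations, so \cref{prop:S_binary_representation} furnishes a binary representation $x_3=\sum_k x_{3,k}2^{-k}$ with $x_{3,k}=x_{1,k}\oplus x_{2,k}$ for every $k$, and since a real number equals the sum of any of its binary representations, $x_3=\sum_k(x_{1,k}\oplus x_{2,k})2^{-k}=T_0(x_1,x_2)$.

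Now set $\Gamma=\{(x_1,x_2,T_0(x_1,x_2)):(x_1,x_2)\in U\}$, a Borel subset of $[0,1]^3$. By the previous step $S\cap\prj_{12}^{-1}(U)\subseteq\Gamma$, and since $\pi(S)=1$ while $\pi\big(\prj_{12}^{-1}([0,1]^2\setminus U)\big)=\mu_{12}([0,1]^2\setminus U)=0$, we conclude $\pi(\Gamma)=1$. Consider $\iota\colon U\to\Gamma$, $w\mapsto(w,T_0(w))$, which is a Borel bijection with inverse $\prj_{12}|_\Gamma$; for every Borel $B\subseteq U$ one has $\iota(B)=\Gamma\cap\prj_{12}^{-1}(B)$, whence $\pi(\iota(B))=\pi(\prj_{12}^{-1}(B))=\mu_{12}(B)$. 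Therefore, for any Borel $A\subseteq[0,1]^3$,
\[
\pi(A)=\pi(A\cap\Gamma)=\pi\big(\iota(\iota^{-1}(A\cap\Gamma))\big)=\mu_{12}\big(\{w\in U:(w,T_0(w))\in A\}\big),
\]
and the right-hand side does not involve $\pi$; equivalently $\pi=\iota_*\mu_{12}=T_*\mu_{12}$, the measure of \cref{thm:primal_solution_for_xyz}. This gives uniqueness.

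The only delicate point is the bookkeeping around the dyadic rationals: verifying that the exceptional set $[0,1]^2\setminus U$ is $\mu_{12}$-null, that off this set the fiber of $S$ over $(x_1,x_2)$ is the single point $(x_1,x_2,x_1\oplus x_2)$, and that $T_0$ is genuinely Borel. Once these are in place, the remainder is the routine graph-measure argument.
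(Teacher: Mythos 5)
Your proof is correct and follows essentially the same route as the paper's: you identify $S$, over the $\mu_{12}$-full set $U$ of non-dyadic pairs, with the graph of $(x_1,x_2)\mapsto x_1\oplus x_2$, and then invoke the standard fact that a measure concentrated on a Borel graph is determined by its base projection. The paper phrases the dyadic bookkeeping slightly differently (introducing $\Gamma$ over all of $[0,1]^2$ and showing $S\setminus\Gamma$ has $\mu_{12}$-null projection, rather than restricting to $U$ from the start), but the argument is the same.
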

\begin{proof}
Let $\Gamma = \{(x, y, x \oplus y) \colon (x, y) \in [0, 1]^2\}$. It follows from \cref{st:S_contains_xor} that $\Gamma \subseteq S$. Consider the set $S_b = S \backslash \Gamma$, and consider a point $(x_1, x_2, x_3) \in S_b$. Suppose that both points $x_1$ and $x_2$ are not dyadic rationals. If $x$ is not a dyadic rational, then there exists a unique binary representation of $x$. Hence, it follows from \cref{prop:S_binary_representation} that there exists at most one $z \in [0, 1]$ such that $(x_1, x_2, z) \in S$. By \cref{st:S_contains_xor} we have $(x_1, x_2, x_1 \oplus x_2) \in S$, and therefore $x_3 = x_1 \oplus x_2$. Thus, $(x_1, x_2, x_3) \in \Gamma$, and this contradicts the point selection.

This contradiction proves that if $(x_1, x_2, x_3) \in S_b$, then at least one of $x_1$ and $x_2$ is a dyadic rational. Hence, $\mu_{12}(\prj_{12}(S_b)) = 0$, and therefore $\pi(S_b) = 0$ provided by $\prj_{12}(\pi) = \mu_{12}$. Thus, since $\supp(\pi) \subseteq S$, we get $\pi(\Gamma) = 1$.

Let $A$ be a measurable subset of $[0, 1]^3$. Since $\pi(\Gamma) = 1$, we have $\pi(A \backslash \Gamma) = 0$, and therefore
\begin{equation}\label{eq:measure_A_wrt_pi}
\pi(A) = \pi(A \cap \Gamma).
\end{equation}
Denote $A_\Gamma = A \cap \Gamma$. The set $A_\Gamma$ is a measurable subset of $\Gamma$. Since for each $(x_1, x_2) \in [0, 1]^2$ there exists exactly one $x_3$ such that $(x_1, x_2, x_3) \in \Gamma$, we get
\[
A_\Gamma = (\prj_{12}(A_\Gamma) \times X_3) \cap \Gamma.
\]
Applying equation \cref{eq:measure_A_wrt_pi} to the set $\prj_{12}(A_\Gamma) \times X_3$, we get
\[
\pi((\prj_{12}(A_\Gamma) \times X_3) \cap \Gamma) = \pi(\prj_{12}(A_\Gamma) \times X_3) = \mu_{12}(\prj_{12}(A_\Gamma))
\]
provided by $\prj_{12}(\pi) = \mu_{12}$. From all equations above we get
\[
\pi(A) = \pi(A_\Gamma) = \pi((\prj_{12}(A_\Gamma) \times X_3) \cap \Gamma) = \mu_{12}(\prj_{12}(A_\Gamma)).
\]

Thus, the measure of the set $A$ with respect to $\pi$ is independent on $\pi$, and therefore there exists at most one measure $\pi$ such that $\supp(\pi) \subseteq S$ and $\prj_{12}(\pi) = \mu_{12}$.
\end{proof}
\begin{theorem}
There exists a unique solution $\pi$ to primal \cref{prob:dual_for_xyz}.
\end{theorem}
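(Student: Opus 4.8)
The existence half is already done: by \cref{thm:primal_solution_for_xyz} the pushforward $\pi = T_*(\lambda)$ of the Lebesgue measure $\lambda$ on $[0,1]^2$ under $T(x,y)=(x,y,x\oplus y)$ is a uniting measure that attains the minimum of $\int x_1x_2x_3\,d\pi$. So it remains only to argue uniqueness, and the plan is to combine the two structural facts proved just above.

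First I would take an \emph{arbitrary} solution $\pi$ to primal \cref{prob:dual_for_xyz}. By \cref{prop:optimal_xyz_on_J} its support satisfies $\supp(\pi)\subseteq S$, where $S$ is the Sierpi\'nsky tetrahedron. On the other hand $\pi\in\Pi(\mu_{ij})$, so in particular $\prj_{12}(\pi)=\mu_{12}$, the Lebesgue measure on $[0,1]^2$. These are exactly the two hypotheses of \cref{lem:optimal_pi_at_most_one}, which asserts that at most one measure on $[0,1]^3$ can satisfy both. Hence any two solutions coincide, and combining with the existence provided by \cref{thm:primal_solution_for_xyz} we get that $\pi=T_*(\lambda)$ is the unique solution.

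There is essentially no obstacle left at this stage; all the substance has been absorbed into the earlier lemmas. The genuinely hard input is \cref{thm:optimal_xyz_on_Jn} (any non-optimally-located mass on some $J_n$ can be strictly improved, forcing $\supp(\pi)\subseteq\bigcap_n J_n=S$), from which \cref{prop:optimal_xyz_on_J} follows, and \cref{lem:optimal_pi_at_most_one}, whose key point is that $S\setminus\Gamma$ (with $\Gamma=\{(x,y,x\oplus y)\}$) projects to a $\mu_{12}$-null set because a point of $S$ off $\Gamma$ must have a dyadic rational among its first two coordinates, so $\pi(\Gamma)=1$ and $\pi$ is then determined by $\prj_{12}(\pi)=\mu_{12}$. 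Assuming those, the final theorem is a one-line corollary, and that is how I would present it.
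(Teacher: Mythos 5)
Your proof is correct and follows exactly the same route as the paper: apply \cref{prop:optimal_xyz_on_J} to confine the support of any solution to $S$, then invoke \cref{lem:optimal_pi_at_most_one} with $\prj_{12}(\pi)=\mu_{12}$ to get uniqueness, and cite an existence result for the other half. The only cosmetic difference is that you cite \cref{thm:primal_solution_for_xyz} (the explicit construction) for existence where the paper cites the general \cref{thm:primal_solution_exists}; both are valid.
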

\begin{proof}
If $\pi$ is a solution to the problem, then $\prj_{12}(\pi) = \mu_{12}$, and it follows from \cref{prop:optimal_xyz_on_J} that $\supp(\pi) \subseteq S$. By \cref{lem:optimal_pi_at_most_one}, there exists at most one measure $\pi$ with that properties. Thus, there exists at most one solution to primal \cref{prob:dual_for_xyz}.

The existence of a solution follows from \cref{thm:primal_solution_exists}.
\end{proof}

Finally, let us find exactly the support of the solution to primal \cref{prob:dual_for_xyz}.
\begin{proposition}\label{prop:supp_pi}
If $\pi$ is the solution to primal \cref{prob:dual_for_xyz}, then $\supp(\pi) = S$.
\end{proposition}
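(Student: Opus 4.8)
The plan is to combine the already-established containment $\supp(\pi)\subseteq S$ from \cref{prop:optimal_xyz_on_J} with a direct proof of the reverse inclusion $S\subseteq\supp(\pi)$. The starting observation is that, by \cref{thm:primal_solution_for_xyz} together with the uniqueness of the primal solution, $\pi$ equals the pushforward $T_*\lambda$ of the Lebesgue measure $\lambda$ on $[0,1]^2$ under $T(x,y)=(x,y,x\oplus y)$; hence $\pi(A)=\lambda(T^{-1}(A))$ for every Borel set $A$, and a point $p\in[0,1]^3$ lies in $\supp(\pi)$ precisely when $\lambda(T^{-1}(B))>0$ for every open ball $B$ centred at $p$.

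To prove $S\subseteq\supp(\pi)$ I would fix $p=(p_1,p_2,p_3)\in S$ and $\varepsilon>0$, pick $n$ with $2^{-n}<\varepsilon$, and use that $S=\bigcap_{m\ge 1}J_m\subseteq J_n$ to locate a triple $(a_1,a_2,a_3)$ with $a_1\oplus a_2\oplus a_3=0$ and $p\in J_n^{a_1,a_2,a_3}=\prod_{i=1}^3[a_i2^{-n},(a_i+1)2^{-n}]$. Then I would consider the dyadic square $Q=[a_12^{-n},(a_1+1)2^{-n})\times[a_22^{-n},(a_2+1)2^{-n})$, which has positive Lebesgue measure $2^{-2n}$, and check that for $(x,y)\in Q$, writing $x=(a_1+x')2^{-n}$ and $y=(a_2+y')2^{-n}$ with $x',y'\in[0,1)$, one gets $x\oplus y=(a_3+(x'\oplus y'))2^{-n}\in[a_32^{-n},(a_3+1)2^{-n}]$; this is exactly where the hypothesis $a_1\oplus a_2=a_3$ enters. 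It then follows that each of $|x-p_1|$, $|y-p_2|$, $|x\oplus y-p_3|$ is at most $2^{-n}<\varepsilon$, so $T(Q)\subseteq B(p,\varepsilon)$ in the sup-metric, whence $\lambda(T^{-1}(B(p,\varepsilon)))\ge\lambda(Q)=2^{-2n}>0$. Since $\varepsilon$ was arbitrary, $p\in\supp(\pi)$, and this finishes the argument.

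I expect the only delicate step to be the bookkeeping with binary expansions: one must verify that prepending the $n$-bit string of $a_i$ to the canonical expansion of $x'\in[0,1)$ yields a legitimate binary representation of $x$ (no carry occurs), so that the termwise rule defining $\oplus$ applies to the first $n$ digits irrespective of the convention chosen for dyadic rationals — this is precisely what forces $x\oplus y$ into the prescribed subcube. Everything else reduces to elementary estimates, and it is worth noting that the same computation simultaneously re-establishes $\overline{\Gamma}=S$ for the graph $\Gamma=\{(x,y,x\oplus y):(x,y)\in[0,1]^2\}$.
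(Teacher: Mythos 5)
Your proof is correct but takes a genuinely different route from the paper's. The paper never identifies $\pi$ with the explicit pushforward $T_*\lambda$; it argues purely from the two facts that $\supp(\pi) \subseteq S \subseteq J_n$ (hence $\pi(J_n)=1$) and $\prj_{12}(\pi)=\mu_{12}$. Because the $4^n$ squares $\prj_{12}(J_n^{a_1,a_2,a_3})$ with $a_1\oplus a_2\oplus a_3=0$ tile $[0,1]^2$ up to null overlaps, this pins down the exact mass $\pi(J_n^{a_1,a_2,a_3})=4^{-n}$ for each such cube, and then a short contradiction (a ball around a point of $S$ missing $\supp(\pi)$ would contain one of those cubes, which has positive $\pi$-mass) gives $\supp(\pi)=S$. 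You instead invoke uniqueness together with \cref{thm:primal_solution_for_xyz} to write $\pi=T_*\lambda$ and then, for each $p\in S$ and $\varepsilon>0$, exhibit a dyadic square $Q$ of positive Lebesgue measure inside $T^{-1}(B(p,\varepsilon))$; the heart of the computation is that $\oplus$ respects the dyadic block structure (no carry when prepending the $n$-bit block of $a_i$), so $T$ maps $Q$ into $J_n^{a_1,a_2,a_3}$. Your route is more concrete, and as you observe it re-establishes $\overline{\Gamma}=S$ as a byproduct. The paper's route buys generality: it shows that \emph{any} probability measure supported in $S$ whose $X_{12}$-projection is Lebesgue has full support equal to $S$, without ever using the explicit form of $\pi$.
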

\begin{proof}
It follows from \cref{prop:optimal_xyz_on_J} that $\supp(\pi) \subseteq S \subset J_n$ for all $n$, and therefore $\pi(J_n) = 1$. By definition of $J_n$,
\[
J_n = \bigcup_{\substack{0 \le a_i < 2^n, \\ a_1 \oplus a_2 \oplus a_3 = 0}}J_n^{a_1, a_2, a_3}.
\]
We have \[
\prj_{12}(J_n^{a_1, a_2, a_3}) = \left[\frac{a_1}{2^n}, \frac{a_1 + 1}{2^n}\right] \times \left[\frac{a_2}{2^n}, \frac{a_2 + 1}{2^n}\right]. \]
For each pair $a_1, a_2$ such that $0 \le a_1, a_2 < 2^n$ there exists a unique $a_3$ such that $0 \le a_3 < 2^n$ and $a_1 \oplus a_2 \oplus a_3 = 0$. Hence, projections to $X_1 \times X_2$ of all components of $J_n$ overlapping by the sets of measure zero with respect to $\mu_{12}$, and therefore
\begin{equation}\label{eq:pi_J_decomposition_eq}
\pi(J_n^{a_1, a_2, a_3}) = \mu_{12}(\prj_{12}(J_n^{a_1, a_2, a_3})) = \frac{1}{4^n}\; \text{ if $a_1 \oplus a_2 \oplus a_3 = 0$.}
\end{equation}

Suppose that $\supp(\pi) \ne S$. Since $\supp(\pi)$ is closed, there exist a point $x_0 \in S$ and a non-negative integer $n$ such that if $|x - x_0| < 2^{1-n}$, then $x$ is not contained in $\supp(\pi)$. Since $x_0 
\in S \subset J_n$, there exist integers $a_1, a_2, a_3$ such that $0 \le a_1, a_2, a_3 < 2^n$, bitwise xor of $a_1, a_2, a_3$ is zero, and $x_0 \in J^{a_1, a_2, a_3}_n$. We have
\[
J^{a_1, a_2, a_3}_n = \left[\frac{a_1}{2^n}, \frac{a_1 + 1}{2^n}\right] \times \left[\frac{a_2}{2^n}, \frac{a_2 + 1}{2^n}\right] \times \left[\frac{a_3}{2^n}, \frac{a_3 + 1}{2^n}\right];
\]
hence, $\mathrm{diam}(J^{a_1, a_2, a_3}_n) < 2^{1 - n}$, and therefore $\supp(\pi) \cap J^{a_1, a_2, a_3}_n = \varnothing$. This contradicts equation \cref{eq:pi_J_decomposition_eq}. 
\end{proof}
In \cite{GlKoZi} the authors also found a solution to the dual \cref{prob:dual_for_xyz}.
\begin{theorem}[Dual problem solution]
Denote
\[
f(x, y) = \int_0^x\int_0^y s\oplus t\,dsdt - \frac{1}{4}\int_0^x\int_0^x s\oplus t\,dsdt - \frac{1}{4}\int_0^y\int_0^y s\oplus t\,dsdt.
\]
Then the tuple of functions $f_{ij}\colon (x_i, x_j) \mapsto f(x_i, x_j)$ is a solution to dual \cref{prob:dual_for_xyz}.
\end{theorem}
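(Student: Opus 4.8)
The plan is to check that the tuple $\{f_{ij}\}$ with $f_{ij}(x_i,x_j)=f(x_i,x_j)$ is feasible for the dual problem and that its value coincides with the minimum of the primal problem; since there is no duality gap (\cref{thm:noncompact_duality}), this is precisely what it means to be a dual solution. The function $g(x,y):=\int_0^x\int_0^y s\oplus t\,ds\,dt$ is bounded and continuous, hence so is $f$, so $f\in L^1([0,1]^2)$ and $\{f_{ij}\}$ is an admissible competitor as soon as the pointwise constraint $f(x_1,x_2)+f(x_1,x_3)+f(x_2,x_3)\le x_1x_2x_3$ holds on $[0,1]^3$.

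The computational backbone is the Rademacher expansion of $\oplus$: writing $r_k$ for the $k$-th Rademacher function (so $r_k(s)=1-2s_k$, where $s_k$ is the $k$-th binary digit of $s$), one has $s\oplus t=\tfrac12-\tfrac12\sum_{k\ge1}2^{-k}r_k(s)r_k(t)$ pointwise; setting $\rho_k(x):=\int_0^x r_k(s)\,ds$ (a nonnegative tent function with $\|\rho_k\|_{\infty}=2^{-k}$), term-by-term integration gives the closed form $g(x,y)=\tfrac12 xy-\tfrac12\sum_{k\ge1}2^{-k}\rho_k(x)\rho_k(y)$, the series now converging uniformly. Substituting into $f(x,y)=g(x,y)-\tfrac14 g(x,x)-\tfrac14 g(y,y)$ and forming the symmetric sum collapses the one-variable corrections and yields
\[
f(x_1,x_2)+f(x_1,x_3)+f(x_2,x_3)=-S_0(x_1,x_2,x_3)+\sum_{k\ge1}2^{-k}S_0\bigl(\rho_k(x_1),\rho_k(x_2),\rho_k(x_3)\bigr),
\]
where $S_0(a,b,c):=\tfrac14(a^2+b^2+c^2)-\tfrac12(ab+bc+ca)$. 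Thus feasibility is the assertion $\Phi\ge0$ on $[0,1]^3$, where $\Phi(x_1,x_2,x_3):=x_1x_2x_3+S_0(x_1,x_2,x_3)-\sum_{k\ge1}2^{-k}S_0(\rho_k(x_1),\rho_k(x_2),\rho_k(x_3))$. For the value I would compute $\sum_{\{i,j\}}\int f_{ij}\,d\mu_{ij}=3\int_0^1\int_0^1 f(x,y)\,dx\,dy$ directly from the expansion above, using the elementary moments $\int_0^1 r_k=0$, $\int_0^1 s\,r_k(s)\,ds=-2^{-k-1}$, $\int_0^1\rho_k^2=\tfrac13 4^{-k}$; every resulting series is geometric in $8^{-k}$ and one gets $\int_0^1\int_0^1 f=\tfrac1{28}$, hence dual value $\tfrac3{28}$. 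The identical computation gives $\int_{[0,1]^2}xy(x\oplus y)\,dx\,dy=\tfrac3{28}$, which by \cref{thm:primal_solution_for_xyz} is exactly $\int x_1x_2x_3\,d\pi$, the primal minimum. Once feasibility is established, $\{f_{ij}\}$ attains the common optimal value and is therefore a dual solution; as a byproduct, $\Phi=0$ $\pi$-a.e., hence on all of $S=\supp(\pi)$ by \cref{prop:supp_pi} and continuity of $\Phi$.

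The main obstacle is the pointwise inequality $\Phi\ge0$, and it is genuinely fractal: the ``polynomial part'' $x_1x_2x_3+S_0(x_1,x_2,x_3)$ is already negative at $(\tfrac12,\tfrac12,\tfrac12)$, so no elementary domination is available and the XOR corrections are essential. I would instead exploit the dyadic self-similarity of $\oplus$. From $\rho_k(x/2)=\tfrac12\rho_{k-1}(x)$ for $k\ge2$ and $\rho_1(x/2)=x/2$ one verifies the exact identity $\Phi(x_1/2,x_2/2,x_3/2)=\tfrac18\Phi(x_1,x_2,x_3)$, and analogous relations hold on each of the eight dyadic sub-cubes of $[0,1]^3$; on the seven sub-cubes meeting a face $\{x_i\ge\tfrac12\}$ these acquire an explicit polynomial remainder coming from the one-variable terms $-\tfrac14 g(x,x)$. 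Iterating this system of recursions reduces $\Phi\ge0$ to a finite base estimate on a fixed coarse dyadic partition, after which continuity of $\Phi$ and density of the dyadic cubes give the inequality everywhere. This is, in essence, the argument of \cite{GlKoZi}, and controlling the non-self-similar polynomial remainders in the recursion is the delicate bookkeeping step.
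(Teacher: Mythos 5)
The paper does not prove this theorem: it is stated as an imported result from the earlier work \cite{GlKoZi}, and the present paper only records the formula. So there is no in-paper proof to compare against; what follows is an assessment of your proposal on its own terms.

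Your overall strategy (show feasibility and show the dual value equals the primal minimum, then invoke \cref{thm:noncompact_duality}) is the right one, and the Rademacher/Walsh machinery you set up is correct and rather elegant. I checked the key identities: with $r_k(s)=1-2s_k$ one indeed has $s\oplus t=\tfrac12-\tfrac12\sum_k 2^{-k}r_k(s)r_k(t)$; substituting and symmetrizing gives exactly your
\[
\sum_{\{i,j\}}f_{ij}(x_i,x_j)=-S_0(x_1,x_2,x_3)+\sum_{k\ge1}2^{-k}S_0\bigl(\rho_k(x_1),\rho_k(x_2),\rho_k(x_3)\bigr);
\]
the moment computations $\int_0^1\rho_k=2^{-k-1}$, $\int_0^1\rho_k^2=\tfrac13 4^{-k}$ are correct, and the resulting geometric series in $8^{-k}$ give $\iint f=\tfrac1{28}$, dual value $\tfrac3{28}$, and $\iint xy(x\oplus y)\,dx\,dy=\tfrac3{28}$ for the primal minimum. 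That part is complete and correct.

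The genuine gap is the feasibility inequality $\Phi\ge0$, and you have essentially conceded it rather than proved it. Your central identity $\Phi(x_1/2,x_2/2,x_3/2)=\tfrac18\Phi(x_1,x_2,x_3)$ is true (I verified it using $\rho_1(x/2)=x/2$ and $\rho_k(x/2)=\tfrac12\rho_{k-1}(x)$ for $k\ge2$), but by itself it carries no sign information: a clean scaling equality is satisfied by functions of either sign, and in particular it cannot by itself rule out a negative infimum attained along a scaling orbit accumulating at the origin. What you would need, and do not exhibit, is the full system of eight recursions $\Phi\bigl(S_\epsilon(\cdot)\bigr)=\tfrac18\Phi(\cdot)+R_\epsilon(\cdot)$ with $\epsilon\in\{0,1\}^3$, together with an argument that the remainders $R_\epsilon$ are nonnegative (or that some weaker property of the $R_\epsilon$ combined with a base case on a coarse partition forces $\Phi\ge0$, e.g.\ via an infimum-contradiction argument). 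Saying this ``reduces $\Phi\ge0$ to a finite base estimate'' and that ``controlling the polynomial remainders is the delicate bookkeeping step'' is an accurate description of the difficulty, but it leaves precisely the nontrivial content unproved. As written, you have verified optimality \emph{conditional on feasibility}, reduced feasibility to a clean scalar inequality, and identified the right self-similar structure to attack it, but the attack is not carried out. To close the gap you must either write out the seven off-center remainders and verify the needed sign/base-case properties, or replace this step with a different complete argument (for example, the inequality can be reduced, using the expansion you derived, to showing that the ternary quadratic form $a^2+b^2+c^2-2(ab+bc+ca)$ picks up enough compensation from the $\rho_k$ terms on each dyadic scale, which is the route \cite{GlKoZi} takes).
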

This solution to the dual problem is not unique. First, for $1 \le i \le 3$ let $f_i$ be an integrable function on the segment $[0, 1]$. Consider the following functions
\begin{align*}
    &\widehat{f}_{12}(x_1, x_2) = f_{12}(x_1, x_2) + f_1(x_1) - f_2(x_2),\\
    &\widehat{f}_{23}(x_2, x_3) = f_{23}(x_2, x_3) + f_2(x_2) - f_3(x_3),\\
    &\widehat{f}_{13}(x_1, x_3) = f_{13}(x_1, x_3) + f_3(x_3) - f_1(x_1).
\end{align*}
Clearly
\begin{align*}
    &\sum_{\{i, j\} \in \mathcal{I}_{32}}\widehat{f}_{ij}(x_i, x_j) = \sum_{\{i, j\} \in \mathcal{I}_{32}}f_{ij}(x_i, x_j)\;\text{ for all $(x_1, x_2, x_3) \in [0, 1]^3$}\\
    \intertext{and}
    &\sum_{\{i, j\} \in \mathcal{I}_{32}}\int_0^1\int_0^1 \widehat{f}_{ij}(x_i, x_j)\,dx_idx_j = \sum_{\{i, j\} \in \mathcal{I}_{32}}\int_0^1\int_0^1 f_{ij}(x_i, x_j)\,dx_idx_j,
\end{align*}
and therefore the functions $\{\widehat{f}_{ij}
\}$ are also the solution to the dual problem.

In what follows, we prove that there is no other continuous solutions to the related dual problem.
\begin{lemma}\label{lem:F_estimated_on_Jn}
If a tuple of functions $\{f_{ij}\}$ is a solution to dual \cref{prob:dual_for_xyz}, function $f_{ij}$ is continuous for all $\{i, j\} \in \mathcal{I}_{3,2}$, and $a_1$, $a_2$ and $a_3$ are non-negative integers such that $0 \le a_1, a_2, a_3 < 2^n$ and $a_1 \oplus a_2 \oplus a_3 = 0$, then 
\[
    \left|F(x_1, x_2, x_3) - x_1x_2x_3\right| \le \frac{13}{2^{3n}} \text{ for all $(x_1, x_2, x_3) \in J_n^{a_1, a_2, a_3}$,}
\]
where
\begin{align*}
&F(x_1, x_2, x_3) = f_{12}(x_1, x_2) + f_{13}(x_1, x_3) + f_{23}(x_2, x_3)
\intertext{and}
&J_n^{a_1, a_2, a_3} = \left[\frac{a_1}{2^n}, \frac{a_1 + 1}{2^n}\right] \times \left[\frac{a_2}{2^n}, \frac{a_2 + 1}{2^n}\right] \times \left[\frac{a_3}{2^n}, \frac{a_3 + 1}{2^n}\right].
\end{align*}
\end{lemma}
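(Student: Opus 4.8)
The plan is as follows. Since $F:=f_{12}+f_{13}+f_{23}\le c$ everywhere on $[0,1]^3$ by dual feasibility, the upper estimate $F(x_1,x_2,x_3)-x_1x_2x_3\le 0\le 13\cdot 2^{-3n}$ is immediate, so the entire content is the lower bound $F(x_1,x_2,x_3)\ge x_1x_2x_3-13\cdot 2^{-3n}$. I would derive it by combining two facts: that $F$ coincides \emph{exactly} with the cost on the Sierpi\'nski tetrahedron, and that every $(3,2)$-function satisfies the ``parallelepiped'' cancellation identity over any axis-parallel box. For the first fact: as $\{f_{ij}\}$ is an optimal dual tuple, $\pi$ an optimal primal measure, and there is no duality gap (\cref{thm:noncompact_duality}), one has $\sum_{ij}\int f_{ij}\,d\mu_{ij}=\int c\,d\pi$; since $f_{ij}\in L^1(\mu_{ij})$ and $\prj_{ij}\pi=\mu_{ij}$, this gives $\int F\,d\pi=\int c\,d\pi$, and together with $F\le c$ it forces $F=c$ $\pi$-a.e. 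As all $f_{ij}$, hence $F$, are continuous, $\{F=c\}$ is closed of full $\pi$-measure, so it contains $\supp\pi=S$ (\cref{prop:supp_pi}); in particular $F=c$ on the graph $\Gamma=\{(x,y,x\oplus y):x,y\in[0,1]\}\subseteq S$ (\cref{st:S_contains_xor}).

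Next I would set up the box. For any $(3,2)$-function $F$ and any $\tilde x_i\in[0,1]$ the eightfold alternating sum of $F$ over $\{x_1,\tilde x_1\}\times\{x_2,\tilde x_2\}\times\{x_3,\tilde x_3\}$ vanishes, because each summand $f_{ij}$ ignores one coordinate and the two values of that index cancel. Fix $(x_1,x_2,x_3)\in J_n^{a_1,a_2,a_3}$ with every coordinate interior to its dyadic subinterval and non-dyadic-rational (such points are dense in $J_n^{a_1,a_2,a_3}$), and put $\tilde x_1=x_2\oplus x_3$, $\tilde x_2=x_1\oplus x_3$, $\tilde x_3=x_1\oplus x_2$. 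Reading off binary digits and using $a_1\oplus a_2\oplus a_3=0$, one checks that each $\tilde x_i$ lies in the same dyadic interval of length $2^{-n}$ as $x_i$, so $|\tilde x_i-x_i|\le 2^{-n}$, and that the four vertices $(x_1,x_2,\tilde x_3)$, $(x_1,\tilde x_2,x_3)$, $(\tilde x_1,x_2,x_3)$, $(\tilde x_1,\tilde x_2,\tilde x_3)$ all lie on $\Gamma$, hence in $S$, so $F$ equals $c$ at each of them.

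To finish, solve the box identity for $F$ at the base vertex $(x_1,x_2,x_3)$, substitute $F=c$ at the four vertices above (which enter with $+$ signs) and the inequality $F\le c$ at the three remaining vertices (which enter with $-$ signs). This yields
\[
F(x_1,x_2,x_3)\ \ge\ x_1x_2x_3-\sum_{\varepsilon\in\{0,1\}^3}(-1)^{\varepsilon_1+\varepsilon_2+\varepsilon_3}c\bigl(x_1^{\varepsilon_1},x_2^{\varepsilon_2},x_3^{\varepsilon_3}\bigr)=x_1x_2x_3-(x_1-\tilde x_1)(x_2-\tilde x_2)(x_3-\tilde x_3),
\]
where $x_i^{0}=x_i$, $x_i^{1}=\tilde x_i$, and the last equality uses that $c=x_1x_2x_3$ factors, so its third mixed difference over the box factors too. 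Hence $0\le x_1x_2x_3-F(x_1,x_2,x_3)\le 2^{-3n}\le 13\cdot 2^{-3n}$ on a dense subset of $J_n^{a_1,a_2,a_3}$, and since $F$ and $c$ are continuous the estimate extends to all of $J_n^{a_1,a_2,a_3}$. (This argument in fact produces the sharper constant $1$; the stated bound $13$ is all that is needed in the sequel.)

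The main obstacle is the second step: $\oplus$ is not a group operation on $[0,1]$ and the map $(x,y)\mapsto x\oplus y$ is discontinuous at dyadic rationals because of the two binary expansions of such points, so the inclusions of the auxiliary vertices in $\Gamma$ and the containments $\tilde x_i\in I_i$ are genuinely valid only off a nowhere-dense null set of ``bad'' coordinates. The continuity hypothesis on the $f_{ij}$ is exactly what lets one prove the inequality first on the dense good set and then pass to the whole closed cube $J_n^{a_1,a_2,a_3}$.
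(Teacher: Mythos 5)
Your proof is correct, and it takes a genuinely different route from the paper's. The paper rescales the dual functions: it defines $\widehat{f}_{ij}(x_i,x_j) = 2^{3n}f_{ij}\bigl(\frac{a_i+x_i}{2^n},\frac{a_j+x_j}{2^n}\bigr) + (\text{lower-order correction})$, shows via the self-similarity $S = \bigcup S_n^{a_1,a_2,a_3}$ that $\{\widehat{f}_{ij}\}$ is again an optimal dual tuple for the same problem, and then invokes the boundedness result (\cref{thm:boundedness_F_in_32}) to get $-12 \le \widehat{F} \le 1$, which unpacks to $|F - x_1x_2x_3|\le 13/2^{3n}$. You instead exploit the same self-similarity through the $\oplus$-structure of $\Gamma$: the box-cancellation identity for $(3,2)$-functions, the equality $F = c$ on $\Gamma$, and the factorization of the alternating sum of the product cost give directly $0 \le x_1x_2x_3 - F(x_1,x_2,x_3) \le (x_1-\tilde x_1)(x_2-\tilde x_2)(x_3-\tilde x_3) \le 2^{-3n}$. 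Your approach is more elementary and self-contained — it bypasses the $\Up$-operator machinery and the global $-12\norm{c}_\infty$ bound entirely — and it produces the sharper constant $1$ in place of $13$. The one point worth making explicit is the sign of the triple product $P = \prod_i(x_i-\tilde x_i)$: your chain of inequalities forces $P\ge 0$, and this can be verified directly by looking at the first bit where $z := y_1\oplus y_2\oplus y_3$ is nonzero (either exactly one or all three of the $y_i$'s have a $1$ there, so the signs of $y_i - z\oplus y_i$ multiply to $+$), which makes the argument unconditional rather than relying on consistency with $F\le c$. Otherwise the careful restriction to non-dyadic coordinates and the density/continuity step at the end handle the discontinuities of $\oplus$ exactly as needed.
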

\begin{proof}
Since $\{f_{ij}\}$ is a solution to the dual problem, we have
\begin{equation}\label{eq:F_le_xyz}
F(x_1, x_2, x_3) \le x_1x_2x_2 \text{ for all $(x_1, x_2, x_3) \in [0, 1]^3$.}
\end{equation}
Let $\pi$ be the solution to primal \cref{prob:dual_for_xyz}. We have $\int F\,d\pi = \int x_1x_2x_2\,d\pi$, and therefore \[
 F(x_1, x_2, x_3) = x_1x_2x_3 \text{ for $\pi$-a.e. $(x_1, x_2, x_3) \in [0, 1]^3$.}
\]
The function $F(x_1, x_2, x_3) - x_1x_2x_3$ is continuous; hence, the equation holds for all $(x_1, x_2, x_3) \in \supp(\pi)$. By \cref{prop:supp_pi}, the support of $\pi$ coincides with the Sierpi{\'n}sky tetrahedron $S$, and therefore 
\begin{equation}\label{eq:F_eq_xyz_on_S}
F(x_1, x_2, x_3) = x_1x_2x_3 \text{ for all $(x_1, x_2, x_3) \in S$.}
\end{equation}

Consider the following functions:
\begin{align*}
    &\widehat{f}_{12}(x_1, x_2) = 2^{3n}f_{12}\left(\frac{a_1 + x_1}{2^n}, \frac{a_2 + x_2}{2^n}\right) - \frac{a_1a_2a_3}{3} - \frac{a_2a_3x_1 + a_1a_3x_2}{2} - a_3x_1x_2,\\
    &\widehat{f}_{13}(x_1, x_3) = 2^{3n}f_{13}\left(\frac{a_1 + x_1}{2^n}, \frac{a_3 + x_3}{2^n}\right) - \frac{a_1a_2a_3}{3} - \frac{a_2a_3x_1 + a_1a_2x_3}{2} - a_2x_1x_3,\\
    &\widehat{f}_{23}(x_2, x_3) = 2^{3n}f_{23}\left(\frac{a_2 + x_2}{2^n}, \frac{a_3 + x_3}{2^n}\right) - \frac{a_1a_2a_3}{3} - \frac{a_1a_3x_2 + a_1a_2x_3}{2} - a_1x_2x_3,
\end{align*}
where $0 \le x_i \le 1$ for $1 \le i \le 3$. We claim that $\{\widehat{f}_{ij}\}$ is a solution to the dual problem.

First, one can easily verify that
\begin{multline}\label{eq:sum_widehat_f}
\widehat{f}_{12}(x_1, x_2) + \widehat{f}_{13}(x_1, x_3) + \widehat{f}_{23}(x_2, x_3) =\\ 2^{3n}\left[F\left(\frac{a_1 + x_1}{2^n}, \frac{a_2 + x_2}{2^n}, \frac{a_3 + x_3}{2^n}\right) - \frac{a_1 + x_1}{2^n} \cdot \frac{a_2 + x_2}{2^n} \cdot \frac{a_3 + x_3}{2^n}\right] + x_1x_2x_3.
\end{multline}
Using inequality \cref{eq:F_le_xyz}, we conclude that
\begin{equation}\label{eq:widehat_f_le_xyz}
\widehat{f}_{12}(x_1, x_2) + \widehat{f}_{13}(x_1, x_3) + \widehat{f}_{23}(x_2, x_3) \le x_1x_2x_3 \text{ for all $(x_1, x_2, x_3) \in [0, 1]^3$.}
\end{equation}
If $(x_1, x_2, x_3) \in S$, then
\[
\left(\frac{a_1 + x_1}{2^n}, \frac{a_2 + x_2}{2^n}, \frac{a_3 + x_3}{2^n}\right) \in S_n^{a_1, a_2, a_3}.
\]
By \cref{st:S_fractal}, $S^{a_1, a_2, a_3}_n \subset S$; hence, if $(x_1, x_2, x_3) \in S$, then by \cref{eq:F_eq_xyz_on_S} we get
\[
F\left(\frac{a_1 + x_1}{2^n}, \frac{a_2 + x_2}{2^n}, \frac{a_3 + x_3}{2^n}\right) = \frac{a_1 + x_1}{2^n} \cdot \frac{a_2 + x_2}{2^n} \cdot \frac{a_3 + x_3}{2^n},
\]
and therefore
\[
\widehat{f}_{12}(x_1, x_2) + \widehat{f}_{13}(x_1, x_3) + \widehat{f}_{23}(x_2, x_3) = x_1x_2x_3 \text{ for all $(x_1, x_2, x_3) \in S$.}
\]
Since $\supp(\pi) = S$, we have
\begin{multline}\label{eq:widehat_f_sum_int}
\int_{[0, 1]^2}\widehat{f}_{12}(x_1, x_2)\,dx_1dx_2 + \int_{[0, 1]^2}\widehat{f}_{13}(x_1, x_3)\,dx_1dx_3 + \int_{[0, 1]^2}\widehat{f}_{23}(x_2, x_3)\,dx_2dx_3 \\
= \int_{[0, 1]^3}\left(\widehat{f}_{12}(x_1, x_2) + \widehat{f}_{13}(x_1, x_3) + \widehat{f}_{23}(x_2, x_3)\right)\,d\pi = \int_{[0, 1]^3} x_1x_2x_3\,d\pi.
\end{multline}
By equations \cref{eq:widehat_f_le_xyz,,eq:widehat_f_sum_int} we conclude that $\{\widehat{f}_{ij}\}$ is a solution to dual \cref{prob:dual_for_xyz}.

The cost function $x_1x_2x_3$ is non-negative and $\mu_{ij} = \mu_i \otimes \mu_j$ for all $\{i, j\} \in \mathcal{I}_{3,2}$. Thus, we are under  assumptions of \cref{thm:boundedness_F_in_32}. We have $\norm{x_1x_2x_3}_{\infty} = 1$, where $0 \le x_i \le 1$ for all $1 \le i \le 3$, and therefore
\[
-12 \le \widehat{f}_{12}(x_1, x_2) + \widehat{f}_{13}(x_1, x_3) + \widehat{f}_{23}(x_2, x_3) \le x_1x_2x_3 \le 1
\]
for almost all $(x_1, x_2, x_3) \in [0, 1]^3$. Since all $\widehat{f}_{ij}$ are continuous, we conclude that inequalities holds for all points, and therefore
\[
\left|\widehat{f}_{12}(x_1, x_2) + \widehat{f}_{13}(x_1, x_3) + \widehat{f}_{23}(x_2, x_3)\right| \le 12 \text{ for all $(x_1, x_2, x_3) \in [0, 1]^3$.}
\]
Using equation \cref{eq:sum_widehat_f}, we conclude that
\[
\left|F\left(\frac{a_1 + x_1}{2^n}, \frac{a_2 + x_2}{2^n}, \frac{a_3 + x_3}{2^n}\right) - \frac{a_1 + x_1}{2^n} \cdot \frac{a_2 + x_2}{2^n} \cdot \frac{a_3 + x_3}{2^n}\right| \le \frac{12 + x_1x_2x_3}{2^{3n}} \le \frac{13}{2^{3n}}
\]
for all $(x_1, x_2, x_3) \in [0, 1]^3$, and therefore 
\[
    \left|F(x_1, x_2, x_3) - x_1x_2x_3\right| \le \frac{13}{2^{3n}} \text{ for all $(x_1, x_2, x_3) \in J_n^{a_1, a_2, a_3}$.}\]
\end{proof}
\begin{lemma}\label{lem:f_change_sign_sum_est}
Let $\{f_{ij}\}$ be a solution to the dual \cref{prob:dual_for_xyz}. If $\{i, j\} \in \mathcal{I}_{3, 2}$, a number $n$ is a positive integer, numbers $a_i$ and $a_j$ are non-negative integers such that $0 \le a_i, a_j < 2^n$, and $(x_i, x_j)$ and $(y_i, y_j)$ are arbitrary points in the square 
\[
\left[\frac{a_i}{2^n}, \frac{a_i + 1}{2^n}\right] \times \left[\frac{a_j}{2^n}, \frac{a_j + 1}{2^n}\right],
\]
then
\[
\left|f_{ij}(x_i, x_j) - f_{ij}(y_i, x_j) - f_{ij}(x_i, y_j) + f_{ij}(y_i, y_j) - \int_{x_i}^{y_i}\int_{x_j}^{y_j}s \oplus t\,dsdt\right| \le \frac{54}{2^{3n}}.
\]
\end{lemma}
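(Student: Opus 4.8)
The plan is to reduce the claimed inequality to the pointwise estimate $\left|F(x_1,x_2,x_3)-x_1x_2x_3\right|\le 13\cdot 2^{-3n}$ on a dyadic cube, established in \cref{lem:F_estimated_on_Jn}, where $F=f_{12}+f_{13}+f_{23}$. By the symmetry of \cref{prob:dual_for_xyz} in the three coordinates we may assume $\{i,j\}=\{1,2\}$; write $k=3$, $a_1=a_i$, $a_2=a_j$, and set $a_3=a_1\oplus a_2$, so that $0\le a_3<2^n$ and $a_1\oplus a_2\oplus a_3=0$, i.e.\ $J_n^{a_1,a_2,a_3}$ is one of the cubes appearing in $J_n$. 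The first step is the purely algebraic identity: for every $z\in[0,1]$,
\[
F(x_1,x_2,z)-F(y_1,x_2,z)-F(x_1,y_2,z)+F(y_1,y_2,z)=\Delta,
\]
where $\Delta:=f_{12}(x_1,x_2)-f_{12}(y_1,x_2)-f_{12}(x_1,y_2)+f_{12}(y_1,y_2)$; indeed the four $f_{13}$-contributions cancel in pairs (since $f_{13}$ does not see the second coordinate) and likewise the four $f_{23}$-contributions cancel.

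Next I would specialize to $z=z_0:=a_3/2^n$. Since $x_1,y_1\in[a_1/2^n,(a_1+1)/2^n]$, $x_2,y_2\in[a_2/2^n,(a_2+1)/2^n]$ and $z_0\in[a_3/2^n,(a_3+1)/2^n]$ with $a_1\oplus a_2\oplus a_3=0$, the four points $(x_1,x_2,z_0),(y_1,x_2,z_0),(x_1,y_2,z_0),(y_1,y_2,z_0)$ all lie in the closed cube $J_n^{a_1,a_2,a_3}$, so \cref{lem:F_estimated_on_Jn} bounds each $F$-value by the corresponding product of coordinates with error at most $13\cdot 2^{-3n}$. Summing the four estimates with signs and using $x_1x_2-y_1x_2-x_1y_2+y_1y_2=(x_1-y_1)(x_2-y_2)$ yields
\[
\left|\Delta-z_0(x_1-y_1)(x_2-y_2)\right|\le \frac{4\cdot 13}{2^{3n}}=\frac{52}{2^{3n}}.
\]

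It remains to recognize $z_0(x_1-y_1)(x_2-y_2)$ as the main term of the iterated integral. For a.e.\ $(s,t)$ with $s\in(a_1/2^n,(a_1+1)/2^n)$ and $t\in(a_2/2^n,(a_2+1)/2^n)$, writing $s=(a_1+s')/2^n$, $t=(a_2+t')/2^n$ with $s',t'\in(0,1)$, the first $n$ binary digits of $s$ and $t$ are those of $a_1$ and $a_2$, so the first $n$ digits of $s\oplus t$ are those of $a_3$ and the remaining digits are those of $s'\oplus t'$; hence $s\oplus t=(a_3+s'\oplus t')/2^n$. Substituting (so that $ds\,dt=2^{-2n}\,ds'\,dt'$ and $y_\ell'-x_\ell'=2^n(y_\ell-x_\ell)$ for $\ell\in\{1,2\}$) gives
\[
\int_{x_1}^{y_1}\int_{x_2}^{y_2}s\oplus t\,ds\,dt=z_0(x_1-y_1)(x_2-y_2)+\frac{1}{2^{3n}}\int_{x_1'}^{y_1'}\int_{x_2'}^{y_2'}s'\oplus t'\,ds'\,dt',
\]
and since $0\le s'\oplus t'\le 1$ and $|y_\ell'-x_\ell'|\le 1$, the remainder has absolute value at most $2^{-3n}$. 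Combining this with the previous display by the triangle inequality gives $\left|\Delta-\int_{x_1}^{y_1}\int_{x_2}^{y_2}s\oplus t\,ds\,dt\right|\le 53\cdot 2^{-3n}\le 54\cdot 2^{-3n}$, which is the assertion.

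The only slightly delicate point is the binary-digit computation of $s\oplus t$ on a dyadic cube together with the ambiguity of binary expansions at dyadic endpoints; this is harmless for the integral since such points form a null set, and \cref{lem:F_estimated_on_Jn} applies to the closed cube, so choosing the endpoint value $z_0=a_3/2^n$ causes no difficulty. Beyond keeping track of the four error terms and the scaling factors, no real obstacle arises.
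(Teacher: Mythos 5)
Your argument is correct and follows essentially the same route as the paper: the same algebraic cancellation identity for the mixed second difference of $F$, the same four applications of \cref{lem:F_estimated_on_Jn} giving the $52\cdot 2^{-3n}$ error, and the same bound $a_3/2^n \le s\oplus t \le (a_3+1)/2^n$ for the integral. The only difference is a small shortcut: you evaluate $F$ directly at the third coordinate $z_0 = a_3/2^n$, whereas the paper evaluates at an arbitrary $x_3$ in the slab and then pays an additional $2^{-3n}$ to replace $x_3$ by $a_3/2^n$; this is why you obtain $53\cdot 2^{-3n}$ rather than the paper's $54\cdot 2^{-3n}$, both of which satisfy the stated bound.
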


Without loss of generality it can be assumed that $\{i, j\} = \{1, 2\}$. Let $a_3 = a_1 \oplus a_2$, and let $(x_1, x_2, x_3)$ and $(y_1, y_2, y_3)$ be arbitrary points of the cube $J_n^{a_1, a_2, a_3}$.
We have
\begin{multline}\label{eq:F_change_sign_sum}
F(x_1, x_2, x_3) - F(y_1, x_2, x_3) - F(x_1, y_2, x_3) + F(y_1, y_2, x_3) \\= f_{12}(x_1, x_2) - f_{12}(y_1, x_2) - f_{12}(x_1, y_2) + f_{12}(y_1, y_2).
\end{multline}
In addition,
\begin{equation}\label{eq:xyz_change_sign_sum}
x_1x_2x_3 - y_1x_2x_3 - x_1y_2x_3 + y_1y_2x_3 = x_3(x_1 - y_1)(x_2 - y_2).
\end{equation}
On the other hand, it follows from \cref{lem:F_estimated_on_Jn} that
\begin{align*}
    |&F(x_1, x_2, x_3) + F(y_1, y_2, x_3) - x_1x_2x_3 - y_1y_2x_3 \\
    &-F(y_1, x_2, x_3) - F(x_1, y_2, x_3) + y_1x_2x_3 + x_1y_2x_3| \le 4 \cdot \frac{13}{2^{3n}} = \frac{52}{2^{3n}}.
\end{align*}
Thus, taking into account equations \cref{eq:F_change_sign_sum,,eq:xyz_change_sign_sum}, we get
\begin{equation}\label{eq:f_change_sign_sum_with_x3}
\left|f_{12}(x_1, x_2) - f_{12}(y_1, x_2) - f_{12}(x_1, y_2) + f_{12}(y_1, y_2) - x_3(x_1 - y_1)(x_2 - y_2)\right| \le \frac{52}{2^{3n}}.
\end{equation}

Since $(x_1, x_2, x_3) \in J_n^{a_1, a_2, a_3}$, we have $|a_3 / 2^n - x_3| \le 2^{-n}$. Since $(y_1, y_2, y_3) \in J_n^{a_1, a_2, a_3}$, we also have $|x_1 - y_1| \le 2^{-n}$ and $|x_2 - y_2| \le 2^{-n}$. Thus,
\begin{equation}\label{eq:x3(x1-y1)(x2-y2)_est}
    \left|x_3(x_1 - y_1)(x_2 - y_2) - \frac{a_3}{2^n}(x_1 - y_1)(x_2 - y_2)\right| = \left|\frac{a_3}{2^n} - x_3\right|\cdot|x_1 - y_1| \cdot |x_2 - y_2| \le \frac{1}{2^{3n}}.
\end{equation}
Next, let $t$ be a point on the interval $(a_1 / 2^n, (a_1 + 1) / 2^n)$, and let $s$ be a point on an interval $(a_2 / 2^n, (a_2 + 1) / 2^n)$. One can easily verify that
\[
\frac{a_1 \oplus a_2}{2^n} \le s \oplus t \le \frac{(a_1 \oplus a_2) + 1}{2^n},
\]
and therefore, since $a_1 \oplus a_2 = a_3$, we get
\begin{equation}\label{eq:int_s_xor_t_est}
    \left| \int_{x_1}^{y_1}\int_{x_2}^{y_2}s \oplus t\,dsdt - \frac{a_3}{2^n}(x_1 - y_1)(x_2 - y_2)\right| \le \frac{1}{2^n}\cdot|x_1 - y_1|\cdot|x_2 - y_2| \le \frac{1}{2^{3n}}.
\end{equation}
Summarizing inequalities \cref{eq:f_change_sign_sum_with_x3,,eq:x3(x1-y1)(x2-y2)_est,,eq:int_s_xor_t_est}, we conclude that
\begin{equation*}
    \left|f_{12}(x_1, x_2) - f_{12}(y_1, x_2) - f_{12}(x_1, y_2) + f_{12}(y_1, y_2) - \int_{x_1}^{y_1}\int_{x_2}^{y_2}s \oplus t\,dsdt\right| \le \frac{54}{2^{3n}}.
\end{equation*}

\begin{lemma}\label{lem:f_ij_equals_integral}
If a tuple of functions $\{f_{ij}\}$ is a solution to dual \cref{prob:dual_for_xyz} and $f_{ij}$ is continuous for all $\{i, j\} \in \mathcal{I}_{3,2}$, then 
\[
f_{ij}(x_i, x_j) - f_{ij}(x_i, 0) - f_{ij}(0, x_j) + f_{ij}(0, 0) = \int_0^{x_i}\int_0^{x_j}s \oplus t\,dsdt
\]
for all $(x_i, x_j) \in [0, 1]^3$.
\end{lemma}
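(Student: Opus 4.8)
The plan is to deduce the identity from \cref{lem:f_change_sign_sum_est} by a summation over the dyadic grid, exploiting the cubic decay $54\cdot 2^{-3n}$ of its error term. First I would introduce, for a function $\varphi$ on $[0,1]^2$ and an axis-parallel rectangle $R=[a,b]\times[c,d]$, its rectangle increment $\Delta\varphi(R)=\varphi(b,d)-\varphi(a,d)-\varphi(b,c)+\varphi(a,c)$, and record two elementary facts: $\Delta\varphi$ is additive when $R$ is subdivided by horizontal and vertical lines, and $\Delta\varphi$ annihilates every function of a single coordinate. Setting $g_{ij}(x,y)=f_{ij}(x,y)-f_{ij}(x,0)-f_{ij}(0,y)+f_{ij}(0,0)$ and $H(x,y)=\int_0^x\int_0^y s\oplus t\,ds\,dt$, these facts give $\Delta g_{ij}(R)=\Delta f_{ij}(R)$ for every $R$ and $\Delta H([a,b]\times[c,d])=\int_a^b\int_c^d s\oplus t\,ds\,dt$; since $g_{ij}$ and $H$ both vanish on the two coordinate axes, $g_{ij}(u,v)=\Delta g_{ij}([0,u]\times[0,v])$ and likewise for $H$. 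Thus the claim reduces to showing $g_{ij}\equiv H$ on $[0,1]^2$.

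Next I would fix $(u,v)$ and a scale $n$, pass to the lower dyadic approximants $u_n=\lfloor u2^n\rfloor/2^n$ and $v_n=\lfloor v2^n\rfloor/2^n$, and partition the rectangle $[0,u_n]\times[0,v_n]$ into the at most $\lfloor u2^n\rfloor\cdot\lfloor v2^n\rfloor\le 4^n$ dyadic cells $Q$ of level $n$. Applying \cref{lem:f_change_sign_sum_est} to the two opposite corners of each cell $Q$ bounds $\bigl|\Delta f_{ij}(Q)-\Delta H(Q)\bigr|$ by $54/2^{3n}$. Summing over all cells and using additivity of the increments together with $\Delta g_{ij}=\Delta f_{ij}$, I obtain $\bigl|g_{ij}(u_n,v_n)-H(u_n,v_n)\bigr|\le 4^n\cdot 54/2^{3n}=54/2^n$. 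Finally, letting $n\to\infty$: $u_n\to u$ and $v_n\to v$, and since $f_{ij}$ (hence $g_{ij}$) is continuous and $H$ is continuous (even Lipschitz, as $0\le s\oplus t\le 1$), the left-hand side converges to $\bigl|g_{ij}(u,v)-H(u,v)\bigr|$ while the right-hand side tends to $0$. Hence $g_{ij}(u,v)=H(u,v)$, which is exactly the asserted identity.

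The argument is short, and the one genuine point is the arithmetic of the geometric factors: the per-cell error must decay faster than the number of cells grows, and here it does so with room to spare — $4^n$ cells against a $2^{-3n}$ error leaves the margin $2^{-n}$, so the \emph{cubic} decay in \cref{lem:f_change_sign_sum_est} is precisely what makes the sum collapse (quadratic decay would be useless, which explains why that estimate was pushed through with three powers of $2^{-n}$). Two minor care points: for small $n$ the approximants $u_n,v_n$ may produce a degenerate rectangle when $u$ or $v$ is tiny, but this is irrelevant in the limit; and one must check that the corner points fed into \cref{lem:f_change_sign_sum_est} indeed lie in the prescribed dyadic square, which they do, being its vertices. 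Beyond this I do not foresee any obstacle.
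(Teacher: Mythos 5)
Your proof is correct and takes essentially the same approach as the paper: partition a rectangle into level-$n$ dyadic cells, apply the per-cell estimate from \cref{lem:f_change_sign_sum_est}, telescope, and observe that $4^n\cdot 54\cdot 2^{-3n}=54\cdot 2^{-n}\to 0$. The only (minor) difference is how the non-dyadic endpoint is reached: you stay on the dyadic grid with lower approximants $u_n,v_n$ and then pass to the limit using continuity of $f_{ij}$ and $H$, whereas the paper instead sets $u_N=x_i$, $v_M=x_j$ directly — noting that the last, truncated cell in each row and column still has both opposite corners inside a single level-$n$ dyadic square, so the estimate still applies — and so obtains the inequality $|g_{ij}(x_i,x_j)-H(x_i,x_j)|\le 54\cdot 2^{-n}$ at the exact point for every $n$, without a separate limiting step. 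Both are valid; your variant uses the continuity hypothesis one extra time, while the paper's bookkeeping with $N=\lceil 2^nx_i\rceil$, $M=\lceil 2^nx_j\rceil$ saves that step.
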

\begin{proof}
Let $\{u_k\}_{k = 0}^N$ and $\{v_l\}_{l = 0}^M$ be arbitrary points on the segment $[0, 1]$. One can easily verify that
\begin{align}\label{eq:f_sum_on_squares}
\begin{split}
    \sum_{k = 1}^N\sum_{l = 1}^M&\left(f_{ij}(u_k, v_l) - f_{ij}(u_{k - 1}, v_l) - f_{ij}(u_k, v_{l - 1}) + f_{ij}(u_{k - 1}, v_{l - 1})\right)\\
     &=f_{ij}(u_N, v_M) - f_{ij}(u_0, v_M) - f_{ij}(u_N, v_0) + f_{ij}(u_0, v_0)
\end{split}
\end{align}
and
\begin{equation}\label{eq:int_xor_sum_on_squares}
    \sum_{k = 1}^N\sum_{l = 1}^M\int_{u_{k - 1}}^{u_k}\int_{v_{l - 1}}^{v_l}s\oplus t\,dsdt = \int_{u_0}^{u_N}\int_{v_0}^{v_M}s \oplus t\,dsdt.
\end{equation}

Let $(x_i, x_j)$ be an arbitrary point on the square $[0, 1]^2$. Let $N = \lceil 2^nx_i \rceil$, and let $M = \lceil 2^nx_j\rceil$. Finally, let $u_k = k / 2^n$ for all $0 \le k < N$ and $u_N = x_i$, and similarly let $v_l = l / 2^n$ for all $0 \le l < M$ and $v_M = x_j$. By construction, both points $(u_{k - 1}, v_{l - 1})$ and $(u_{k}, v_{l})$ belong to the square
\[
\left[\frac{k - 1}{2^n}, \frac{k}{2^n}\right] \times \left[\frac{l - 1}{2^n}, \frac{l}{2^n}\right],
\]
and therefore by \cref{lem:f_change_sign_sum_est} we have
\begin{equation*}
    \left|f_{ij}(u_k, v_l) - f_{ij}(u_{k - 1}, v_l) - f_{ij}(u_k, v_{l - 1}) + f_{ij}(u_{k - 1}, v_{l - 1}) - \int_{u_{k - 1}}^{u_k}\int_{v_{l - 1}}^{v_l}s \oplus t\,dsdt\right| \le \frac{54}{2^{3n}}
\end{equation*}
for all $1 \le k \le N$ and for all $1 \le l \le M$. 

Taking into account equations \cref{eq:f_sum_on_squares,,eq:int_xor_sum_on_squares}, we conclude that
\begin{align*}
&\left|f_{ij}(x_i, x_j) - f_{ij}(x_i, 0) - f_{ij}(0, x_j) + f_{ij}(0, 0) - \int_0^{x_i}\int_0^{x_j}s \oplus t\,dsdt\right| \\
    &\le\begin{aligned}[t] \sum_{k = 1}^N\sum_{l = 1}^M\Big|f_{ij}(u_k, v_l) - f_{ij}(u_{k - 1}, v_l) &- f_{ij}(u_k, v_{l - 1})\\&+ f_{ij}(u_{k - 1}, v_{l - 1}) 
- \int_{u_{k - 1}}^{u_k}\int_{v_{l - 1}}^{v_l}s \oplus t\,dsdt\Big|
\end{aligned}\\
&\le\sum_{k = 1}^N\sum_{l = 1}^M \frac{54}{2^{3n}} = \frac{54\cdot N \cdot M}{2^{3n}}.
\end{align*}
Thus, since $N, M \le 2^n$, we get
\[
\left|f_{ij}(x_i, x_j) - f_{ij}(x_i, 0) - f_{ij}(0, x_j) + f_{ij}(0, 0) - \int_0^{x_i}\int_0^{x_j}s \oplus t\,dsdt\right| \le \frac{54}{2^n}
\]
for all $(x_i, x_j) \in [0, 1]^2$ and for every positive integer $n$, and therefore
\[
f_{ij}(x_i, x_j) - f_{ij}(x_i, 0) - f_{ij}(0, x_j) + f_{ij}(0, 0) = \int_0^{x_i}\int_0^{x_j}s \oplus t\,dsdt.
\]
\end{proof}
\begin{theorem}
If a tuple of functions $\{f_{ij}\}$ is a solution to \cref{prob:dual_for_xyz} and $f_{ij}$ is continuous for all $\{i, j\} \in \mathcal{I}_{3,2}$, then there exist continuous functions $f_i \colon [0, 1] \to \mathbb{R}$, $1 \le i \le 3$, such that 
\begin{align*}
&f_{12}(x_1, x_2) = f(x_1, x_2) + f_1(x_1) - f_2(x_2), \\
&f_{23}(x_2, x_3) = f(x_2, x_3) + f_2(x_2) - f_3(x_3),\\ 
\intertext{and}
&f_{13}(x_1, x_3) = f(x_1, x_3) + f_3(x_3) - f_1(x_1),
\end{align*}
where
\[
f(x, y) = \int_0^x\int_0^y s\oplus t\,dsdt - \frac{1}{4}\int_0^x\int_0^x s\oplus t\,dsdt - \frac{1}{4}\int_0^y\int_0^y s\oplus t\,dsdt.
\]
\end{theorem}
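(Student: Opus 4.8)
The plan is to build on \cref{lem:f_ij_equals_integral}, which already determines each $f_{ij}$ up to functions of one variable. Write $g(x,y)=\int_0^x\int_0^y s\oplus t\,ds\,dt$, so \cref{lem:f_ij_equals_integral} gives
\[
f_{ij}(x_i,x_j)=g(x_i,x_j)+\phi_{ij}(x_i)+\psi_{ij}(x_j),
\]
with $\phi_{ij}(t)=f_{ij}(t,0)$ and $\psi_{ij}(t)=f_{ij}(0,t)-f_{ij}(0,0)$ continuous. Since $f(x,y)=g(x,y)-\tfrac14 g(x,x)-\tfrac14 g(y,y)$, setting $u_{ij}(t)=\phi_{ij}(t)+\tfrac14 g(t,t)$ and $v_{ij}(t)=\psi_{ij}(t)+\tfrac14 g(t,t)$ (both continuous, as $g(t,t)$ is continuous in $t$) one gets
\[
f_{12}=f+u_{12}(x_1)+v_{12}(x_2),\quad f_{13}=f+u_{13}(x_1)+v_{13}(x_3),\quad f_{23}=f+u_{23}(x_2)+v_{23}(x_3).
\]
Matching with the claimed form $f_{12}=f+f_1-f_2$, $f_{13}=f+f_3-f_1$, $f_{23}=f+f_2-f_3$ (and noting that a function of $x_i$ plus a function of $x_j$ vanishes identically only if each is constant) reduces the theorem to the following: the three continuous functions $P:=u_{12}+u_{13}$, $Q:=v_{12}+u_{23}$, $R:=v_{13}+v_{23}$ are each constant. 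Indeed, if so, the constants $\alpha,\beta,\gamma$ appearing in $u_{12}-f_1=\alpha$, $u_{13}+f_1=\beta$, $u_{23}-f_2=\gamma$ solve a linear system that is consistent precisely because $P+Q+R=0$, and $f_1:=u_{12}-\alpha$, $f_2:=-v_{12}-\alpha$, $f_3:=v_{13}+\beta$ are then continuous and a direct substitution (using $u_{12}+u_{13}=\alpha+\beta$, $v_{12}+u_{23}=\gamma-\alpha$, $v_{13}+v_{23}=-\beta-\gamma$) verifies the three identities.

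To prove $P,Q,R$ are constant I use that the tuple $(f,f,f)$ is itself a dual solution (the explicit dual solution recalled above). Arguing exactly as in the proof of \cref{lem:F_estimated_on_Jn} — an optimal dual tuple must attain equality on the support of an optimal primal $\pi$, and $\supp\pi=S$ by \cref{prop:supp_pi} — we have both
\[
f_{12}(x_1,x_2)+f_{13}(x_1,x_3)+f_{23}(x_2,x_3)=x_1x_2x_3
\quad\text{and}\quad
f(x_1,x_2)+f(x_1,x_3)+f(x_2,x_3)=x_1x_2x_3
\]
for all $(x_1,x_2,x_3)\in S$. Subtracting, $P(x_1)+Q(x_2)+R(x_3)=0$ on $S$; restricting to the graph $\{(x,y,x\oplus y):x,y\in[0,1]\}\subseteq S$ (\cref{st:S_contains_xor}) yields the functional equation $P(x)+Q(y)+R(x\oplus y)=0$ for all $x,y\in[0,1]$.

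The heart of the argument is now to deduce that $P,Q,R$ are constant from this functional equation. Putting $y=0$ gives $R=-P-Q(0)$, and substituting back gives $P(x\oplus y)-P(x)=Q(y)-Q(0)=:h(y)$. Thus $h(0)=0$, $h$ is continuous (it equals $P(y)-P(0)$), and $h(y_1\oplus y_2)=h(y_1)+h(y_2)$, i.e.\ $h$ is an additive homomorphism of $([0,1],\oplus)$. Since $\tfrac{1}{2^n}\oplus\tfrac{1}{2^n}=0$ forces $h(2^{-n})=0$, and every dyadic rational in $[0,1]$ is a finite $\oplus$-sum of such points, $h$ vanishes on a dense set, hence $h\equiv 0$ by continuity. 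Therefore $P$ is constant, so $R$ is constant, and then $Q$ is constant; moreover $P+Q+R=0$. Combined with the first paragraph this produces the required continuous $f_1,f_2,f_3$.

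I expect the main obstacle to be exactly this last step — cleanly extracting the $\oplus$-homomorphism $h$ and invoking that a continuous homomorphism $([0,1],\oplus)\to\mathbb{R}$ is trivial (one must be a little careful about the two binary expansions of dyadic rationals, but the convention fixed before \cref{thm:primal_solution_for_xyz} handles this). Everything else is bookkeeping: rewriting $f_{ij}$ via \cref{lem:f_ij_equals_integral}, translating $g$ into $f$, and solving a $3\times 3$ linear system for the constants.
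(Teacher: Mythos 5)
Your proof is correct, but you take a genuinely different route from the paper at the crucial step, so let me compare. Both arguments begin from \cref{lem:f_ij_equals_integral} and the fact that any \emph{continuous} dual tuple must satisfy $F = x_1x_2x_3$ on $S = \supp(\pi)$ (by \cref{prop:supp_pi}). From there the paper does \emph{not} compare against the explicit solution $(f,f,f)$ or pass to the full graph $\{(x,y,x\oplus y)\}$: it simply observes that the three one-parameter families $(0,x,x)$, $(x,0,x)$, $(x,x,0)$ lie in $S$, so evaluating the decomposition $F = \sum g(x_i,x_j) + \varphi_1(x_1)+\varphi_2(x_2)+\varphi_3(x_3)$ there gives a $3\times 3$ linear system whose solution is simply $\varphi_i(x) = -\tfrac12\int_0^x\int_0^x s\oplus t\,ds\,dt$ for each $i$, and then the decomposition into $f,f_1,f_2,f_3$ drops out by bookkeeping. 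Your route instead subtracts the known solution $(f,f,f)$, reduces to showing $P = u_{12}+u_{13}$, $Q = v_{12}+u_{23}$, $R = v_{13}+v_{23}$ are constants, and uses the whole graph to obtain $P(x)+Q(y)+R(x\oplus y)=0$, from which you extract the $\oplus$-homomorphism $h$. Both work; the paper's argument is a bit tighter because the three axis families already determine everything linearly, whereas your route introduces the $\oplus$-homomorphism machinery. Two remarks on that final step: (i) you do not actually need density-plus-continuity or the ``finite $\oplus$-sums of $2^{-n}$'' observation — once you have $h(y_1\oplus y_2)=h(y_1)+h(y_2)$, taking $y_1=y_2=y$ gives $h(0)=2h(y)$, and $h(0)=0$, so $h\equiv 0$ immediately (every $y\in[0,1]$ is an involution for $\oplus$, not just the $2^{-n}$); (ii) your caution about the two binary expansions of dyadic rationals is well placed in general — $\oplus$ is not associative on all of $[0,1]$ under the paper's convention — but it turns out to be moot since you never actually need associativity: the identity $h(y_1\oplus y_2)=h(y_1)+h(y_2)$ follows from $P(x\oplus y)=P(x)+h(y)$ and $h=P-P(0)$ by a two-line computation, and the step $h(y\oplus y)=2h(y)$ uses only that $y\oplus y=0$.
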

\begin{proof}
First, consider the function
\[
F(x_1, x_2, x_3) = f_{12}(x_1, x_2) + f_{13}(x_1, x_3) + f_{23}(x_2, x_3).
\]
It follows from equation \cref{eq:F_eq_xyz_on_S} that
\[
F(x_1, x_2, x_3) = x_1x_2x_3 \text{ for all $(x_1, x_2, x_3) \in S$.}
\]
By \cref{st:S_contains_xor}, all the points $(0, x, x)$, $(x, 0, x)$ and $(x, x, 0)$ are contained in $S$, and therefore
\begin{equation}\label{eq:F_on_diagonal_0}
F(0, x, x) = F(x, 0, x) = F(x, x, 0) = 0 \text{ for all $x \in [0, 1]$.}
\end{equation}
In particular, taking $x = 0$, we conclude that
\begin{equation}\label{eq:sum_zeros_f}
f_{12}(0, 0) + f_{13}(0, 0) + f_{23}(0, 0) = F(0, 0, 0) = 0.
\end{equation}

Denote $\widehat{f}_{ij}(x_i, x_i) = f_{ij}(x_i, x_j) - f_{ij}(0, 0)$. We have $\widehat{f}_{ij}(0, 0) = 0$; it follows from \cref{eq:sum_zeros_f} that
\[
F(x_1, x_2, x_3) = \widehat{f}_{12}(x_1, x_2) + \widehat{f}_{13}(x_1, x_3) + \widehat{f}_{23}(x_2, x_3).
\]
By \cref{lem:f_ij_equals_integral} we have
\begin{equation}\label{eq:widehat_f_ij_from_lemma}
\widehat{f}_{ij}(x_i, x_j) = \int_{0}^{x_i}\int_0^{x_j}s \oplus t\, dsdt + \widehat{f}_{ij}(x_i, 0) + \widehat{f}_{ij}(0, x_j),
\end{equation}
and therefore
\begin{align}\label{eq:F_sum_with_int_and_phi}
\begin{split}
F(x_1, x_2, x_3) = &\int_{0}^{x_1}\int_{0}^{x_2}s \oplus t\,dsdt + \int_{0}^{x_1}\int_{0}^{x_3}s \oplus t\,dsdt + \int_{0}^{x_2}\int_{0}^{x_3}s \oplus t\,dsdt \\
&+ \varphi_1(x_1) + \varphi_2(x_2) + \varphi_3(x_3),
\end{split}
\end{align}
where
\begin{align}\label{eq:varphi_def}
\begin{split}
    &\varphi_1(x_1) = \widehat{f}_{12}(x_1, 0) + \widehat{f}_{13}(x_1, 0),\\
    &\varphi_2(x_2) = \widehat{f}_{12}(0, x_2) + \widehat{f}_{23}(x_2, 0),\\
    &\varphi_3(x_3) = \widehat{f}_{13}(0, x_3) + \widehat{f}_{23}(0, x_3).
\end{split}
\end{align}

Since $\widehat{f}_{i, j}(0, 0) = 0$ for all $\{i, j \} \in \mathcal{I}_{3, 2}$, we have $\varphi_i(0) = 0$ for all $1 \le i \le 3$. Hence, using equations \cref{eq:F_on_diagonal_0,,eq:F_sum_with_int_and_phi} we get
\begin{align*}
    &0 = F(0, x, x) = \int_0^x\int_0^x s \oplus t\,dsdt + \varphi_2(x) + \varphi_3(x),\\
    &0 = F(x, 0, x) = \int_0^x\int_0^x s \oplus t\,dsdt + \varphi_1(x) + \varphi_3(x),\\
    &0 = F(x, x, 0) = \int_0^x\int_0^x s \oplus t\,dsdt + \varphi_1(x) + \varphi_2(x)
\end{align*}
for all $x \in [0, 1]$. Thus, we obtain
\begin{equation}\label{eq:varphi_explicit_value}
\varphi_i(x) = -\frac{1}{2}\int_0^x\int_0^x s \oplus t\,dsdt
\end{equation}
for all $x \in [0, 1]$ for $1 \le i \le 3$.

Consider the functions $f_i(x_i)$, $1 \le i \le 3$, satisfying the following equations:
\begin{align}\label{eq:f_ij_cycle1_repr}
\begin{split}
    &\widehat{f}_{12}(x_1, 0) = f_1(x_1) -\frac{1}{4}\int_0^{x_1}\int_0^{x_1} s \oplus t\,dsdt,\\
    &\widehat{f}_{23}(x_2, 0) = f_2(x_2) -\frac{1}{4}\int_0^{x_2}\int_0^{x_2} s \oplus t\,dsdt,\\
    &\widehat{f}_{13}(0, x_3) = f_3(x_3) -\frac{1}{4}\int_0^{x_3}\int_0^{x_3} s \oplus t\,dsdt.
\end{split}
\end{align}
The function $f_i$ is continuous for $1 \le i \le 3$. Combining equations \cref{eq:varphi_def,eq:varphi_explicit_value} we get
\[
\widehat{f}_{12}(0, x_2) = \varphi_2(x_2) - \widehat{f}_{23}(x_2, 0) = -\frac{1}{2}\int_0^{x_2}\int_0^{x_2} s \oplus t\,dsd - \widehat{f}_{23}(x_2, 0),
\]
and using the representation of $\widehat{f}_{23}$ from equation \cref{eq:f_ij_cycle1_repr} we get
\begin{equation}\label{eq:f_ij_cycle2_repr}
\widehat{f}_{12}(0, x_2) = -f_2(x_2) - \frac{1}{4}\int_{0}^{x_2}\int_{0}^{x_2} s \oplus t\,dsdt.
\end{equation}
Substituting equations \cref{eq:f_ij_cycle1_repr,eq:f_ij_cycle2_repr} into \cref{eq:widehat_f_ij_from_lemma} we obtain the following relation:
\begin{align*}
&\widehat{f}_{12}(x_1, x_2) = \int_0^{x_1}\int_0^{x_2} s \oplus t\,dsdt + \widehat{f}_{12}(x_1, 0) + \widehat{f}_{12}(0, x_2)\\
&=\int_0^{x_1}\int_0^{x_2} s \oplus t\,dsdt - \frac{1}{4}\int_0^{x_1}\int_0^{x_1} s \oplus t\,dsdt - \frac{1}{4}\int_0^{x_2}\int_0^{x_2} s \oplus t\,dsdt + f_1(x_1) - f_2(x_2)\\
&= f(x_1, x_2) + f_1(x_1) - f_2(x_2).
\end{align*}
Similarly, we conclude that $\widehat{f}_{23}(x_2, x_3) = f(x_2, x_3) + f_2(x_2) - f_3(x_3)$ and $\widehat{f}_{13}(x_1, x_3) = f(x_1, x_3) + f_3(x_3) - f_1(x_1)$.

Finally, since $f_{12}(0, 0) + f_{13}(0, 0) + f_{23}(0, 0) = 0$, there exist real numbers $C_1$, $C_2$ and $C_3$ such that $f_{12}(0, 0) = C_1 - C_2$, $f_{23}(0, 0) = C_2 - C_3$ and $f_{13}(0, 0) = C_3 - C_1$. Thus, 
\begin{align*}
&f_{12}(x_1, x_2) = \widehat{f}_{12}(x_1, x_2) + f_{12}(0, 0) = f(x_1, x_2) + (f_1(x_1) + C_1) - (f_2(x_2) + C_2),\\
&f_{23}(x_2, x_3) = \widehat{f}_{23}(x_2, x_3) + f_{23}(0, 0) = f(x_2, x_3) + (f_2(x_2) + C_2) - (f_3(x_3) + C_3),\\
&f_{11}(x_1, x_3) = \widehat{f}_{13}(x_1, x_3) + f_{13}(0, 0) = f(x_1, x_3) + (f_3(x_3) + C_3) - (f_1(x_1) + C_1).
\end{align*}
\end{proof}

\subsection{Example of a discontinuous solution to a dual problem}

It is known that any dual multimarginal problem admits a regular
 solution provided the cost function is regular. For instance, applying the Legendre-type transformation, the reader can easily verify that for a Lipschitz cost functions there exists a Lipschitz dual solution. 
In this section we prove that a natural solution to the dual $(3,2)$-problem can be even discontinuous and (in a sense) unique.

Consider the following $(3, 2)$-problem.

\begin{problem}\label{prob:discontinuous}
For $1 \le i \le 3$, let $X_i = [0, 1]$, let $\mu_{ij}$ be the restriction of the Lebesgue measure onto the square $[0, 1]^2$, and let $c(x_1, x_2, x_3) = \max(0, x_1 + x_2 + 3x_3 - 3)$.

\textbf{\upshape Primal problem.} Find a uniting measure $\pi \in \Pi(\mu_{ij})$ such that
\[
\int c(x_1, x_2, x_3)\,d\pi \to \min.
\]

\textbf{\upshape Dual problem.} Find a tuple of functions $\{f_{ij}\} \subset L^1([0, 1]^2)$ such that 
\begin{align*}
&\sum_{\{i, j\} \in \mathcal{I}_{3,2}} f_{ij}(x_i, x_j) \le c(x_1, x_2, x_3) \;\text{for all $(x_1, x_2, x_3) \in [0, 1]^3$,}\\
&\sum_{\{i, j\} \in \mathcal{I}_{3,2}}\int_0^1\int_0^1 f_{ij}(x_i, x_j)\,dx_idx_j \to \max.
\end{align*}
\end{problem}

The cost function $c(x_1, x_2, x_3) = \max(0, x_1 + x_2 + 3x_3 - 3)$ is Lipschitz continuous, and the tuple of measures $\{\mu_{ij}\}$ is redicible; hence, there is no duality gap, and solutions to both primal and dual problems exist.

\begin{proposition}\label{prop:discontinuous_F_is_compatible}
Let
\begin{align*}
    &f_{12}(x_1, x_2) = 0 \text{ for all points $(x_1, x_2) \in [0, 1]^2$};\\
    &f_{13}(x_1, x_3) = \begin{cases}
    0, &\text{if $x_3 < \frac{2}{3}$},\\
    x_1 + \frac{3}{2}x_3 - \frac{3}{2}, &\text{if $x_3 \ge \frac{2}{3}$};
    \end{cases}\\
    &f_{23}(x_2, x_3) = \begin{cases}
    0, &\text{if $x_3 < \frac{2}{3}$},\\
    x_2 + \frac{3}{2}x_3 - \frac{3}{2}, &\text{if $x_3 \ge \frac{2}{3}$}.
    \end{cases}
\end{align*}
Denote $F(x_1, x_2, x_3) = f_{12}(x_1, x_2) + f_{13}(x_1, x_3) + f_{23}(x_2, x_3)$. Then 
\begin{assertions}
    \item \label{st:discontinuous_F_le_c}$F(x_1, x_2, x_3) \le c(x_1, x_2, x_3)$ for all $(x_1, x_2, x_3) \in [0, 1]^3$;
    \item \label{st:F_eq_c_pi113_ae} if the value of $x_1 + x_2 + 3x_3$ is integer and $(x_1, x_2, x_3) \ne (0, 0, 2/3)$, then $F(x_1, x_2, x_3) = c(x_1, x_2, x_3)$.
\end{assertions}
\end{proposition}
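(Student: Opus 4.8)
The plan is to prove both assertions by a direct case split according to whether $x_3 < \frac{2}{3}$ or $x_3 \ge \frac{2}{3}$, using the explicit piecewise formulas for $f_{13}$, $f_{23}$ and the identity $f_{12} \equiv 0$. The first step is to compute $F = f_{12} + f_{13} + f_{23}$ on each of the two regions. On $\{x_3 < \frac{2}{3}\}$ all three summands vanish, so $F \equiv 0$ there. On $\{x_3 \ge \frac{2}{3}\}$ one has
\[
F(x_1, x_2, x_3) = \Bigl(x_1 + \tfrac{3}{2}x_3 - \tfrac{3}{2}\Bigr) + \Bigl(x_2 + \tfrac{3}{2}x_3 - \tfrac{3}{2}\Bigr) = x_1 + x_2 + 3x_3 - 3.
\]

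For \cref{st:discontinuous_F_le_c} this computation already suffices: if $x_3 < \frac{2}{3}$ then $F = 0 \le c$ because $c = \max(0,\cdot)$ is non-negative; if $x_3 \ge \frac{2}{3}$ then $F = x_1 + x_2 + 3x_3 - 3 \le \max(0, x_1 + x_2 + 3x_3 - 3) = c$. Nothing beyond the definition of $\max$ is needed, so I would simply record these two lines.

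For \cref{st:F_eq_c_pi113_ae} I would run the same dichotomy, now assuming $N := x_1 + x_2 + 3x_3 \in \mathbb{Z}$ and $(x_1, x_2, x_3) \ne (0, 0, \frac{2}{3})$. If $x_3 < \frac{2}{3}$, then from $x_1, x_2 \le 1$ and $3x_3 < 2$ we get $N < 4$, hence $N \le 3$ since $N$ is an integer, so $c = \max(0, N - 3) = 0 = F$. If $x_3 \ge \frac{2}{3}$, then $3x_3 \ge 2$, so $N \ge 2$; the value $N = 2$ would force $x_1 + x_2 = 2 - 3x_3 \le 0$, i.e. $x_1 = x_2 = 0$ and $x_3 = \frac{2}{3}$, which is the excluded point, so $N \ge 3$ and thus $c = \max(0, N - 3) = N - 3 = F$.

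The only place where any care is required is ruling out $N = 2$ in the second case, which is exactly where the hypothesis $(x_1, x_2, x_3) \ne (0, 0, \frac{2}{3})$ is used; everything else reduces to comparing a linear expression with $\max(0,\cdot)$ and to the elementary remark that a real number which is $<4$ (resp. $\ge 2$) and happens to be an integer is $\le 3$ (resp. $\ge 2$). I therefore do not expect a genuine obstacle: the proposition is essentially bookkeeping, the substantive content being the choice of the functions $f_{ij}$, which is already given.
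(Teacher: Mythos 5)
Your proof is correct and matches the paper's argument essentially step for step: both derive the piecewise representation of $F$, conclude \cref{st:discontinuous_F_le_c} directly from the definition of $\max$, and prove \cref{st:F_eq_c_pi113_ae} by showing $x_1+x_2+3x_3 \le 3$ when $x_3 < \tfrac{2}{3}$ and $x_1+x_2+3x_3 > 2$ (hence $\ge 3$, as an integer) when $x_3 \ge \tfrac{2}{3}$, with the excluded point handling the borderline case $N=2$.
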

\begin{proof}
First, one can easily verify the following representation for the function $F$:
\begin{align}\label{eq:discontinuous_F_repr}
    F(x_1, x_2, x_3) = \begin{cases}
    0, &\text{if $x_3 < \frac{2}{3}$},\\
    x_1 + x_2 + 3x_3 - 3, &\text{if $x_3 \ge \frac{2}{3}$}.
    \end{cases}
\end{align}
Thus, $F(x_1, x_2, x_3) \le \max(0, x_1 + x_2 + 3x_3 - 3) = c(x_1, x_2, x_3)$ for all $(x_1, x_2, x_3) \in [0, 1]^3$, and this implies \cref{st:discontinuous_F_le_c}.

Suppose that the value of $x_1 + x_2 + 3x_3$ is integer. Consider the case $x_3 < 2/3$. Equation \cref{eq:discontinuous_F_repr} implies that $F(x_1, x_2, x_3) = 0$. Since $x_1, x_2 \le 1$, we have $x_1 + x_2 + 3x_3 < 4$, and therefore $x_1 + x_2 + 3x_3 \le 3$. Thus, $c(x_1, x_2, x_3) = \max(x_1 + x_2 + 3x_3 - 3, 0) = 0 = F(x_1, x_2, x_3)$.

Consider the case $x_3 \ge 2/3$. By equation \cref{eq:discontinuous_F_repr}, $F(x_1, x_2, x_3) = x_1 + x_2 + 3x_3 - 3$. If $(x_1, x_2, x_3) \ne (0, 0, 2/3)$, then $x_1 + x_2 + 3x_3 > 2$, and therefore, since $x_1 + x_2 + 3x_3$ is integer, $x_1 + x_2 + 3x_3 \ge 3$. Thus, if $(x_1, x_2, x_3) \ne (0, 0, 2/3)$, then $c(x_1, x_2, x_3) = x_1 + x_2 + 3x_3 - 3 = F(x_1, x_2, x_3)$, and this implies \cref{st:F_eq_c_pi113_ae}.
\end{proof}

We claim that the constructed tuple of functions $\{f_{ij}\}$ is a solution to the dual \cref{prob:discontinuous}. By \cref{prop:discontinuous_F_is_compatible} it is enough to find a measure $\pi \in \Pi(\mu_{ij})$ such that $\pi$ is concentrated on the set $\{(x_1, x_2, x_3) \colon \mathrm{frac}(x_1 + x_2 + 3x_3) = 0\}$.
The proof of the following lemma is easy and is left to the reader.

\begin{lemma}
There exists a measure $\pi_{1, 1, 1}$ concentrated on the set \[\{(x_1, x_2, x_3) \colon \mathrm{frac}(x_1 + x_2 + x_3) = 0\}\] 
such that $\prj_{ij}(\pi)$ 
coincides  with the Lebesgue measure restricted to the square $[0, 1]^2$ for all $\{i, j\} \in \mathcal{I}_{3, 2}$. 
\end{lemma}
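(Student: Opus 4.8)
The plan is to exhibit $\pi_{1,1,1}$ concretely as the image of the two-dimensional Lebesgue measure under an explicit graph map, and to verify the three projection identities by Fubini's theorem together with the rotation-invariance of Lebesgue measure on the circle $\mathbb{R}/\mathbb{Z}$.

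First I would set, for $(x_1,x_2)\in[0,1]^2$ with $x_1+x_2\notin\mathbb{Z}$,
\[
h(x_1,x_2)=1-\{x_1+x_2\}\in(0,1),
\]
where $\{t\}$ denotes the fractional part of $t$, and I would put $h\equiv 0$ on the Lebesgue-null set $\{x_1+x_2\in\mathbb{Z}\}$ (the choice there is irrelevant). Let $\Phi\colon[0,1]^2\to[0,1]^3$, $\Phi(x_1,x_2)=(x_1,x_2,h(x_1,x_2))$, and define $\pi_{1,1,1}=\Phi_*\mu_{12}$, the push-forward of the Lebesgue measure on the square; it is a probability measure on $[0,1]^3$. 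For $x_1+x_2\notin\mathbb{Z}$ one has $x_1+x_2+h(x_1,x_2)=\lfloor x_1+x_2\rfloor+1\in\mathbb{Z}$, so $\pi_{1,1,1}$ is concentrated on $\{(x_1,x_2,x_3)\colon\mathrm{frac}(x_1+x_2+x_3)=0\}$, which is the first assertion.

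Next I would check the marginals. Since $\Phi$ is a section of $\prj_{12}$, we get $\prj_{12}(\pi_{1,1,1})=\mu_{12}$ at once. For $\prj_{13}$ and a bounded measurable $\varphi$ on $[0,1]^2$,
\[
\int\varphi\,d\prj_{13}(\pi_{1,1,1})=\int_0^1\!\!\int_0^1\varphi\bigl(x_1,1-\{x_1+x_2\}\bigr)\,dx_2\,dx_1,
\]
and for each fixed $x_1$ the map $x_2\mapsto\{x_1+x_2\}$ pushes Lebesgue measure on $[0,1]$ forward to Lebesgue measure on $[0,1]$ (it is a rotation of $\mathbb{R}/\mathbb{Z}$), so the inner integral equals $\int_0^1\varphi(x_1,1-u)\,du=\int_0^1\varphi(x_1,x_3)\,dx_3$; hence $\prj_{13}(\pi_{1,1,1})$ is the Lebesgue measure on $[0,1]^2$. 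Integrating in $x_1$ first gives the same for $\prj_{23}(\pi_{1,1,1})$. Conceptually this is just the statement that, on the torus $\mathbb{T}^3=(\mathbb{R}/\mathbb{Z})^3$, the Haar measure on the subtorus $\{t_1+t_2+t_3=0\}$ projects to the Haar measure on each coordinate $\mathbb{T}^2$, because $(t_1,t_2,t_3)\mapsto(t_1,t_2)$ restricts to a group isomorphism of that subtorus onto $\mathbb{T}^2$; the map $\Phi$ is just this chart read off in the identification $\mathbb{T}\cong[0,1)$.

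There is no real obstacle here: the only point needing a word of care is the behaviour on the set where $x_1+x_2$ is an integer, but that set is Lebesgue-null and carries no mass of $\pi_{1,1,1}$, so it affects neither the concentration claim nor the projection computations.
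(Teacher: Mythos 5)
Your construction is correct. Since the paper explicitly leaves this lemma to the reader, there is no proof to compare against, but your argument --- pushing forward Lebesgue measure on $[0,1]^2$ under the graph map $\Phi(x_1,x_2)=(x_1,x_2,1-\{x_1+x_2\})$ and using the rotation-invariance of Lebesgue measure on $\mathbb{R}/\mathbb{Z}$ to verify the $13$- and $23$-marginals --- is exactly the natural one, and the torus picture you give (Haar measure on the subtorus $\{t_1+t_2+t_3=0\}\subset\mathbb{T}^3$) is the clean conceptual formulation of it. Your handling of the Lebesgue-null set $\{x_1+x_2\in\mathbb{Z}\}$ is also correct and sufficient.
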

Using this lemma, we prove a more general statement.
\begin{proposition}\label{prop:fraction_part_measure}
Assume we are given positive integers $a_1$, $a_2$ and $a_3$. Then there exists a measure $\pi_{a_1, a_2, a_3} \in \Pi(\mu_{ij})$ concentrated on the set \[\{(x_1, x_2, x_3) \colon \mathrm{frac}(a_1x_1 + a_2x_2 + a_3x_3) = 0\}.\]
\end{proposition}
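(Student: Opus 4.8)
The idea is to reduce the general case to the case $a_1=a_2=a_3=1$ handled by the preceding lemma, using the ``multiplication by an integer'' self-map of the circle together with the scaling/fractal structure already exploited elsewhere in the paper. First I would observe that the statement is really about measures on the torus $(\mathbb{R}/\mathbb{Z})^3$: the condition $\mathrm{frac}(a_1x_1+a_2x_2+a_3x_3)=0$ describes the preimage under the group homomorphism $(x_1,x_2,x_3)\mapsto a_1x_1+a_2x_2+a_3x_3$ of $0\in\mathbb{R}/\mathbb{Z}$. The natural tool is the map $m_a\colon [0,1)\to[0,1)$, $x\mapsto \mathrm{frac}(ax)$, which pushes the Lebesgue measure on $[0,1]$ forward to itself (it is an $a$-to-$1$ covering, measure-preserving). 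Applying $m_{a_1}\times m_{a_2}\times m_{a_3}$ to the measure $\pi_{1,1,1}$ from the lemma would produce a measure on $[0,1]^3$ concentrated on $\{\mathrm{frac}(a_1x_1+a_2x_2+a_3x_3)=0\}$, and one only has to check that the two-dimensional projections remain Lebesgue. This holds because $\prj_{ij}(\pi_{1,1,1})$ is Lebesgue on $[0,1]^2$ and $m_{a_i}\times m_{a_j}$ preserves the two-dimensional Lebesgue measure (being a product of one-dimensional measure-preserving maps), and because pushforward commutes with projection: $\prj_{ij}\circ(m_{a_1}\times m_{a_2}\times m_{a_3}) = (m_{a_i}\times m_{a_j})\circ\prj_{ij}$.

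Concretely, the steps are: (1) record that for a positive integer $a$ the map $m_a(x)=\mathrm{frac}(ax)$ is Borel and satisfies $(m_a)_*\lambda = \lambda$, where $\lambda$ is Lebesgue measure on $[0,1]$ (a routine change of variables on each interval $[k/a,(k+1)/a)$); (2) define $\Phi = m_{a_1}\times m_{a_2}\times m_{a_3}\colon [0,1]^3\to[0,1]^3$ and set $\pi_{a_1,a_2,a_3} = \Phi_*\pi_{1,1,1}$; (3) verify $\pi_{a_1,a_2,a_3}\in\Pi(\mu_{ij})$ by the projection-pushforward identity above together with $(m_{a_i}\times m_{a_j})_*\mu_{ij} = \mu_{ij}$, which follows from Fubini and step (1); (4) verify the support condition: if $(x_1,x_2,x_3)\in\supp(\pi_{1,1,1})$ then $\mathrm{frac}(x_1+x_2+x_3)=0$, hence $a_1\mathrm{frac}(x_1) \equiv a_1 x_1$ and summing, $a_1\Phi_1(x) + a_2\Phi_2(x) + a_3\Phi_3(x) = a_1 m_{a_1}(x_1)+\cdots \equiv a_1 x_1 + a_2 x_2 + a_3 x_3 \equiv 0 \pmod 1$, so the image point lies in $\{\mathrm{frac}(a_1 x_1 + a_2 x_2 + a_3 x_3) = 0\}$. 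The small bookkeeping point here is that $m_{a}(x) = \mathrm{frac}(ax)$ satisfies $m_a(x) \equiv ax \pmod 1$, so $a_i m_{a_i}(x_i) \equiv a_i^2 x_i$ is not what we want — instead one should use that $a_i \cdot \mathrm{frac}(a_i x_i)$ need not be related simply to $x_i$, so it is cleaner to phrase $\Phi$ so that $\sum a_i [\Phi(x)]_i \equiv \sum a_i \cdot \mathrm{frac}(a_i x_i)$ and note this is congruent mod $1$ to $\sum a_i^2 x_i$ — which is still an integer combination, but not the Lebesgue-preserving normalization.

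Because of this, the honest approach is slightly different and I would instead take $\Phi_i(x_i) = \mathrm{frac}(a_i x_i)$ but then re-examine what invariant set results: the image of $\supp(\pi_{1,1,1})$ lands in $\{\mathrm{frac}(\frac{1}{a_1}y_1 + \frac{1}{a_2}y_2 + \frac{1}{a_3}y_3)\in \frac{1}{a_1 a_2 a_3}\mathbb{Z}\}$, which is not quite the target. The clean fix is to go the other way: realize $\{\mathrm{frac}(a_1 x_1 + a_2 x_2 + a_3 x_3) = 0\}$ as $\bigcup$ of $a_1 a_2 a_3$ affine copies of a single hyperplane, or better, to use the map in the \emph{pullback} direction and disintegrate. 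Alternatively — and this is what I expect to be the actual argument — one builds $\pi_{a_1,a_2,a_3}$ directly by the same explicit construction as $\pi_{1,1,1}$: subdivide $[0,1]^2$ into cells on which the ``third coordinate'' $x_3 = \mathrm{frac}((k - a_1 x_1 - a_2 x_2)/a_3)$ for appropriate integers $k$ is a well-defined graph, and push Lebesgue on $[0,1]^2$ forward along each branch, reweighting so that the projections onto $X_1\times X_3$ and $X_2\times X_3$ come out Lebesgue (this requires averaging over the $a_3$ branches, analogous to the Monge-square construction in the paper). The main obstacle is precisely this last point: ensuring \emph{simultaneously} that all three two-dimensional projections are Lebesgue, since fixing the $\prj_{12}$-projection leaves a one-parameter family of branch choices for $x_3$ and one must verify the uniform average over branches yields the correct $\prj_{13}$ and $\prj_{23}$ marginals. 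Given the statement says ``the proof is easy and is left to the reader'' for the $a_i=1$ case, I expect the general case to be handled either by the torus-homomorphism/pushforward argument (once the normalization is set up correctly, e.g.\ by working with $m_a$ on $\mathbb{R}/\mathbb{Z}$ and checking the map $(x_1,x_2,x_3)\mapsto(m_{a_1}x_1, m_{a_2}x_2, m_{a_3}x_3)$ carries the level set $\{x_1+x_2+x_3\in\mathbb{Z}\}$ onto a subset of $\{a_1x_1+a_2x_2+a_3x_3\in\mathbb{Z}\}$ after the obvious reparametrization), or by the explicit branch-averaging construction, with the verification of the three marginal conditions being the only content.
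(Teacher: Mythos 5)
Your first idea (pushing $\pi_{1,1,1}$ forward by $\Phi=m_{a_1}\times m_{a_2}\times m_{a_3}$, where $m_a(x)=\mathrm{frac}(ax)$) fails for exactly the reason you diagnosed: the image would concentrate on $\{\sum a_i^2 x_i\equiv 0\pmod 1\}$, not on $\{\sum a_i y_i\equiv 0\}$, because $a_i\,\mathrm{frac}(a_i x_i)\equiv a_i^2 x_i$ rather than $\equiv x_i$. Good catch. The ``clean fix'' you then floated in passing — realize the target set as $a_1a_2a_3$ affine scaled copies of $\{\mathrm{frac}(x_1+x_2+x_3)=0\}\cap[0,1]^3$ and work in the pullback direction — is in fact precisely the paper's argument, and you should have followed through on it rather than guessing that the branch-averaging construction was the intended one.

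Here is how the fix resolves the ``main obstacle'' you flagged (getting all three projections Lebesgue simultaneously). For each choice of integers $0\le t_i<a_i$ define the affine contraction
\[
T_{t_1,t_2,t_3}\colon (x_1,x_2,x_3)\mapsto\Bigl(\tfrac{x_1+t_1}{a_1},\,\tfrac{x_2+t_2}{a_2},\,\tfrac{x_3+t_3}{a_3}\Bigr),
\]
i.e.\ one of the $a_1a_2a_3$ branches of the inverse of your $\Phi$. If $(y_1,y_2,y_3)=T_{t_1,t_2,t_3}(x)$ then $a_1y_1+a_2y_2+a_3y_3=(x_1+x_2+x_3)+(t_1+t_2+t_3)$, so each pushforward $(T_{t_1,t_2,t_3})_*\pi_{1,1,1}$ is automatically concentrated on $\{\mathrm{frac}(a_1y_1+a_2y_2+a_3y_3)=0\}$ — no $a_i^2$ anomaly, because you contract rather than expand. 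Now set $\pi_{a_1,a_2,a_3}=\frac{1}{a_1a_2a_3}\sum_{t}(T_{t_1,t_2,t_3})_*\pi_{1,1,1}$. For the projections: $\prj_{ij}\bigl((T_{t_1,t_2,t_3})_*\pi_{1,1,1}\bigr)$ is the image of Lebesgue on $[0,1]^2$ under $(x_i,x_j)\mapsto((x_i+t_i)/a_i,(x_j+t_j)/a_j)$, i.e.\ $a_ia_j$ times Lebesgue restricted to the box $[t_i/a_i,(t_i+1)/a_i]\times[t_j/a_j,(t_j+1)/a_j]$. These boxes tile $[0,1]^2$, each appears with multiplicity $a_k$ (from the free parameter $t_k$), and the total weight $\frac{1}{a_1a_2a_3}\cdot a_k\cdot a_ia_j=1$ works out so that the sum is exactly Lebesgue on $[0,1]^2$. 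This is why all three marginal conditions hold at once without any further balancing: the discrete averaging over all $(t_1,t_2,t_3)$ handles them uniformly, which is cleaner than the cell-by-cell branch construction you proposed as the likely alternative.
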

\begin{proof}
Let $t_1$, $t_2$ and $t_3$ be non-negative integers such that $0 \le t_i < a_i$ for $1 \le i \le 3$. Consider the mapping
\[
T \colon (x_1, x_2, x_3) \mapsto \left(\frac{x_1 + t_1}{a_1}, \frac{x_2 + t_2}{a_2}, \frac{x_3 + t_3}{a_3}\right). 
\]
Let $\pi_{a_1, a_2, a_3}^{t_1, t_2, t_3}$ be the image of the measure $\pi_{1, 1, 1}$ under the mapping $T$. First, if $(y_1, y_2, y_3) = T(x_1, x_2, x_3)$, then $a_1y_1 + a_2y_2 + a_3y_3 = (x_1 + x_2 + x_3) + (t_1 + t_2 + t_3)$. Hence, \[\mathrm{frac}(x_1 + x_2 + x_3) = \mathrm{frac}(a_1y_1 + a_2y_2 + a_3y_3),\]
and therefore, since $\pi_{1, 1, 1}$ is concentrated on the set $\{(x_1, x_2, x_3) \colon \mathrm{frac}(x_1 + x_2 + x_3) = 0\}$, the measure $\pi_{a_1, a_2, a_3}^{t_1, t_2, t_3}$ is concentrated on the set $\{(y_1, y_2, y_3) \colon \mathrm{frac}(a_1y_1 + a_2y_2 + a_3y_3)=0\}$.

In addition, for all $\{i, j\} \in \mathcal{I}_{3, 2}$ the measure $\prj_{ij}(\pi_{a_1, a_2, a_3}^{t_1, t_2, t_3})$ is the image of $\prj_{ij}(\pi_{1, 1, 1})$ under the mapping
\[
(x_i, x_j) \mapsto \left(\frac{x_i + t_i}{a_i}, \frac{x_j + t_j}{a_j}\right).
\]
Thus, $\prj_{ij}(\pi_{a_1, a_2, a_3}^{t_1, t_2, t_3})$ is proportional to the Lebesgue measure restricted to the square
\begin{equation}\label{eq:pi_a1a2a3_projection}
\left[\frac{t_i}{a_i}, \frac{t_i + 1}{a_i}\right] \times \left[\frac{t_j}{a_j}, \frac{t_j + 1}{a_j}\right].
\end{equation}

Let 
\[
\pi_{a_1, a_2, a_3} = \frac{1}{a_1a_2a_3}\sum_{0 \le t_i < a_i}\pi_{a_1, a_2, a_3}^{t_1, t_2, t_3}.
\]
The measure $\pi_{a_1, a_2, a_3}$ is a probability measure concentrated on the set \[\{(y_1, y_2, y_3) \colon \mathrm{frac}(a_1y_1 + a_2y_2 + a_3y_3)=0\}.\]
In addition, it follows from \cref{eq:pi_a1a2a3_projection} that $\prj_{ij}(\pi_{a_1, a_2, a_3})$ is the Lebesgue measure restricted to the square $[0, 1]^2$.
\end{proof}

Using this proposition, we immediately obtain the following theorem.
\begin{theorem}
\label{dualdiscontinuous}
The tuple of functions $\{f_{ij}\}$ described in \cref{prop:discontinuous_F_is_compatible} is a solution to the dual \cref{prob:discontinuous}, and the measure $\pi_{1, 1, 3} \in \Pi(\mu_{ij})$, concentrated on the set \[\{(x_1, x_2, x_3) \colon \mathrm{frac}(x_1 + x_2 + 3x_3)=0\},\] is a solution to the primal \cref{prob:discontinuous}.
\end{theorem}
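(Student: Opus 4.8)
The plan is to combine the two propositions just established with the elementary weak-duality inequality, so that the proof reduces to exhibiting a matching primal--dual pair. First I would record admissibility of $\{f_{ij}\}$ for the dual problem: each of $f_{12}$, $f_{13}$, $f_{23}$ is bounded on $[0,1]^2$, hence lies in $L^1([0,1]^2)$, and $F := f_{12}+f_{13}+f_{23}$ satisfies $F\le c$ everywhere on $[0,1]^3$ by \cref{prop:discontinuous_F_is_compatible}. Next, applying \cref{prop:fraction_part_measure} with $(a_1,a_2,a_3)=(1,1,3)$ produces a uniting measure $\pi_{1,1,3}\in\Pi(\mu_{ij})$ carried by the set $\Lambda=\{(x_1,x_2,x_3)\colon \mathrm{frac}(x_1+x_2+3x_3)=0\}$; in particular $\Pi(\mu_{ij})$ is non-empty and the primal problem is feasible.

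The core of the argument is the identity
\[
\int_X c\,d\pi_{1,1,3} \;=\; \sum_{\{i,j\}\in\mathcal{I}_{3,2}}\int_{X_{ij}}f_{ij}\,d\mu_{ij}.
\]
To obtain it I would show that $F=c$ holds $\pi_{1,1,3}$-almost everywhere. On $\Lambda$ the number $x_1+x_2+3x_3$ is a non-negative integer, so by \cref{prop:discontinuous_F_is_compatible} one has $F=c$ at every point of $\Lambda$ with the single exception of $(0,0,2/3)$; and that point is $\pi_{1,1,3}$-negligible, since $\prj_1(\pi_{1,1,3})=\mu_1$ is Lebesgue measure, which assigns zero mass to $\{x_1=0\}$. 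Since each $f_{ij}$ is bounded and $\prj_{ij}(\pi_{1,1,3})=\mu_{ij}$, the extension of $f_{ij}$ to $X$ is $\pi_{1,1,3}$-integrable and $\int_X F\,d\pi_{1,1,3}=\sum_{\{i,j\}}\int_{X_{ij}}f_{ij}\,d\mu_{ij}$; combining this with $F=c$ a.e.\ gives the displayed identity.

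Finally I would close the loop by weak duality. For every $\pi\in\Pi(\mu_{ij})$ and every admissible dual tuple $\{g_{ij}\}$ one has $\int_X c\,d\pi\ge\sum_{\{i,j\}}\int_{X_{ij}}g_{ij}\,d\mu_{ij}$ (this is the straightforward half of \cref{thm:noncompact_duality}, valid here because $c$ is bounded continuous), hence $\sup_{\mathrm{dual}}\le\inf_{\mathrm{primal}}$. Writing $v$ for the common value produced above, $v$ is attained by the dual-admissible tuple $\{f_{ij}\}$ and by the primal-admissible measure $\pi_{1,1,3}$, so $v\le\sup_{\mathrm{dual}}\le\inf_{\mathrm{primal}}\le v$. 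Therefore all these quantities coincide, $\pi_{1,1,3}$ is a primal minimizer, and $\{f_{ij}\}$ is a dual maximizer.

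I expect no serious obstacle; the only points needing a little care are verifying that the exceptional point $(0,0,2/3)$ (equivalently, the part of $\Lambda$ where $F<c$) is $\pi_{1,1,3}$-null, which follows from the Lebesgue marginal of $\pi_{1,1,3}$, and justifying the interchange $\int_X(\sum_{\{i,j\}}f_{ij})\,d\pi_{1,1,3}=\sum_{\{i,j\}}\int_{X_{ij}}f_{ij}\,d\mu_{ij}$, which is immediate from boundedness of the $f_{ij}$ together with the marginal conditions $\prj_{ij}(\pi_{1,1,3})=\mu_{ij}$.
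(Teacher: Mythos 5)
Your proof is correct and follows essentially the same route as the paper, which simply states that the theorem follows "immediately" from \cref{prop:discontinuous_F_is_compatible} and \cref{prop:fraction_part_measure} without spelling out the details. You have filled in exactly the intended reasoning: $F\le c$ gives dual admissibility, $\pi_{1,1,3}$ gives primal feasibility, the set where $F<c$ on $\Lambda$ is the single point $(0,0,2/3)$ which is $\pi_{1,1,3}$-null because the first marginal is atomless, so the objective values match, and weak duality closes the argument.
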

Unlike \cref{prob:dual_for_xyz}, a solution to the primal \cref{prob:discontinuous} is non-unique. 
\begin{proposition}\label{prop:improved_primal_solution_of_discontinuous}
Let $\pi_1$ be the restriction of the Lebesgue measure to the set $\{(x_1, x_2, x_3) \colon 0 \le x_1, x_2 \le 1, 0 \le x_3 \le 1/3\}$, and let $\pi_2$ be the image of the measure $\widehat{\pi}_{1, 1, 2}$ described in \cref{prop:fraction_part_measure} under the mapping
\[
T \colon (x_1, x_2, x_3) \mapsto \left(x_1, x_2, \frac{2}{3}x_3 + \frac{1}{3}\right).
\]

Then the measure $\pi = \pi_1 + \frac{2}{3}\widehat{\pi}_{1,1,2} $ is uniting and the function $F(x_1, x_2, x_3)$ described in \cref{prop:discontinuous_F_is_compatible}  satisfies: $F(x_1,x_2,x_3)=c(x_1, x_2, x_3)$ $\pi$-a.e. Consequently, the measure $\pi$ is a solution to the primal \cref{prob:discontinuous} (see \cref{fig:fracsolution}).
\end{proposition}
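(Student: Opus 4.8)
The plan is to verify three things in turn: that $\pi$ is a probability measure with $\prj_{ij}(\pi)=\mu_{ij}$ for every $\{i,j\}\in\mathcal{I}_{3,2}$; that the function $F$ of \cref{prop:discontinuous_F_is_compatible} satisfies $F=c$ $\pi$-almost everywhere; and finally that these two facts, together with the dual feasibility $F\le c$ already established there, force $\pi$ to be optimal. (I read the measure of the statement as $\pi=\pi_1+\frac{2}{3}\pi_2$, with $\pi_2=T_*\widehat{\pi}_{1,1,2}$ as defined; this is the reading that makes $\pi$ uniting.)

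For feasibility I would split $\pi=\pi_1+\frac{2}{3}\pi_2$ and project each summand. Since $\pi_1$ is the Lebesgue measure on the box $[0,1]^2\times[0,\frac{1}{3}]$, one has $\prj_{12}(\pi_1)=\frac{1}{3}\mu_{12}$, while $\prj_{12}(\pi_2)=\mu_{12}$ because $T$ is the identity on the first two coordinates and $\prj_{12}(\widehat{\pi}_{1,1,2})=\mu_{12}$ by \cref{prop:fraction_part_measure}; adding gives $\prj_{12}(\pi)=\frac{1}{3}\mu_{12}+\frac{2}{3}\mu_{12}=\mu_{12}$. For $\prj_{13}$ (and symmetrically $\prj_{23}$), $\prj_{13}(\pi_1)$ is the Lebesgue measure restricted to $[0,1]\times[0,\frac{1}{3}]$, and $\prj_{13}(\pi_2)$ is the pushforward of $\mu_{13}$ under $(x_1,x_3)\mapsto(x_1,\frac{2}{3}x_3+\frac{1}{3})$, i.e. $\frac{3}{2}$ times the Lebesgue measure restricted to $[0,1]\times[\frac{1}{3},1]$ (the Jacobian of the affine map in the $x_3$-variable is $\frac{2}{3}$). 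Hence $\frac{2}{3}\prj_{13}(\pi_2)$ is Lebesgue on $[0,1]\times[\frac{1}{3},1]$, and the two pieces reassemble into $\mu_{13}$. The total mass $\frac{1}{3}+\frac{2}{3}=1$ confirms $\pi\in\mathcal{P}(X)$.

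The main work is the identity $F=c$ $\pi$-a.e. On $\supp(\pi_1)$ we have $x_3\le\frac{1}{3}<\frac{2}{3}$, so $F(x)=0$ by the piecewise formula \cref{eq:discontinuous_F_repr}, while $x_1+x_2+3x_3\le 3$ gives $c(x)=0$; thus $F=c$ on all of $\supp(\pi_1)$. On $\supp(\pi_2)$ a point has the form $(y_1,y_2,y_3)=T(x_1,x_2,x_3)$ with $\mathrm{frac}(x_1+x_2+2x_3)=0$, and then $y_1+y_2+3y_3=x_1+x_2+3\cdot\frac{2x_3+1}{3}=x_1+x_2+2x_3+1$ is a nonnegative integer. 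By \cref{st:F_eq_c_pi113_ae} this forces $F(y)=c(y)$ unless $y=(0,0,\frac{2}{3})$; and $y=(0,0,\frac{2}{3})$ occurs only for $(x_1,x_2,x_3)=(0,0,\frac{1}{2})$, a single point. Since $\widehat{\pi}_{1,1,2}\in\Pi(\mu_{ij})$ has nonatomic marginals it has no atoms, so $\widehat{\pi}_{1,1,2}(\{(0,0,\frac{1}{2})\})=0$ and hence $\pi_2(\{(0,0,\frac{2}{3})\})=0$. Therefore $F=c$ $\pi_2$-a.e., and combining the two pieces, $F=c$ $\pi$-a.e. The delicate point here is precisely this exceptional point $(0,0,\frac{2}{3})$, where $F$ and $c$ genuinely differ: the whole argument hinges on showing it carries no $\pi$-mass, which is why nonatomicity of $\widehat{\pi}_{1,1,2}$ is used.

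Finally, for the conclusion: $f_{12}\equiv 0$ and $f_{13},f_{23}$ are bounded measurable on $[0,1]^2$, hence integrable, and $F$ splits additively. For any $\pi'\in\Pi(\mu_{ij})$ dual feasibility gives $\int_X c\,d\pi'\ge\int_X F\,d\pi'=\sum_{\{i,j\}\in\mathcal{I}_{3,2}}\int_{X_{ij}}f_{ij}\,d\mu_{ij}$, a quantity independent of $\pi'$. For our $\pi$, using $F=c$ $\pi$-a.e. together with $\prj_{ij}(\pi)=\mu_{ij}$, we get $\int_X c\,d\pi=\int_X F\,d\pi=\sum_{\{i,j\}\in\mathcal{I}_{3,2}}\int_{X_{ij}}f_{ij}\,d\mu_{ij}$. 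Hence $\int_X c\,d\pi\le\int_X c\,d\pi'$ for every $\pi'\in\Pi(\mu_{ij})$, so $\pi$ solves the primal \cref{prob:discontinuous}; this simultaneously re-confirms optimality of $\{f_{ij}\}$ (cf. \cref{dualdiscontinuous}).
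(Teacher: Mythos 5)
Your proof is correct and follows essentially the same route as the paper's: project $\pi_1$ and $\pi_2$ separately to verify feasibility, use the piecewise formula for $F$ on $\supp(\pi_1)$ and the fractional-part argument combined with \cref{st:F_eq_c_pi113_ae} on $\supp(\pi_2)$, then deduce optimality. You also carefully handle the exceptional point $(0,0,\tfrac{2}{3})$ by invoking nonatomicity of the marginals of $\pi_{1,1,2}$ — a small gap the paper silently skips over — so if anything your version is slightly more complete.
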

\begin{figure}
    \centering
    \includegraphics[width=0.45\textwidth]{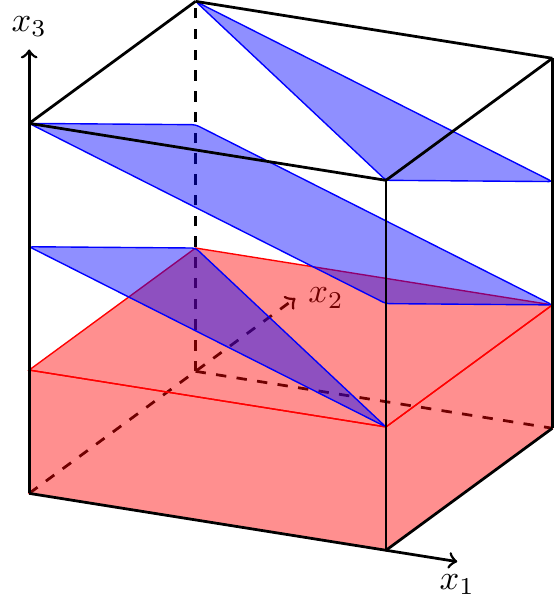}
    \caption{The support of the solution $\pi$ described in \cref{prop:improved_primal_solution_of_discontinuous}. The support of the measure $\pi_1$ is red, and support of $\widehat{\pi}_{1, 1, 2}$ is blue.}
\label{fig:fracsolution}
\end{figure}
\begin{proof}
By construction, $\prj_{12}(\pi_1)$ is proportional to the restriction of the Lebesgue measure to the square $[0, 1]^2$. The mapping $T$ does not change the projection of a measure onto the space $X_{12}$, and therefore $\prj_{12}(\widehat{\pi}_{1, 1, 2})$ is also proportional to the restriction of the Lebesgue measure to the square $[0, 1]^2$. Thus, $\prj_{12}(\pi) = \mu_{12}$.

The measure $\prj_{13}(\pi_1)$ coincides with the restriction of the Lebesgue measure to the rectangle $\{(x_1, x_3) \colon 0 \le x_1 \le 1, 0 \le x_3 \le 1/3\}$. The measure $\prj_{13}(\widehat{\pi}_{1,1,2})$ is the image of $\prj_{13}(\pi_{1, 1, 2})$ under the mapping
\[
(x_1, x_3) \mapsto \left(x_1, \frac{2}{3}x_3 + \frac{1}{3}\right).
\]
Thus, $\frac{2}{3}\prj_{13}(\widehat{\pi}_{1,1,2})$ coincides with the restriction of the Lebesgue measure to the rectangle $\{(x_1, x_3) \colon 0 \le x_1 \le 1, 1/3 \le x_3 \le 1\}$, and therefore $\prj_{13}(\pi) = \mu_{13}$. Similarly, $\prj_{23}(\pi) = \mu_{23}$, and we conclude that $\pi \in \Pi(\mu_{ij})$.

Let $(x_1, x_2, x_3)$ be a point in $[0, 1]^3$ such that $x_3 \le 1/3$. By equation \cref{eq:discontinuous_F_repr} we have $F(x_1, x_2, x_3) = 0$. In addition, $x_1 + x_2 + 3x_3 - 3 \le 0$, and therefore $c(x_1, x_2, x_3) = 0$. Thus, since $\supp(\pi_1) = \{(x_1, x_2, x_3) \in [0, 1]^3 \colon 0 \le x_3 \le 1/3\}$, we conclude that $F(x_1, x_2, x_3) = c(x_1, x_2, x_3)$ $\pi_1$-a.e.

Let $(x_1, x_2, x_3)$ be an arbitrary point in the cube $[0, 1]^2$, and let $(y_1, y_2, y_3) = T(x_1, x_2, x_3)$. We have $y_1 + y_2 + 3y_3 = x_1 + x_2 + 2x_3 + 1$, and therefore \[
\mathrm{frac}(y_1 + y_2 + 3y_3) = \mathrm{frac}(x_1 + x_2 + 2x_3).
\]
Hence, we conclude that $\widehat{\pi}_{1, 1, 2}$ is concentrated on the set $\{(x_1, x_2, x_3) \colon\mathrm{frac}(x_1 + x_2 + 3x_3) = 0\}$, and therefore by \cref{st:F_eq_c_pi113_ae} $F(x_1, x_2, x_3) = c(x_1, x_2, x_3)$ $\widehat{\pi}_{1, 1, 2}$-a.e.

Thus, $F(x_1, x_2, x_3) = c(x_1, x_2, x_3)$ for $\pi$-almost all points $(x_1, x_2, x_3) \in [0, 1]^3$, and the measure $\pi$ is a solution to the primal \cref{prob:discontinuous}.
\end{proof}

Unlike the primal problem, the dual problem admits a unique solution in the following sense.

\begin{proposition}\label{prop:dual_discontinuous_solution_unique}
Let $\{g_{ij}\}$ be a solution to the relaxed dual \cref{prob:discontinuous}. Then the equation
\[
g_{12}(x_1, x_2) + g_{13}(x_1, x_3) + g_{23}(x_2, x_3) = f_{12}(x_1, x_2) + f_{13}(x_1, x_3) + f_{23}(x_2, x_3)
\]
holds for almost all $(x_1, x_2, x_3) \in [0, 1]^3$, where the tuple of functions $\{f_{ij}\}$ is defined in \cref{prop:discontinuous_F_is_compatible}.
\end{proposition}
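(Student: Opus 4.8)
The plan is to reduce the statement to showing that $H:=G-F$ vanishes Lebesgue-a.e., where $F=f_{12}+f_{13}+f_{23}$ and $G=g_{12}+g_{13}+g_{23}$ for a relaxed dual optimum $\{g_{ij}\}$, and $h_{ij}:=g_{ij}-f_{ij}\in L^1(\mu_{ij})$. Two properties of $H$ are immediate. First, since both $\{f_{ij}\}$ and $\{g_{ij}\}$ attain the (common) dual value, which by \cref{thm:noncompact_duality} equals $v:=\inf_{\Pi(\mu_{ij})}\int c\,d\pi$, we get $\sum_{\{i,j\}}\int h_{ij}\,d\mu_{ij}=0$, hence $\int_X H\,d\sigma=0$ for every $\sigma\in\Pi(\mu_{ij})$ for which the $h_{ij}$ are $\sigma$-integrable (in particular whenever $\prj_{ij}\sigma\ll_B\mu_{ij}$). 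Second, $G\le c$ off a zero $(3,2)$-thickness set — which is Lebesgue null — while $F\le c$ everywhere by \cref{prop:discontinuous_F_is_compatible}\ref{st:discontinuous_F_le_c}, so $H\le c-F$ a.e.; from the explicit formula \cref{eq:discontinuous_F_repr} one checks $c-F\ge 0$ with $\{c-F=0\}=E$, where $E=\{x_3<\tfrac23,\ x_1+x_2+3x_3\le 3\}\cup\{x_3\ge\tfrac23,\ x_1+x_2+3x_3\ge 3\}$, and $c-F>0$ on $E^c$. In particular $H\le 0$ a.e. on $E$.

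Next I would use complementary slackness with the primal optima. Because $F$ is a $(3,2)$-function with bounded (hence $\mu_{ij}$-integrable) components, $\int_X F\,d\pi=v$ for every $\pi\in\Pi(\mu_{ij})$; therefore every $c$-primal optimum $\pi^\ast$ satisfies $\int(c-F)\,d\pi^\ast=0$, i.e. $\pi^\ast(E)=1$. For such $\pi^\ast$ one has $G\le c$ $\pi^\ast$-a.e. (a zero $(3,2)$-thickness set is $\pi^\ast$-null since $\pi^\ast$ is uniting) and $F\le c$ everywhere, so $H\le c-F=0$ $\pi^\ast$-a.e.; combined with $\int_X H\,d\pi^\ast=\sum\int h_{ij}\,d\mu_{ij}=0$ this gives $H=0$ $\pi^\ast$-a.e. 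Taking $\pi^\ast=\pi_1+\tfrac23\widehat\pi_{1,1,2}$ from \cref{prop:improved_primal_solution_of_discontinuous}, whose summand $\pi_1$ is Lebesgue measure on $\{0\le x_3\le\tfrac13\}$, yields $H=0$ a.e. on $\{x_3\le\tfrac13\}\subseteq E$.

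To pin $H$ down on the rest of $E$, the plan is to build further $c$-primal optima whose absolutely continuous parts, together with $\pi_1$, cover $E$ up to a Lebesgue-null set. The set $E$ is "fat": the feasibility problem $\Pi(\mu_{ij})$ restricted to measures supported in $E$ is solvable (e.g.\ by $\pi_{1,1,3}$ of \cref{dualdiscontinuous}), and the uniform measure on $E$ has one- and two-dimensional marginal densities lying between $\tfrac23$ and $1$; so a perturbation argument of the type in \cref{thm:density_condition,prop:density_condition2,thm:density_condition1}, applied to $E$ and to its slabs $\{\tfrac13<x_3<\tfrac23\}$ and $\{x_3\ge\tfrac23\}$, should produce $\sigma\in\Pi(\mu_{ij})$ with $\sigma(E)=1$ and with a positive Lebesgue density on $\operatorname{int}(E)$. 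For any such $\sigma$, complementary slackness gives $H=0$ $\sigma$-a.e.; together with the bottom slab this forces $H=0$ a.e. on $E$ (its boundary is a finite union of pieces of hyperplanes, hence null).

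Finally I would upgrade "$H=0$ a.e.\ on $E$" to "$H=0$ a.e." by slicing in $x_3$. For a.e.\ fixed $x_3\in(\tfrac13,\tfrac23)$ the slice $E\cap\{x_3=\cdot\}$ is the half-plane-type region $\{x_1+x_2\le 3-3x_3\}$, which contains $\{x_1+x_2\le 1\}$; since $H(x_1,x_2,x_3)=h_{12}(x_1,x_2)+h_{13}(x_1,x_3)+h_{23}(x_2,x_3)$ vanishes a.e.\ there, $h_{12}$ restricted to that region is a $(3,1)$-function, and letting $x_3\downarrow\tfrac13$ the regions exhaust $[0,1]^2$, so $h_{12}$ is globally a $(3,1)$-function; absorbing it into $h_{13},h_{23}$ and re-reading $H=0$ on these regions forces $x_1\mapsto h_{13}(x_1,x_3)$ and $x_2\mapsto h_{23}(x_2,x_3)$ to be a.e.\ constant (with opposite constants), whence $H\equiv 0$ a.e.\ on the whole slab $\{\tfrac13<x_3<\tfrac23\}$. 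The same argument on $\{x_3\ge\tfrac23\}$, where the $E$-slices are the complementary half-planes $\{x_1+x_2\ge 3-3x_3\}$ whose union over $x_3\in[\tfrac23,1]$ is again $[0,1]^2$, gives $H\equiv 0$ a.e.\ on $\{x_3\ge\tfrac23\}$. Together with $\{x_3\le\tfrac13\}$ this yields $H=0$ a.e., i.e.\ $G=F$ a.e. The main obstacle is the third step: constructing the thick family of $c$-primal optima supported on $E$ (equivalently, a Kellerer-type feasibility check for $\Pi(\mu_{ij})$ restricted to $E$ and its slabs with densities bounded away from $0$); the reduction, the complementary-slackness inputs, and the final slicing argument are comparatively routine.
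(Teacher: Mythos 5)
The high-level structure — complementary slackness, primal optima supported on the contact set $E=\{c=F\}$, and then an algebraic extension from $E$ to all of $[0,1]^3$ — is sound, and your steps 1, 2, and 4 are essentially correct (step 4 needs some care with gluing the slice-dependent decompositions $h_{12}=a_{x_3}(x_1)+b_{x_3}(x_2)$, but it works because the regions are nested). However, step 3 is a genuine gap, and its difficulty is underestimated. Two concrete problems: (i) Theorems \ref{thm:density_condition}, \ref{thm:density_condition_eqv} and Propositions \ref{thm:density_condition1}, \ref{prop:density_condition2} only assert non-emptiness of $\Pi(\mu_\alpha)$ given the marginals $\mu_\alpha$; they say nothing about feasibility under an additional support constraint ``concentrated on $E$,'' which is what you need. (ii) Your assertion that the uniform measure on $E$ has ``one- and two-dimensional marginal densities lying between $\tfrac23$ and $1$'' is false. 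The $X_1\times X_2$ marginal density is indeed between $\tfrac23$ and $1$, but the $X_1\times X_3$ marginal density equals $\min(1,3-3x_3-x_1)$ on the slab $\tfrac13<x_3<\tfrac23$ and hence vanishes at $(x_1,x_3)=(1,\tfrac23)$; similarly the $X_2\times X_3$ marginal vanishes near $(0,\tfrac23)$. So the uniform measure on $E$ is not uniformly equivalent to the product of its one-dimensional marginals, and the density-condition machinery has no purchase. Whether a uniting measure with $\sigma(E)=1$ and a.e.\ positive density on $\operatorname{int}(E)$ even exists is a non-trivial question that the proposal leaves open.

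The paper avoids this 3D feasibility question entirely by slicing in $x_3$ first. After \cref{lem:g_12_is_zero} reduces to $g_{12}\equiv 0$, one fixes $x_3^{(0)}$, writes $\varphi_1(x_1)+\varphi_2(x_2):=G(\cdot,\cdot,x_3^{(0)})-F(\cdot,\cdot,x_3^{(0)})$, and observes that $\varphi_1+\varphi_2\le 0$ a.e.\ on the 2D half-strip $\{x_1+x_2\le 1+\eps\}$ (respectively $\{x_1+x_2\ge 1-\eps\}$). \Cref{lem:sum_le_1_plus_eps} — proved by an explicit construction of a probability measure on $[0,1]^2$ with uniform marginals supported in such a strip — then gives $\int\varphi_1+\int\varphi_2\le 0$ with equality iff $\varphi_1+\varphi_2=0$ a.e. Integrating over $x_3^{(0)}$ and using that $\int G\,d\mu=\int F\,d\mu$ for any uniting $\mu$ (in particular Lebesgue) forces equality on a.e.\ slice. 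In other words, the paper only needs a 2D feasibility lemma that is elementary and explicit, whereas your step 3 demands its 3D analogue, which you neither prove nor correctly reduce to known results.
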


First, let us verify the following statement.
\begin{lemma}\label{lem:g_12_is_zero}
Let $\{g_{ij}\}$ be a solution to the relaxed dual \cref{prob:discontinuous}. Then there exist integrable functions $\varphi_1$ and $\varphi_2$ such that $g_{12}(x_1, x_2) = \varphi_1(x_1) + \varphi_2(x_2)$ almost everywhere.
\end{lemma}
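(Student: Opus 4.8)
The plan is to exploit the fact that the cost function $c(x_1,x_2,x_3) = \max(0, x_1+x_2+3x_3-3)$ does not depend on the coordinates $x_1,x_2$ when $x_3$ is small, and more importantly that $c$ vanishes identically on a slab of positive measure. Concretely, on the set $\{x_3 < 2/3\}$ we have $x_1 + x_2 + 3x_3 - 3 < x_1 + x_2 - 1 \le 1$, so actually on $\{x_3 \le 1/3\}$ one has $c \equiv 0$. I would first locate a region on which any dual tuple $\{g_{ij}\}$ must satisfy $g_{12}(x_1,x_2) + g_{13}(x_1,x_3) + g_{23}(x_2,x_3) \le 0$ for all $x_3$ in an interval, with equality forced almost everywhere by the existence of a primal optimizer concentrated there (e.g. the measure $\pi_1$ from \cref{prop:improved_primal_solution_of_discontinuous}, whose support is exactly $\{0 \le x_3 \le 1/3\}$ and on which $c = 0$).

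The key step: since $\{g_{ij}\}$ is a solution to the relaxed dual problem and $\pi_1$ (suitably normalized) is a competitor in the primal problem (it has the right two-dimensional marginals, being the Lebesgue measure on $[0,1]^2\times[0,1/3]$ rescaled), complementary slackness gives $g_{12}(x_1,x_2) + g_{13}(x_1,x_3) + g_{23}(x_2,x_3) = c(x_1,x_2,x_3) = 0$ for Lebesgue-almost every $(x_1,x_2,x_3)$ with $0 \le x_3 \le 1/3$. Fix two values $x_3', x_3'' \in [0,1/3]$ (generic, so that the sections are valid a.e.); subtracting the two identities eliminates $g_{12}$ and yields
\[
g_{13}(x_1,x_3') + g_{23}(x_2,x_3') = g_{13}(x_1,x_3'') + g_{23}(x_2,x_3'')
\]
for a.e. $(x_1,x_2)$. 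This means $g_{13}(x_1,x_3') - g_{13}(x_1,x_3'')$ is (a.e.) a function of $x_1$ alone plus a function of $x_2$ alone that must actually be constant — so the $x_3$-dependence of $g_{13}$ on $[0,1/3]$ is an additive function of $x_1$ independent of $x_3$. Feeding this back into $g_{12}(x_1,x_2) = -g_{13}(x_1,x_3') - g_{23}(x_2,x_3')$ shows directly that $g_{12}(x_1,x_2)$ splits as $\varphi_1(x_1) + \varphi_2(x_2)$ a.e., where $\varphi_1(x_1) = -g_{13}(x_1,x_3')$ and $\varphi_2(x_2) = -g_{23}(x_2,x_3')$ for a fixed admissible value $x_3'$; integrability of $\varphi_1,\varphi_2$ follows from integrability of $g_{13},g_{23}$ via Fubini (a.e. section of an integrable function is integrable).

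The main obstacle is bookkeeping around null sets and the "relaxed" nature of the dual problem: the inequality $\sum g_{ij} \le c$ holds only off a set of zero $(3,2)$-thickness, not everywhere, so one must check that the primal optimizer $\pi_1$ assigns zero mass to that exceptional set — this is automatic because $\pi_1$ is a uniting measure (every zero $(3,2)$-thickness set is $\pi_1$-null, as noted in the proof of \cref{thm:existence_multistochastic_dual_solution}). A secondary technical point is to pick the fixed values $x_3', x_3''$ in the common full-measure subset of $[0,1/3]$ where the sectional identities are valid, and to ensure the "function of $x_1$ plus function of $x_2$ equals zero implies each is constant" step is applied correctly in $L^1$; this is a standard consequence of Fubini and does not need the structure of $c$ beyond its vanishing on the slab. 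I expect no genuine difficulty here, only care with a.e.-statements.
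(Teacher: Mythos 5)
Your approach matches the paper's: complementary slackness against the primal optimizer gives $G(x_1,x_2,x_3)=c(x_1,x_2,x_3)=0$ Lebesgue--a.e.\ on the slab $\{0\le x_3\le 1/3\}$, then Fubini supplies a single admissible section $x_3^{(0)}$ at which $g_{12}(x_1,x_2)=-g_{13}(x_1,x_3^{(0)})-g_{23}(x_2,x_3^{(0)})$ a.e., and integrability of the two sections comes from integrability of $g_{13},g_{23}$. Two small remarks. First, the step with two sections $x_3',x_3''$ and subtraction is an unnecessary detour; you notice this yourself when you fall back to the one--section identity, and the paper goes straight there. Second, your parenthetical that ``$\pi_1$ (suitably normalized) is a competitor in the primal problem (it has the right two-dimensional marginals)'' is inaccurate: the rescaled $3\pi_1$ does have $\prj_{12}$ equal to Lebesgue on $[0,1]^2$, but its $\prj_{13}$ and $\prj_{23}$ marginals are concentrated on $[0,1]\times[0,1/3]$, so $\pi_1$ alone is \emph{not} uniting. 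What is actually used (and what you need) is that the primal optimizer $\pi=\pi_1+\tfrac{2}{3}\widehat{\pi}_{1,1,2}$ from \cref{prop:improved_primal_solution_of_discontinuous} restricts to Lebesgue measure on the slab $\{x_3\le 1/3\}$; complementary slackness with respect to $\pi$ then yields $G=c=0$ Lebesgue--a.e.\ there, which is the fact driving the argument. This is a minor slip of phrasing rather than a structural gap, and the rest is correct.
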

\begin{proof}
Consider the finite $(3, 2)$-function \[
G(x_1, x_2, x_3) = g_{12}(x_1, x_2) + g_{13}(x_1, x_3) + g_{23}(x_2, x_3).\]
Since $\{g_{ij}\}$ is a solution to the relaxed dual problem, the equation $G(x_1, x_2, x_3) = c(x_1, x_2, x_3)$ holds $\pi$-almost everywhere, where $\pi$ is a solution to the primal problem defined in \cref{prop:improved_primal_solution_of_discontinuous}. In particular, $G(x_1, x_2, x_3) = c(x_1, x_2, x_3)$ for almost all points $(x_1, x_2, x_3) \in [0, 1]^3$ such that $0 \le x_3 \le 1/3$. Since $c(x_1, x_2, x_3) = \max(x_1 + x_2 + 3x_3 - 3, 0) = 0$ if $x_3 \le 1/3$, we conclude that $G(x_1, x_2, x_3) = 0$ for almost all $(x_1, x_2, x_3) \in [0, 1]^3$ such that $0 \le x_3 \le 1/3$.

In particular, there exists a point $0 \le x_3^{(0)} \le 1/3$ such that the equation $G(x_1, x_2, x_3^{(0)}) = 0$ holds for almost all $(x_1, x_2) \in [0, 1]^2$. Hence, if we denote $\varphi_1(x_1) = -g_{13}(x_1, x_3^{(0)})$ and $\varphi_2(x_2) = -g_{23}(x_2, x_3^{(0)})$, then the equation
\[
g_{12}(x_1, x_2) = -g_{13}(x_1, x_3^{(0)}) -g_{23}(x_2, x_3^{(0)}) = \varphi_1(x_1) + \varphi_2(x_2)
\]
holds for almost all $(x_1, x_2) \in [0, 1]^2$.

Let us verify that $\varphi_1$ and $\varphi_2$ are integrable. Since $g_{12}$ is integrable, it follows from the Fubini-Tonelli theorem that for almost all $x_2^{(0)} \in [0, 1]$ the function $x_1 \mapsto g_{12}(x_1, x_2^{(0)}) = \varphi(x_1) + \varphi_2(x_2^{(0)})$ is also integrable. Since $\varphi_2(x_2^{(0)})$ is a constant, we conclude that $\varphi_1(x_1)$ is integrable. The integrability of $\varphi_2$ is proven in the same manner.
\end{proof}
It follows from \cref{lem:g_12_is_zero} that if $\{g_{ij}\}$ is a solution to the relaxed dual problem, then we can set $\widehat{g}_{12}(x_1, x_2) = 0$, $\widehat{g}_{13}(x_1, x_3) = g_{13}(x_1, x_3) + \varphi_1(x_1)$ and $\widehat{g}_{23}(x_2, x_3) = g_{23}(x_2, x_3) + \varphi_2(x_2)$. Then the equation
\[
g_{12}(x_1, x_2) + g_{13}(x_1, x_3) + g_{23}(x_2, x_3) = \widehat{g}_{12}(x_1, x_2) + \widehat{g}_{13}(x_1, x_3) + \widehat{g}_{23}(x_2, x_3)
\]
holds for all $(x_1, x_2, x_3) \in [0, 1]^3$ except a zero $(3, 2)$-thickness set, and therefore the tuple of functions $\{\widehat{g}_{ij}\}$ is also a solution to the relaxed dual problem. Thus, in \cref{prop:dual_discontinuous_solution_unique} we may additionally assume that $g_{12}(x_1, x_2) = 0$ for all $(x_1, x_2) \in [0, 1]^2$.

\begin{lemma}\label{lem:sum_le_1_plus_eps}
Let $\varphi_1$ and $\varphi_2$ be integrable functions defined on the segment $[0, 1]$. Suppose that there exists a real $\eps > 0$ such that the inequality $\varphi_1(x_1) + \varphi_2(x_2) \le 0$ holds for almost all points $(x_1, x_2)$ such that $0 \le x_1 + x_2 \le 1 + \eps$. Then \[
\int_0^1\varphi_1(x_1)\,dx_1 + \int_0^1\varphi_2(x_2) \le 0.\]

Moreover, if the equality is achieved, then $\varphi_1(x_1) + \varphi_2(x_2) = 0$ for almost all $(x_1, x_2) \in [0, 1]^2$. The same is true if we replace the inequality $0 \le x_1 + x_2 \le 1 + \eps$ with $1 - \eps \le x_1 + x_2 \le 2$.
\end{lemma}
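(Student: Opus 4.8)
The plan is to test the pointwise inequality $\varphi_1(x_1)+\varphi_2(x_2)\le 0$ against an explicit coupling of two copies of Lebesgue measure on $[0,1]$ which is absolutely continuous with respect to planar Lebesgue measure $\mathrm{Leb}_2$ and supported inside the region $\{x_1+x_2\le 1+\eps\}$. For each integer $N\ge 1/\eps$, partition $[0,1]$ into the intervals $I_k=[(k-1)/N,\,k/N]$, $1\le k\le N$, and let $\pi_N$ be the measure on $[0,1]^2$ with density $N$ on the anti-diagonal staircase $\bigcup_{k=1}^{N} I_k\times I_{N+1-k}$ and density $0$ elsewhere. One checks at once that $\pi_N$ is a probability measure (each block carries mass $1/N$), that both its marginals equal the Lebesgue measure on $[0,1]$ (on a block the density $N$ times the length $1/N$ of the opposite factor equals $1$), that $\pi_N\ll\mathrm{Leb}_2$, and that each block $I_k\times I_{N+1-k}$ is contained in $\{x_1+x_2\le (N+1)/N\}\subseteq\{x_1+x_2\le 1+\eps\}$.

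For the first assertion, fix such an $N$. Since $\varphi_1,\varphi_2\in L^1[0,1]$, the function $(x_1,x_2)\mapsto\varphi_1(x_1)+\varphi_2(x_2)$ is $\pi_N$-integrable, and because the marginals of $\pi_N$ are Lebesgue we get $\int(\varphi_1(x_1)+\varphi_2(x_2))\,d\pi_N=\int_0^1\varphi_1(x_1)\,dx_1+\int_0^1\varphi_2(x_2)\,dx_2$. By hypothesis the set $\{(x_1,x_2):x_1+x_2\le 1+\eps,\ \varphi_1(x_1)+\varphi_2(x_2)>0\}$ is $\mathrm{Leb}_2$-null; as $\pi_N$ is absolutely continuous and carried by $\{x_1+x_2\le 1+\eps\}$, this set is $\pi_N$-null, so $\varphi_1(x_1)+\varphi_2(x_2)\le 0$ holds $\pi_N$-almost everywhere. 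Integrating gives $\int_0^1\varphi_1(x_1)\,dx_1+\int_0^1\varphi_2(x_2)\,dx_2\le 0$.

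Suppose now that the sum of the integrals is $0$. Then $\int(\varphi_1(x_1)+\varphi_2(x_2))\,d\pi_N=0$ while the integrand is $\pi_N$-a.e.\ nonpositive, so $\varphi_1(x_1)+\varphi_2(x_2)=0$ for $\pi_N$-a.e.\ $(x_1,x_2)$, hence $\mathrm{Leb}_2$-a.e.\ on each block $I_k\times I_{N+1-k}$. By Fubini, on each block $\varphi_1$ agrees a.e.\ with a constant on $I_k$ and $\varphi_2$ with the opposite constant on $I_{N+1-k}$; in particular $\varphi_1$ is a.e.\ constant on every interval of the uniform partition of mesh $1/N$, for every $N\ge 1/\eps$. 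Applying this with $N=N_0:=\lceil 1/\eps\rceil$ and $N=N_0+1$: since $\gcd(N_0,N_0+1)=1$, every breakpoint $k/N_0$ with $1\le k\le N_0-1$ lies strictly inside a block of the $(N_0+1)$-partition, which therefore overlaps the two adjacent $N_0$-blocks in positive measure, forcing the constants on consecutive $N_0$-blocks to coincide. Hence $\varphi_1$, and likewise $\varphi_2$, is a.e.\ constant on $[0,1]$, say $\varphi_1\equiv a$ and $\varphi_2\equiv b$, and the relation $a+b=0$ read off from any block gives $\varphi_1(x_1)+\varphi_2(x_2)=0$ for a.e.\ $(x_1,x_2)\in[0,1]^2$. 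Finally, the variant with $1-\eps\le x_1+x_2\le 2$ reduces to the case just treated by replacing $\varphi_i(x)$ with $\varphi_i(1-x)$, which preserves all hypotheses, the values of the integrals, and the conclusion.

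The only delicate point is the equality case: a single coupling forces $\varphi_1+\varphi_2$ to vanish merely on its support, so one must use couplings at different scales together with the coprimality of the mesh sizes to propagate the vanishing to all of $[0,1]^2$; the verification that $\pi_N$ is a coupling with the stated support and the $L^1$ integrability bookkeeping are routine.
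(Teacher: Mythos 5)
Your proof is correct but follows a route different from the paper's, both in the coupling used and in the treatment of the equality case. The paper takes $\eps = 1/n$, lets $\mu_1$ be Lebesgue measure restricted to the ``L''-shaped set $A_1 = \{\min(x_1,x_2)\le 1/(2n)\}$ and $\mu_2$ Lebesgue measure on a staircase $A_2 = \{\lfloor 2nx_1\rfloor + \lfloor 2nx_2\rfloor = 2n\}$, and checks that $\mu = \mu_1 + (2n-1)\mu_2$ is a coupling of two copies of the Lebesgue measure on $[0,1]$ supported in $\{x_1 + x_2 \le 1 + 1/n\}$. The inequality then follows exactly as in your argument, but the equality case is handled much more directly: since $\supp\mu_1$ contains the full strip $[0,1]\times[0,1/(2n)]$, a single application of Fubini's theorem along one $x_2$-fiber inside that strip already shows $\varphi_1$ is a.e.\ constant on all of $[0,1]$, and similarly for $\varphi_2$. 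You instead use the pure ``anti-diagonal staircase'' coupling $\pi_N$, which is cleaner to define and to check, but whose support does not contain any full strip, so for the equality case you have to propagate local constancy across blocks; your coprimality argument with $N_0$ and $N_0+1$ does this correctly. Both arguments are sound; the paper trades a slightly heavier coupling for a lighter equality-case analysis, while you do the reverse. (One minor remark: you tacitly use $N\ge 1$; for $\eps > 1$ the statement is trivial anyway since $\{x_1+x_2\le 1+\eps\}\supseteq[0,1]^2$, so the reduction to $\eps\le 1$ is immediate and not worth belaboring, but is worth a word.)
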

\begin{proof}
Without loss of generality we may assume that $\eps = 1 / n$ for some positive integer $n$. Consider the set $A_1 = \{(x_1, x_2) \in [0, 1]^2 \colon \min(x_1, x_2) \le 1 / (2n)\}$. Let $\mu_1$ be the restriction of the Lebesgue measure to the set $A_1$. One can easily verify that if $\rho$ is the density of the projection of $\mu_1$ to the axis, then $\rho(x) = 1$ if $0 \le x \le 1 / (2n)$ and $\rho(x) = 1 / (2n)$ if $1 / (2n) < x \le 1$. In addition, if $\min(x_1, x_2) \le 1 / (2n)$, then $0 \le x_1 + x_2 \le 1 + 1 / (2n)$, and therefore the inequality $\varphi_1(x_1) + \varphi_2(x_2) \le 0$ holds $\mu_1$-almost everywhere.

Consider the set $A_2 = \{(x_1, x_2) \in [0, 1]^2 \colon \lfloor 2nx_1\rfloor + \lfloor 2nx_2\rfloor = 2n\}$. Let $\mu_2$ be the restriction of the Lebesgue measure to the set $A_2$. If $\lfloor 2nx_1\rfloor + \lfloor 2nx_2\rfloor = 2n$, then $2nx_1 + 2nx_2 < 2n + 2$, and therefore $x_1 + x_2 < 1 +  1 / n$. Hence, $\varphi_1(x_1) + \varphi_2(x_2) \le 0$ for $\mu_2$-almost all points $(x_1, x_2)$. In addition, the projection of $\mu_2$ to the axis is proportional to the restriction of the Lebesgue measure to the segment $[1 / (2n), 1]$, and the density of this projection is equal to $1 / (2n)$ on this segment. See \cref{fig:discontinuous_square} for the visualization of the sets $A_1$ and $A_2$.

\begin{figure}
\centering
\includegraphics[width=0.33\textwidth]{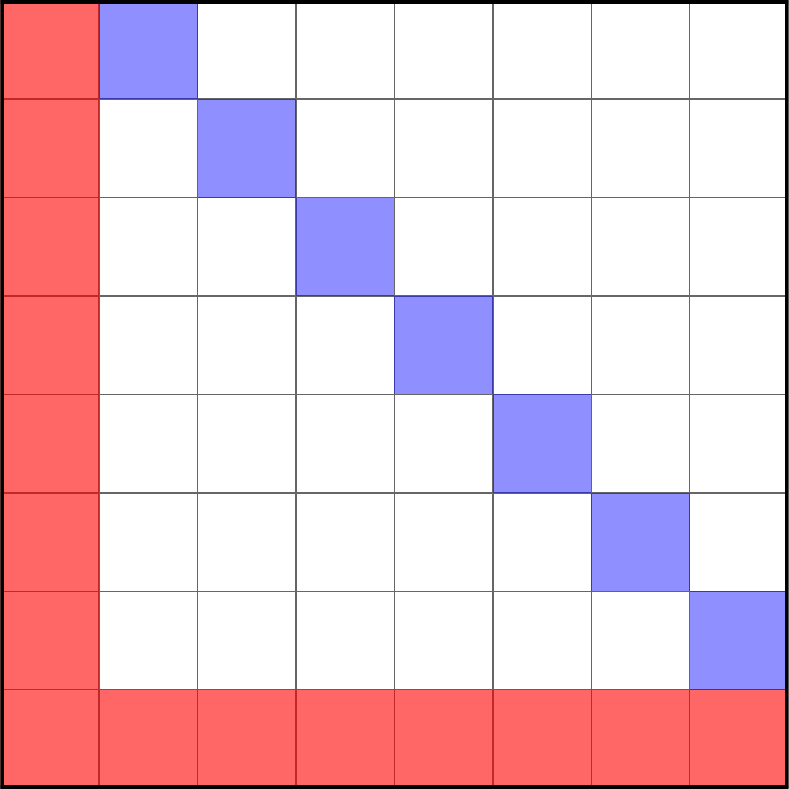}
\caption{The supports of the measures $\mu_1$ and $\mu_2$ for the case $\eps = \frac{1}{4}$. The set $A_1$ is colored red, and the set $A_2$ is blue.}
\label{fig:discontinuous_square}
\end{figure}

Consider the measure $\mu = \mu_1 + (2n - 1)\mu_2$. The projections of this measure to the axes coincides with the restriction of the Lebesgue measure to the segment $[0, 1]$. In addition, $\supp(\mu) \subset \{(x_1, x_2) \in [0, 1]^2 \colon 0 \le x_1 + x_2 \le 1 + 1/n\}$. Thus, we have
\[
\int_0^1\varphi_1(x_1)\,dx_1 + \int_0^1\varphi_2(x_2)\,dx_2 = \int_{[0, 1]^2}(\varphi_1(x_1) + \varphi_2(x_2))\,\mu(dx_1, dx_2) \le 0.
\]

Assume that the equality holds. Then $\varphi_1(x_1) + \varphi_2(x_2) = 0$ $\mu$-almost everywhere. In particular, $\varphi_1(x_1) + \varphi_2(x_2) = 0$ for almost all points $(x_1, x_2) \in A_1$, and therefore this equation holds for almost all points $(x_1, x_2)$ such that $0 \le x_2 \le 1 / (2n)$. Thus, by the Fubini-Tonelli theorem there exists a point $x_2^{(0)} \in [0, 1/(2n)]$ such that the equation $\varphi_1(x_1) + \varphi_2(x_2^{(0)}) = 0$ holds for almost all $x_1 \in [0, 1]$, and therefore there exists a constant $C_1 = -\varphi_2(x_2^{(0)})$ such that $\varphi_1(x_1) = C_1$ almost everywhere.

Similarly, there exists a constant $C_2$ such that $\varphi_2(x_2) = C_2$ almost everywhere. Then 
\[
0 = \int_0^1\varphi_1(x_1)\,dx_1 + \int_0^1\varphi_2(x_2)\,dx_2 = C_1 + C_2,
\]
and therefore $\varphi_1(x_1) + \varphi_2(x_2) = 0$. The case of the inequality $1 - \eps \le x_1 + x_2 \le 2$ is proven in the same manner.
\end{proof}
\begin{proof}[Proof of \cref{prop:dual_discontinuous_solution_unique}] By \cref{lem:g_12_is_zero} we may assume that $g_{12} \equiv 0$. Consider the finite $(3, 2)$-function
\[
G(x_1, x_2, x_3) = g_{13}(x_1, x_3) + g_{23}(x_2, x_3).
\]
The function $G$ is integrable and the inequality $G(x_1, x_2, x_3) \le c(x_1, x_2, x_3)$ holds for almost all points $(x_1, x_2, x_3) \in [0, 1]^3$. Hence, there exists a set $A \subseteq [0, 1]$ with full measure such that if $x_3^{(0)} \in A$, then the function $G(\cdot, \cdot, x_3^{(0)})$ is integrable and the inequality $G(x_1, x_2, x^{(0)}_3) \le c(x_1, x_2, x_3^{(0)})$ holds for almost all $(x_1, x_2) \in [0, 1]^2$.

Assume that $x_3^{(0)} \in A$ and that $x_3^{(0)} < 2 / 3$. Consider the $(3, 2)$-function
\[
F(x_1, x_2, x_3) = f_{12}(x_1, x_2) + f_{13}(x_1, x_3) + f_{23}(x_2, x_3) = f_{13}(x_1, x_3) + f_{23}(x_2, x_3).
\]
By equation \cref{eq:discontinuous_F_repr} we have $F(x_1, x_2, x_3^{(0)}) = 0$ for all $(x_1, x_2) \in [0, 1]^2$.

Denote $\eps = 2 - 3x_3^{(0)}$. We have $\eps > 0$. If $x_1 + x_2 \le 1 + \eps$, then $x_1 + x_2 + 3x_3^{(0)} - 3 \le 0$, and therefore \[
c(x_1, x_2, x_3^{(0)}) = \max(x_1 + x_2 + 3x_3^{(0)} - 3) =  0 = F(x_1, x_2, x_3^{(0)}).
\]
In addition, since $G(x_1, x_2, x_3^{(0)}) \le c(x_1, x_2, x_3^{(0)})$ for almost all points $(x_1, x_2)$, we conclude that the inequality $G(x_1, x_2, x_3^{(0)}) \le F(x_1, x_2, x_3^{(0)})$ holds for almost all points $(x_1, x_2)$ such that $0 \le x_1 + x_2 \le 1 + \eps$.

Consider the functions 
\begin{equation}\label{eq:discontinuous_varphi_definition}
\varphi_1(x_1) = g_{13}(x_1, x_3^{(0)}) - f_{13}(x_1, x_3^{(0)})\text{ and }\varphi_2(x_2) = g_{23}(x_2, x_3^{(0)}) - f_{23}(x_2, x_3^{(0)}).
\end{equation}
We have \[
\varphi_1(x_1) + \varphi_2(x_2) = G(x_1, x_2, x_3^{(0)}) - F(x_1, x_2, x_3^{(0)}).\]
Hence, the function $\varphi_1(x_1) + \varphi_2(x_2)$ is integrable on $[0, 1]^2$, and therefore both functions $\varphi_1$ and $\varphi_2$ are integrable on $[0, 1]$. In addition, the inequality $\varphi_1(x_1) + \varphi_2(x_2) \le 0$ holds for almost all points $(x_1, x_2)$ such that $0 \le x_1 + x_2 \le 1 + \eps$. Thus, it follows from \cref{lem:sum_le_1_plus_eps} that
\[
\int_{[0, 1]^2}\left(G(x_1, x_2, x_3^{(0)}) - F(x_1, x_2, x_3^{(0)})\right)\,dx_1dx_2 = \int_0^1\varphi_1(x_1)\,dx_1 + \int_0^1\varphi_2(x_2)\,dx_2 
\le 0.
\]
Moreover, if the equality holds, then $G(x_1, x_2, x_3^{(0)}) = F(x_1, x_2, x_3^{(0)})$ almost everywhere.

Assume that $x_3^{(0)} \in A$ and that $x_3^{(0)} > 2 / 3$. By equation \cref{eq:discontinuous_F_repr} we have \[
F(x_1, x_2, x_3^{(0)}) = x_1 + x_2 + 3x_3^{(0)} - 3.
\]
Denote $\eps = 3x_3^{(0)} - 2 > 0$. If $x_1 + x_2 > 1 - \eps$, then $x_1 + x_2 + 3x_3 - 3 > 0$, and therefore
\[
c(x_1, x_2, x_3^{(0)}) = \max(x_1 + x_2 + 3x_3^{(0)} - 3, 0) = x_1 + x_2 + 3x_3^{(0)} - 3 = F(x_1, x_2, x_3^{(0)}).
\]
Hence, since $G(x_1, x_2, x_3^{(0)}) \le c(x_1, x_2, x_3^{(0)})$ for almost all $(x_1, x_2)$, we conclude that $\varphi_1(x_1) + \varphi_2(x_2) \le 0$ for almost all points $(x_1, x_2)$ such that $1 - \eps \le x_1 + x_2 \le 2$, where the functions $\varphi_1$ and $\varphi_2$ are defined in equation \cref{eq:discontinuous_varphi_definition}. Thus, it follows from \cref{lem:sum_le_1_plus_eps} that
\begin{equation}\label{eq:discontinuos_G_le_F}
\int_{[0, 1]^2}G(x_1, x_2, x_3^{(0)})\,dx_1dx_2 \le \int_{[0, 1]^2}F(x_1, x_2, x_3^{(0)})\,dx_1dx_2,
\end{equation}
and if the equality holds, then $G(x_1, x_2, x_3^{(0)}) = F(x_1, x_2, x_3^{(0)})$ for almost all $(x_1, x_2)$.

Summarizing this results, we conclude that if $x_3^{(0)} \in A$ and if $x_3^{(0)} \ne 2/3$, then inequality \cref{eq:discontinuos_G_le_F} holds, and therefore, since $A$ is a set of full measure, we have
\[
\int_{[0, 1]^3}G(x_1, x_2, x_3)\,dx_1dx_2dx_3 \le \int_{[0, 1]^3}F(x_1, x_2, x_3)\,dx_1dx_2dx_3.
\]
Since $\{g_{ij}\}$ is a solution to the relaxed dual problem, the equality holds, and therefore the equality in inequality \cref{eq:discontinuos_G_le_F} is achieved for almost all $x_3^{(0)}$. Thus, for almost all $x_3^{(0)} \in [0, 1]$ the equation $F(x_1, x_2, x_3^{(0)}) = G(x_1, x_2, x_3^{(0)})$ holds for almost $(x_1, x_2) \in [0, 1]^2$, and therefore
\[
g_{12}(x_1, x_2) + g_{13}(x_1, x_3) + g_{23}(x_2, x_3) = f_{12}(x_1, x_2) + f_{13}(x_1, x_3) + f_{23}(x_2, x_3)
\]
almost everywhere.
\end{proof}
\bibliographystyle{abbrvurl}
\bibliography{references}

\end{document}